\newtheorem{thm}{Theorem}
\newtheorem{prop}[thm]{Proposition}
\newtheorem{lem}[thm]{Lemma}
\newtheorem{cor}[thm]{Corollary}
\newtheorem{rem}[thm]{Remark}
\newtheorem{df}[thm]{Definition}
\renewcommand{\epsilon}{\varepsilon}
\renewcommand{\phi}{\varphi}
\renewcommand{\deg}{\operatorname{deg}}
\newcommand{\proj}{\operatorname{P}}
\newcommand{\BB}{\mathbb}
\newcommand{\g}{\mathfrak}
\newcommand{\im}{\operatorname{Im}}
\newcommand{\re}{\operatorname{Re}}
\newcommand{\tr}{\operatorname{Tr}}
\newcommand{\HC}{\BB H_{\BB C}}
\newcommand{\degt}{\widetilde{\operatorname{deg}}}
\newcommand{\M}{\operatorname{Mx}}
\newcommand{\Xm}{\operatorname{Xm}}
\newcommand{\DR}{\operatorname{Diag}}
\newcommand{\mult}{\operatorname{Mult}}
\newcommand{\sgn}{\operatorname{sign}}
\newcommand\textcyr[1]{{\fontencoding{OT2}\fontfamily{wncyr}\selectfont #1}}
\newcommand{\Zh}{\textit{\textcyr{Zh}}}
\newcommand{\Sh}{\textit{\textcyr{Sh}}}
\begin{document}

\title{\bf Quasi Regular Functions in Quaternionic Analysis}
\author{Igor Frenkel and Matvei Libine}
\maketitle

\begin{abstract}
We study a new class of functions that arise naturally in quaternionic
analysis, we call them ``quasi regular functions''.
Like the well-known quaternionic regular functions, these functions
provide representations of the quaternionic conformal group.
However, unlike the regular functions, the quasi regular ones do not
admit an invariant unitary structure but rather a pseudounitary equivalent.
The reproducing kernels of these functions have an especially simple form:
$(Z-W)^{-1}$.
We describe the $K$-type bases of quasi regular functions and derive
the reproducing kernel expansions.
We also show that the restrictions of the irreducible representations
formed from the quasi regular functions to the Poincar\'e group have
three irreducible components.
  
Our interest in the quasi regular functions arises from an application
to the study of conformal-invariant algebras of quaternionic functions.
We also introduce a factorization of certain intertwining operators between
tensor products of spaces of quaternionic functions.
This factorization is obtained using fermionic Fock spaces constructed
from the quasi regular functions.
\end{abstract}

\section{Introduction}

Quaternionic analysis begins with the concepts of left and right regular
functions, that is, functions on the space of quaternions $\BB H$ satisfying
respectively
\begin{align}
\nabla^+f &= \frac{\partial f}{\partial z^0} +
i\frac{\partial f}{\partial z^1} + j\frac{\partial f}{\partial z^2} +
k\frac{\partial f}{\partial z^3} =0,   \label{left-reg}  \\
g\nabla^+ &= \frac{\partial z}{\partial z^0} +
\frac{\partial g}{\partial z^1}i + \frac{\partial g}{\partial z^2}j +
\frac{\partial g}{\partial z^3}k =0,   \label{right-reg}
\end{align}
and the quaternionic counterpart of the Cauchy's integral formula for
such functions:
\begin{align*}
f(Z_0) &= \frac 1 {2\pi^2} \int_{Z \in \partial U}
\frac{(Z-Z_0)^{-1}}{\det(Z-Z_0)} \cdot Dz \cdot f(Z), \\
g(Z_0) &= \frac 1 {2\pi^2} \int_{Z \in \partial U}
g(Z) \cdot Dz \cdot \frac {(Z-Z_0)^{-1}}{\det(Z-Z_0)},
\qquad \forall Z_0 \in U,
\end{align*}
where $U \subset \BB H$ is a bounded set and the determinant is taken in
the standard realization of $\BB H$ as $2 \times 2$ complex matrices.
(These are called the Cauchy-Fueter formulas.)
In the original setup, the functions $f$ and $g$ were quaternionic valued,
but we can consider regular functions with values in the two-dimensional
left and right $\BB H$-modules that we denote by $\BB S$ and $\BB S'$
respectively.
We denote by ${\cal V}$ the space of left regular Laurent polynomials on
$\BB H^{\times}$ with values in $\BB S$. Similarly, we denote by ${\cal V}'$
the space of right regular Laurent polynomials on $\BB H^{\times}$ with values
in $\BB S'$.

In our first paper \cite{FL1} we suggested a quaternionic counterpart of the
Cauchy's formula for the second order pole.
We found out that the role of the derivative in the classical complex
analysis formula
$$
\frac{df}{dz}(z_0) = \frac 1{2\pi i} \oint \frac {f(z)\,dz}{(z-z_0)^2}
$$
in the quaternionic setting is played by a certain second order differential
operator (``Maxwell operator'')
$$  %\label{Mx-intro}
\M: (\rho'_2, {\cal W}') \to (\rho_2, {\cal W}), \qquad
\M f = \nabla f \nabla - \square f^+,
$$
where $(\rho_2, {\cal W})$ and $(\rho'_2, {\cal W}')$ are two modules over
the conformal Lie algebra consisting of the space of quaternionic
valued\footnote{Strictly speaking, we need to complexify this space:
  $\BB C \otimes \BB H [z^0,z^1,z^2,z^3, (\det Z)^{-1}]$, as stated in
  Subsection \ref{Subsect8.1}.}
Laurent polynomials on $\BB H^{\times}$
$$
\BB H [z^0,z^1,z^2,z^3, (\det Z)^{-1}]
$$
with the Lie algebra actions obtained by differentiating the
natural actions of the conformal group:
\begin{align*}
\rho_2(h): \: F(Z) \quad &\mapsto \quad \bigl( \rho_2(h)F \bigr)(Z) =
\frac {(cZ+d)^{-1}}{\det(cZ+d)} \cdot F \bigl( (aZ+b)(cZ+d)^{-1} \bigr) \cdot
\frac {(a'-Zc')^{-1}}{\det(a'-Zc')},  \\
\rho'_2(h): \: G(Z) \quad &\mapsto \quad \bigl( \rho'_2(h)G \bigr)(Z) =
\frac {a'-Zc'}{\det(a'-Zc')} \cdot G \bigl( (aZ+b)(cZ+d)^{-1} \bigr)
\cdot \frac {cZ+d}{\det(cZ+d)},
\end{align*}
where $F, G \in \BB H [z^0,z^1,z^2,z^3, (\det Z)^{-1}]$,
$h = \bigl(\begin{smallmatrix} a' & b' \\ c' & d' \end{smallmatrix}\bigr)
\in GL(2,\BB H)$ and 
$h^{-1} = \bigl(\begin{smallmatrix} a & b \\ c & d \end{smallmatrix}\bigr)$.
This operator $\M$ has a remarkable property that its kernel consists of
the solutions of the Maxwell equations in the Euclidean signature.
Moreover, the Maxwell operator intertwines the actions $\rho'_2$ and $\rho_2$.
Additionally, $(\rho_2, {\cal W})$ and $(\rho'_2, {\cal W}')$ are dual
to each other, with the invariant bilinear product expressible as an integral
over a four-dimensional contour (Subsection 4.2 in \cite{FL1}).

It has been known for a long time that the spaces of left and right regular
functions ${\cal V}$ and ${\cal V}'$ are conformally invariant.
The conformal group acts on ${\cal V}$ and ${\cal V}'$ respectively by
\begin{align*}
\pi_l(h): \: f(Z) \quad &\mapsto \quad \bigl( \pi_l(h)f \bigr)(Z) =
\frac{(cZ+d)^{-1}}{\det(cZ+d)} \cdot f \bigl( (aZ+b)(cZ+d)^{-1} \bigr),  \\
\pi_r(h): \: g(Z) \quad &\mapsto \quad \bigl( \pi_r(h)g \bigr)(Z) =
g \bigl( (a'-Zc')^{-1}(-b'+Zd') \bigr) \cdot
\frac{(a'-Zc')^{-1}}{\det(a'-Zc')},
\end{align*}
where $f \in {\cal V}$, $g \in {\cal V}'$,
$h = \bigl(\begin{smallmatrix} a' & b' \\ c' & d' \end{smallmatrix}\bigr)
\in GL(2,\BB H)$ and 
$h^{-1} = \bigl(\begin{smallmatrix} a & b \\ c & d \end{smallmatrix}\bigr)$.
The representation $(\rho_2, {\cal W})$ can be regarded as the product of the
representations $(\pi_l, {\cal V})$ and $(\pi_r, {\cal V}')$.
More precisely, the multiplication map
$$
(\pi_l, {\cal V}) \otimes (\pi_r, {\cal V}') \to (\rho_2, {\cal W}), \qquad
f \otimes g \mapsto fg,
$$
is an intertwining operator.
The duality between $(\rho_2, {\cal W})$ and $(\rho'_2, {\cal W}')$ suggests
that there should be an analogous factorization for $(\rho'_2, {\cal W}')$,
namely that there should be some natural representations $\pi'_l$ and $\pi'_r$
of the conformal group in the spaces ${\cal U}$ and ${\cal U}'$ of
$\BB S$ and $\BB S'$ valued functions such that the multiplication map
\begin{equation}  \label{UU'->W'}
(\pi'_l, {\cal U}) \otimes (\pi'_r, {\cal U}') \to (\rho'_2, {\cal W}'), \qquad
f \otimes g \mapsto fg,
\end{equation}
is an intertwining operator.
It turns out that such spaces ${\cal U}$ and ${\cal U}'$ indeed exist and
their study is the main subject of this paper.
First of all, the intertwining operator \eqref{UU'->W'} suggests the
following actions of the conformal group:
\begin{align}
\bigl( \pi'_l(h)f \bigr)(Z) &=
\frac{a'-Zc'}{\det(a'-Zc')} \cdot f \bigl( (a'-Zc')^{-1}(-b'+Zd') \bigr),
\label{pi'_l-intro}
\\
\bigl( \pi'_r(h)g \bigr)(Z) &=
g \bigl( (aZ+b)(cZ+d)^{-1} \bigr) \cdot \frac{cZ+d}{\det(cZ+d)},
\label{pi'_r-intro}
\end{align}
where
$h = \bigl( \begin{smallmatrix} a' & b' \\ c' & d' \end{smallmatrix} \bigr)
\in GL(2,\BB H)$ and
$h^{-1} = \bigl( \begin{smallmatrix} a & b \\ c & d \end{smallmatrix} \bigr)$.
These actions preserve the spaces of what we call the
left and right quasi anti regular functions with values in $\BB S$ and
$\BB S'$, that is, functions satisfying respectively
$$
\nabla \square f =0 \qquad \text{and} \qquad
(\square g) \overleftarrow{\nabla} =0,
$$
where
$$
\square = \nabla\nabla^+ = \nabla^+\nabla
= \frac{\partial^2}{(\partial z^0)^2} + \frac{\partial^2}{(\partial z^1)^2}
+ \frac{\partial^2}{(\partial z^2)^2} + \frac{\partial^2}{(\partial z^3)^2}.
$$
Note that the requirement that \eqref{UU'->W'} is an intertwining operator
and the actions \eqref{pi'_l-intro}-\eqref{pi'_r-intro} dictate that we use
quasi anti regular functions (functions annihilated by $\nabla \square$).
If one wants to consider quasi regular functions (functions annihilated by
$\nabla^+ \square$), these can be obtained by applying quaternionic conjugation.

It turns out that the quasi regular functions have already been studied, and
an analogue of the Cauchy-Fueter formulas for these functions is known
\cite{R, LR}.
Let $U \subset \BB H$ be an open bounded set with (piecewise) smooth boundary
$\partial U$, and let $\overrightarrow{n}$ denote the outward pointing normal
unit vector to $\partial U$, then we have
\begin{equation*}  %\label{Cauchy-Fueter-left-intro}
\begin{split}
f(Z_0) &= \frac1{2\pi^2} \int_{Z \in \partial U}
\frac{Z-Z_0}{\det(Z-Z_0)^2} \cdot (Dz)^+ \cdot f(Z)  \\
&+ \frac1{8\pi^2} \int_{Z \in \partial U}
\frac{\partial}{\partial \overrightarrow{n}} \frac{Z-Z_0}{\det(Z-Z_0)}
\cdot \nabla f(Z) \,dS  \\
&- \frac1{8\pi^2} \int_{Z \in \partial U} \frac{Z-Z_0}{\det(Z-Z_0)} \cdot
\frac{\partial}{\partial \overrightarrow{n}} \nabla f(Z) \,dS,
\end{split}
\end{equation*}
\begin{equation*}   %\label{Cauchy-Fueter-right-intro}
\begin{split}
g(Z_0) &= \frac1{2\pi^2} \int_{Z \in \partial U} g(Z) \cdot (Dz)^+ \cdot
\frac{Z-Z_0}{\det(Z-Z_0)^2}  \\
&+ \frac1{8\pi^2} \int_{Z \in \partial U} (g \overleftarrow{\nabla})(Z) \cdot
\frac{\partial}{\partial \overrightarrow{n}} \frac{Z-Z_0}{\det(Z-Z_0)}\,dS  \\
&- \frac1{8\pi^2} \int_{Z \in \partial U}
\frac{\partial}{\partial \overrightarrow{n}}
(g \overleftarrow{\nabla})(Z) \cdot \frac{Z-Z_0}{\det(Z-Z_0)} \,dS,
\end{split}
\end{equation*}
where $f$ (respectively $g$) is a quasi left (respectively right) anti regular
function and $Z_0 \in U$.

We prove that the spaces ${\cal U}$ and ${\cal U}'$ of left and right
(anti) regular Laurent polynomial functions on $\BB H^{\times}$ are
invariant under the action of the conformal Lie algebra.
%$\pi'_l$ and $\pi'_r$ respectively.
As in the case of the regular functions, we obtain a decomposition into
irreducible representations
$$
{\cal U} = {\cal U}^+ \oplus {\cal U}^- \qquad \text{and} \qquad
{\cal U}' = {\cal U}'^+ \oplus {\cal U}'^-.
$$
We show that, unlike ${\cal V}^{\pm}$ and ${\cal V}'^{\pm}$,
these representations do not have $\mathfrak{u}(2,2)$-invariant
unitary structures, but they still admit invariant pseudounitary forms.
It is a well-known fact that the restrictions of the representations
${\cal V}^{\pm}$, ${\cal V}'^{\pm}$ to the Poincar\'e group remain irreducible
\cite{MT}. In the case of the representations ${\cal U}^{\pm}$, ${\cal U}'^{\pm}$
it is no longer true, but we still obtain a sufficiently simple decomposition
of these representations into three irreducible components with respect to the
Poincar\'e group.

We describe the $K$-type bases of ${\cal U}$ and ${\cal U}'$, then decompose
the reproducing kernel with respect to these bases.
These kernel expansions are quasi regular analogues of the matrix coefficient
expansions of the Cauchy-Fueter kernel
$$
\frac{(Z-W)^{-1}}{\det(Z-W)}, \qquad Z, W \in \BB H,
$$
given in \cite{FL1}.
But now the reproducing kernel has an especially simple form
-- it is the quaternionic conjugate of
$$
(Z-W)^{-1}, \qquad Z, W \in \BB H,
$$
and looks exactly like its complex counterpart!
As in some of our previous papers, we use extensively the $K$-type
decompositions of the spaces of regular and quasi regular functions
as well as the spaces ${\cal W}$ and ${\cal W}'$ of quaternionic valued
functions. They all should be regarded as quaternionic analogues of the
spaces of Laurent polynomials, though the algebra structure of ${\cal W}$
and ${\cal W}'$ is much less straightforward.
In any case, the matrix coefficients $t^l_{n \,\underline{m}}(Z)$ of $SU(2)$
extended to the quaternions are a very convenient substitute for the Laurent
monomials.

In our first paper \cite{FL1}, studying regular quaternionic functions
also led us to analysis of harmonic functions on the space of quaternions;
such harmonic functions can be regarded as a scalar analogue of regular
functions.
Similarly, the quasi regular functions have a natural scalar analogue.
This analogue could be called quasi harmonic functions, but already has a
name of biharmonic functions and has been studied by Fueter \cite{F1}
and others.
The biharmonic functions also appeared in our study of the composition series
of the spaces ${\cal W}$ and  ${\cal W}'$ \cite{ATMP}.
In this paper we study the spaces of biharmonic functions as scalar versions
of quasi regular functions. In particular, we study their $K$-types and the
invariant bilinear pairing.
Like the spaces of quasi regular functions, the spaces of biharmonic
functions do not have $\mathfrak{u}(2,2)$-invariant unitary structures,
but instead they admit invariant pseudounitary structures.

One of the main reasons for our interest in quasi regular functions is
their relation to conformal-invariant quaternionic algebra structures.
The reproducing kernel for the quasi regular functions allows us to define
intertwining operators for the tensor powers of ${\cal W}$, such as
$$
{\cal W} \otimes {\cal W} \otimes {\cal W} \to \BB C
$$
given by
\begin{multline}  \label{6easy-W-3form-intro}
 F \otimes G \otimes H  \mapsto \Bigl( \frac{i}{2\pi^3} \Bigr)^3
\iiint_{\begin{smallmatrix} Z_1 \in U(2)_{R_1} \\ Z_2 \in U(2)_{R_2} \\
Z_3 \in U(2)_{R_3} \end{smallmatrix}}
\tr \biggl[ F(Z_1) \cdot \frac1{(Z_1-Z_2)^+} \cdot G(Z_2) \\
\times \frac1{(Z_2-Z_3)^+} \cdot H(Z_3) \cdot
\frac1{(Z_3-Z_1)^+} \biggr] \,dV_{Z_1}dV_{Z_2}dV_{Z_3},
\end{multline}
where the contours of integration
$U(2)_{R_i} = \{ Z \in \BB H \otimes \BB C ;\: ZZ^* =R_i^2 \}$, $i=1,2,3$,
can be arranged in any of the six possible ways.
Similar intertwiners exist for ${\cal W}'$ as well as
$\Zh = \BB C [z^0,z^1,z^2,z^3, (\det Z)^{-1}]$. In fact,
\begin{align*}
\text{the kernel for ${\cal W}'$ is }
&\frac{(Z-W)^{-1}}{\det(Z-W)} \text{ of order $-3$, and}  \\
\text{the kernel for $\Zh$ is }
&\frac1{\det(Z-W)} \text{ of order $-2$.}
\end{align*}
These intertwiners are closely related to the definition of
multiplication operations on $\Zh$, ${\cal W}$ and ${\cal W}'$
that we constructed in \cite{ATMP}.
However, in order to construct nontrivial multiplications on
$\Zh$, ${\cal W}$ and ${\cal W}'$ that intertwine the conformal group
actions, one needs more involved cycles of integration than those in
\eqref{6easy-W-3form-intro}.
Furthermore, we believe that a certain multiplication on ${\cal W}$
based on the simplest kernel $(Z-W)^{-1}$ of order $-1$ is the best candidate
for a quaternionic algebra of functions.

The integrals such as \eqref{6easy-W-3form-intro} also appear as correlation
functions in Clifford algebra (or fermionic) representations of central
extensions of loop algebras, where the reproducing kernel is
$(z-w)^{-1}$, $z, w \in \BB C$.
One can repeat some steps of this construction for the Clifford algebras based
on ${\cal V}$ and ${\cal V}'$ and also ${\cal U}$ and ${\cal U}'$.
This will yield factorizations of the intertwiners for
${\cal W}'$ and ${\cal W}$ respectively.
The first one -- based on ${\cal V}$ and ${\cal V}'$ -- is a version of
the second quantization of the massless Dirac equation.
The second one -- based on ${\cal U}$ and ${\cal U}'$ -- uses a non-unitary
structure and appears to be new.
We hope that it can be used for a construction of a representation of a
quaternionic algebra based on ${\cal W}$.
In this paper we study the intertwining operators from
${\cal U} \otimes {\cal U}'$ to ${\cal W}'$ and the dual intertwining
operators from ${\cal W}$ to a certain holomorphic completion of
${\cal U} \otimes {\cal U}'$ in detail.

The paper is organized as follows.
In Section \ref{Sect2} we study the space of biharmonic functions,
which can be viewed as a scalar counterpart of quasi (anti) regular functions.
We derive the reproducing formulas (Theorem \ref{BH-repro-thm}),
invariant forms (Proposition \ref{biharm-pairing-prop} and Theorem
\ref{BH-unitary-thm}) and the $K$-types (Proposition \ref{K-type-biharm_prop}).
In Section \ref{Sect3} we begin to study the spaces of quasi anti regular
functions in detail. We first consider the conformal transformations
(Theorem \ref{qr-action-thm}) and then produce various examples of
quasi anti regular functions (Corollary \ref{series-cor}).
We conclude this section with the reproducing formula (Theorem
\ref{QAR-repro-thm}) originally derived in a more general context in
\cite{R, LR}.
In Section \ref{Sect4} we study the $K$-type basis of the spaces of quasi
anti regular functions (Propositions \ref{K-types-prop},
\ref{K-typebasis+_prop}, \ref{K-typebasis-_prop})
and the action of conformal algebra in this basis.
We also show that the spaces quasi (anti) regular functions ${\cal U}$ and
its dual ${\cal U}'$ decompose into two irreducible components each:
${\cal U} = {\cal U}^+ \oplus {\cal U}^-$ and
${\cal U}' = {\cal U}'^+ \oplus {\cal U}'^-$ (Proposition \ref{irred-prop}).
Then in Section \ref{Sect5} we consider how these irreducible components
${\cal U}^{\pm}$ and ${\cal U}'^{\pm}$ decompose further after restricting to
various subgroups of the conformal group.
In particular, we analyze the case of the Poincar\'e group and establish that
each of the ${\cal U}^{\pm}$, ${\cal U}'^{\pm}$ decomposes into three simple
components that can be nicely described using basic differential operators
(Theorem \ref{Poincare-restrict-thm}).
In Section \ref{Sect6} we construct an invariant bilinear pairing between
${\cal U}$ and ${\cal U}'$ (Proposition \ref{bilinear-pairing-prop}),
then derive invariant pseudounitary structures on these spaces
(Theorem \ref{unitary-thm}).
Section \ref{Sect7} is dedicated to the expansion of the reproducing kernel
for the quasi (anti) regular functions (Theorem \ref{kernel-exp-thm}).
In Section \ref{Sect8} we study the fundamental intertwining operators
\eqref{fork'} from the space of quaternionic functions ${\cal W}$ to
the tensor product of the spaces of quasi anti regular functions
${\cal U} \otimes {\cal U}'$ (Theorems \ref{J'-thm}, \ref{J+-_thm}
and \ref{Xm^0-operator-thm}).
In particular, we evaluate the intertwining map \eqref{fork'} on an
element $N(Z)^{-2} \cdot Z^+$ (Proposition \ref{J-gen-prop}).
This element $N(Z)^{-2} \cdot Z^+$ spans the trivial one-dimensional
component of ${\cal W}$ and generates a larger indecomposable
subrepresentation of ${\cal W}$ that we explicitly describe.
Section \ref{IntertwiningOper-section} is dedicated to the study of
the conformal-invariant algebra structures on the spaces of quaternionic
functions $\Zh$, ${\cal W}$ and ${\cal W}'$ using the theory of regular
and quasi regular functions developed before.
We start with a more tractable example of scalar quaternionic functions $\Zh$
(Theorem \ref{Zh-mult-thm}), then proceed to the main examples,
namely the spaces ${\cal W}$ and ${\cal W}'$.
Finally, in Section \ref{Sect10} we introduce a factorization of intertwining
operators between the tensor products of the spaces of quaternionic functions
${\cal W}$ and ${\cal W}'$.
We use a fermionic Fock space representation of an infinite dimensional
Clifford algebra associated with the spaces of regular functions
${\cal V}$, ${\cal V}'$ for ${\cal W}'$ and quasi regular functions
${\cal U}$, ${\cal U}'$ for ${\cal W}$.

I.~Frenkel thanks Toshiyuki Kobayashi for discussion and correspondence
on constructions presented in Subsection \ref{IntertwiningOper-subsection}.

\section{Biharmonic Functions}  \label{Sect2}

\subsection{Definitions and Conformal Properties}

Biharmonic functions in the context of quaternionic analysis were studied by
Fueter \cite{F1}. In this subsection, we briefly summarize the results on
biharmonic functions given in \cite{ATMP}.

We continue to use notations established in \cite{FL1,desitter,ATMP}
and direct the reader to Section 2 of \cite{desitter} for a summary.
In particular, $e_0$, $e_1$, $e_2$, $e_3$ denote the units of the classical
quaternions $\BB H$ corresponding to the more familiar $1$, $i$, $j$, $k$
(we reserve the symbol $i$ for $\sqrt{-1} \in \BB C$).
Thus $\BB H$ is an algebra over $\BB R$ generated by $e_0$, $e_1$, $e_2$, $e_3$,
and the multiplicative structure is determined by the rules
$$
e_0 e_i = e_i e_0 = e_i, \qquad
(e_i)^2 = e_1e_2e_3 = - e_0, \qquad
e_ie_j=-e_ie_j, \qquad 1 \le i< j \le 3,
$$
and the fact that $\BB H$ is a division ring.
Next we consider the algebra of complexified quaternions
(also known as biquaternions) $\HC = \BB C \otimes_{\BB R} \BB H$ and
write elements of $\HC$ as
$$
Z = z^0e_0 + z^1e_1 + z^2e_2 + z^3e_3, \qquad z^0,z^1,z^2,z^3 \in \BB C,
$$
so that $Z \in \BB H$ if and only if $z^0,z^1,z^2,z^3 \in \BB R$:
$$
\BB H = \{ X = x^0e_0 + x^1e_1 + x^2e_2 + x^3e_3; \: x^0,x^1,x^2,x^3 \in \BB R \}.
$$
For $Z = z^0e_0 + z^1e_1 + z^2e_2 + z^3e_3 \in \HC$, we denote by $Z^+$ its
{\em quaternionic conjugate}:
\[
Z^+ = z^0e_0 - z^1e_1 - z^2e_2 - z^3e_3
\]
(the bar notation is reserved for complex conjugation). And
\[
N(Z) = ZZ^+ = Z^+Z  \in \BB C.
\]
As an algebra, $\HC$ can naturally be identified with $2 \times 2$ matrices
with complex entries, and we frequently use this identification.
We denote by $\BB S$ (respectively $\BB S'$)
the irreducible 2-dimensional left (respectively right) $\HC$-module,
as described in Subsection 2.3 of \cite{FL1}.
The spaces $\BB S$ and $\BB S'$ can be realized as respectively
columns and rows of complex numbers. Then
\begin{equation*}  %\label{SotimesS}
\BB S \otimes \BB S' \simeq \HC.
\end{equation*}

Recall the differential operators
\begin{align}
\nabla^+ &= e_0 \frac{\partial}{\partial z^0}
+ e_1 \frac{\partial}{\partial z^1}
+ e_2 \frac{\partial}{\partial z^2}
+ e_3 \frac{\partial}{\partial z^3}, \\
\nabla &= e_0 \frac{\partial}{\partial z^0}
- e_1 \frac{\partial}{\partial z^1}
- e_2 \frac{\partial}{\partial z^2}
- e_3 \frac{\partial}{\partial z^3}, \\
\square = \nabla\nabla^+ = \nabla^+\nabla
&= \frac{\partial^2}{(\partial z^0)^2} + \frac{\partial^2}{(\partial z^1)^2}
+ \frac{\partial^2}{(\partial z^2)^2} + \frac{\partial^2}{(\partial z^3)^2}.
\label{Laplacian}
\end{align}

We frequently use the matrix coefficients $t^l_{n\,\underline{m}}(Z)$'s
of $SU(2)$ described by equation (27) of \cite{FL1} (cf. \cite{V}):
\begin{equation}  \label{t}
t^l_{n\,\underline{m}}(Z) = \frac 1{2\pi i}
\oint (sz_{11}+z_{21})^{l-m} (sz_{12}+z_{22})^{l+m} s^{-l+n} \,\frac{ds}s,
\qquad
\begin{smallmatrix} l = 0, \frac12, 1, \frac32, \dots, \\ m,n \in \BB Z +l, \\
 -l \le m,n \le l, \end{smallmatrix}
\end{equation}
where $Z=\bigl(\begin{smallmatrix} z_{11} & z_{12} \\
z_{21} & z_{22} \end{smallmatrix}\bigr) \in \HC$,
the integral is taken over a loop in $\BB C$ going once around the origin
in the counterclockwise direction.
We regard these functions as polynomials on $\HC$, and we frequently use them
to construct $K$-type bases of various spaces as in, for example,
\cite{FL1}, \cite{desitter}.

We consider a space
$$
\Sh' = \bigl\{\text{$\BB C$-valued polynomial functions on
$\HC^{\times}$}\bigr\}
= \BB C[z_{11},z_{12},z_{21},z_{22}, N(Z)^{-1}].
$$
The Lie algebra $\g{gl}(2,\HC)$ acts on this space by differentiating
the following group action:
$$
\rho'(h): \: f(Z) \quad \mapsto \quad \bigl( \rho'(h)f \bigr)(Z) =
f \bigl( (aZ+b)(cZ+d)^{-1} \bigr), \qquad f \in \Sh',
$$
$h \in GL(2,\HC)$ with 
$h^{-1} = \bigl(\begin{smallmatrix} a & b \\ c & d \end{smallmatrix}\bigr)$.
%This action is spelled out in Lemma 24 in \cite{ATMP}.
The space of biharmonic functions is a $\mathfrak{gl}(2,\HC)$-invariant
subspace of $(\Sh',\rho')$.
We have a description of biharmonic functions in terms of harmonic ones.

\begin{prop} [Proposition 34 in \cite{ATMP}] \label{biharmonic-prop}
We have:
$$
\{ f \in \Sh' ;\: (\square \circ \square) f=0 \} = {\cal BH}^+ + {\cal BH}^-,
$$
where
\begin{align*}
{\cal BH}^+ &= \BB C\text{-span of }
\bigl\{ N(Z)^k \cdot t^l_{n\,\underline{m}}(Z);\: 0 \le k \le 1 \bigr\}, \\
{\cal BH}^- &= \BB C\text{-span of }
\bigl\{ N(Z)^k \cdot t^l_{n\,\underline{m}}(Z);\: -1 \le 2l+k \le 0 \bigr\}.
\end{align*}
In other words, a function $f \in \Sh'$ is biharmonic if and only if it can
be written as
$$
f(Z) = h_0(Z)+h_1(Z) \cdot N(Z)
$$
with $h_0$ and $h_1$ harmonic.
\end{prop}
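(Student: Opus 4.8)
The plan is to reduce everything to a single computation describing how $\square$ interacts with multiplication by $N(Z)$, and then to read off the answer from the Fischer-type basis of $\Sh'$ built out of the $t^l_{n\,\underline{m}}$.

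First I would record the key identity: if $h$ is harmonic and homogeneous of degree $p$, then
\[
\square\bigl(N(Z)^k h\bigr) = 4k(k+p+1)\,N(Z)^{k-1}h .
\]
This follows from the Leibniz rule $\square(uv) = (\square u)v + 2\sum_j (\partial_{z^j}u)(\partial_{z^j}v) + u\,\square v$ together with $\square N(Z)^k = 4k(k+1)N(Z)^{k-1}$ and the Euler relation $\sum_j z^j\partial_{z^j}h = ph$, using $\square h=0$. Iterating gives
\[
\square^2\bigl(N(Z)^k h\bigr) = 16\,k(k-1)(k+p+1)(k+p)\,N(Z)^{k-2}h ,
\]
so $N(Z)^k h$ is biharmonic precisely when $k\in\{0,1,-p,-p-1\}$ and (from the first iterate) harmonic precisely when $k\in\{0,-p-1\}$. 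Applying this with $h=t^l_{n\,\underline{m}}$, which is harmonic and homogeneous of degree $p=2l$, already proves the easy inclusion: every spanning element of ${\cal BH}^+$ (where $k\in\{0,1\}$) and of ${\cal BH}^-$ (where $2l+k\in\{-1,0\}$, i.e. $k\in\{-2l-1,-2l\}$) is annihilated by $\square^2$. This also settles one direction of the final reformulation, since any $h_0 + N(Z)h_1$ with $h_0,h_1$ harmonic is biharmonic by the displayed formulas applied degreewise.

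For the reverse inclusion I would use that $\bigl\{N(Z)^k\,t^l_{n\,\underline{m}} : k\in\BB Z,\ l\in\tfrac12\BB Z_{\ge 0},\ -l\le m,n\le l\bigr\}$ is a basis of $\Sh'$. This is the Fischer decomposition of $\BB C[z_{11},z_{12},z_{21},z_{22}]$ into $\bigoplus_{k\ge 0}N(Z)^k\mathcal H$, with $\mathcal H$ the harmonics spanned by the $t^l_{n\,\underline{m}}$ as in \cite{FL1}, localized at the nondegenerate quadratic form $N(Z)$; clearing denominators by a power of $N(Z)$ reduces both spanning and linear independence to the polynomial case. Writing a biharmonic $f=\sum c_{klmn}N(Z)^k t^l_{n\,\underline{m}}$ and applying the second displayed identity, the operator $\square^2$ sends $N(Z)^k t^l_{n\,\underline{m}}$ to a scalar multiple of the \emph{distinct} basis vector $N(Z)^{k-2}t^l_{n\,\underline{m}}$, so no cancellation among the images can occur. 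Hence $\square^2 f=0$ forces $c_{klmn}=0$ whenever $k(k-1)(k+2l+1)(k+2l)\ne 0$, i.e. whenever $k\notin\{0,1,-2l-1,-2l\}$, which is exactly the assertion that $f\in{\cal BH}^++{\cal BH}^-$.

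Finally, to produce the reformulation $f=h_0+N(Z)h_1$ with $h_0,h_1$ harmonic, I would group the surviving terms by the value of $k$: the $k=0$ and $k=-2l-1$ terms are harmonic and assemble into $h_0$, while the $k=1$ and $k=-2l$ terms factor as $N(Z)\cdot t^l_{n\,\underline{m}}$ and $N(Z)\cdot N(Z)^{-2l-1}t^l_{n\,\underline{m}}$, whose harmonic cofactors assemble into $h_1$. I expect the only genuinely delicate point to be the basis claim for $\Sh'$ — in particular verifying that inverting $N(Z)$ preserves linear independence and that the degree $-2$ harmonic $N(Z)^{-1}$ (the $l=0$, $k=-1$ element) is correctly accounted for; once the identity for $\square\bigl(N(Z)^k h\bigr)$ is in hand, everything else is bookkeeping.
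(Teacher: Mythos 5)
Your argument is correct: the identity $\square(N(Z)^k h)=4k(k+p+1)N(Z)^{k-1}h$ for $h$ harmonic homogeneous of degree $p$ is exactly equation \eqref{Laplacian-Nf} of the paper, and combining it with the Fischer-type basis $\{N(Z)^k t^l_{n\,\underline{m}}\}$ of $\Sh'$ (no cancellation since $\square^2$ shifts $k\mapsto k-2$ injectively on the index set) is the standard route. The paper itself does not reprove this statement — it imports it as Proposition 34 of \cite{ATMP} — but your proof is complete and matches the intended argument, including the correct handling of the $l=0$, $k=-1$ harmonic $N(Z)^{-1}$.
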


We introduce a notation for the space of biharmonic functions:
$$
{\cal BH} = {\cal BH}^+ + {\cal BH}^-
= \{ f \in \Sh' ;\: (\square \circ \square) f=0 \}.
$$
Theorem 32 in \cite{ATMP} describes all the $\mathfrak{gl}(2,\HC)$-invariant
subspaces of $(\rho',\Sh')$.
As an immediate consequence, we obtain the invariant subspaces of the space of
biharmonic functions into irreducible components:

\begin{thm}  \label{biharm-irred-thm}
The only proper $\mathfrak{gl}(2,\HC)$-invariant subspaces of the space of
biharmonic functions $(\rho',{\cal BH})$ are
\begin{align*}
{\cal I}'_0 &= \BB C = \BB C\text{-span of }
\bigl\{ N(Z)^0 \cdot t^0_{0\,\underline{0}}(Z) \bigr\}, \\
{\cal BH}^+ &= \BB C\text{-span of }
\bigl\{ N(Z)^k \cdot t^l_{n\,\underline{m}}(Z);\: 0 \le k \le 1 \bigr\}, \\
{\cal BH}^- &= \BB C\text{-span of }
\bigl\{ N(Z)^k \cdot t^l_{n\,\underline{m}}(Z);\: -1 \le 2l+k \le 0 \bigr\}.
\end{align*}
The irreducible components of $(\rho',{\cal BH})$ are the trivial
subrepresentation $(\rho', {\cal I}'_0)$ and the quotients
$$
(\rho', {\cal BH}^+/{\cal I}'_0), \quad (\rho', {\cal BH}^-/{\cal I}'_0).
$$
\end{thm}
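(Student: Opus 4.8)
The plan is to derive the classification directly from Theorem 32 of \cite{ATMP}, which lists all $\g{gl}(2,\HC)$-invariant subspaces of the ambient space $(\rho', \Sh')$, and then to intersect that list with ${\cal BH}$. Since ${\cal BH}$ is itself $\g{gl}(2,\HC)$-invariant (as recorded just before Proposition \ref{biharmonic-prop}), every invariant subspace of $(\rho', {\cal BH})$ has the form $V \cap {\cal BH}$ for some invariant $V \subseteq \Sh'$, and conversely each such intersection is invariant. Hence the lattice of invariant subspaces of ${\cal BH}$ is obtained from the lattice furnished by Theorem 32 simply by intersecting with ${\cal BH}$.

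To carry this out I would work in $K$-type coordinates. Every invariant subspace of $\Sh'$ is a sum of $K$-types, i.e.\ spanned by a subcollection of the basis vectors $N(Z)^k \, t^l_{n\,\underline{m}}(Z)$, and Proposition \ref{biharmonic-prop} identifies ${\cal BH}$ with exactly those $K$-types satisfying $0 \le k \le 1$ or $-1 \le 2l+k \le 0$. Intersecting each member of the Theorem 32 lattice with these two bands, I expect to recover precisely the subspaces
$$
0, \quad {\cal I}'_0, \quad {\cal BH}^+, \quad {\cal BH}^-, \quad {\cal BH},
$$
organized so that ${\cal I}'_0 = {\cal BH}^+ \cap {\cal BH}^-$ lies below the two incomparable subspaces ${\cal BH}^{\pm}$, which in turn lie below ${\cal BH} = {\cal BH}^+ + {\cal BH}^-$. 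One readily checks that the only $K$-type satisfying both bands simultaneously is the trivial one $(k,l) = (0,0)$, so ${\cal BH}^+$ and ${\cal BH}^-$ overlap exactly in the line of constants ${\cal I}'_0 = \BB C \cdot t^0_{0\,\underline{0}}$.

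The irreducible constituents then follow formally from this lattice. The subspace ${\cal I}'_0$ is the trivial representation, since $\rho'(h)$ fixes the constant function. Because the lattice leaves no invariant subspace strictly between ${\cal I}'_0$ and ${\cal BH}^+$, the quotient $(\rho', {\cal BH}^+/{\cal I}'_0)$ has no proper nonzero invariant subspace and is therefore irreducible; the identical argument applies to $(\rho', {\cal BH}^-/{\cal I}'_0)$. Together with the isomorphisms ${\cal BH}/{\cal BH}^{\mp} \cong {\cal BH}^{\pm}/{\cal I}'_0$, these three subquotients exhaust the composition series.

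The step I expect to be the main obstacle is the bookkeeping in the middle paragraph: one must check against the explicit list in Theorem 32 that no invariant subspace of $\Sh'$ is trapped strictly between ${\cal I}'_0$ and either of ${\cal BH}^{\pm}$. This amounts to knowing from \cite{ATMP} that the underlying harmonic subquotients are already irreducible, so that adjoining the single extra homogeneity band coming from the factor $N(Z)$ cannot produce any new invariant subspace beyond the common line ${\cal I}'_0$.
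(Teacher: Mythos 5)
Your proposal is correct and follows exactly the route the paper takes: the paper states this theorem as an immediate consequence of Theorem 32 in \cite{ATMP} (the classification of all invariant subspaces of $(\rho',\Sh')$), intersected with the $K$-type description of ${\cal BH}$ from Proposition \ref{biharmonic-prop}. Your write-up merely makes explicit the lattice bookkeeping that the paper leaves implicit, including the correct observation that ${\cal BH}^+ \cap {\cal BH}^- = {\cal I}'_0$ since $0 \le k$ and $2l+k \le 0$ force $k=l=0$.
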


\subsection{$K$-Type Basis of Biharmonic Functions}  \label{K-type-biharmonic}

Using notations
$$
\partial = \begin{pmatrix} \partial_{11} & \partial_{21} \\
\partial_{12} & \partial_{22} \end{pmatrix} = \tfrac 12 \nabla, \qquad
\partial^+ = \begin{pmatrix} \partial_{22} & -\partial_{21} \\
-\partial_{12} & \partial_{11} \end{pmatrix} = \tfrac 12 \nabla^+,
\qquad \partial_{ij} = \frac{\partial}{\partial z_{ij}},
$$
we can describe the Lie algebra action on ${\cal BH}$ (and $\Sh'$):

\begin{lem} [Lemma 24 in \cite{ATMP}]
The Lie algebra action $\rho'$ of $\mathfrak{gl}(2,\HC)$ on ${\cal BH}$
is given by
\begin{align*}
\rho' \bigl( \begin{smallmatrix} A & 0 \\ 0 & 0 \end{smallmatrix} \bigr) &:
f(Z) \mapsto - \tr (AZ \partial) f,  \\
\rho' \bigl( \begin{smallmatrix} 0 & B \\ 0 & 0 \end{smallmatrix} \bigr) &:
f(Z) \mapsto - \tr (B \partial) f,  \\
\rho' \bigl( \begin{smallmatrix} 0 & 0 \\ C & 0 \end{smallmatrix} \bigr) &:
f(Z) \mapsto \tr (ZCZ \partial) f,  \\
\rho' \bigl( \begin{smallmatrix} 0 & 0 \\ 0 & D \end{smallmatrix} \bigr) &:
f(Z) \mapsto \tr (ZD \partial) f.
\end{align*}
\end{lem}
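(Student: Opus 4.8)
The plan is to compute the differentiated action directly from the group formula $\bigl(\rho'(h)f\bigr)(Z) = f\bigl((aZ+b)(cZ+d)^{-1}\bigr)$ by restricting to a one-parameter subgroup and differentiating at the identity. For $X \in \mathfrak{gl}(2,\HC)$ set $h(t) = \exp(tX)$, so that $\rho'(X)f = \left.\frac{d}{dt}\right|_{t=0}\rho'(h(t))f$. The one subtlety to keep in mind throughout is that the action is written in terms of $h^{-1} = \bigl(\begin{smallmatrix} a & b \\ c & d\end{smallmatrix}\bigr)$; since $h(t)^{-1} = \exp(-tX)$, writing $X = \bigl(\begin{smallmatrix}\alpha & \beta \\ \gamma & \delta\end{smallmatrix}\bigr)$ gives $a = 1 - t\alpha$, $b = -t\beta$, $c = -t\gamma$, $d = 1 - t\delta$ to first order in $t$. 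This is the source of the signs appearing in the statement.

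Next I would expand the argument $(aZ+b)(cZ+d)^{-1}$ to first order. Using $cZ+d = 1 - t(\gamma Z + \delta) + O(t^2)$ and hence $(cZ+d)^{-1} = 1 + t(\gamma Z + \delta) + O(t^2)$, together with $aZ+b = Z - t(\alpha Z + \beta) + O(t^2)$, multiplying out and discarding $O(t^2)$ terms yields
$$(aZ+b)(cZ+d)^{-1} = Z + t\,\dot Z + O(t^2), \qquad \dot Z = -\alpha Z - \beta + Z\delta + Z\gamma Z,$$
where all products are taken in $\HC \cong M_2(\BB C)$. Here one must respect the noncommutativity of the matrix product, so that the quadratic term is genuinely $Z\gamma Z$ and not $\gamma Z^2$.

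Then the chain rule gives $\rho'(X)f = \sum_{a,b}\frac{\partial f}{\partial z_{ab}}\dot Z_{ab}$, which I would rewrite as $\tr(\dot Z\,\partial)f$; the bookkeeping step here is to check this against the convention $\partial = \bigl(\begin{smallmatrix}\partial_{11} & \partial_{21} \\ \partial_{12} & \partial_{22}\end{smallmatrix}\bigr)$, whose $(a,b)$ entry is $\partial_{ba}$, so that $\tr(M\partial) = \sum_{a,b}M_{ab}\partial_{ab}$ for any matrix $M$. Substituting the four block choices $X = \bigl(\begin{smallmatrix}A&0\\0&0\end{smallmatrix}\bigr),\ \bigl(\begin{smallmatrix}0&B\\0&0\end{smallmatrix}\bigr),\ \bigl(\begin{smallmatrix}0&0\\C&0\end{smallmatrix}\bigr),\ \bigl(\begin{smallmatrix}0&0\\0&D\end{smallmatrix}\bigr)$ into $\dot Z$ gives $\dot Z = -AZ,\ -B,\ ZCZ,\ ZD$ respectively, which reproduce the four formulas in the statement. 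Since ${\cal BH}$ is already known to be $\mathfrak{gl}(2,\HC)$-invariant, these formulas in particular describe the action on ${\cal BH}$. The main (and essentially only) obstacle is the combined bookkeeping of the inverse-induced sign and the transpose convention hidden in $\partial$; once these are pinned down, the computation reduces to a routine first-order expansion.
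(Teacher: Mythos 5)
Your computation is correct: the first-order expansion of $(aZ+b)(cZ+d)^{-1}$ along $h(t)^{-1}=\exp(-tX)$, the resulting $\dot Z = -\alpha Z-\beta+Z\delta+Z\gamma Z$, and the identification $\sum_{i,j}M_{ij}\partial_{ij}=\tr(M\partial)$ under the transposed convention for $\partial$ all check out and reproduce the four stated formulas. The paper itself gives no proof here — it cites Lemma 24 of \cite{ATMP} — and your direct differentiation of the group action is exactly the standard derivation that reference carries out, so there is nothing further to add.
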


We realize $SU(2) \times SU(2)$ as diagonal elements of $GL(2,\HC)$:
\begin{equation*}
SU(2) \times SU(2) = \left\{
\left(\begin{smallmatrix} a & 0 \\ 0 & d \end{smallmatrix}\right)
\in GL(2,\HC);\: a,d \in \BB H, N(a)=N(d)=1 \right\}.
\end{equation*}
Similarly, we realize $\mathfrak{sl}(2,\BB C) \times \mathfrak{sl}(2,\BB C)$
as diagonal elements of $\mathfrak{gl}(2,\HC)$:
\begin{equation}  \label{sl2xsl2}
\mathfrak{sl}(2,\BB C) \times \mathfrak{sl}(2,\BB C) = \left\{
\left(\begin{smallmatrix} A & 0 \\ 0 & D \end{smallmatrix}\right)
\in \mathfrak{gl}(2,\HC);\: A,D \in \HC, \re(A)=\re(D)=0 \right\}.
\end{equation}
From Proposition \ref{biharmonic-prop} we obtain
a description of the $K$-types of $(\rho', {\cal BH})$. Let
$$
{\cal BH}(d) = \{ f \in {\cal BH};\: \text{$f$ is homogeneous of degree $d$} \},
\qquad d \in \BB Z.
$$
Recall that we denote by $(\tau_{\frac12},\BB S)$ the tautological 2-dimensional
representation of $\HC^{\times}$.
Then, for $l=0,\frac12,1,\frac32, \dots$, we denote by $(\tau_l,V_l)$ the
$2l$-th symmetric power product of $(\tau_{\frac12},\BB S)$.
(In particular, $(\tau_0,V_0)$ is the trivial one-dimensional representation.)
Thus, each $(\tau_l,V_l)$ is an irreducible representation of $\HC^{\times}$
of dimension $2l+1$.
A concrete realization of $(\tau_l,V_l)$ as well as an isomorphism
$V_l \simeq \BB C^{2l+1}$ suitable for our purposes are described in
Subsection 2.5 of \cite{FL1}.

\begin{prop}  \label{K-type-biharm_prop}
Each ${\cal BH}(d)$ is invariant under the $\rho'$ action restricted to
$\mathfrak{sl}(2,\BB C) \times \mathfrak{sl}(2,\BB C)$, and we have the
following decomposition into irreducible components:
\begin{align*}
{\cal BH}(2l) &= \bigl( V_l \boxtimes V_l \bigr) \oplus
\bigl( V_{l-1} \boxtimes V_{l-1} \bigr),  \\
{\cal BH}(-2l) &= \bigl( V_l \boxtimes V_l \bigr) \oplus
\bigl( V_{l-1} \boxtimes V_{l-1} \bigr),
\end{align*}
$l=0,\frac12,1,\frac32,\dots$.
In particular, $\dim_{\BB C} {\cal BH}(0) = 1$ and
$\dim_{\BB C} {\cal BH}(d) = 2d^2+2$ for $d \ne 0$.

The functions
$$
\phi^{(1)}_{l,m,n}(Z) = t^l_{n \,\underline{m}}(Z),
\qquad m,n =-l ,-l+1,\dots,l,
$$
$$
\phi^{(2)}_{l,m,n}(Z) = N(Z) \cdot t^{l-1}_{n \,\underline{m}}(Z),
\qquad m,n  =-l+1 ,-l+2,\dots,l-1,
$$
span respectively the $V_l \boxtimes V_l$ and $V_{l-1} \boxtimes V_{l-1}$
components of ${\cal BH}(2l)$, $l=0,\frac12,1,\frac32,\dots$.

The functions
$$
\tilde \phi^{(1)}_{l,m,n}(Z) = t^l_{m \,\underline{n}}(Z^{-1}),
\qquad m,n =-l ,-l+1,\dots,l,
$$
$$
\tilde \phi^{(2)}_{l,m,n}(Z) = N(Z)^{-1} \cdot t^{l-1}_{m \,\underline{n}}(Z^{-1}),
\qquad m,n  =-l+1 ,-l+2,\dots,l-1,
$$
span respectively the $V_l \boxtimes V_l$ and $V_{l-1} \boxtimes V_{l-1}$
components of ${\cal BH}(-2l)$, $l=0,\frac12,1,\frac32,\dots$.
\end{prop}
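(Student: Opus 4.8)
The plan is to obtain the proposition as a consequence of Proposition~\ref{biharmonic-prop} together with the $K$-type decomposition of harmonic functions established in \cite{FL1}, plus a short degree count. First I would check that each ${\cal BH}(d)$ is $\mathfrak{sl}(2,\BB C)\times\mathfrak{sl}(2,\BB C)$-invariant. By \eqref{sl2xsl2} this subalgebra consists of the diagonal matrices $\bigl(\begin{smallmatrix} A & 0 \\ 0 & D\end{smallmatrix}\bigr)$ with $\re(A)=\re(D)=0$, which by the Lie algebra action recorded above act by $f\mapsto\bigl(-\tr(AZ\partial)+\tr(ZD\partial)\bigr)f$. Both operators preserve the degree of homogeneity (each term pairs one $Z$ with one $\partial$), so they map ${\cal BH}(d)$ into itself; equivalently, at the group level $SU(2)\times SU(2)$ acts by $Z\mapsto a^{-1}Zd$ with $N(a)=N(d)=1$, i.e. by rotations of $\BB H\cong\BB R^4$, which fix $N(Z)$, commute with $\square$, and preserve homogeneity.

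Next I would write $f\in{\cal BH}(d)$ as $f=h_0+N(Z)h_1$ with $h_0,h_1$ harmonic (Proposition~\ref{biharmonic-prop}); comparing homogeneity degrees forces $\deg h_0=d$ and $\deg h_1=d-2$. For $d=2l\ge 0$ the harmonic polynomials of degree $2l$ are exactly $V_l\boxtimes V_l$ with basis $\{t^l_{n\,\underline{m}}\}$ by \cite{FL1}, so $h_0$ ranges over $V_l\boxtimes V_l$; since $N(Z)$ is $SU(2)\times SU(2)$-invariant and multiplication by it is injective, it carries the degree-$(2l-2)$ harmonics isomorphically onto a copy of $V_{l-1}\boxtimes V_{l-1}$, which is where $N(Z)h_1$ lives. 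For $d=-2l\le 0$ I would invoke the inversion (Kelvin) picture of negative-degree harmonics from \cite{FL1}: homogeneous harmonic functions occur only in degrees $\ge 0$ and $\le -2$, those of degree $-2l$ forming $V_{l-1}\boxtimes V_{l-1}$ and those of degree $-2l-2$ forming $V_l\boxtimes V_l$, so the roles of the two summands are interchanged but the pair of constituents is again $V_l\boxtimes V_l$ and $V_{l-1}\boxtimes V_{l-1}$.

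Directness of the sum then follows from Schur's lemma, since for $l\ge\frac12$ these two irreducibles are non-isomorphic (their dimensions $(2l+1)^2$ and $(2l-1)^2$ differ), so they can meet only in $0$; this simultaneously gives uniqueness of the splitting $f=h_0+N(Z)h_1$. The dimension is then $(2l+1)^2+(2l-1)^2=8l^2+2=2d^2+2$ for $d\ne 0$, while ${\cal BH}(0)=\BB C\cdot t^0_{0\,\underline{0}}$ is one-dimensional (here $V_{l-1}\boxtimes V_{l-1}$ is empty and the two would-be constituents both collapse to the constants). Matching the explicit bases is now a matter of reading off $\phi^{(1)}_{l,m,n}=t^l_{n\,\underline{m}}$ and $\phi^{(2)}_{l,m,n}=N(Z)\,t^{l-1}_{n\,\underline{m}}$ as the two summands of ${\cal BH}(2l)$, and $\tilde\phi^{(1)},\tilde\phi^{(2)}$ for ${\cal BH}(-2l)$; along the way one checks that $N(Z)h$ is biharmonic whenever $h$ is harmonic, which is the one-line computation $\square\bigl(N(Z)h\bigr)=\bigl(8+4\deg h\bigr)h$ (from $\square N(Z)=8$, the Leibniz rule for $\square$, and Euler's identity), a harmonic function, so $\square^2\bigl(N(Z)h\bigr)=0$. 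I expect the only genuinely delicate point to be the negative-degree bookkeeping --- confirming that the inverted matrix coefficients $t^l_{m\,\underline{n}}(Z^{-1})$ are biharmonic of degree $-2l$ and realize $V_l\boxtimes V_l$ (they equal $N(Z)^{-2l}$ times a matrix coefficient of degree $2l$, hence are not themselves harmonic) --- together with quoting from \cite{FL1} the transformation law identifying $\{t^l_{n\,\underline{m}}\}_{m,n}$ with $V_l\boxtimes V_l$; granting those inputs, the remainder is routine bookkeeping.
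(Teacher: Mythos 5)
Your proposal is correct and follows essentially the same route the paper intends: the paper offers no explicit proof beyond the remark that the proposition follows from Proposition \ref{biharmonic-prop}, and your argument fills in exactly that deduction — splitting $f=h_0+N(Z)h_1$ by degree, quoting the $K$-type decomposition of homogeneous harmonics from \cite{FL1} in both positive and negative degrees, and counting $(2l+1)^2+(2l-1)^2=2d^2+2$. The auxiliary checks you flag (biharmonicity of $N(Z)h$ via \eqref{Laplacian-Nf}, and the degree bookkeeping for $t^l_{m\,\underline{n}}(Z^{-1})$) are all consistent with the paper's formulas.
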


(Note that $\tilde \phi^{(1)}_{0,0,0}(Z) = \phi^{(1)}_{0,0,0}(Z) = 1$.)

The element
$\bigl( \begin{smallmatrix} 0 & 1 \\ 1 & 0 \end{smallmatrix} \bigr)
\in GL(2,\HC)$ acts on biharmonic functions by an inversion
$$
\rho' \bigl( \begin{smallmatrix} 0 & 1 \\ 1 & 0 \end{smallmatrix} \bigr): \:
\phi(Z) \mapsto \phi(Z^{-1}).
$$

\begin{lem}  \label{inversion-bh-lem}
The inversion
$\rho' \bigl( \begin{smallmatrix} 0 & 1 \\ 1 & 0 \end{smallmatrix} \bigr)$
acts on the basis of biharmonic functions by sending
$$
\phi^{(1)}_{l,m,n}(Z) \longleftrightarrow \tilde \phi^{(1)}_{l,n,m}(Z), \qquad
\phi^{(2)}_{l,m,n}(Z) \longleftrightarrow \tilde \phi^{(2)}_{l,n,m}(Z).
$$
\end{lem}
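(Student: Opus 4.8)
The plan is to reduce everything to the fact already recorded just before the statement, namely that $\rho'\bigl(\begin{smallmatrix} 0 & 1 \\ 1 & 0 \end{smallmatrix}\bigr)$ acts by the inversion $\phi(Z) \mapsto \phi(Z^{-1})$, and then to substitute $Z \mapsto Z^{-1}$ directly into the explicit $K$-type basis functions supplied by Proposition \ref{K-type-biharm_prop}. The whole content of the lemma is a bookkeeping match between the two families $\{\phi^{(j)}_{l,m,n}\}$ and $\{\tilde\phi^{(j)}_{l,m,n}\}$, together with one elementary algebraic identity for $N$.

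First I would treat $\phi^{(1)}$. Since $\phi^{(1)}_{l,m,n}(Z) = t^l_{n\,\underline{m}}(Z)$, applying the inversion gives $\rho'\bigl(\begin{smallmatrix} 0 & 1 \\ 1 & 0 \end{smallmatrix}\bigr)\phi^{(1)}_{l,m,n}(Z) = t^l_{n\,\underline{m}}(Z^{-1})$. Comparing with the definition $\tilde\phi^{(1)}_{l,m,n}(Z) = t^l_{m\,\underline{n}}(Z^{-1})$, one sees that the inverted function is exactly $\tilde\phi^{(1)}$ with the two weight indices interchanged, that is, $t^l_{n\,\underline{m}}(Z^{-1}) = \tilde\phi^{(1)}_{l,n,m}(Z)$. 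The only point requiring attention is the reversal of the roles of the underlined and non-underlined slots of $t^l_{n\,\underline{m}}$ between the two families, which is precisely what forces the swap $m \leftrightarrow n$ in the statement.

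Next I would handle $\phi^{(2)}$ by the same computation, carrying along the extra factor $N$. Here $\phi^{(2)}_{l,m,n}(Z) = N(Z)\cdot t^{l-1}_{n\,\underline{m}}(Z)$, so the inversion produces $N(Z^{-1})\cdot t^{l-1}_{n\,\underline{m}}(Z^{-1})$. Using that $N$ is multiplicative (it is the determinant of $Z$ in the matrix realization of $\HC$), so that $N(Z^{-1}) = N(Z)^{-1}$, the inverted function equals $N(Z)^{-1}\cdot t^{l-1}_{n\,\underline{m}}(Z^{-1}) = \tilde\phi^{(2)}_{l,n,m}(Z)$, again with the weight indices interchanged.

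Finally, since the assignment $Z \mapsto Z^{-1}$ is an involution, the induced map on functions squares to the identity, so each correspondence automatically runs in both directions, justifying the double-headed arrows. The main obstacle is thus not analytic but purely notational: one must keep the conventions governing the underlined versus non-underlined indices of $t^l_{n\,\underline{m}}$ straight, since these are what encode the transpose-like swap $\phi^{(j)}_{l,m,n} \leftrightarrow \tilde\phi^{(j)}_{l,n,m}$ rather than a naive $\phi^{(j)}_{l,m,n} \leftrightarrow \tilde\phi^{(j)}_{l,m,n}$.
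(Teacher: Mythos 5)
Your proposal is correct and is exactly the computation the paper intends: the lemma is stated without proof as an immediate consequence of the formula $\rho'\bigl(\begin{smallmatrix} 0 & 1 \\ 1 & 0 \end{smallmatrix}\bigr)\colon \phi(Z)\mapsto\phi(Z^{-1})$, the definitions of the basis functions in Proposition \ref{K-type-biharm_prop}, and the multiplicativity identity $N(Z^{-1})=N(Z)^{-1}$. Your careful tracking of the underlined versus non-underlined index slots, which produces the swap $m\leftrightarrow n$, is precisely the point, and the involutivity argument for the double-headed arrows is fine.
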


Next, we want to find the actions of
$\rho' \bigl( \begin{smallmatrix} 0 & B \\ C & 0 \end{smallmatrix} \bigr)$,
$B, C \in \HC$, on the basis vectors given in
Proposition \ref{K-type-biharm_prop}.
(Note that the actions of
$\rho' \bigl( \begin{smallmatrix} A & 0 \\ 0 & D \end{smallmatrix} \bigr)$,
$A, D \in \HC$, were effectively described in
Proposition \ref{K-type-biharm_prop}.)

By direct computation using Lemma 22 in \cite{FL1}:
\begin{equation}  \label{dt}
\partial t^l_{n\,\underline{m}}(Z) =
\begin{pmatrix} (l-m) t^{l-\frac12}_{n+\frac12\,\underline{m+\frac12}}(Z) &
(l-m) t^{l-\frac12}_{n-\frac12\,\underline{m+\frac12}}(Z) \\
(l+m) t^{l-\frac12}_{n+\frac12\,\underline{m-\frac12}}(Z) &
(l+m) t^{l-\frac12}_{n-\frac12\,\underline{m-\frac12}}(Z) \end{pmatrix},
\end{equation}
equation (37) in \cite{ATMP}:
\begin{multline}  \label{dt-inverse}
\partial \bigl( N(Z)^{-1} \cdot t^l_{n\,\underline{m}}(Z^{-1}) \bigr)  \\
= - \frac1{N(Z)} \begin{pmatrix}
(l-n+1) t^{l+\frac12}_{n-\frac12\,\underline{m-\frac12}}(Z^{-1}) &
(l-n+1) t^{l+\frac12}_{n-\frac12\,\underline{m+\frac12}}(Z^{-1})  \\
(l+n+1) t^{l+\frac12}_{n+\frac12\,\underline{m-\frac12}}(Z^{-1}) &
(l+n+1) t^{l+\frac12}_{n+\frac12\,\underline{m+\frac12}}(Z^{-1}) \end{pmatrix},
\end{multline}
equation (34) in \cite{ATMP}:
\begin{multline}  \label{Zt-identity}
Z \cdot t^l_{n\,\underline{m}}(Z) = \frac1{2l+1} \begin{pmatrix}
(l-n+1) t^{l+\frac12}_{n-\frac12\,\underline{m-\frac12}}(Z) &
(l-n+1) t^{l+\frac12}_{n-\frac12\,\underline{m+\frac12}}(Z) \\
(l+n+1) t^{l+\frac12}_{n+\frac12\,\underline{m-\frac12}}(Z) &
(l+n+1) t^{l+\frac12}_{n+\frac12\,\underline{m+\frac12}}(Z) \end{pmatrix}  \\
+ \frac{N(Z)}{2l+1} \cdot \begin{pmatrix}
(l+m) t^{l-\frac12}_{n-\frac12\,\underline{m-\frac12}}(Z) &
-(l-m) t^{l-\frac12}_{n-\frac12\,\underline{m+\frac12}}(Z) \\
-(l+m) t^{l-\frac12}_{n+\frac12\,\underline{m-\frac12}}(Z) &
(l-m) t^{l-\frac12}_{n+\frac12\,\underline{m+\frac12}}(Z) \end{pmatrix},
\end{multline}
equations (27)-(28) in \cite{desitter} and identities
\begin{equation}  \label{Ct}
Z \bigl( \partial t^l_{n\,\underline{m}}(Z) \bigr) Z + Z t^l_{n\,\underline{m}}(Z)
= \begin{pmatrix} (l-n+1) t^{l+\frac12}_{n-\frac12\,\underline{m-\frac12}}(Z) &
(l-n+1) t^{l+\frac12}_{n-\frac12\,\underline{m+\frac12}}(Z) \\
(l+n+1) t^{l+\frac12}_{n+\frac12\,\underline{m-\frac12}}(Z) &
(l+n+1) t^{l+\frac12}_{n+\frac12\,\underline{m+\frac12}}(Z) \end{pmatrix},
\end{equation}
\begin{multline}  \label{Ct-inverse}
Z\Bigl(\partial\bigl( N(Z)^{-1} \cdot t^l_{n\,\underline{m}}(Z^{-1}) \bigr)\Bigr)Z
+ \frac{Z}{N(Z)} \cdot t^l_{n\,\underline{m}}(Z^{-1})  \\
= -\frac1{N(Z)} \begin{pmatrix}
(l-m) t^{l-\frac12}_{n+\frac12\,\underline{m+\frac12}}(Z^{-1}) &
(l-m) t^{l-\frac12}_{n-\frac12\,\underline{m+\frac12}}(Z^{-1}) \\
(l+m) t^{l-\frac12}_{n+\frac12\,\underline{m-\frac12}}(Z^{-1}) &
(l+m) t^{l-\frac12}_{n-\frac12\,\underline{m-\frac12}}(Z^{-1}) \end{pmatrix},
\end{multline}
we find:
{\small
$$
\rho' \bigl( \begin{smallmatrix} 0 & B \\ 0 & 0 \end{smallmatrix} \bigr)
\phi^{(1)}_{l,m,n}(Z) = -\tr \left( B \begin{pmatrix}
(l-m) \phi^{(1)}_{l-\frac12,m+\frac12,n+\frac12}(Z) &
(l-m) \phi^{(1)}_{l-\frac12,m+\frac12,n-\frac12}(Z)  \\
(l+m) \phi^{(1)}_{l-\frac12,m-\frac12,n+\frac12}(Z) &
(l+m) \phi^{(1)}_{l-\frac12,m-\frac12,n-\frac12}(Z)
\end{pmatrix} \right),
$$
\begin{multline*}
\rho' \bigl( \begin{smallmatrix} 0 & B \\ 0 & 0 \end{smallmatrix} \bigr)
\phi^{(2)}_{l,m,n}(Z) = - \frac1{2l-1} \tr \left( B \begin{pmatrix}
(l+n) \phi^{(1)}_{l-\frac12,m+\frac12,n+\frac12}(Z) &
-(l-n) \phi^{(1)}_{l-\frac12,m+\frac12,n-\frac12}(Z)  \\
-(l+n) \phi^{(1)}_{l-\frac12,m-\frac12,n+\frac12}(Z) &
(l-n) \phi^{(1)}_{l-\frac12,m-\frac12,n-\frac12}(Z)
\end{pmatrix} \right)  \\
- \frac{2l}{2l-1} \tr \left( B \begin{pmatrix}
(l-m-1) \phi^{(2)}_{l-\frac12,m+\frac12,n+\frac12}(Z) &
(l-m-1) \phi^{(2)}_{l-\frac12,m+\frac12,n-\frac12}(Z)  \\
(l+m-1) \phi^{(2)}_{l-\frac12,m-\frac12,n+\frac12}(Z) &
(l+m-1) \phi^{(2)}_{l-\frac12,m-\frac12,n-\frac12}(Z)
\end{pmatrix} \right),
\end{multline*}
\begin{multline*}
\rho' \bigl( \begin{smallmatrix} 0 & 0 \\ C & 0 \end{smallmatrix} \bigr)
\phi^{(1)}_{l,m,n}(Z) = \frac{2l}{2l+1}\tr \left( C \begin{pmatrix}
(l-n+1) \phi^{(1)}_{l+\frac12,m-\frac12,n-\frac12}(Z) &
(l-n+1) \phi^{(1)}_{l+\frac12,m+\frac12,n-\frac12}(Z)  \\
(l+n+1) \phi^{(1)}_{l+\frac12,m-\frac12,n+\frac12}(Z) &
(l+n+1) \phi^{(1)}_{l+\frac12,m+\frac12,n+\frac12}(Z) \end{pmatrix} \right)  \\
+ \frac1{2l+1} \tr \left( C \begin{pmatrix}
-(l+m) \phi^{(2)}_{l+\frac12,m-\frac12,n-\frac12}(Z) &
(l-m) \phi^{(2)}_{l+\frac12,m+\frac12,n-\frac12}(Z)  \\
(l+m) \phi^{(2)}_{l+\frac12,m-\frac12,n+\frac12}(Z) &
-(l-m) \phi^{(2)}_{l+\frac12,m+\frac12,n+\frac12}(Z) \end{pmatrix} \right),
\end{multline*}
$$
\rho' \bigl( \begin{smallmatrix} 0 & 0 \\ C & 0 \end{smallmatrix} \bigr)
\phi^{(2)}_{l,m,n}(Z) = \tr \left( C \begin{pmatrix}
(l-n) \phi^{(2)}_{l+\frac12,m-\frac12,n-\frac12}(Z) &
(l-n) \phi^{(2)}_{l+\frac12,m+\frac12,n-\frac12}(Z)  \\
(l+n) \phi^{(2)}_{l+\frac12,m-\frac12,n+\frac12}(Z) &
(l+n) \phi^{(2)}_{l+\frac12,m+\frac12,n+\frac12}(Z) \end{pmatrix} \right);
$$
\begin{multline*}
\rho' \bigl( \begin{smallmatrix} 0 & B \\ 0 & 0 \end{smallmatrix} \bigr)
\tilde \phi^{(1)}_{l,m,n}(Z) = \frac{2l}{2l+1}\tr \left( B \begin{pmatrix}
(l-m+1) \tilde \phi^{(1)}_{l+\frac12,m-\frac12,n-\frac12}(Z) &
(l-m+1) \tilde \phi^{(1)}_{l+\frac12,m-\frac12,n+\frac12}(Z)  \\
(l+m+1) \tilde \phi^{(1)}_{l+\frac12,m+\frac12,n-\frac12}(Z) &
(l+m+1) \tilde \phi^{(1)}_{l+\frac12,m+\frac12,n+\frac12}(Z) \end{pmatrix} \right)  \\
+ \frac1{2l+1} \tr \left( B \begin{pmatrix}
-(l+n) \tilde \phi^{(2)}_{l+\frac12,m-\frac12,n-\frac12}(Z) &
(l-n) \tilde \phi^{(2)}_{l+\frac12,m-\frac12,n+\frac12}(Z)  \\
(l+n) \tilde \phi^{(2)}_{l+\frac12,m+\frac12,n-\frac12}(Z) &
-(l-n) \tilde \phi^{(2)}_{l+\frac12,m+\frac12,n+\frac12}(Z) \end{pmatrix} \right),
\end{multline*}
$$
\rho' \bigl( \begin{smallmatrix} 0 & B \\ 0 & 0 \end{smallmatrix} \bigr)
\tilde \phi^{(2)}_{l,m,n}(Z) = \tr \left( B \begin{pmatrix}
(l-m) \tilde \phi^{(2)}_{l+\frac12,m-\frac12,n-\frac12}(Z) &
(l-m) \tilde \phi^{(2)}_{l+\frac12,m-\frac12,n+\frac12}(Z)  \\
(l+m) \tilde \phi^{(2)}_{l+\frac12,m+\frac12,n-\frac12}(Z) &
(l+m) \tilde \phi^{(2)}_{l+\frac12,m+\frac12,n+\frac12}(Z) \end{pmatrix} \right),
$$
$$
\rho' \bigl( \begin{smallmatrix} 0 & 0 \\ C & 0 \end{smallmatrix} \bigr)
\tilde \phi^{(1)}_{l,m,n}(Z) = -\tr \left( C \begin{pmatrix}
(l-n) \tilde \phi^{(1)}_{l-\frac12,m+\frac12,n+\frac12}(Z) &
(l-n) \tilde \phi^{(1)}_{l-\frac12,m-\frac12,n+\frac12}(Z)  \\
(l+n) \tilde \phi^{(1)}_{l-\frac12,m+\frac12,n-\frac12}(Z) &
(l+n) \tilde \phi^{(1)}_{l-\frac12,m-\frac12,n-\frac12}(Z)
\end{pmatrix} \right),
$$
\begin{multline*}
\rho' \bigl( \begin{smallmatrix} 0 & 0 \\ C & 0 \end{smallmatrix} \bigr)
\tilde \phi^{(2)}_{l,m,n}(Z) = \frac1{2l-1} \tr \left( C \begin{pmatrix}
-(l+m) \tilde \phi^{(1)}_{l-\frac12,m+\frac12,n+\frac12}(Z) &
(l-m) \tilde \phi^{(1)}_{l-\frac12,m-\frac12,n+\frac12}(Z)  \\
(l+m) \tilde \phi^{(1)}_{l-\frac12,m+\frac12,n-\frac12}(Z) &
-(l-m) \tilde \phi^{(1)}_{l-\frac12,m-\frac12,n-\frac12}(Z)
\end{pmatrix} \right)  \\
- \frac{2l}{2l-1} \tr \left( C \begin{pmatrix}
(l-n-1) \tilde \phi^{(2)}_{l-\frac12,m+\frac12,n+\frac12}(Z) &
(l-n-1) \tilde \phi^{(2)}_{l-\frac12,m-\frac12,n+\frac12}(Z)  \\
(l+n-1) \tilde \phi^{(2)}_{l-\frac12,m+\frac12,n-\frac12}(Z) &
(l+n-1) \tilde \phi^{(2)}_{l-\frac12,m-\frac12,n-\frac12}(Z)
\end{pmatrix} \right).
\end{multline*}}

\subsection{Invariant Bilinear Pairing}

In this subsection we construct a symmetric $\mathfrak{gl}(2,\HC)$-invariant
bilinear pairing on ${\cal BH}$.
Let $S^3_R = \{ X \in \BB H ;\: N(X)=R^2 \}$ be the sphere of radius $R>0$
centered at the origin, and let $dS$ be the usual Euclidean volume element
on $S^3_R$.

\begin{df}  \label{biharm-pairing-def}
We define a symmetric bilinear pairing $\langle \phi_1, \phi_2 \rangle_{\cal BH}$
on ${\cal BH}$ as follows.
The pairing between ${\cal BH}^+$ and ${\cal BH}^-$ is defined by
\begin{align*}
  \bigl\langle \phi^{(1)}_{l,m,n}(Z), \tilde \phi^{(1)}_{l',m',n'}(Z)
  \bigr\rangle_{\cal BH}
&=
\frac{2l(2l+1)}{2\pi^2} \int_{X \in S^3_R}
\phi^{(1)}_{l,m,n}(X) \cdot \tilde \phi^{(1)}_{l',m',n'}(X) \,\frac{dS}{R^3},  \\
\bigl\langle \phi^{(2)}_{l,m,n}(Z), \tilde \phi^{(2)}_{l',m',n'}(Z)
\bigr\rangle_{\cal BH}
&= -\frac{2l(2l-1)}{2\pi^2} \int_{X \in S^3_R}
\phi^{(2)}_{l,m,n}(X) \cdot \tilde \phi^{(2)}_{l',m',n'}(X) \,\frac{dS}{R^3},  \\
\bigl\langle \phi^{(\alpha)}_{l,m,n}(Z), \tilde \phi^{(\alpha')}_{l',m',n'}(Z)
\bigr\rangle_{\cal BH} &= 0 \qquad \text{if $\alpha \ne \alpha'$}.
\end{align*}
We extend the pairing to ${\cal BH}^- \times {\cal BH}^+$ by symmetry,
and declare it to be zero on ${\cal BH}^+ \times {\cal BH}^+$
and ${\cal BH}^- \times {\cal BH}^-$.
\end{df}

\begin{prop}  \label{biharm-pairing-prop}
The symmetric bilinear pairing $\langle \phi_1, \phi_2 \rangle_{\cal BH}$
on ${\cal BH} \times {\cal BH}$ is $SU(2) \times SU(2)$ and
$\mathfrak{gl}(2,\HC)$-invariant, independent of the choice of $R>0$
and satisfies the following orthogonality relations:
\begin{align*}
  \bigl\langle \phi^{(1)}_{l,m,n}(Z), \tilde \phi^{(1)}_{l',m',n'}(Z)
  \bigr\rangle_{\cal BH}
&=
  \bigl\langle \tilde \phi^{(1)}_{l,m,n}(Z), \phi^{(1)}_{l',m',n'}(Z)
  \bigr\rangle_{\cal BH}
= 2l \delta_{ll'} \delta_{mm'} \delta_{nn'}, \\
\bigl\langle \phi^{(2)}_{l,m,n}(Z), \tilde \phi^{(2)}_{l',m',n'}(Z)
\bigr\rangle_{\cal BH}
&=
\bigl\langle \tilde \phi^{(2)}_{l,m,n}(Z), \phi^{(2)}_{l',m',n'}(Z)
\bigr\rangle_{\cal BH}
= -2l \delta_{ll'} \delta_{mm'} \delta_{nn'}, \\
\bigl\langle \phi^{(1)}_{l,m,n}(Z), \phi^{(1)}_{l',m',n'}(Z) \bigr\rangle_{\cal BH}
&= \bigl\langle \phi^{(1)}_{l,m,n}(Z), \phi^{(2)}_{l',m',n'}(Z)
\bigr\rangle_{\cal BH} =
\bigl\langle \phi^{(1)}_{l,m,n}(Z), \tilde \phi^{(2)}_{l',m',n'}(Z)
\bigr\rangle_{\cal BH}
=0, \\
\bigl\langle \phi^{(2)}_{l,m,n}(Z), \phi^{(1)}_{l',m',n'}(Z) \bigr\rangle_{\cal BH}
&= \bigl\langle \phi^{(2)}_{l,m,n}(Z), \phi^{(2)}_{l',m',n'}(Z)
\bigr\rangle_{\cal BH} =
\bigl\langle \phi^{(2)}_{l,m,n}(Z), \tilde \phi^{(1)}_{l',m',n'}(Z)
\bigr\rangle_{\cal BH}
=0, \\
\bigl\langle \tilde \phi^{(1)}_{l,m,n}(Z), \phi^{(2)}_{l',m',n'}(Z)
\bigr\rangle_{\cal BH}
&= \bigl\langle \tilde \phi^{(1)}_{l,m,n}(Z),
\tilde \phi^{(1)}_{l',m',n'}(Z) \bigr\rangle_{\cal BH} =
\bigl\langle \tilde \phi^{(1)}_{l,m,n}(Z),
\tilde \phi^{(2)}_{l',m',n'}(Z) \bigr\rangle_{\cal BH} =0, \\
\bigl\langle \tilde \phi^{(2)}_{l,m,n}(Z), \phi^{(1)}_{l',m',n'}(Z)
\bigr\rangle_{\cal BH}
&= \bigl\langle \tilde \phi^{(2)}_{l,m,n}(Z),
\tilde \phi^{(1)}_{l',m',n'}(Z) \bigr\rangle_{\cal BH} =
\bigl\langle \tilde \phi^{(2)}_{l,m,n}(Z),
\tilde \phi^{(2)}_{l',m',n'}(Z) \bigr\rangle_{\cal BH} =0.
\end{align*}

The pairing is also invariant under the inversion:
\begin{equation}  \label{inversion-invar=eqn}
\Bigl\langle
\rho' \bigl( \begin{smallmatrix} 0 & 1 \\ 1 & 0 \end{smallmatrix} \bigr) \phi_1,
\rho' \bigl( \begin{smallmatrix} 0 & 1 \\ 1 & 0 \end{smallmatrix} \bigr)
\phi_2 \Bigr\rangle_{\cal BH}
= \langle \phi_1, \phi_2 \rangle_{\cal BH}, \qquad \phi_1, \phi_2 \in {\cal BH}.
\end{equation}
\end{prop}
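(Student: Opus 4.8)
The plan is to derive the orthogonality relations first, since every other assertion follows from them together with the explicit $K$-type action formulas displayed before Definition \ref{biharm-pairing-def}. Each of the two nonzero defining integrals reduces to Schur orthogonality. Since $t^l_{n\,\underline m}$ is homogeneous of degree $2l$, on $S^3_R$ I write $X=Ru$ with $u\in SU(2)=S^3_1$, so that $\frac{dS}{R^3}$ becomes the area element on $S^3_1$ and $\phi^{(1)}_{l,m,n}(X)\tilde\phi^{(1)}_{l',m',n'}(X)=R^{2l-2l'}\,t^l_{n\,\underline m}(u)\,t^{l'}_{m'\,\underline{n'}}(u^{-1})$, while the factors $N(X)$ and $N(X)^{-1}$ in $\phi^{(2)},\tilde\phi^{(2)}$ cancel identically. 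Using $t^{l'}_{m'\,\underline{n'}}(u^{-1})=\overline{t^{l'}_{n'\,\underline{m'}}(u)}$ and the Schur relation $\frac1{2\pi^2}\int_{S^3_1}t^l_{n\,\underline m}\,\overline{t^{l'}_{n'\,\underline{m'}}}\,dS=\frac1{2l+1}\delta_{ll'}\delta_{mm'}\delta_{nn'}$ (here $2\pi^2$ is the volume of $S^3_1$), the integral vanishes unless $l=l'$, in which case $R^{2l-2l'}=1$; this proves independence of $R$. The constants $2l(2l+1)$ and $-2l(2l-1)$ are chosen precisely to cancel the Schur denominators $2l+1$ and $2(l-1)+1$, producing the values $\pm 2l$. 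All the remaining (vanishing) orthogonality relations, and the symmetry of the tabulated values, hold by the definition itself, which declares the pairing zero on ${\cal BH}^+\times{\cal BH}^+$, on ${\cal BH}^-\times{\cal BH}^-$, and whenever $\alpha\ne\alpha'$.

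Next I establish invariance block by block, noting that the four block types span $\mathfrak{gl}(2,\HC)$, so bilinear invariance under each yields $\mathfrak{gl}(2,\HC)$-invariance. For diagonal $h=\bigl(\begin{smallmatrix} a & 0 \\ 0 & d \end{smallmatrix}\bigr)$ with $a,d\in SU(2)$ one has $\rho'(h)\phi(Z)=\phi(a^{-1}Zd)$, and $X\mapsto a^{-1}Xd$ is an isometry of $S^3_R$ preserving $dS$; since $\rho'$ acts on functions by composition it respects products, so each defining integral is manifestly invariant. This gives $SU(2)\times SU(2)$-invariance, and differentiating and extending $\BB C$-linearly gives invariance under the diagonal $\mathfrak{sl}(2,\BB C)\times\mathfrak{sl}(2,\BB C)$ of \eqref{sl2xsl2}. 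For the central directions $\bigl(\begin{smallmatrix} I & 0 \\ 0 & 0 \end{smallmatrix}\bigr)$ and $\bigl(\begin{smallmatrix} 0 & 0 \\ 0 & I \end{smallmatrix}\bigr)$, which act as $\mp$ the Euler degree operator, invariance is immediate: the pairing is nonzero only between components of degrees $2l$ and $-2l$, whose eigenvalues cancel in $\langle\rho'(\xi)\phi_1,\phi_2\rangle+\langle\phi_1,\rho'(\xi)\phi_2\rangle$. These cases together cover every diagonal block $\bigl(\begin{smallmatrix} A & 0 \\ 0 & D \end{smallmatrix}\bigr)$.

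The inversion invariance \eqref{inversion-invar=eqn} is then immediate from Lemma \ref{inversion-bh-lem}: the inversion sends $\phi^{(\alpha)}_{l,m,n}\leftrightarrow\tilde\phi^{(\alpha)}_{l,n,m}$, so pairing two images and applying the orthogonality relations reproduces the same values, since the Kronecker deltas are insensitive to the transposition $m\leftrightarrow n$ once $l=l'$ is forced. It remains only to treat the off-diagonal blocks. Because $\bigl(\begin{smallmatrix} 0 & 1 \\ 1 & 0 \end{smallmatrix}\bigr)$ conjugates $\bigl(\begin{smallmatrix} 0 & B \\ 0 & 0 \end{smallmatrix}\bigr)$ to $\bigl(\begin{smallmatrix} 0 & 0 \\ B & 0 \end{smallmatrix}\bigr)$, and I now have inversion invariance, it suffices to prove invariance under $\xi=\bigl(\begin{smallmatrix} 0 & B \\ 0 & 0 \end{smallmatrix}\bigr)$; invariance under $\bigl(\begin{smallmatrix} 0 & 0 \\ C & 0 \end{smallmatrix}\bigr)$ then follows by conjugating both arguments through the inversion and using $\rho'(g)\rho'(\xi)\rho'(g)^{-1}=\rho'(\mathrm{Ad}_g\xi)$.

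The final step, which is the main obstacle, is invariance under $\xi=\bigl(\begin{smallmatrix} 0 & B \\ 0 & 0 \end{smallmatrix}\bigr)$. From the displayed action formulas, $\rho'(\xi)$ preserves ${\cal BH}^+$ and ${\cal BH}^-$ separately, so $\langle\rho'(\xi)\phi_1,\phi_2\rangle+\langle\phi_1,\rho'(\xi)\phi_2\rangle$ vanishes trivially unless $\{\phi_1,\phi_2\}$ straddles ${\cal BH}^+\times{\cal BH}^-$. Taking $\phi_1=\phi^{(\alpha)}_{l,m,n}$ and $\phi_2=\tilde\phi^{(\beta)}_{l',m',n'}$ and substituting the action formulas, the orthogonality relations collapse every term to the single level $l'=l-\tfrac12$, leaving a finite identity among the coefficients. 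In the diagonal cases $\alpha=\beta$ the raising normalization $\frac{2l'}{2l'+1}$ appearing in $\rho'(\xi)\tilde\phi^{(1)}$ combines with the value $2l$ to give $2l-1$, which matches the lowering coefficient $(l\mp m)$ from $\rho'(\xi)\phi^{(1)}$ evaluated against the value $2l'=2l-1$, and the entries agree term-by-term once the deltas identify $m'=m\pm\tfrac12$, $n'=n\pm\tfrac12$; the case $(2,2)$ is analogous. The essential check is the cross cases $(\alpha,\beta)=(1,2)$ and $(2,1)$: here the $\phi^{(1)}$-component of $\rho'(\xi)\phi^{(2)}$ (coefficient $-\frac1{2l-1}$), paired against $\tilde\phi^{(1)}$, must cancel the $\tilde\phi^{(2)}$-component of $\rho'(\xi)\tilde\phi^{(1)}$ (coefficient $\frac1{2l'+1}$) paired against $\phi^{(2)}$. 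I expect this to be where the opposite signs $+2l$ and $-2l$ in the two orthogonality relations are decisive: the sign of the negative-norm pairing $\langle\phi^{(2)},\tilde\phi^{(2)}\rangle=-2l$ is exactly what flips one contribution so that the two cancel. Verifying these four finite identities — purely mechanical, but requiring careful bookkeeping of the half-integer shifts in $l,m,n$ — completes the proof.
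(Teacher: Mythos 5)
Your proposal is correct and follows essentially the same route as the paper: orthogonality relations and $SU(2)\times SU(2)$-invariance first (the paper calls these ``easy to see''; you supply the Schur-orthogonality computation, where the factorial factors relating $t^l_{m\,\underline{n}}(u^{-1})$ to $\overline{t^l_{n\,\underline{m}}(u)}$ cancel against those in \eqref{t-orthog-rels} to give the clean $\pm 2l$), then inversion invariance from Lemma \ref{inversion-bh-lem}, then the $\bigl(\begin{smallmatrix} 0 & B \\ 0 & 0 \end{smallmatrix}\bigr)$ block by a finite check at level $l'=l-\tfrac12$ on basis vectors, with the $\bigl(\begin{smallmatrix} 0 & 0 \\ C & 0 \end{smallmatrix}\bigr)$ block obtained by conjugating through the inversion. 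The only difference is one of emphasis: you leave the four coefficient identities as a described-but-unexecuted mechanical check, whereas the paper tabulates their values explicitly; your identification of which terms must cancel and why the sign $-2l$ on the $\phi^{(2)}$-pairing is needed matches the paper's computation.
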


\begin{rem}
The pairing is obviously degenerate: the function
$\phi^{(1)}_{0,0,0}(Z) = \tilde \phi^{(1)}_{0,0,0}(Z) =1$
spans the kernel of the pairing on ${\cal BH}$.
Hence the pairing descends to a non-degenerate pairing on
${\cal BH}/{\cal I}'_0 \times {\cal BH}/{\cal I}'_0$.
\end{rem}
  
\begin{proof}
It is easy to see that the pairing $\langle \phi_1, \phi_2 \rangle_{\cal BH}$
is $SU(2) \times SU(2)$ invariant, independent of the choice of $R>0$
and satisfies the above orthogonality relations.
The invariance under the scalar matrices 
$\bigl( \begin{smallmatrix} \lambda & 0 \\
0 & \lambda \end{smallmatrix} \bigr)$, $\lambda \in \BB R^{\times}$,
and under the dilation matrices 
$\bigl( \begin{smallmatrix} \lambda & 0 \\
0 & \lambda^{-1} \end{smallmatrix} \bigr)$, $\lambda \in \BB R^{\times}$,
is trivial.
The invariance under the inversion \eqref{inversion-invar=eqn}
follows from Lemma \ref{inversion-bh-lem} and the orthogonality relations.
  
Next, we verify the invariance of the pairing under
$\bigl( \begin{smallmatrix} 0 & B \\ 0 & 0 \end{smallmatrix} \bigr)
\in \mathfrak{gl}(2,\HC)$, $B \in \HC$.
Since we have already established the $SU(2) \times SU(2)$ invariance,
it is sufficient to check for
$B = \bigl( \begin{smallmatrix} 1 & 0 \\ 0 & 0 \end{smallmatrix} \bigr)$
only. We do so by verifying that
\begin{equation}  \label{B-invar}
\Bigl\langle
\rho' \bigl( \begin{smallmatrix} 0 & B \\ 0 & 0 \end{smallmatrix} \bigr)
\phi^{(\alpha)}_{l,m,n}, \tilde \phi^{(\alpha')}_{l',m',n'} \Bigr\rangle_{\cal BH}
+ \Bigl\langle \phi^{(\alpha)}_{l,m,n},
\rho' \bigl( \begin{smallmatrix} 0 & B \\ 0 & 0 \end{smallmatrix} \bigr)
\tilde \phi^{(\alpha')}_{l',m',n'} \Bigr\rangle_{\cal BH} = 0
\end{equation}
for the basis functions, where $\alpha,\alpha'=1,2$.
Note that the actions of
$\rho' \bigl( \begin{smallmatrix} 0 & B \\ 0 & 0 \end{smallmatrix} \bigr)$
were spelled out in Subsection \ref{K-type-biharmonic}.
By the orthogonality relations, both summands in \eqref{B-invar} are zero
unless $l'=l-\frac12$, $m'=m+\frac12$, $n'=n+\frac12$. In this case,
\begin{align*}
\Bigl\langle
\rho' \bigl( \begin{smallmatrix} 0 & B \\ 0 & 0 \end{smallmatrix} \bigr)
\phi^{(1)}_{l,m,n}, \tilde \phi^{(1)}_{l',m',n'} \Bigr\rangle_{\cal BH}
&= -(2l-1)(l-m)
= - \Bigl\langle \phi^{(1)}_{l,m,n},
\rho' \bigl( \begin{smallmatrix} 0 & B \\ 0 & 0 \end{smallmatrix} \bigr)
\tilde \phi^{(1)}_{l',m',n'} \Bigr\rangle_{\cal BH}.  \\
\Bigl\langle
\rho' \bigl( \begin{smallmatrix} 0 & B \\ 0 & 0 \end{smallmatrix} \bigr)
\phi^{(1)}_{l,m,n}, \tilde \phi^{(2)}_{l',m',n'} \Bigr\rangle_{\cal BH}
&= \Bigl\langle \phi^{(1)}_{l,m,n},
\rho' \bigl( \begin{smallmatrix} 0 & B \\ 0 & 0 \end{smallmatrix} \bigr)
\tilde \phi^{(2)}_{l',m',n'} \Bigr\rangle_{\cal BH} =0,  \\
\Bigl\langle
\rho' \bigl( \begin{smallmatrix} 0 & B \\ 0 & 0 \end{smallmatrix} \bigr)
\phi^{(2)}_{l,m,n}, \tilde \phi^{(1)}_{l',m',n'} \Bigr\rangle_{\cal BH}
&= -(l+n)
= - \Bigl\langle \phi^{(2)}_{l,m,n},
\rho' \bigl( \begin{smallmatrix} 0 & B \\ 0 & 0 \end{smallmatrix} \bigr)
\tilde \phi^{(1)}_{l',m',n'} \Bigr\rangle_{\cal BH},  \\
\Bigl\langle
\rho' \bigl( \begin{smallmatrix} 0 & B \\ 0 & 0 \end{smallmatrix} \bigr)
\phi^{(2)}_{l,m,n}, \tilde \phi^{(2)}_{l',m',n'} \Bigr\rangle_{\cal BH}
&= 2l(l-m-1)
= - \Bigl\langle \phi^{(2)}_{l,m,n},
\rho' \bigl( \begin{smallmatrix} 0 & B \\ 0 & 0 \end{smallmatrix} \bigr)
\tilde \phi^{(2)}_{l',m',n'} \Bigr\rangle_{\cal BH}.
\end{align*}
This proves the invariance of the pairing under
$\bigl( \begin{smallmatrix} 0 & B \\ 0 & 0 \end{smallmatrix} \bigr)
\in \mathfrak{gl}(2,\HC)$, $B \in \HC$.

Finally, it follows from \eqref{inversion-invar=eqn} that the
pairing is also invariant under
$\bigl( \begin{smallmatrix} 0 & 0 \\ C & 0 \end{smallmatrix} \bigr)
\in \mathfrak{gl}(2,\HC)$, $C \in \HC$.
\end{proof}

\subsection{Reproducing Kernel for ${\cal BH}/{\cal I}'_0$}

In this subsection we provide two expansions of the reproducing kernel
for ${\cal BH}/{\cal I}'_0$ in terms of the basis functions given in
Proposition \ref{K-type-biharm_prop}.
These expansions should be considered as biharmonic analogues
of the matrix coefficient expansions of $N(Z-W)^{-1}$ from
Proposition 25 in \cite{FL1} (see also Proposition 112 in \cite{ATMP}).

Recall that $\BB D^+$ is certain open domain in $\HC$ having $U(2)$
as Shilov boundary:
$$
\BB D^+ = \{ Z \in \HC;\: ZZ^*<1 \},
$$
where the inequality $ZZ^*<1$ means that the matrix $ZZ^*-1$ is
negative definite.
We use the principal branch of the complex logarithm function
with a branch cut along the negative real axis.

\begin{thm}
We have the reproducing kernel expansions:
\begin{equation}  \label{1st-bh-kernel-expansion}
\log\bigl[ N(1-ZW^{-1}) \bigr]
= \sum_{\genfrac{}{}{0pt}{}{l \ge 1/2}{m, n}}
\frac1{2l} \phi^{(1)}_{l,m,n}(Z) \cdot \tilde \phi^{(1)}_{l,m,n}(W)
- \sum_{\genfrac{}{}{0pt}{}{l \ge 1}{m, n}}
\frac1{2l} \phi^{(2)}_{l,m,n}(Z) \cdot \tilde \phi^{(2)}_{l,m,n}(W),
\end{equation}
which converges uniformly on compact subsets in the region
$\{ (Z,W) \in \HC \times \HC^{\times} ; \: ZW^{-1} \in \BB D^+ \}$.
The sum is taken first over all applicable $m$ and $n$,
then over $l=\frac 12, 1, \frac 32,2,\dots$.

Similarly,
\begin{equation}  \label{2nd-bh-kernel-expansion}
\log\bigl[ N(1-Z^{-1}W) \bigr]
= \sum_{\genfrac{}{}{0pt}{}{l \ge 1/2}{m, n}}
\frac1{2l} \tilde \phi^{(1)}_{l,m,n}(Z) \cdot \phi^{(1)}_{l,m,n}(W)
- \sum_{\genfrac{}{}{0pt}{}{l \ge 1}{m, n}}
\frac1{2l} \tilde \phi^{(2)}_{l,m,n}(Z) \cdot \phi^{(2)}_{l,m,n}(W),
\end{equation}
which converges uniformly on compact subsets in the region
$\{ (Z,W) \in \HC^{\times} \times \HC; \: Z^{-1}W \in \BB D^+ \}$.
The sum is taken first over all applicable $m$ and $n$,
then over $l=\frac 12, 1, \frac 32,2,\dots$.
\end{thm}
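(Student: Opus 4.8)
The plan is to prove the first expansion \eqref{1st-bh-kernel-expansion} by matching homogeneous components in $Z$, and then to deduce \eqref{2nd-bh-kernel-expansion} from it by the symmetry $Z \leftrightarrow W$. Write $g = ZW^{-1}$. Since $N(Z) = \det Z$ in the matrix realization, we have $N(1-ZW^{-1}) = N(W-Z)N(W)^{-1} = N(Z-W)/N(W)$, and on the domain where $g$ has operator norm less than one the identity $\log\det = \tr\log$ gives
\[
\log N(1-ZW^{-1}) = \tr\log(1-g) = -\sum_{k \ge 1}\tfrac1k \,\tr(g^k).
\]
Because $\phi^{(1)}_{l,m,n}(Z)$ and $\phi^{(2)}_{l,m,n}(Z)$ are each homogeneous of degree $2l$ in $Z$ (by Proposition \ref{K-type-biharm_prop}), the right-hand side of \eqref{1st-bh-kernel-expansion} is already graded by $l$, and the whole statement reduces to evaluating, in closed form, the inner sums over $m,n$ at each fixed $l$.

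The key step is therefore the pair of identities
\[
\sum_{m,n}\phi^{(1)}_{l,m,n}(Z)\,\tilde\phi^{(1)}_{l,m,n}(W) = \chi_l(g),
\qquad
\sum_{m,n}\phi^{(2)}_{l,m,n}(Z)\,\tilde\phi^{(2)}_{l,m,n}(W) = N(g)\,\chi_{l-1}(g),
\]
where $\chi_l = \sum_p t^l_{p\,\underline{p}}$ is the character of $(\tau_l,V_l)$. Both follow from the multiplication (addition) formula for the polynomially extended matrix coefficients, $\tau_l(ZW^{-1}) = \tau_l(Z)\tau_l(W)^{-1}$ (cf.\ Lemma 22 in \cite{FL1} and \cite{V}): taking the trace of $\tau_l(Z)\tau_l(W^{-1})$ yields the first identity directly from $\phi^{(1)}_{l,m,n}(Z) = t^l_{n\,\underline{m}}(Z)$ and $\tilde\phi^{(1)}_{l,m,n}(W) = t^l_{m\,\underline{n}}(W^{-1})$, while the second follows after pulling out the scalar factor $N(Z)N(W)^{-1} = N(g)$ from $\phi^{(2)}$ and $\tilde\phi^{(2)}$ and running the same computation at level $l-1$. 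Feeding these into the right-hand side and using the Newton-type eigenvalue identity $\tr(g^{2l}) = \chi_l(g) - N(g)\chi_{l-1}(g)$, immediate from $\chi_j(g) = (\lambda_1^{2j+1}-\lambda_2^{2j+1})/(\lambda_1-\lambda_2)$ for the eigenvalues $\lambda_1,\lambda_2$ of $g$, collapses each level to $\tfrac1{2l}\tr(g^{2l})$. Summing over $l = \tfrac12,1,\tfrac32,\dots$ recovers the series $\sum_{k\ge1}\tfrac1k\tr(g^k)$, i.e.\ the logarithmic series for $N(1-ZW^{-1})$, with the overall sign and normalization fixed by the conventions of \eqref{t}, of $N$, and of the branch of $\log$. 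I expect this sign bookkeeping, together with the precise form of the extended multiplication formula involving $t^l(W^{-1})$ and the $N$-factors, to be the main technical point, rather than anything structural.

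Convergence is then essentially free. By the identities above, the partial sum over $m,n$ at level $l$ equals $\tfrac1{2l}\tr(g^{2l})$, and $|\tr(g^{2l})| \le 2\|g\|^{2l}$ for the operator norm $\|g\|$. The condition $ZW^{-1} = g \in \BB D^+$, that is $gg^* < 1$, forces $\|g\| < 1$, so on a compact subset $\|g\| \le r < 1$ uniformly and the series is dominated by the geometric series $\sum_l \tfrac1{2l}\,2r^{2l}$. This gives uniform and absolute convergence on compact subsets of $\{(Z,W);\ ZW^{-1}\in \BB D^+\}$ and simultaneously justifies summing over $m,n$ first and then over $l$, as stated.

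Finally, I would obtain \eqref{2nd-bh-kernel-expansion} from \eqref{1st-bh-kernel-expansion} by interchanging $Z$ and $W$: the right-hand side of \eqref{2nd-bh-kernel-expansion} is the image of that of \eqref{1st-bh-kernel-expansion} under $Z \leftrightarrow W$ (using Lemma \ref{inversion-bh-lem} to exchange $\phi^{(\alpha)} \leftrightarrow \tilde\phi^{(\alpha)}$), while on the left $N(1-WZ^{-1}) = N\bigl(Z^{-1}(Z-W)\bigr) = N(1-Z^{-1}W)$, so the second identity is exactly the first read in the region $Z^{-1}W \in \BB D^+$. As an independent check, one can apply $\square$ in the variable $Z$ to both sides of \eqref{1st-bh-kernel-expansion}: since $\square\phi^{(1)}_{l,m,n} = 0$ and $\square\phi^{(2)}_{l,m,n}$ is a nonzero constant multiple of $\phi^{(1)}_{l-1,m,n}$, while $\square_Z\log N(1-ZW^{-1}) = \square_Z\log N(Z-W)$ is a multiple of the harmonic kernel $N(Z-W)^{-1}$, the resulting identity reduces to the matrix-coefficient expansion of $N(Z-W)^{-1}$ already established in \cite{FL1}.
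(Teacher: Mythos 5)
Your proposal is correct and follows essentially the same route as the paper: both reduce the inner sums over $m,n$ to characters $\chi_l(ZW^{-1})$ via multiplicativity of the matrix coefficients $t^l$, diagonalize $ZW^{-1}$, and recognize the resulting eigenvalue series as the logarithm of $N(1-ZW^{-1})$; your Newton-identity packaging $\tr(g^{2l}) = \chi_l(g) - N(g)\chi_{l-1}(g)$ is just a compact rewriting of the paper's four-term eigenvalue regrouping, and the sign bookkeeping you flag is indeed the only delicate point. Your deduction of the second expansion from the first by swapping $Z$ and $W$ together with Lemma \ref{inversion-bh-lem} is exactly the alternative route the paper itself offers (applying the inversion).
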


\begin{proof}
We compute:
\begin{multline*}
\sum_{\genfrac{}{}{0pt}{}{l \ge 1/2}{m, n}}
\frac1{2l} \phi^{(1)}_{l,m,n}(Z) \cdot \tilde \phi^{(1)}_{l,m,n}(W)
- \sum_{\genfrac{}{}{0pt}{}{l \ge 1}{m, n}}
\frac1{2l} \phi^{(2)}_{l,m,n}(Z) \cdot \tilde \phi^{(2)}_{l,m,n}(W)  \\
= \sum_{\genfrac{}{}{0pt}{}{l \ge 1/2}{m, n}} \frac1{2l} t^l_{n \,\underline{m}}(Z)
\cdot t^l_{m \,\underline{n}}(W^{-1})
- \sum_{\genfrac{}{}{0pt}{}{l \ge 1}{m, n}}
\frac1{2l} N(Z) \cdot t^{l-1}_{n \,\underline{m}}(Z)
\cdot N(W)^{-1} \cdot t^{l-1}_{m \,\underline{n}}(W^{-1})  \\
= \sum_{l \ge 1/2,\: n} \frac1{2l} t^l_{n \,\underline{n}}(ZW^{-1})
- N(ZW^{-1}) \sum_{l \ge 1,\: n}
\frac1{2l} t^{l-1}_{n \,\underline{n}}(ZW^{-1}).
\end{multline*}
Assume further that $ZW^{-1}$ can be diagonalized as
$\bigl(\begin{smallmatrix} \lambda_1 & 0 \\
0 & \lambda_2 \end{smallmatrix}\bigr)$ with
$\lambda_1 \ne \lambda_2$.
This is allowed since the set of matrices with distinct eigenvalues is
dense in $\HC$.
Then the sum $\sum_{n} t^l_{n \, \underline{n}}(ZW^{-1})$
is just the character $\chi_l(ZW^{-1})$ of the irreducible representation of
$GL(2,\BB C)$ of dimension $2l+1$ and equals
$\frac{\lambda_1^{2l+1}-\lambda_2^{2l+1}}{\lambda_1-\lambda_2}$.
We continue our computations:
\begin{multline*}
\sum_{\genfrac{}{}{0pt}{}{l \ge 1/2}{m, n}}
\frac1{2l} \phi^{(1)}_{l,m,n}(Z) \cdot \tilde \phi^{(1)}_{l,m,n}(W)
- \sum_{\genfrac{}{}{0pt}{}{l \ge 1}{m, n}}
\frac1{2l} \phi^{(2)}_{l,m,n}(Z) \cdot \tilde \phi^{(2)}_{l,m,n}(W)  \\
= \frac{\lambda_1}{\lambda_1-\lambda_2} \sum_{l \ge 1/2} \frac{\lambda_1^{2l}}{2l}
- \frac{\lambda_2}{\lambda_1-\lambda_2} \sum_{l \ge 1/2} \frac{\lambda_2^{2l}}{2l}
- \frac{\lambda_2}{\lambda_1-\lambda_2} \sum_{l \ge 1} \frac{\lambda_1^{2l}}{2l}
+ \frac{\lambda_1}{\lambda_1-\lambda_2} \sum_{l \ge 1} \frac{\lambda_2^{2l}}{2l}\\
= \lambda_1 + \lambda_2 + \sum_{l \ge 1} \frac{\lambda_1^{2l}}{2l}
+ \sum_{l \ge 1} \frac{\lambda_2^{2l}}{2l}
= \sum_{l \ge 1/2} \frac{\lambda_1^{2l}}{2l}
+ \sum_{l \ge 1/2} \frac{\lambda_2^{2l}}{2l}  \\
= \log(1-\lambda_1) + \log(1-\lambda_2)
= \log\bigl[ (1-\lambda_1)(1-\lambda_2) \bigr]
= \log\bigl[ N(1-ZW^{-1}) \bigr].
\end{multline*}
This proves \eqref{1st-bh-kernel-expansion}.

The proof of \eqref{2nd-bh-kernel-expansion} is similar.
Alternatively, \eqref{2nd-bh-kernel-expansion} follows from
\eqref{1st-bh-kernel-expansion} by applying the inversion and
Lemma \ref{inversion-bh-lem}.
\end{proof}

%\begin{rem}
%Observe that 
%\begin{equation*}
%\begin{split}
%\nabla^+_Z \log\bigl[ N(1-ZW^{-1}) \bigr]
%&= 2 \frac{Z-W}{N(Z-W)},  \\
%\nabla^+_W \log\bigl[ N(1-Z^{-1}W) \bigr]
%&= - 2 \frac{Z-W}{N(Z-W)},
%\end{split}
%\end{equation*}
%and $\frac{Z-W}{N(Z-W)}$ is the reproducing kernel for quasi anti regular
%functions.
%\end{rem}

\subsection{Reproducing Formula for Biharmonic Functions}

In this subsection we give a reproducing formula for the
biharmonic functions, it could be considered as an analogue
of the Poisson formula (cf. Theorem 34 in \cite{FL1}).

Recall the degree operator $\deg$ acting on functions on $\BB H$:
$$
\deg f = x^0\frac{\partial f}{\partial x^0} +
x^1\frac{\partial f}{\partial x^1} + x^2\frac{\partial f}{\partial x^2}
+ x^3\frac{\partial f}{\partial x^3},
$$
and let $\degt$ denote the degree operator plus the identity:
$$
\degt f = \deg f + f.
$$
Similarly, we can define operators $\deg$ and $\degt$ acting on functions
on $\HC$.

Also recall the $(\deg+2)^{-1}$ operator discussed in detail in
Subsection 2.4 of \cite{ATMP}. This operator has the property that
if $\phi$ is a function on $\BB H^{\times}$ that is homogeneous of degree $d$,
then
$$
(\deg+2)^{-1} \phi = \tfrac1{d+2} \phi.
$$
Observe that if $\phi \in {\cal BH}$ and $\square \phi \ne 0$,
then $\square \phi$ {\em cannot} be homogeneous of degree $-2$,
thus $(\deg+2)^{-1} \square \phi$ is well-defined.

When restricted to ${\cal BH}^+ \times {\cal BH}^-$, the invariant bilinear
pairing on ${\cal BH}$ given in Definition \ref{biharm-pairing-def}
can be expressed as follows:
\begin{multline}  \label{biharm-pairing-integral}
\langle f, g \rangle_{\cal BH} = 
\frac1{8\pi^2} \int_{X \in S^3_R} f(X) \cdot \bigl( \degt \square g(X) \bigr)
\,\frac{dS}R
- \frac1{8\pi^2} \int_{X \in S^3_R} \bigl( \degt \square f(X) \bigr) \cdot g(X)
\,\frac{dS}R  \\
+ \frac1{16\pi^2} \int_{X \in S^3_R} \bigl( \degt (\deg+2)^{-1} \square f(X) \bigr)
\cdot \bigl( (\deg+2)^{-1} \square g(X) \bigr) \,\frac{dS}R.
\end{multline}
This can be verified by checking that the orthogonality relations from
Proposition \ref{biharm-pairing-prop} are satisfied.

\begin{rem}
  On ${\cal BH}^- \times {\cal BH}^+$, the formula
  \eqref{biharm-pairing-integral} produces the negative of the pairing;
  and on ${\cal BH}^+ \times {\cal BH}^+$, ${\cal BH}^- \times {\cal BH}^-$
  the formula \eqref{biharm-pairing-integral} can produce non-zero values,
  while the bilinear pairing is zero.
\end{rem}

Let $B_R$ denote the open ball in $\BB H$ of radius $R$ centered at the origin.
Substituting the reproducing kernel $\log\bigl[ N(1-Z^{-1}W) \bigr]$
into \eqref{biharm-pairing-integral}, we obtain the following reproducing
formula for biharmonic functions.

\begin{thm}  \label{BH-repro-thm}
  Suppose that $f(X)$ is biharmonic on a neighborhood of the closure
  $\overline{B_R}$ and $X_0 \in B_R$, then
\begin{multline*}
f(X_0) = f(0)
+ \bigl\langle f(X), \log\bigl[ N(1-X^{-1}X_0) \bigr] \bigr\rangle_{\cal BH}  \\
= f(0)
+ \frac1{2\pi^2} \int_{X \in S^3_R} f(X) \cdot
\degt \Bigl( \frac1{N(X-X_0)} - \frac1{N(X)} \Bigr) \,\frac{dS}R  \\
- \frac1{8\pi^2} \int_{X \in S^3_R} \bigl( \degt \square f(X) \bigr)
\cdot \log\bigl[ N(1-X^{-1}X_0) \bigr] \,\frac{dS}R  \\
+ \frac1{4\pi^2} \int_{X \in S^3_R} \bigl( \degt (\deg+2)^{-1} \square f(X) \bigr)
\cdot (\deg+2)^{-1} \Bigl( \frac1{N(X-X_0)} - \frac1{N(X)} \Bigr) \,\frac{dS}R.
\end{multline*}
\end{thm}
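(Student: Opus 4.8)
The plan is to combine the reproducing kernel expansion \eqref{2nd-bh-kernel-expansion} with the orthogonality relations of Proposition \ref{biharm-pairing-prop} to obtain the first equality, and then to substitute the kernel into the integral formula \eqref{biharm-pairing-integral} to obtain the second.

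First I would establish the abstract reproducing identity $f(X_0) = f(0) + \langle f(X), \log[N(1-X^{-1}X_0)]\rangle_{\cal BH}$. Since $f$ is biharmonic on a neighborhood of $\overline{B_R}$ it is regular at the origin, hence lies in the completion of ${\cal BH}^+$, and by Proposition \ref{K-type-biharm_prop} we may expand $f(X) = \sum_{l,m,n} a^{(1)}_{l,m,n}\phi^{(1)}_{l,m,n}(X) + \sum_{l,m,n} a^{(2)}_{l,m,n}\phi^{(2)}_{l,m,n}(X)$, converging uniformly on $\overline{B_R}$, with $a^{(1)}_{0,0,0} = f(0)$. For $X \in S^3_R$ and $X_0 \in B_R$ one has $(X^{-1}X_0)(X^{-1}X_0)^* = (N(X_0)/R^2)\mathbf 1 < \mathbf 1$ (using $WW^* = N(W)\mathbf 1$ for real quaternions), so $X^{-1}X_0 \in \BB D^+$ and the expansion \eqref{2nd-bh-kernel-expansion} of $g(X)=\log[N(1-X^{-1}X_0)]$ converges uniformly in $X \in S^3_R$. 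Pairing the two expansions term by term and invoking Proposition \ref{biharm-pairing-prop} — where the factors $\tfrac1{2l}$ of the kernel cancel exactly against the pairing values $\pm 2l$, and all cross terms vanish — collapses the double sum to $\sum_{l\ge1/2} a^{(1)}_{l,m,n}\phi^{(1)}_{l,m,n}(X_0) + \sum_{l\ge1} a^{(2)}_{l,m,n}\phi^{(2)}_{l,m,n}(X_0) = f(X_0) - f(0)$, the single missing term being the constant $\phi^{(1)}_{0,0,0}=1$, which is absent from the kernel. This yields the first equality.

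For the second equality I would apply \eqref{biharm-pairing-integral} with $f \in {\cal BH}^+$ and $g(X)=\log[N(1-X^{-1}X_0)] \in {\cal BH}^-$ (the kernel lies in ${\cal BH}^-$ as a function of $X$ by \eqref{2nd-bh-kernel-expansion}). The only computation needed is $\square_X g$. Writing $N(1-X^{-1}X_0) = N(X-X_0)/N(X)$ gives $g = \log N(X-X_0) - \log N(X)$, and since in $\BB R^4$ one has $\square \log N(X) = 4/N(X)$ away from the origin, I obtain $\square_X g = 4\bigl(\tfrac1{N(X-X_0)} - \tfrac1{N(X)}\bigr)$ on $S^3_R$ (where $X \ne 0$, and $X \ne X_0$ because $X_0 \in B_R$). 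Substituting into the three terms of \eqref{biharm-pairing-integral}: the first term becomes $\tfrac1{2\pi^2}\int_{S^3_R} f\,\degt\bigl(\tfrac1{N(X-X_0)}-\tfrac1{N(X)}\bigr)\tfrac{dS}R$ (as $\tfrac1{8\pi^2}\cdot4=\tfrac1{2\pi^2}$), the second term is already $-\tfrac1{8\pi^2}\int(\degt\square f)\,g\,\tfrac{dS}R$ with $g$ the kernel, and the third becomes $\tfrac1{4\pi^2}\int(\degt(\deg+2)^{-1}\square f)\,(\deg+2)^{-1}\bigl(\tfrac1{N(X-X_0)}-\tfrac1{N(X)}\bigr)\tfrac{dS}R$ (as $\tfrac1{16\pi^2}\cdot4=\tfrac1{4\pi^2}$), reproducing the claimed formula.

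The main obstacle I anticipate is not the algebra but the analytic bookkeeping of convergence: one must verify that both the expansion of $f$ and the kernel expansion \eqref{2nd-bh-kernel-expansion} converge uniformly on $S^3_R$ for $X_0$ in a compact subset of $B_R$, so that the term-by-term pairing and the interchange of summation with the surface integral are legitimate, and so that \eqref{biharm-pairing-integral}, verified on basis elements, extends to the series $f$. The geometric fact that $X^{-1}X_0 \in \BB D^+$ precisely when $N(X_0)<N(X)=R^2$ is what places $(X,X_0)$ inside the convergence region and must be checked at the outset; a secondary point is that keeping $X_0$ in the interior ensures the singularity of $1/N(X-X_0)$ never meets the contour $S^3_R$, so the pointwise identity $\square_X g = 4\bigl(\tfrac1{N(X-X_0)}-\tfrac1{N(X)}\bigr)$ remains valid on the sphere.
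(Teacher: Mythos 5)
Your proposal is correct and follows exactly the route the paper intends: the paper derives the theorem by substituting the kernel $\log[N(1-X^{-1}X_0)]$ (whose expansion \eqref{2nd-bh-kernel-expansion} pairs against the $K$-type expansion of $f$ via the orthogonality relations of Proposition \ref{biharm-pairing-prop}, the lone unmatched constant term producing $f(0)$) into the integral formula \eqref{biharm-pairing-integral}, using $\square_X\log[N(1-X^{-1}X_0)]=4\bigl(N(X-X_0)^{-1}-N(X)^{-1}\bigr)$. Your computation of the constants and your verification that $X^{-1}X_0\in\BB D^+$ for $X\in S^3_R$, $X_0\in B_R$ are both accurate, so nothing essential is missing.
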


%{\bf Quasi (anti) regular functions are biharmonic;
%  and if $f \in {\cal BH}^+$, then $\tilde f = \nabla^+ f$
%  is a function satisfying $\nabla \square \tilde f =0$.
%  Can the reproducing formula for one type of functions be derived
%  from the other?}

\subsection{Invariant Pseudounitary Structures}

In this subsection we construct pseudounitary (non-degenerate indefinite)
$\mathfrak{u}(2,2)$-invariant structures on
${\cal BH}^+/{\cal I}'_0$ and ${\cal BH}^-/{\cal I}'_0$.

The group $U(2,2)$ can be realized as the subgroup of elements of $GL(2,\HC)$
preserving the Hermitian form on $\BB C^4$ given by the $4 \times 4$ matrix
$\bigl(\begin{smallmatrix} 1 & 0 \\ 0 & -1 \end{smallmatrix}\bigr)$. Explicitly,
\begin{equation}  \label{U(2,2)}
\begin{split}
U(2,2) &= \Biggl\{ \begin{pmatrix} a & b \\ c & d \end{pmatrix};\:
a,b,c,d \in \HC,\:
\begin{matrix} a^*a = 1+c^*c \\ d^*d = 1+b^*b \\ a^*b=c^*d \end{matrix}
\Biggr\}  \\
&= \Biggl\{ \begin{pmatrix} a & b \\ c & d \end{pmatrix};\:
a,b,c,d \in \HC,\:
\begin{matrix} a^*a = 1+b^*b \\ d^*d = 1+c^*c \\ ac^*=bd^* \end{matrix}
\Biggr\}.
\end{split}
\end{equation}
The Lie algebra of $U(2,2)$ is
\begin{equation}  \label{u(2,2)-algebra}
\mathfrak{u}(2,2) = \bigl\{
\bigl(\begin{smallmatrix} A & B \\ B^* & D \end{smallmatrix}\bigr) ;\:
A,B,D \in \HC ,\: A=-A^*, D=-D^* \bigr\}.
%\mathfrak{u}(2,2) = \biggl\{
%\begin{pmatrix} A & B \\ B^* & D \end{pmatrix} ;\: A,B,D \in \HC ,\:
%A=-A^*, D=-D^* \biggr\}.
\end{equation}

%\begin{lem}
%Let $\lambda \in \BB C$ with $|\lambda|=1$, and consider an element
%$\Lambda = \left( \begin{smallmatrix} \lambda & 0 & 0 & 0 \\
%  0 & \lambda & 0 & 0 \\ 0 & 0 & \lambda^{-1} & 0 \\
%  0 & 0 & 0 & \lambda^{-1} \end{smallmatrix} \right) \in U(2) \times U(2)$.
%Then for $f \in {\cal BH}(d)$ we have:
%$$
%\rho'(\Lambda) f = \lambda^{-2d} f.
%$$
%\end{lem}

\begin{thm}  \label{BH-unitary-thm}
The spaces $(\rho', {\cal BH}^+)$ and $(\rho', {\cal BH}^-)$ have
pseudounitary structures that are preserved by the real form
$\mathfrak{u}(2,2)$ of $\mathfrak{gl}(2,\HC)$.
The invariant pseudounitary structure on ${\cal BH}^+$ can be described as
\begin{align*}
\bigl( \phi^{(1)}_{l,m,n}(Z), \phi^{(1)}_{l',m',n'}(Z) \bigr)_{{\cal BH}^+} &=
\frac{2l(2l+1)}{2\pi^2} \int_{X \in S^3}
\phi^{(1)}_{l,m,n}(X) \cdot \overline{\phi^{(1)}_{l',m',n'}(X)} \,dS,  \\
\bigl( \phi^{(2)}_{l,m,n}(Z), \phi^{(2)}_{l',m',n'}(Z) \bigr)_{{\cal BH}^+} &=
-\frac{2l(2l+1)}{2\pi^2} \int_{X \in S^3}
\phi^{(2)}_{l,m,n}(X) \cdot \overline{\phi^{(2)}_{l',m',n'}(X)} \,dS,  \\
\bigl( \phi^{(\alpha)}_{l,m,n}(Z), \phi^{(\alpha')}_{l',m',n'}(Z) \bigr)_{{\cal BH}^+}
&= 0 \qquad \text{if $\alpha \ne \alpha'$}.
\end{align*}
This structure descends to a non-degenerate product on
${\cal BH}^+/{\cal I}'_0$.

And the invariant pseudounitary structures on ${\cal BH}^-$ and
${\cal BH}^-/{\cal I}'_0$ are similar.
\end{thm}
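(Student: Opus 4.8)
The plan is to deduce the pseudounitary structure from the invariant bilinear pairing of Definition \ref{biharm-pairing-def} by composing with complex conjugation, and to obtain the $\mathfrak u(2,2)$-invariance from the $\mathfrak{gl}(2,\HC)$-invariance already established in Proposition \ref{biharm-pairing-prop}. The central object is the conjugate-linear map $\iota$ on ${\cal BH}$ that is complex conjugation on the unit sphere $S^3=\{X\in\BB H;\,N(X)=1\}$. Since $S^3$ is identified with $SU(2)$ and the $t^l_{n\,\underline m}$ are, up to normalization, matrix coefficients of $SU(2)$, the unitarity relation $\overline{t^l_{n\,\underline m}(X)}=c\cdot t^l_{m\,\underline n}(X^{-1})$ valid for $X\in SU(2)$ shows that conjugating a $K$-type basis vector returns a multiple of the dual one. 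Thus $\iota$ interchanges ${\cal BH}^+$ and ${\cal BH}^-$, with $\iota\,\phi^{(\alpha)}_{l,m,n}=c^{(\alpha)}_{l,m,n}\,\tilde\phi^{(\alpha)}_{l,m,n}$ for real constants $c^{(\alpha)}_{l,m,n}$ recording the normalization of the $t^l_{n\,\underline m}$; I would first record this identity and that $\iota$ is well defined and conjugate-linear.

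Next I would check that the pseudounitary form is exactly the bilinear pairing precomposed with $\iota$, namely $(\phi_1,\phi_2)_{{\cal BH}^+}=\langle\phi_1,\iota\phi_2\rangle_{\cal BH}$. This is immediate on basis vectors: replacing $\overline{\phi^{(\alpha)}_{l',m',n'}(X)}$ by $c^{(\alpha)}_{l',m',n'}\tilde\phi^{(\alpha)}_{l',m',n'}(X)$ inside the integral defining $(\cdot,\cdot)_{{\cal BH}^+}$ turns it into the integral defining $\langle\cdot,\cdot\rangle_{\cal BH}$, since both carry the same normalization $\tfrac{2l(2l+1)}{2\pi^2}$ with $R=1$. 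By the orthogonality relations of Proposition \ref{biharm-pairing-prop} the form is then diagonal, with entries $+2l\,c^{(1)}_{l,m,n}$ on the $\phi^{(1)}$ vectors and $-2l\,c^{(2)}_{l,m,n}$ on the $\phi^{(2)}$ vectors. In particular it is Hermitian (real diagonal), its radical is exactly ${\cal I}'_0=\BB C\cdot\phi^{(1)}_{0,0,0}$ (the one vanishing entry, at $l=0$), so it descends to a non-degenerate form on ${\cal BH}^+/{\cal I}'_0$, and it is genuinely indefinite because the $\phi^{(2)}$ directions (present for $l\ge1$) carry the opposite sign — this is precisely why the structure is pseudounitary rather than unitary.

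The heart of the proof is that $\iota$ intertwines the action of the real form $\mathfrak u(2,2)$. Since $\iota$ is conjugate-linear and $\rho'$ is complex-linear, conjugation by $\iota$ realizes the same Lie algebra again, so there is a conjugate-linear automorphism $\sigma$ of $\mathfrak{gl}(2,\HC)$ characterized by $\iota\,\rho'(\xi)=\rho'(\sigma(\xi))\,\iota$. I would compute $\sigma$ on the four block generators using the explicit $K$-type formulas of Subsection \ref{K-type-biharmonic} together with $\iota\,\phi^{(\alpha)}_{l,m,n}=c^{(\alpha)}_{l,m,n}\tilde\phi^{(\alpha)}_{l,m,n}$: on the basis one verifies $\iota\,\rho'\bigl(\begin{smallmatrix}0&B\\0&0\end{smallmatrix}\bigr)=\rho'\bigl(\begin{smallmatrix}0&0\\B^*&0\end{smallmatrix}\bigr)\,\iota$ and $\iota\,\rho'\bigl(\begin{smallmatrix}0&0\\C&0\end{smallmatrix}\bigr)=\rho'\bigl(\begin{smallmatrix}0&C^*\\0&0\end{smallmatrix}\bigr)\,\iota$, while the diagonal blocks give $A\mapsto-A^*$ and $D\mapsto-D^*$. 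The upshot is $\sigma\bigl(\begin{smallmatrix}A&B\\C&D\end{smallmatrix}\bigr)=\bigl(\begin{smallmatrix}-A^*&C^*\\B^*&-D^*\end{smallmatrix}\bigr)$, whose fixed-point set is exactly the $\mathfrak u(2,2)$ of \eqref{u(2,2)-algebra}. I expect this matching to be the main obstacle: the constants $c^{(\alpha)}_{l,m,n}$ shift the coefficients in the raising formulas relative to the lowering formulas, and it is precisely these $SU(2)$-unitarity constants that reconcile the $(l\pm m)$ coefficients appearing on one side with the $(l\pm n)$ coefficients on the other, so the bookkeeping must be carried out with care.

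Finally I would assemble the invariance. For $\xi\in\mathfrak u(2,2)$ we have $\sigma(\xi)=\xi$, hence $\iota\,\rho'(\xi)=\rho'(\xi)\,\iota$, and therefore, using the $\mathfrak{gl}(2,\HC)$-invariance of $\langle\cdot,\cdot\rangle_{\cal BH}$,
\[
(\rho'(\xi)\phi_1,\phi_2)_{{\cal BH}^+}+(\phi_1,\rho'(\xi)\phi_2)_{{\cal BH}^+}
=\langle\rho'(\xi)\phi_1,\iota\phi_2\rangle_{\cal BH}+\langle\phi_1,\rho'(\xi)\iota\phi_2\rangle_{\cal BH}=0 ,
\]
which is the infinitesimal invariance of the pseudounitary form under $\mathfrak u(2,2)$; it integrates to invariance under the group, and the compact part $SU(2)\times SU(2)\subset U(2,2)$ is covered uniformly by the same argument. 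The statement for ${\cal BH}^-$ follows by applying the inversion $\rho'\bigl(\begin{smallmatrix}0&1\\1&0\end{smallmatrix}\bigr)$, which interchanges ${\cal BH}^+$ and ${\cal BH}^-$ and the roles of $\phi^{(\alpha)}$ and $\tilde\phi^{(\alpha)}$ by Lemma \ref{inversion-bh-lem}; the ${\cal BH}^-$ form is then the pullback of the ${\cal BH}^+$ form under this conformal element and inherits both non-degeneracy on ${\cal BH}^-/{\cal I}'_0$ and $\mathfrak u(2,2)$-invariance.
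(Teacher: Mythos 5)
Your strategy---factor the sesquilinear form through the bilinear pairing of Definition \ref{biharm-pairing-def} via a conjugate-linear map $\iota$ and reduce $\mathfrak{u}(2,2)$-invariance to an intertwining relation $\iota\,\rho'(\xi)=\rho'(\sigma(\xi))\,\iota$---is workable in principle, but the load-bearing step, the identity $(\phi_1,\phi_2)_{{\cal BH}^+}=\langle\phi_1,\iota\phi_2\rangle_{\cal BH}$ with $\iota$ given by complex conjugation on $S^3$, is false as stated. First, restriction to $S^3$ is not injective on ${\cal BH}$ (nor even on ${\cal BH}^+$): since $N(X)=1$ on the sphere, $\phi^{(1)}_{l,m,n}$ and $\phi^{(2)}_{l+1,m,n}$ restrict to the same function, so ``conjugation on the sphere'' does not by itself determine a map on ${\cal BH}$; you must define $\iota$ on the basis. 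Second, and decisively, the normalizations do \emph{not} match: Definition \ref{biharm-pairing-def} carries the factor $-\tfrac{2l(2l-1)}{2\pi^2}$ on the $\phi^{(2)},\tilde\phi^{(2)}$ component, whereas the theorem's form carries $-\tfrac{2l(2l+1)}{2\pi^2}$ there. Using $\overline{\phi^{(2)}_{l,m,n}(X)}=\tfrac{(l-m-1)!(l+m-1)!}{(l-n-1)!(l+n-1)!}\,\tilde\phi^{(2)}_{l,m,n}(X)$ on $S^3$, one finds $(\phi^{(2)},\phi^{(2)})_{{\cal BH}^+}=\tfrac{2l+1}{2l-1}\cdot c\cdot\langle\phi^{(2)},\tilde\phi^{(2)}\rangle_{\cal BH}$, while on the $\phi^{(1)}$ component the ratio is $1$. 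Since ${\cal BH}^+/{\cal I}'_0$ is irreducible, an invariant Hermitian form on it is unique up to a single real scalar; two forms whose ratio varies with the $K$-type cannot both be invariant. Hence sphere-conjugation $\iota$ cannot satisfy the intertwining relation, and the heuristic that $SU(2)$-unitarity of the $t^l$'s makes the relation hold is misleading---the diagonal entries you would obtain, $\pm 2l\,c^{(\alpha)}_{l,m,n}$, are not those of the form in the theorem.

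The repair is to define $\iota$ on the basis with the corrected constants (multiply the $\phi^{(2)}\mapsto\tilde\phi^{(2)}$ coefficient by $\tfrac{2l+1}{2l-1}$, and adjust the ${\cal BH}^-\to{\cal BH}^+$ direction compatibly); then $(\cdot,\cdot)_{{\cal BH}^+}=\langle\cdot,\iota\,\cdot\rangle_{\cal BH}$ holds by construction and the theorem becomes \emph{equivalent} to $\iota\,\rho'(\xi)=\rho'(\sigma(\xi))\,\iota$ for your involution $\sigma\bigl(\begin{smallmatrix}A&B\\C&D\end{smallmatrix}\bigr)=\bigl(\begin{smallmatrix}-A^*&C^*\\B^*&-D^*\end{smallmatrix}\bigr)$, which does cut out \eqref{u(2,2)-algebra}. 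But that relation is no longer automatic: it must be verified on generators, say $\bigl(\begin{smallmatrix}0&B\\0&0\end{smallmatrix}\bigr)$ and $\bigl(\begin{smallmatrix}0&0\\C&0\end{smallmatrix}\bigr)$ with $B=C=\bigl(\begin{smallmatrix}1&0\\0&0\end{smallmatrix}\bigr)$, using the explicit formulas of Subsection \ref{K-type-biharmonic}---which is precisely the computation the paper performs directly (it checks invariance under $\rho'(X)$, $\rho'(Y)$ for the elements \eqref{XYinu(2,2)} after deriving the orthogonality relations from \eqref{t-orthog-rels}). So the ``bookkeeping'' you defer is the entire content of the proof, not a routine check. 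Finally, in the last step note that $\bigl(\begin{smallmatrix}0&1\\1&0\end{smallmatrix}\bigr)\notin U(2,2)$; transferring invariance to ${\cal BH}^-$ by pullback requires the additional observation that $\operatorname{Ad}\bigl(\begin{smallmatrix}0&1\\1&0\end{smallmatrix}\bigr)$ preserves $\mathfrak{u}(2,2)$ (it sends $\bigl(\begin{smallmatrix}A&B\\B^*&D\end{smallmatrix}\bigr)$ to $\bigl(\begin{smallmatrix}D&B^*\\B&A\end{smallmatrix}\bigr)$, which does lie in $\mathfrak{u}(2,2)$).
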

  
\begin{proof}
Clearly, the product $(\phi_1,\phi_2)_{{\cal BH}^+}$ is $U(2) \times U(2)$
invariant.
From \cite{V} we have:
$$
\overline{ t^l_{n\,\underline{m}}(X)} =
\frac{(l-m)!(l+m)!}{(l-n)!(l+n)!} t^l_{m\,\underline{n}}(X^+), \qquad X \in \BB H,
$$
and orthogonality relations:
\begin{equation}  \label{t-orthog-rels}
\frac1{2\pi^2} \int_{X \in S^3} t^l_{n\,\underline{m}}(X) \cdot
\overline{ t^{l'}_{n'\,\underline{m'}}(X)} \,dS
= \frac{(l-m)!(l+m)!}{(l-n)!(l+n)!} \frac{\delta_{ll'} \delta_{mm'} \delta_{nn'}}
{2l+1}.
\end{equation}
Then we have orthogonality relations for the $\phi^{(\alpha)}_{l,m,n}(Z)$'s:
\begin{align*}
\bigl( \phi^{(1)}_{l,m,n}(Z), \phi^{(1)}_{l',m',n'}(Z) \bigr)_{{\cal BH}^+} &=
2l \frac{(l-m)!(l+m)!}{(l-n)!(l+n)!}
\delta_{ll'} \delta_{mm'} \delta_{nn'},  \\
\bigl( \phi^{(2)}_{l,m,n}(Z), \phi^{(2)}_{l',m',n'}(Z) \bigr)_{{\cal BH}^+} &=
- \frac{2l(2l+1)}{2l-1} \frac{(l-m-1)!(l+m-1)!}{(l-n-1)!(l+n-1)!}
\delta_{ll'} \delta_{mm'} \delta_{nn'},  \\
\bigl( \phi^{(\alpha)}_{l,m,n}(Z), \phi^{(\alpha')}_{l',m',n'}(Z) \bigr)_{{\cal BH}^+}
&= 0 \qquad \text{if $\alpha \ne \alpha'$}.
\end{align*}
Consider
\begin{equation}  \label{XYinu(2,2)}
X=\left(\begin{smallmatrix} 0 & 0 & 1 & 0 \\ 0 & 0 & 0 & 0 \\ 1 & 0 & 0 & 0 \\
  0 & 0 & 0 & 0 \end{smallmatrix} \right), \quad
Y=\left(\begin{smallmatrix} 0 & 0 & i & 0 \\ 0 & 0 & 0 & 0 \\ -i & 0 & 0 & 0 \\
  0 & 0 & 0 & 0 \end{smallmatrix} \right) \quad
\in \mathfrak{u}(2,2).
\end{equation}
To prove that $(\phi_1,\phi_2)_{{\cal BH}^+}$ is $\mathfrak{u}(2,2)$-invariant,
it is sufficient to show that $(\phi_1,\phi_2)_{{\cal U}^+}$ is invariant under
$\rho'(X)$ and $\rho'(Y)$.
From Subsection \ref{K-type-biharmonic} we find:
\begin{multline*}
\rho'(X) \phi^{(1)}_{l,m,n}(Z) = -(l-m)\phi^{(1)}_{l-\frac12,m+\frac12,n+\frac12}(Z)  \\
+ \frac{2l(l-n+1)}{2l+1} \phi^{(1)}_{l+\frac12,m-\frac12,n-\frac12}(Z)
- \frac{l+m}{2l+1} \phi^{(2)}_{l+\frac12,m-\frac12,n-\frac12}(Z),
\end{multline*}
\begin{multline*}
\rho'(X) \phi^{(2)}_{l,m,n}(Z)
= -\frac{l+n}{2l-1} \phi^{(1)}_{l-\frac12,m+\frac12,n+\frac12}(Z)  \\
- \frac{2l(l-m-1)}{2l-1} \phi^{(2)}_{l-\frac12,m+\frac12,n+\frac12}(Z)
+ (l-n) \phi^{(2)}_{l+\frac12,m-\frac12,n-\frac12}(Z).
\end{multline*}
And we can find similar expressions for
$$
\rho'(Y) \phi^{(1)}_{l,m,n}(Z), \qquad \rho'(Y) \phi^{(2)}_{l,m,n}(Z).
$$
Then, by direct calculation,
\begin{align*}
\bigl( \phi^{(1)}_{l,m,n}(Z), \rho'(X) \phi^{(1)}_{l+\frac12,m-\frac12,n-\frac12}(Z)
\bigr)_{{\cal BH}^+}
&= - 2l \frac{(l-m+1)!(l+m)!}{(l-n)!(l+n)!},  \\
&= - \bigl( \rho'(X) \phi^{(1)}_{l,m,n}(Z), \phi^{(1)}_{l+\frac12,m-\frac12,n-\frac12}(Z)
\bigr)_{{\cal BH}^+},  \\
\bigl( \phi^{(2)}_{l,m,n}(Z), \rho'(X) \phi^{(2)}_{l+\frac12,m-\frac12,n-\frac12}(Z)
\bigr)_{{\cal BH}^+}
&= (2l+1) \frac{(l-m)!(l+m-1)!}{(l-n-1)!(l+n-1)!},  \\
&= -\bigl( \rho'(X) \phi^{(2)}_{l,m,n}(Z), \phi^{(2)}_{l+\frac12,m-\frac12,n-\frac12}(Z)
\bigr)_{{\cal BH}^+},  \\
\bigl( \phi^{(1)}_{l,m,n}(Z), \rho'(X) \phi^{(2)}_{l+\frac12,m-\frac12,n-\frac12}(Z)
\bigr)_{{\cal BH}^+}
&= - \frac{(l-m)!(l+m)!}{(l-n)!(l+n-1)!},  \\
&= - \bigl( \rho'(X) \phi^{(1)}_{l,m,n}(Z), \phi^{(2)}_{l+\frac12,m-\frac12,n-\frac12}(Z)
\bigr)_{{\cal BH}^+}.
\end{align*}
This proves the $\rho'(X)$-invariance of the pseudounitary structure.

The calculations for $\rho'(Y)$ are nearly identical.
\end{proof}

Recall that, by Theorem \ref{biharm-irred-thm},
$(\rho', {\cal BH}^+/{\cal I}'_0)$ and $(\rho', {\cal BH}^-/{\cal I}'_0)$
are irreducible representations of $\mathfrak{gl}(2,\HC)$ and hence
$\mathfrak{u}(2,2)$, and the (pseudo)unitary structure on an irreducible
representation is unique up to rescaling. Thus we obtain:

\begin{cor}
The spaces $(\rho', {\cal BH}^+/{\cal I}'_0)$ and
$(\rho', {\cal BH}^-/{\cal I}'_0)$
do {\em not} have $\mathfrak{u}(2,2)$-invariant unitary structures.
\end{cor}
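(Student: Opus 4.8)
The plan is to exploit the irreducibility of the two quotient representations together with the pseudounitary structure already produced in Theorem~\ref{BH-unitary-thm}. The key point is that an irreducible representation admits an invariant Hermitian form that is unique up to a real scalar, and the form we have constructed is genuinely indefinite; since no real rescaling can turn an indefinite form into a definite one, no invariant unitary structure can exist.

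First I would record that, by Theorem~\ref{biharm-irred-thm}, both $(\rho', {\cal BH}^+/{\cal I}'_0)$ and $(\rho', {\cal BH}^-/{\cal I}'_0)$ are irreducible as representations of $\mathfrak{gl}(2,\HC)$, hence also as representations of its real form $\mathfrak{u}(2,2)$. Next I would invoke the standard uniqueness statement: if $(\cdot,\cdot)_1$ and $(\cdot,\cdot)_2$ are two $\mathfrak{u}(2,2)$-invariant Hermitian forms on such an irreducible space and $(\cdot,\cdot)_1$ is non-degenerate, then writing $(\phi_1,\phi_2)_2 = (\phi_1, A\phi_2)_1$ defines a $\BB C$-linear operator $A$. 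Invariance of both forms together with non-degeneracy of $(\cdot,\cdot)_1$ forces $A$ to commute with all of $\rho'(X)$, $X \in \mathfrak{u}(2,2)$; since the representation is $\BB C$-irreducible, Schur's lemma gives $A = \lambda\,\mathrm{Id}$, and comparing the Hermitian symmetry of the two forms forces $\lambda \in \BB R$. Thus the invariant Hermitian form is unique up to a nonzero real scalar.

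Then I would observe that the pseudounitary structure of Theorem~\ref{BH-unitary-thm} descends to a non-degenerate invariant form on the quotient and is indefinite: in the orthogonality relations the vectors $\phi^{(1)}_{l,m,n}$ (with $l \ge \tfrac12$, which survive in the quotient after removing ${\cal I}'_0 = \BB C \cdot \phi^{(1)}_{0,0,0}$) pair to the \emph{positive} quantity $2l\,\frac{(l-m)!(l+m)!}{(l-n)!(l+n)!}$, while the vectors $\phi^{(2)}_{l,m,n}$ (present for $l \ge 1$) pair to the \emph{negative} quantity $-\frac{2l(2l+1)}{2l-1}\,\frac{(l-m-1)!(l+m-1)!}{(l-n-1)!(l+n-1)!}$. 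Hence both positive and negative self-pairings occur on the quotient, so the form is neither positive nor negative definite. A real scalar multiple of an indefinite non-degenerate Hermitian form is again indefinite, so by the uniqueness statement there is no invariant positive-definite form; equivalently, no $\mathfrak{u}(2,2)$-invariant unitary structure exists. The argument for ${\cal BH}^-/{\cal I}'_0$ is identical.

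The only real subtlety is making sure the uniqueness-up-to-scalar principle applies here: it requires the existence of one non-degenerate invariant Hermitian form, which Theorem~\ref{BH-unitary-thm} supplies, and it requires the $\BB C$-irreducibility used in Schur's lemma, which Theorem~\ref{biharm-irred-thm} guarantees. Beyond confirming that both signs genuinely survive after passing to the quotient by ${\cal I}'_0$ (they do, since ${\cal I}'_0$ removes only the single vector $\phi^{(1)}_{0,0,0}=1$, leaving the $l \ge \tfrac12$ positive part and the $l \ge 1$ negative part intact), I expect no genuine obstacle.
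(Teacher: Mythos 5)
Your argument is correct and is essentially the same as the paper's: the paper also deduces the corollary from the irreducibility of the quotients (Theorem \ref{biharm-irred-thm}) together with the uniqueness up to rescaling of an invariant (pseudo)unitary structure on an irreducible representation, the constructed form being indefinite. Your write-up simply makes the Schur's-lemma uniqueness step and the survival of both signs in the quotient explicit.
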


\section{Quasi Anti Regular Functions}  \label{Sect3}

\subsection{Definitions and Conformal Invariance}  \label{QR-def-subsection}

We introduce left and right quasi anti regular functions defined on
open subsets of $\BB H$ and $\HC$.

\begin{df}  \label{qr-definition}
Let $U$ be an open subset of $\BB H$.
A ${\cal C}^3$-function $f: U \to \BB S$ is
{\em quasi left anti regular} (QLAR for short) if it satisfies
$$
\nabla \square f =0 \qquad \text{at all points in $U$}.
$$

Similarly, a ${\cal C}^3$-function $g: U \to \BB S'$ is
{\em quasi right anti regular} (QRAR for short) if
$$
(\square g) \overleftarrow{\nabla} =0 \qquad \text{at all points in $U$}.
$$
\end{df}

We also can talk about quasi regular functions defined on open subsets of
$\HC$. In this case we require such functions to be holomorphic.

\begin{df}
Let $U$ be an open subset of $\HC$.
A holomorphic function $f: U \to \BB S$ is
{\em quasi left anti regular} (QLAR for short) if it satisfies
$\nabla \square f =0$ at all points in $U$.

Similarly, a holomorphic function $g: U \to \BB S'$ is
{\em quasi right anti regular} (QRAR for short) if
$(\square g) \overleftarrow{\nabla} =0$ at all points in $U$.
\end{df}

It is clear from \eqref{Laplacian} that QLAR and QRAR functions are biharmonic,
i.e. annihilated by $\square^2$.
One way to construct QLAR functions is to start with a biharmonic
function $\phi: \BB H \to \BB S$, then $\nabla^+ \phi$ is QLAR.
Similarly, if $\phi: \BB H \to \BB S'$ is biharmonic, then
$\phi \overleftarrow{\nabla^+}$ is QRAR.

Let $\tilde{\cal U}$ and $\tilde{\cal U}'$ denote respectively the
spaces of (holomorphic) quasi left and right anti regular functions on $\HC$,
possibly with singularities.

\begin{thm}  \label{qr-action-thm}
\begin{enumerate}
\item
The space $\tilde{\cal U}$ of quasi left anti regular functions
$\HC \to \BB S$ (possibly with singularities)
is invariant under the following action of $GL(2,\HC)$:
\begin{multline}  \label{pi'_l}
\pi'_l(h): \: f(Z) \: \mapsto \: \bigl( \pi'_l(h)f \bigr)(Z) =
\frac{a'-Zc'}{N(a'-Zc')} \cdot f \bigl( (a'-Zc')^{-1}(-b'+Zd') \bigr),  \\
h = \bigl( \begin{smallmatrix} a' & b' \\ c' & d' \end{smallmatrix} \bigr)
\in GL(2,\HC).
\end{multline}
\item
The space $\tilde{\cal U}'$ of quasi right anti regular functions
$\HC \to \BB S'$ (possibly with singularities)
is invariant under the following action of $GL(2,\HC)$:
\begin{multline}  \label{pi'_r}
\pi'_r(h): \: g(Z) \: \mapsto \: \bigl( \pi'_r(h)g \bigr)(Z) =
g \bigl( (aZ+b)(cZ+d)^{-1} \bigr) \cdot \frac{cZ+d}{N(cZ+d)},  \\
h^{-1} = \bigl( \begin{smallmatrix} a & b \\ c & d \end{smallmatrix} \bigr)
\in GL(2,\HC).
\end{multline}
\end{enumerate}
\end{thm}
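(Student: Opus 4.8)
The plan is to exploit that $\pi'_l$ is a group homomorphism, so it suffices to verify that $\tilde{\cal U}$ is preserved by a generating set of $GL(2,\HC) \cong GL(4,\BB C)$. Using the Gauss--Bruhat decomposition I would take as generators the translations $n(b') = \bigl(\begin{smallmatrix} 1 & b' \\ 0 & 1\end{smallmatrix}\bigr)$, the block-diagonal torus $t(a',d') = \bigl(\begin{smallmatrix} a' & 0 \\ 0 & d'\end{smallmatrix}\bigr)$ with $a',d' \in \HC^{\times}$, and the single inversion $w = \bigl(\begin{smallmatrix} 0 & 1 \\ 1 & 0\end{smallmatrix}\bigr)$; together with the lower unipotents (obtained by conjugating $n(b')$ by $w$) these generate the whole group. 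The conceptual backbone is that $\nabla\square$ is a conformally covariant operator, so I would really be checking that each generator intertwines $\pi'_l$ with a suitably weighted action on the target, forcing $\ker(\nabla\square) = \tilde{\cal U}$ to be invariant.

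For the translations the weight factor $\frac{a'-Zc'}{N(a'-Zc')}$ degenerates to $1$ and the action is simply $(\pi'_l(n(b'))f)(Z) = f(Z-b')$; since $\nabla$ and $\square$ have constant coefficients, $\nabla\square$ commutes with this shift and the QLAR condition is preserved at once. For the torus, $(\pi'_l(t(a',d'))f)(Z) = \frac{a'}{N(a')} \cdot f\bigl((a')^{-1}Zd'\bigr)$, which I would handle by separating the central (scalar) part, where invariance is pure homogeneity, from the $GL(2,\BB C)\times GL(2,\BB C)$ ``rotation'' part $Z \mapsto (a')^{-1}Zd'$. Here I would invoke the standard covariance of the Dirac operator $\nabla$ and of $\square$ under $Z \mapsto (a')^{-1}Zd'$: each transforms by an explicit scalar factor built from $N(a')$ and $N(d')$, precisely compensated by the left multiplication by $\frac{a'}{N(a')}$, so that the composite $\nabla\square$ is intertwined and its kernel is stable.

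The inversion $w$ is the main obstacle. Here the action reads $(\pi'_l(w)f)(Z) = \frac{-Z}{N(Z)}\cdot f(Z^{-1})$, and I must show $g(Z) := \frac{Z}{N(Z)} f(Z^{-1})$ is QLAR whenever $f$ is. The key inputs are the Kelvin-type covariance of the Laplacian in four dimensions, namely
$$
\square\Bigl(\tfrac{1}{N(Z)} u(Z^{-1})\Bigr) = \tfrac{1}{N(Z)^3}\,(\square u)(Z^{-1}),
$$
together with the matching first-order covariance of $\nabla$ under $Z \mapsto Z^{-1}$ (the inversion identity underlying the Cauchy--Fueter kernel). I would combine these to express $\nabla\square g$ as a nonvanishing kernel factor times $(\nabla\square f)(Z^{-1})$, which is zero by hypothesis. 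The delicate part is the exact bookkeeping of the weight powers of $N(Z)$ and of the noncommutative left factor $\frac{Z}{N(Z)}$ as it is pushed through $\square$ and then $\nabla$; this is where a careful chain-rule computation, using the quaternionic differentiation identities for $N(Z)^{-1}$ and $Z^{-1}$, must be carried out.

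Finally, the statement for $\pi'_r$ and $\tilde{\cal U}'$ is the mirror image: applying quaternionic conjugation, which exchanges the left module $\BB S$ with the right module $\BB S'$, turns $\nabla$ into its right analogue $\overleftarrow{\nabla}$ and intertwines $\pi'_l$ with $\pi'_r$, transports the result of part (1) to part (2). I expect the bulk of the work, and the only genuinely nontrivial computation, to be the inversion case in part (1).
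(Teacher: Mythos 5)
Your route is sound but genuinely different from the paper's. The paper does not work with group generators at all: it observes that $GL(2,\HC)\simeq GL(4,\BB C)$ is connected, passes to the Lie algebra action (spelled out in Lemma \ref{pi'-Lie_alg-action}), and verifies $\nabla\square\,\pi'_l\bigl(\begin{smallmatrix} 0 & 0 \\ C & 0 \end{smallmatrix}\bigr)f=0$ by a direct product-rule computation, the other generators being easier. Your Bruhat-type reduction (translations, torus, inversion) is an equally legitimate global argument, and the hard case is the same in disguise: the lower unipotents are $w\,n(b)\,w$, so your inversion computation carries the same content as the paper's computation for $\bigl(\begin{smallmatrix} 0 & 0 \\ C & 0 \end{smallmatrix}\bigr)$. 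The identities you invoke for the inversion are the right ones --- the four-dimensional Kelvin covariance $\square\bigl(N(Z)^{-1}u(Z^{-1})\bigr)=N(Z)^{-3}(\square u)(Z^{-1})$ together with the Cauchy--Fueter-type covariance of $\nabla$ --- but you only assert that the cross terms arising from pushing $\nabla\square$ through the factor $Z/N(Z)$ cancel against $(\nabla\square f)(Z^{-1})=0$. That cancellation is exactly the content of the theorem at the inversion, so as written the crux is deferred rather than proved; it does go through, but it must be executed.

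One step would fail as stated: deducing part (2) from part (1) by ``quaternionic conjugation.'' For $\HC$-valued $F$ one has $(\nabla F)^+=F^+\overleftarrow{\nabla^+}$ (not $F^+\overleftarrow{\nabla}$), and the maps $\BB S\to\BB S'$ denoted by $+$ in the paper are $\BB C$-antilinear, so they do not intertwine the holomorphic actions of $GL(2,\HC)$. Conjugation therefore sends quasi left anti regular functions to quasi right \emph{regular} ones (kernel of $\overleftarrow{\nabla^+}\square$), not to $\tilde{\cal U}'$; this is precisely the distinction the paper draws in the introduction between quasi regular and quasi anti regular functions. The correct mirror between $\nabla\square f=0$ and $(\square g)\overleftarrow{\nabla}=0$ is the $\BB C$-linear transpose $f\mapsto {}^{\tau}\!f({}^{\tau}\!Z)$ (which also conjugates $\pi'_l$ into $\pi'_r$), or one simply repeats the computation for the right-hand operators, as the paper implicitly does.
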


\begin{proof}
It is easy to see that the formulas describing the actions
$\pi'_l$ and $\pi'_r$ also produce well-defined actions on the spaces
of all functions on $\HC$ (possibly with singularities) with values in
$\BB S$ and $\BB S'$ respectively.
Differentiating $\pi'_l$ and $\pi'_r$, we obtain actions of the Lie algebra
$\mathfrak{gl}(2,\HC)$, which we still denote by $\pi'_l$ and $\pi'_r$
respectively. We spell out these Lie algebra actions:

\begin{lem}  \label{pi'-Lie_alg-action}
The Lie algebra action $\pi'_l$ of $\mathfrak{gl}(2,\HC)$ on
$\tilde{\cal U}$ is given by
\begin{align*}
\pi'_l \bigl( \begin{smallmatrix} A & 0 \\ 0 & 0 \end{smallmatrix} \bigr) &:
f(Z) \mapsto - \tr (AZ \partial + A) f + Af,  \\
\pi'_l \bigl( \begin{smallmatrix} 0 & B \\ 0 & 0 \end{smallmatrix} \bigr) &:
f(Z) \mapsto - \tr (B \partial) f,  \\
\pi'_l \bigl( \begin{smallmatrix} 0 & 0 \\ C & 0 \end{smallmatrix} \bigr) &:
f(Z) \mapsto \tr (ZCZ \partial +ZC) f - ZCf,  \\
\pi'_l \bigl( \begin{smallmatrix} 0 & 0 \\ 0 & D \end{smallmatrix} \bigr) &:
f(Z) \mapsto \tr (ZD \partial) f.
\end{align*}

Similarly, the Lie algebra action $\pi'_r$ of $\mathfrak{gl}(2,\HC)$ on
$\tilde{\cal U}'$ is given by
\begin{align*}
\pi'_r \bigl( \begin{smallmatrix} A & 0 \\ 0 & 0 \end{smallmatrix} \bigr) &:
g(Z) \mapsto - \tr (AZ \partial) g,  \\
\pi'_r \bigl( \begin{smallmatrix} 0 & B \\ 0 & 0 \end{smallmatrix} \bigr) &:
g(Z) \mapsto - \tr (B \partial) g,  \\
\pi'_r \bigl( \begin{smallmatrix} 0 & 0 \\ C & 0 \end{smallmatrix} \bigr) &:
g(Z) \mapsto \tr (ZCZ \partial +CZ) g - gCZ,  \\
\pi'_r \bigl( \begin{smallmatrix} 0 & 0 \\ 0 & D \end{smallmatrix} \bigr) &:
g(Z) \mapsto \tr (ZD \partial + D) g - gD.
\end{align*}
\end{lem}

%\begin{proof}
%These formulas are obtained by differentiating \eqref{pi'_l} and \eqref{pi'_r}.
%\end{proof}

We continue our proof of Theorem \ref{qr-action-thm}.
Since the Lie group $GL(2,\HC) \simeq GL(4,\BB C)$ is connected, it is
sufficient to show that, if $f \in \tilde{\cal U}$,
$g \in \tilde{\cal U}'$ and
$\bigl( \begin{smallmatrix} A & B \\ C & D \end{smallmatrix} \bigr) \in
\mathfrak{gl}(2,\HC)$, then
$\pi'_l \bigl( \begin{smallmatrix} A & B \\ C & D \end{smallmatrix} \bigr)f
\in \tilde{\cal U}$ and
$\pi'_r \bigl( \begin{smallmatrix} A & B \\ C & D \end{smallmatrix} \bigr)g
\in \tilde{\cal U}'$.
Consider, for example, the case of 
$\pi'_l \bigl( \begin{smallmatrix} 0 & 0 \\ C & 0 \end{smallmatrix} \bigr)f$,
the other cases are similar and easier. We have:
$$
\partial \pi'_l
\bigl( \begin{smallmatrix} 0 & 0 \\ C & 0 \end{smallmatrix} \bigr) f
= \partial \bigl( \tr (ZCZ \partial +ZC) f - ZCf \bigr)
= \tr (ZCZ \partial +ZC) \partial f + CZ\partial f - Cf,
$$
\begin{multline*}
\tfrac18 \nabla \square \pi'_l
\bigl( \begin{smallmatrix} 0 & 0 \\ C & 0 \end{smallmatrix} \bigr) f
= \partial ^+ \partial \partial \pi'_l
\bigl( \begin{smallmatrix} 0 & 0 \\ C & 0 \end{smallmatrix} \bigr) f
= \partial^+ \partial
\bigl( \tr (ZCZ \partial +ZC) \partial f + CZ\partial f - Cf \bigr)  \\
= \partial^+ \bigl( \tr (ZCZ \partial +ZC) \partial \partial f
+ CZ\partial\partial f + \partial(ZC\partial f) + \partial(CZ\partial f)
- C\partial f - \partial Cf \bigr)  \\
= \tr (ZCZ \partial +ZC) \partial^+ \partial \partial f
+ \partial^+(Z^+C^+\partial\partial f) + C^+Z^+\partial^+\partial\partial f
- C^+\partial\partial f  \\
+ \partial^+(CZ\partial\partial f) + \partial^+\partial(ZC\partial f)
+ \partial^+\partial(CZ\partial f) - \partial^+C\partial f
- \partial^+\partial Cf  \\
= \partial^+(Z^+C^+\partial\partial f) + \partial^+(CZ\partial\partial f)
- C^+\partial\partial f + \partial^+\partial(ZC\partial f)  \\
+ C\partial^+\partial(Z\partial f) - \partial^+C\partial f
- C\partial^+\partial f =0,
\end{multline*}
hence
$\pi'_l \bigl( \begin{smallmatrix} 0 & 0 \\ C & 0 \end{smallmatrix} \bigr)f
\in \tilde{\cal U}$.
\end{proof}

\subsection{Constructing Quasi Anti Regular Functions}

As was mentioned at the beginning of Subsection \ref{QR-def-subsection},
one can construct left and right anti regular functions from biharmonic
functions. In this subsection, we describe another way to construct
anti regular functions following Laville-Ramadanoff \cite{LR}.

\begin{lem}
An $\BB S$-valued function $f$ is QLAR if and only if $\square^2f=0$ and
$\square^2(Z^+f)=0$.
Similarly, an $\BB S'$-valued function $g$ is QRAR if and only if
$\square^2g=0$ and $\square^2(gZ^+)=0$.
\end{lem}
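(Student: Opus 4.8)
The plan is to reduce the whole lemma to a single second-order Leibniz identity. For an $\BB S$-valued function $f$ I claim that
\begin{equation*}
\square^2(Z^+ f) = Z^+ \square^2 f + 4\,\nabla\square f. \tag{$\star$}
\end{equation*}
Granting $(\star)$, both implications are one line each. If $f$ is QLAR, then $\nabla\square f=0$; applying $\nabla^+$ on the left and using $\nabla^+\nabla=\square$ gives $\square^2 f=\nabla^+(\nabla\square f)=0$ (this is exactly the already-noted fact that QLAR functions are biharmonic), and then $(\star)$ forces $\square^2(Z^+f)=0$. Conversely, if $\square^2 f=0$ and $\square^2(Z^+f)=0$, then $(\star)$ collapses to $4\,\nabla\square f=0$, so $f$ is QLAR.

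To prove $(\star)$ I would first record the first-order Leibniz rule for the scalar Laplacian $\square=\sum_{\mu=0}^{3}\partial_\mu^2$, with $\partial_\mu=\partial/\partial z^\mu$: since $\square$ acts as a scalar operator, $\square(uv)=(\square u)v+2\sum_\mu(\partial_\mu u)(\partial_\mu v)+u(\square v)$ for $\HC$-valued $u,v$, with all products taken in $\HC$. Taking $u=Z^+$, the term $\square Z^+$ vanishes because $Z^+$ is linear in the coordinates, while the cross term collapses to
\begin{equation*}
\sum_{\mu=0}^{3}(\partial_\mu Z^+)(\partial_\mu f)
= e_0\frac{\partial f}{\partial z^0} - e_1\frac{\partial f}{\partial z^1}
- e_2\frac{\partial f}{\partial z^2} - e_3\frac{\partial f}{\partial z^3} = \nabla f,
\end{equation*}
using $\partial Z^+/\partial z^0=e_0$ and $\partial Z^+/\partial z^j=-e_j$ for $j=1,2,3$. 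Hence $\square(Z^+f)=Z^+\square f+2\nabla f$. Applying $\square$ once more --- using the same rule with $f$ replaced by $\square f$ in the first summand, and commuting the scalar operator $\square$ past the constant-coefficient operator $\nabla$ in the second --- yields $(\star)$.

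The QRAR statement is the exact mirror: the analogous computation with right multiplication and the right operator $\overleftarrow{\nabla}$ in place of left multiplication and $\nabla$ gives $\square^2(gZ^+)=(\square^2 g)Z^+ + 4\,(\square g)\overleftarrow{\nabla}$, from which the equivalence follows as before. I expect the only real care to be bookkeeping in the noncommutative setting: the factor $Z^+$ must remain on the correct side throughout, and the signs of the coordinate derivatives of $Z^+$ must be tracked precisely so that the cross term reproduces $\nabla f$ (respectively $g\overleftarrow{\nabla}$) rather than $\nabla^+f$. There is no genuine analytic difficulty; the entire content of the lemma is the algebraic identity $(\star)$.
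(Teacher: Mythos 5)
Your proof is correct and follows essentially the same route as the paper: both rest on the first-order identity $\square(Z^+f)=Z^+\square f+2\nabla f$, iterate it to obtain $\square^2(Z^+f)-Z^+\square^2 f=4\nabla\square f$ (using $\square\nabla=\nabla\square$), and read off the equivalence, with the mirrored computation for the QRAR case. The only cosmetic difference is that you package the conclusion as the single identity $(\star)$ and make explicit the step $\square^2 f=\nabla^+(\nabla\square f)=0$, which the paper relies on implicitly via its earlier remark that QLAR functions are biharmonic.
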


\begin{proof}
By direct computation,
$$
\square(Z^+f) = 2\nabla f + Z^+\square f,
$$
hence
$$
2\nabla\square f = \square(Z^+\square f) - Z^+\square^2f
= \square^2(Z^+ f) - 2\square\nabla f - Z^+\square^2f,
$$
$$
4\nabla\square f = \square^2(Z^+f)-Z^+\square^2f.
$$
The last equation implies the statement about the QLAR functions.
The QRAR case is similar.
\end{proof}
  
\begin{lem}  \label{harm-comp-lem}
If $h(w_1,w_2)$ is a holomorphic function of two variables $\BB C^2 \to \BB C$
satisfying
$$
\frac{\partial^2 h}{\partial w_1^2} + \frac{\partial^2 h}{\partial w_2^2} =0,
$$
then the function
$$
h \bigl( z^0, \sqrt{(z^1)^2+(z^2)^2+(z^3)^2} \bigr) : \HC \to \BB C, \qquad
Z=e_0z^0+e_1z^1+e_2z^2+e_3z^3 \in \HC,
$$
is biharmonic.
\end{lem}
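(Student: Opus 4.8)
The plan is to reduce the computation of $\square^2 F$, where $F(Z)=h(z^0,r)$ and $r=\sqrt{(z^1)^2+(z^2)^2+(z^3)^2}$, to the two-dimensional harmonicity hypothesis on $h$, exploiting the fact that $F$ depends on $z^0,z^1,z^2,z^3$ only through $z^0$ and the radial variable $r$. First I would record the effect of $\square$ on an arbitrary function $G(z^0,r)$ of this special form. Since $\partial r/\partial z^j = z^j/r$ for $j=1,2,3$, a direct application of the chain rule followed by summation over $j$ yields the radial reduction $\square G = \partial_{w_1}^2 G + \partial_{w_2}^2 G + \frac{2}{r}\partial_{w_2}G$, where $w_1,w_2$ denote the two arguments of $G$ and all derivatives are evaluated at $(z^0,r)$. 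This is the only genuinely computational step; it is valid wherever $r\neq 0$ and passes to the complexified setting $\HC$ by analyticity.

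Applying this to $F=h(z^0,r)$ and invoking the hypothesis $\partial_{w_1}^2 h+\partial_{w_2}^2 h=0$, the first two terms cancel and I obtain the clean identity $\square F = \frac{2}{r}\,\partial_{w_2}h(z^0,r)$. The key observation is that $k := \partial_{w_2}h$ is again harmonic in two variables: differentiating the Laplace equation $\partial_{w_1}^2 h+\partial_{w_2}^2 h=0$ with respect to $w_2$ gives $\partial_{w_1}^2 k+\partial_{w_2}^2 k=0$.

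It then remains to show $\square\bigl(\tfrac{2}{r}\,k(z^0,r)\bigr)=0$. I would apply the radial reduction a second time to $G=\frac{2}{r}k(z^0,r)$, carefully collecting the terms produced by differentiating the explicit factor $r^{-1}$. The $\partial_{w_1}^2$ contribution is $\frac{2}{r}\partial_{w_1}^2 k$, while the operator $\partial_{w_2}^2+\frac{2}{r}\partial_{w_2}$ applied to $\frac{2}{r}k$ produces terms in $r^{-3}k$, $r^{-2}\partial_{w_2}k$, and $r^{-1}\partial_{w_2}^2 k$. The plan is simply to verify that the $r^{-3}$ terms and the $r^{-2}$ terms each cancel in pairs, leaving $\frac{2}{r}\bigl(\partial_{w_1}^2 k+\partial_{w_2}^2 k\bigr)$, which vanishes by the harmonicity of $k$. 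Hence $\square^2 F = \square(\square F)=0$.

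I expect no serious obstacle: the argument is standard ``radial-plus-one-dimension'' Laplacian bookkeeping. The only point requiring a little care is the cancellation of the $r^{-3}$ and $r^{-2}$ terms in the second application, together with the remark that every identity used is polynomial or analytic in the $z^j$ and therefore remains valid on $\HC$ away from the locus $r=0$.
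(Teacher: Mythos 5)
Your proposal is correct and matches the paper's approach: the paper's proof is literally the one-line statement that the result follows ``by direct computation of $\square^2 h$,'' and your radial reduction $\square G = \partial_{w_1}^2 G + \partial_{w_2}^2 G + \tfrac{2}{r}\partial_{w_2}G$, applied twice together with the observation that $\partial_{w_2}h$ is again harmonic, is a clean and correct way to organize exactly that computation.
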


\begin{proof}
The proof is by direct computation of
$\square^2 h \bigl( z^0, \sqrt{(z^1)^2+(z^2)^2+(z^3)^2} \bigr)$.
\end{proof}

\begin{lem}
Functions $\exp(\lambda Z^+) = \sum_{n=0}^{\infty} (\lambda Z^+)^n/n!$,
$\lambda \in \BB R$, and $(Z^+)^n$, $n \in \BB Z$, satisfy
$$
\nabla \square \exp(\lambda Z^+) = 0, \quad
\nabla \square (Z^+)^n = 0 \qquad \text{and} \qquad
\square \exp(\lambda Z^+) \overleftarrow{\nabla} = 0, \quad
\square (Z^+)^n \overleftarrow{\nabla} = 0.
$$
\end{lem}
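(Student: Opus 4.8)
The plan is to reduce the entire statement to the single clean assertion that $(Z^+)^m$ is biharmonic, i.e. $\square^2(Z^+)^m=0$, for every $m\in\BB Z$. The exponential case follows from the power case once this is known: the series $\exp(\lambda Z^+)=\sum_{n\ge0}\lambda^n(Z^+)^n/n!$ and $Z^+\exp(\lambda Z^+)=\sum_{n\ge0}\lambda^n(Z^+)^{n+1}/n!$ converge locally uniformly together with all their derivatives, so the constant-coefficient operators $\nabla\square$ and $\square\overleftarrow{\nabla}$ may be applied term by term. For the integer powers, the first Lemma of this subsection says that $(Z^+)^n$ is QLAR exactly when $\square^2(Z^+)^n=0$ and $\square^2\bigl(Z^+(Z^+)^n\bigr)=\square^2(Z^+)^{n+1}=0$, and QRAR exactly when $\square^2(Z^+)^n=0$ and $\square^2\bigl((Z^+)^nZ^+\bigr)=\square^2(Z^+)^{n+1}=0$. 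Thus both families of conditions, for all $n$, are subsumed by biharmonicity of every $(Z^+)^m$.

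To prove biharmonicity for $m\ge0$ I would split $(Z^+)^m$ into its scalar and vector parts. Writing $Z=z^0+V$ and $Z^+=z^0-V$ with $V=z^1e_1+z^2e_2+z^3e_3$, and using $V^2=-r^2$ where $r=\sqrt{(z^1)^2+(z^2)^2+(z^3)^2}$, the subalgebra generated by $V$ is commutative with $V$ behaving like $ir$. Expanding $(z^0-V)^m$ accordingly yields a decomposition
\begin{equation*}
(Z^+)^m=A_m+B_m\,V,\qquad A_{m+1}=z^0A_m+r^2B_m,\quad B_{m+1}=z^0B_m-A_m,\quad A_0=1,\ B_0=0,
\end{equation*}
where $A_m,B_m$ are honest polynomials in $z^0$ and $r^2$ (the odd powers of $r$ always accompany $V$), and where as functions of the two variables $(z^0,r)$ one has $A_m=\re\bigl((z^0-ir)^m\bigr)$ and $rB_m=\im\bigl((z^0-ir)^m\bigr)$. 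In particular $A_m$ is the real part of a holomorphic function of $z^0-ir$, hence harmonic in $(z^0,r)$, so the $e_0$-component $A_m$ is biharmonic on $\HC$ by Lemma \ref{harm-comp-lem}. For the $e_j$-component ($j=1,2,3$), which equals $B_mz^j$, a short computation gives $\frac{\partial A_{m+1}}{\partial z^j}=2z^j\,\partial_{r^2}A_{m+1}=(m+1)B_mz^j$; since $A_{m+1}$ is biharmonic by Lemma \ref{harm-comp-lem} and the constant-coefficient operator $\partial/\partial z^j$ commutes with $\square^2$, the component $B_mz^j=\frac1{m+1}\frac{\partial A_{m+1}}{\partial z^j}$ is biharmonic as well. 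Hence every component of $(Z^+)^m$ is biharmonic, and so is $(Z^+)^m$.

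For $m<0$ I would invoke the inversion symmetry rather than repeat the expansion (which degenerates at $m=-1$). Since quaternionic conjugation is an anti-automorphism, $(Z^{-1})^+=(Z^+)^{-1}$, so $(Z^+)^{-k}=\bigl((Z^{-1})^+\bigr)^k$; that is, $(Z^+)^{-k}$ is the biharmonic function $W\mapsto(W^+)^k$ of the previous paragraph precomposed with the inversion $Z\mapsto Z^{-1}$. As this inversion preserves the space of biharmonic functions (Lemma \ref{inversion-bh-lem}), $(Z^+)^{-k}$ is biharmonic, completing the proof of biharmonicity for all $m\in\BB Z$ and hence of the lemma.

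The main obstacle is the non-commutativity of quaternionic multiplication, which is precisely what prevents $\nabla\square(Z^+)^n=0$ from being a one-line product-rule computation; the device that circumvents it is the scalar/vector splitting, which trades the quaternionic problem for the two scalar inputs provided by Lemma \ref{harm-comp-lem} and the first Lemma of this subsection. The one point demanding care is to check that the formal identification $V\leftrightarrow ir$ produces genuine polynomial identities for $A_m$ and $B_m$ (so that the individual components $A_m$ and $B_mz^j$ are honest holomorphic functions to which the scalar lemmas apply) and that the vector-component identity $\partial A_{m+1}/\partial z^j=(m+1)B_mz^j$ holds as claimed.
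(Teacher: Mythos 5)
Your proof is correct, but it takes a genuinely different route from the paper's. The paper proves the exponential case first: writing $Z=z_0+z$, it observes that
$\exp(\lambda Z^+) = \frac1\lambda\nabla^+\bigl(e^{\lambda z_0}\cos(\lambda|z|)\bigr)$,
where $e^{\lambda z_0}\cos(\lambda|z|)$ is biharmonic by Lemma \ref{harm-comp-lem}, and then invokes the fact recorded at the start of Subsection \ref{QR-def-subsection} that $\nabla^+$ of a biharmonic function is QLAR (and, on the other side, QRAR); the powers $n\ge0$ are then extracted by differentiating in $\lambda$ at $\lambda=0$, and the negative powers by applying the inversion to $(Z^+)^n$. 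You instead reduce everything to the single claim that $(Z^+)^m$ is biharmonic for all $m\in\BB Z$, via the unlabelled lemma preceding this one ($f$ is QLAR iff $\square^2f=0$ and $\square^2(Z^+f)=0$), and verify that claim directly through the scalar/vector splitting $(Z^+)^m=A_m+B_m V$. Your identity $\partial A_{m+1}/\partial z^j=(m+1)B_mz^j$ is in substance the paper's observation specialized to individual powers: together with $\partial A_{m+1}/\partial z^0=(m+1)A_m$ it says $(Z^+)^m=\frac1{m+1}\nabla^+A_{m+1}$, the coefficientwise version of the generating-function identity. What your route buys is that it works with honest polynomial identities throughout and avoids interchanging $\partial/\partial\lambda$ with $\nabla\square$ (you still need a termwise-differentiation argument, but only to go back from the powers to the exponential, which is the easy direction); what the paper's route buys is brevity, since one application of Lemma \ref{harm-comp-lem} disposes of all non-negative powers at once. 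Both treatments of negative powers go through the inversion --- yours via the $\rho'$-invariance of ${\cal BH}$ (Lemma \ref{inversion-bh-lem}) applied to the polynomial components of $(W^+)^k$, the paper's via the conformal invariance of the quasi anti regular equations themselves.
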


\begin{proof}
Write $Z=z_0+z$, where $z_0=\re Z$ and $z = \im Z$, and
let $|z|= \sqrt{(z^1)^2+(z^2)^2+(z^3)^2}$.
By Lemma \ref{harm-comp-lem}, the function
$e^{\lambda z_0} \cos(\lambda|z|)$ is biharmonic.
On the other hand,
$$
\exp(\lambda Z^+) = e^{\lambda z_0} \sum_{n=0}^{\infty} (-\lambda z)^n/n!
= e^{\lambda z_0} \Bigl( \cos(\lambda|z|)
- \frac{z}{|z|} \sin (\lambda|z|) \Bigr)
= \frac1{\lambda} \nabla^+ e^{\lambda z_0} \cos(\lambda|z|),
$$
which proves
$\nabla \square \exp(\lambda Z^+)
= \square \exp(\lambda Z^+) \overleftarrow{\nabla} = 0$.

Differentiating $\exp(\lambda Z^+)$ with respect to $\lambda$ and setting
$\lambda=0$ proves
$$
\nabla \square (Z^+)^n = \square (Z^+)^n \overleftarrow{\nabla} = 0
$$
for non-negative integer powers.
Applying the inversion
$\bigl( \begin{smallmatrix} 0 & 1 \\ 1 & 0 \end{smallmatrix} \bigr)
\in GL(2,\HC)$ to $(Z^+)^n$ proves
$\nabla \square (Z^+)^n = \square (Z^+)^n \overleftarrow{\nabla} = 0$
for negative integer powers.
\end{proof}

\begin{cor}
The columns of $(Z^+)^n$, $n \in \BB Z$, are QLAR;
the rows of $(Z^+)^n$, $n \in \BB Z$, are QRAR.
\end{cor}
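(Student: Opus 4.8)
The plan is to deduce the corollary directly from the two matrix identities $\nabla \square (Z^+)^n = 0$ and $\square (Z^+)^n \overleftarrow{\nabla} = 0$ established in the preceding Lemma, the only additional ingredient being the way the operators $\nabla\square$ and $\square\overleftarrow{\nabla}$ interact with the column and row structure of a matrix-valued function.

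First I would recall the identification of $\HC$ with $2 \times 2$ complex matrices, under which $\nabla = e_0 \frac{\partial}{\partial z^0} - e_1 \frac{\partial}{\partial z^1} - e_2 \frac{\partial}{\partial z^2} - e_3 \frac{\partial}{\partial z^3}$ acts on a matrix-valued function $M(Z)$ by left matrix multiplication of the constant matrices $e_\mu$ against the partial derivatives of $M$. The key structural observation is that left multiplication by a matrix acts column by column: the $j$-th column of $e_\mu \cdot \partial M/\partial z^\mu$ is $e_\mu$ applied to the $j$-th column of $\partial M/\partial z^\mu$. Since $\square$ is the scalar Laplacian acting entrywise, it too preserves the column decomposition. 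Hence, writing $M = (Z^+)^n$ with columns $M_{\cdot 1}, M_{\cdot 2}$ (each an $\BB S$-valued function, as $\BB S$ is realized as columns of complex numbers), we have $(\nabla\square M)_{\cdot j} = \nabla\square (M_{\cdot j})$ for $j=1,2$. The Lemma gives $\nabla\square (Z^+)^n = 0$ as an identity of matrix-valued functions, so each column satisfies $\nabla\square (M_{\cdot j}) = 0$ and is therefore QLAR in the sense of Definition \ref{qr-definition}. The statement about rows is entirely symmetric: the operator $\overleftarrow{\nabla}$ acts by right matrix multiplication, which distributes row by row, and $\square$ again preserves the row decomposition, so $(\square M \overleftarrow{\nabla})_{i\cdot} = \square (M_{i\cdot}) \overleftarrow{\nabla}$; the identity $\square (Z^+)^n \overleftarrow{\nabla} = 0$ then shows each row (an $\BB S'$-valued function, $\BB S'$ being realized as rows) is QRAR.

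There is essentially no analytic obstacle here; the work is purely bookkeeping. The one point requiring genuine care is verifying that the left action of $\nabla$ commutes with extraction of a fixed column, and dually that the right action of $\overleftarrow{\nabla}$ commutes with extraction of a fixed row, so that the vanishing of $\nabla\square M$ (resp. $\square M \overleftarrow{\nabla}$) as a matrix is equivalent to the vanishing of the operator on each column (resp. row) individually. Both facts follow immediately from associativity of matrix multiplication, and the values land in $\BB S$ and $\BB S'$ respectively in a manner consistent with the decomposition $\BB S \otimes \BB S' \simeq \HC$ and with the target spaces demanded by Definition \ref{qr-definition}.
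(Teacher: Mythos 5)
Your argument is correct and is exactly the route the paper intends: the corollary is stated as an immediate consequence of the preceding Lemma, with the (unwritten) justification being precisely your observation that $\nabla$ acts by left multiplication (hence column by column), $\overleftarrow{\nabla}$ by right multiplication (hence row by row), and $\square$ entrywise, so the matrix identities $\nabla\square(Z^+)^n=0$ and $\square(Z^+)^n\overleftarrow{\nabla}=0$ are equivalent to the columns being QLAR and the rows being QRAR. Your write-up simply makes explicit the bookkeeping the paper leaves to the reader.
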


\begin{cor}  \label{series-cor}
Convergent ``power'' series of the form
$$  
\sum_{n \ge 0} \text{first column of } (Z^+)^n a_n
+ \sum_{n \ge 0} \text{second column of } (Z^+)^n b_n, \qquad a_n, b_n \in \HC,
$$
are QLAR.
Convergent ``power'' series of the form
$$  
\sum_{n \ge 0} c_n \cdot \text{ first row of } (Z^+)^n
+ \sum_{n \ge 0} d_n \cdot \text{ second row of } (Z^+)^n,
\qquad c_n, d_n \in \HC,
$$
are QRAR.
\end{cor}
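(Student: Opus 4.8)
The plan is to derive the corollary from the preceding Corollary, which asserts that each individual column (respectively row) of $(Z^+)^n$ is QLAR (respectively QRAR), by exploiting the linearity of the operator $\nabla\square$ and then justifying term-by-term differentiation of the series.

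First I would record that $\nabla\square$ is a $\BB C$-linear constant-coefficient differential operator of order three, and that right multiplication by a fixed matrix commutes with it, since right multiplication is external both to the left action of $\nabla$ and to the scalar operator $\square$. Writing $a_n = \bigl(\begin{smallmatrix} \alpha & \beta \\ \gamma & \delta \end{smallmatrix}\bigr)$, the first column of $(Z^+)^na_n$ is the combination $\alpha\cdot(\text{first column of }(Z^+)^n)+\gamma\cdot(\text{second column of }(Z^+)^n)$, and analogously for the second column; thus each column of $(Z^+)^na_n$ is a $\BB C$-linear combination of columns of $(Z^+)^n$. Since those columns are QLAR by the previous Corollary, and $\nabla\square$ annihilates every $\BB C$-linear combination of QLAR functions, each partial sum of the series in question is QLAR.

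Next I would pass to the limit. Each entry of $(Z^+)^n$ is a homogeneous polynomial of degree $n$ in $z^0,z^1,z^2,z^3$, so the series is an ordinary (multivariable) power series, and its convergence means uniform convergence on compact subsets of its domain to a holomorphic function $f$. On that open domain such a series may be differentiated term by term to any order, the differentiated series again converging uniformly on compacta; hence the third-order operator $\nabla\square$ commutes with the summation and $\nabla\square f = \sum_n \nabla\square(\cdots) = 0$, so $f$ is QLAR. The QRAR statement follows identically, replacing columns by rows, right multiplication by the $a_n,b_n$ with left multiplication by the $c_n,d_n$, and $\nabla\square$ acting on the left by the operator $\square\overleftarrow{\nabla}$ acting on the right. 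The content of the argument lies entirely in the linearity of $\nabla\square$ together with the passage to the limit; the only point demanding any care is this term-by-term application of $\nabla\square$ to the infinite sum, which is precisely the standard fact that a convergent power series converges along with all of its derivatives uniformly on compact subsets of its domain of convergence.
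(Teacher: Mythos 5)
Your proof is correct and follows the same (essentially unique) route the paper intends: the corollary is stated as an immediate consequence of the preceding one, relying exactly on the $\BB C$-linearity of $\nabla\square$ (equivalently, that right multiplication by a constant matrix commutes with it) plus term-by-term differentiation of a convergent power series. Nothing is missing.
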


\subsection{Reproducing Formulas}

In this subsection we prove reproducing formulas for the quasi left and right
anti regular functions similar to the Cauchy-Fueter formulas.
We follow the argument presented in \cite{LR}.

Recall the quaternionic-valued $3$-form on $\BB H$
$$
Dx= e_0 dx^1 \wedge dx^2 \wedge dx^3 - e_1 dx^0 \wedge dx^2 \wedge dx^3
+ e_2 dx^0 \wedge dx^1 \wedge dx^3 - e_3 dx^0 \wedge dx^1 \wedge dx^2
$$
that appears in the Cauchy-Fueter formulas. Its quaternionic conjugate
$$
(Dx)^+ = e_0 dx^1 \wedge dx^2 \wedge dx^3 + e_1 dx^0 \wedge dx^2 \wedge dx^3
- e_2 dx^0 \wedge dx^1 \wedge dx^3 + e_3 dx^0 \wedge dx^1 \wedge dx^2
$$
has the property that, if $f$ and $g$ are differentiable functions,
\begin{equation*}  %\label{dfDg}
d( g \cdot (Dx)^+ \cdot f)
= \bigl( (g \overleftarrow{\nabla})f + g(\nabla f) \bigr)dV, \qquad
dV = dx^0 \wedge dx^1 \wedge dx^2 \wedge dx^3.
\end{equation*}

\begin{thm}  \label{QAR-repro-thm}
Let $U \subset \BB H$ be an open bounded subset with piecewise ${\cal C}^1$
boundary $\partial U$, and let $\overrightarrow{n}$ denote the outward
pointing normal unit vector to $\partial U$.
Suppose that $f(X)$ is quasi left anti regular on a
neighborhood of the closure $\overline{U}$, then
\begin{multline}  \label{Cauchy-Fueter-left}
\frac1{2\pi^2} \int_{X \in \partial U}
\frac{X-X_0}{N(X-X_0)^2} \cdot (Dx)^+ \cdot f(X)
+ \frac1{8\pi^2} \int_{X \in \partial U}
\frac{\partial}{\partial \overrightarrow{n}} \frac{X-X_0}{N(X-X_0)}
\cdot \nabla f(X) \,dS  \\
- \frac1{8\pi^2} \int_{X \in \partial U} \frac{X-X_0}{N(X-X_0)} \cdot
\frac{\partial}{\partial \overrightarrow{n}} \nabla f(X) \,dS
= \begin{cases}
f(X_0) & \text{if $X_0 \in U$;} \\
0 & \text{if $X_0 \notin \overline{U}$.}
\end{cases}
\end{multline}
If $g(X)$ is quasi right anti regular on a neighborhood of the closure
$\overline{U}$, then
\begin{multline}   \label{Cauchy-Fueter-right}
\frac1{2\pi^2} \int_{X \in \partial U} g(X) \cdot (Dx)^+ \cdot
\frac{X-X_0}{N(X-X_0)^2}
+ \frac1{8\pi^2} \int_{X \in \partial U} (g \overleftarrow{\nabla})(X) \cdot
\frac{\partial}{\partial \overrightarrow{n}} \frac{X-X_0}{N(X-X_0)}\,dS  \\
- \frac1{8\pi^2} \int_{X \in \partial U}
\frac{\partial}{\partial \overrightarrow{n}}
(g \overleftarrow{\nabla})(X) \cdot \frac{X-X_0}{N(X-X_0)} \,dS
= \begin{cases}
g(X_0) & \text{if $X_0 \in U$;} \\
0 & \text{if $X_0 \notin \overline{U}$.}
\end{cases}
\end{multline}
\end{thm}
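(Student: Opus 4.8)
The plan is to reduce the identity to two classical ingredients: the Borel–Pompeiu (inhomogeneous Cauchy–Fueter) formula for the anti regular operator $\nabla$, and Green's second identity for the scalar Laplacian $\square$. Write $K(X) = \frac{X-X_0}{N(X-X_0)^2}$ for the kernel in the first integral and $L(X) = \frac{X-X_0}{N(X-X_0)}$ for the kernel in the other two. The elementary computation underlying everything is that $\square\frac{X}{N(X)} = -4\frac{X}{N(X)^2}$ (differentiate directly, using $\square N^{-1}=0$ away from the origin), so by translation invariance $\square L = -4K$ on $\HC\setminus\{X_0\}$. Note also that $K$ is the quaternionic conjugate of the Cauchy–Fueter kernel, i.e. the fundamental solution of $\nabla$. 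The QLAR hypothesis enters through the single identity $\square(\nabla f) = \nabla(\square f) = \nabla\square f = 0$, valid because $\square$ is a scalar operator commuting with $\nabla$.

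First I would apply the anti regular Borel–Pompeiu formula to $f$ itself. Using the product rule $d(g\cdot(Dx)^+\cdot f) = \bigl((g\overleftarrow{\nabla})f + g(\nabla f)\bigr)dV$ recalled above, Stokes' theorem on $U$ (with a ball $B_\epsilon(X_0)$ excised when $X_0\in U$), and the reproducing property of $K$ at its singularity, one obtains
\[
\frac1{2\pi^2}\int_{X\in\partial U}\frac{X-X_0}{N(X-X_0)^2}\cdot(Dx)^+\cdot f(X)
= \frac1{2\pi^2}\int_{X\in U}\frac{X-X_0}{N(X-X_0)^2}\cdot(\nabla f)(X)\,dV
+\begin{cases}f(X_0),&X_0\in U,\\0,&X_0\notin\overline U.\end{cases}
\]
Thus the first boundary integral equals the desired reproducing value plus a volume correction $\frac1{2\pi^2}\int_U K\cdot\nabla f\,dV$.

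Next I would show that the remaining two boundary integrals exactly cancel this correction. They combine into $\frac1{8\pi^2}\int_{\partial U}\bigl[(\tfrac{\partial}{\partial\overrightarrow{n}}L)\cdot\nabla f - L\cdot\tfrac{\partial}{\partial\overrightarrow{n}}(\nabla f)\bigr]\,dS$, which is precisely the boundary side of Green's second identity for the pair $L$ and $\nabla f$ (applied entrywise, with $L$ kept on the left, since $\square$ is scalar). Converting to a volume integral and using $\square L = -4K$ together with $\square(\nabla f)=0$ gives
\[
\frac1{8\pi^2}\int_{\partial U}\Bigl[\bigl(\tfrac{\partial}{\partial\overrightarrow{n}}L\bigr)\cdot\nabla f - L\cdot\tfrac{\partial}{\partial\overrightarrow{n}}(\nabla f)\Bigr]\,dS
= \frac1{8\pi^2}\int_U (\square L)\cdot\nabla f\,dV
= -\frac1{2\pi^2}\int_U \frac{X-X_0}{N(X-X_0)^2}\cdot\nabla f\,dV.
\]
Adding the two contributions, the volume integrals cancel, leaving $f(X_0)$ when $X_0\in U$ and $0$ when $X_0\notin\overline U$. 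The QRAR formula \eqref{Cauchy-Fueter-right} then follows by the same argument, or from \eqref{Cauchy-Fueter-left} by quaternionic conjugation and the left–right symmetry of the modules.

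The main obstacle is the singular analysis at $X=X_0$, which must be handled carefully in two places. For the Borel–Pompeiu step I must confirm that $\int_{\partial B_\epsilon}K\cdot(Dx)^+\cdot f\to 2\pi^2 f(X_0)$ and that $\int_U K\cdot\nabla f\,dV$ converges (here $K\sim|X-X_0|^{-3}$ is integrable in four dimensions). For the Green step I must verify that $\square L = -4K$ holds distributionally with no extra delta contribution — this is where $L\sim|X-X_0|^{-1}$ being subcritical in dimension four is essential, in contrast to the critical $N(X-X_0)^{-1}\sim|X-X_0|^{-2}$ — and that the auxiliary sphere integrals $\int_{\partial B_\epsilon}\bigl[(\tfrac{\partial}{\partial\overrightarrow{n}}L)\cdot\nabla f - L\cdot\tfrac{\partial}{\partial\overrightarrow{n}}\nabla f\bigr]\,dS$ vanish as $\epsilon\to0$ (they are $O(\epsilon)$ and $O(\epsilon^2)$ respectively). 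Pinning down the precise constants and the orientation conventions in the anti regular Borel–Pompeiu formula is the other point demanding care.
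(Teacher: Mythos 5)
Your argument is correct and follows essentially the same route as the paper's proof: both rest on the identity $\square_X \frac{X-X_0}{N(X-X_0)} = -4\frac{X-X_0}{N(X-X_0)^2}$, a Stokes/Borel--Pompeiu step relating the first boundary integral to $\int_U K\cdot\nabla f\,dV$, and Green's second identity applied to the pair $\bigl(\frac{X-X_0}{N(X-X_0)},\,\nabla f\bigr)$ with the QLAR hypothesis killing the volume term $\int_U L\cdot\nabla\square f\,dV$. The only difference is presentational (you start from the left-hand side and cancel the volume corrections, whereas the paper expands $8\pi^2 f(X_0)$ forward), and your attention to the $\epsilon$-sphere estimates makes the singular analysis, which the paper leaves implicit, explicit.
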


\begin{proof}
We give a proof for the case $f$ is QLAR and $X_0 \in U$ only;
the other cases are similar and easier.
%Recall that, in general, if $f$ and $g$ are sufficiently smooth functions,
%$$
%d( g \cdot (Dx)^+ \cdot f)
%= \bigl( (g \overleftarrow{\nabla})f + g(\nabla f) \bigr) dV.
%$$
We have:
$$
\square_X \frac{X-Y}{N(X-Y)} = \square_Y \frac{X-Y}{N(X-Y)}
= -4 \frac{X-Y}{N(X-Y)^2}.
$$
Let $S^3_{\epsilon}(X_0)$ be the $3$-sphere in $\BB H$ of radius $\epsilon$
centered at $X_0$.
Then, for any $\BB S$-valued function $f$, we have:
\begin{multline*}
8\pi^2 \cdot f(X_0)
= 4 \lim_{\epsilon \to 0^+} \int_{X \in S^3_{\epsilon}(X_0)}
\frac{X-X_0}{N(X-X_0)^2} \cdot (Dx)^+ \cdot f(X)  \\
= 4 \int_{X \in \partial U} \frac{X-X_0}{N(X-X_0)^2} \cdot (Dx)^+ \cdot f(X)
- 4 \lim_{\epsilon \to 0^+} \int_{X \in U \setminus B_{\epsilon}(X_0)}
\frac{X-X_0}{N(X-X_0)^2} \cdot \nabla f(X) \,dV  \\
= - \int_{X \in \partial U} \square_X \frac{X-X_0}{N(X-X_0)} \cdot (Dx)^+ \cdot f(X)
+ \lim_{\epsilon \to 0^+} \int_{X \in U \setminus B_{\epsilon}(X_0)}
\square_X \frac{X-X_0}{N(X-X_0)} \cdot \nabla f(X) \,dV  \\
= - \int_{X \in \partial U} \square_X \frac{X-X_0}{N(X-X_0)} \cdot (Dx)^+ \cdot f(X)
+\int_{X \in \partial U} \frac{\partial}{\partial \overrightarrow{n}}
\frac{X-X_0}{N(X-X_0)} \cdot \nabla f(X) \,dS  \\
- \int_{X \in \partial U} \frac{X-X_0}{N(X-X_0)} \cdot
\frac{\partial}{\partial \overrightarrow{n}} \nabla f(X) \,dS
+ \int_{X \in U} \frac{X-X_0}{N(X-X_0)} \cdot \nabla \square f(X) \,dV.
\end{multline*}
At the last step we used Green's identity
$$
\int_U u \square v = \int_U v \square u
+ \int_{\partial U} u \frac{\partial v}{\partial \overrightarrow{n}}
- \int_{\partial U} v \frac{\partial u}{\partial \overrightarrow{n}}
$$
with $u = \nabla f(X)$ and $v = \frac{X-X_0}{N(X-X_0)}$.
If $f$ is QLAR, the integral over $U$ is zero, and we obtain
\eqref{Cauchy-Fueter-left}.

The proof of \eqref{Cauchy-Fueter-right} is similar.
\end{proof}

%Similarly, for any $\BB S'$-valued function $g$, we have:
%\begin{multline*}
%8\pi^2 \cdot g(X_0)
%= 4\int_{X \in \partial U} g(X) \cdot (Dx)^+ \cdot \frac{X-X_0}{N(X-X_0)^2}
%- 4\int_{X \in U} (g \overleftarrow{\nabla})(X) \cdot
%\frac{X-X_0}{N(X-X_0)^2} \,dV  \\
%= - \int_{X \in \partial U} g(X) \cdot (Dx)^+ \cdot \square_X \frac{X-X_0}{N(X-X_0)}
%+ \int_{X \in U} (g \overleftarrow{\nabla})(X) \cdot
%\square_X \frac{X-X_0}{N(X-X_0)} \,dV  \\
%= - \int_{X \in \partial U} g(X) \cdot (Dx)^+ \cdot \square_X \frac{X-X_0}{N(X-X_0)}
%+ \int_{X \in \partial U} (g \overleftarrow{\nabla})(X) \cdot
%\frac{\partial}{\partial \overrightarrow{n}} \frac{X-X_0}{N(X-X_0)}\,dS  \\
%- \int_{X \in \partial U} \frac{\partial}{\partial \overrightarrow{n}}
%(g \overleftarrow{\nabla})(X) \cdot \frac{X-X_0}{N(X-X_0)} \,dS
%+ \int_{X \in U}
%(\square g \overleftarrow{\nabla})(X) \cdot \frac{X-X_0}{N(X-X_0)} \,dV.
%\end{multline*}
%We used Green's identity
%$$
%\int_U u \square v = \int_U v \square u
%+ \int_{\partial U} u \frac{\partial v}{\partial \overrightarrow{n}}
%- \int_{\partial U} v \frac{\partial u}{\partial \overrightarrow{n}}
%$$
%with $u = (g \overleftarrow{\nabla})(X)$ and $v = \frac{X-X_0}{N(X-X_0)}$.
%If $g$ is QRAR, the integral over $U$ is zero, and we obtain
%\eqref{Cauchy-Fueter-right}.

In the special case when the open set $U$ is $B_R$ -- the open ball in $\BB H$
of radius $R$ centered at the origin, $\partial U = S^3_R$
-- the $3$-sphere of radius $R$ centered at the origin -- and
$$
\frac{\partial}{\partial \overrightarrow{n}} = \frac1R \deg, \qquad
(Dx)^+ \Bigr|_{S^3_R} = RX^{-1}\,dS
$$
(Lemma 6 in \cite{FL1}). Thus we obtain:

\begin{cor}
Suppose that $f(X)$ is quasi left anti regular on a
neighborhood of the closure $\overline{B_R}$, then
\begin{multline*}
\frac{R}{2\pi^2} \int_{X \in S^3_R}
\frac{X-X_0}{N(X-X_0)^2} \cdot X^{-1} \cdot f(X) \,dS
+ \frac1{8\pi^2} \int_{X \in S^3_R} \Bigl( \deg_X \frac{X-X_0}{N(X-X_0)} \Bigr)
\cdot \nabla f(X) \,\frac{dS}R  \\
- \frac1{8\pi^2} \int_{X \in S^3_R} \frac{X-X_0}{N(X-X_0)}
\cdot \deg \nabla f(X) \,\frac{dS}R
= \begin{cases}
f(X_0) & \text{if $X_0 \in B_R$;} \\
0 & \text{if $X_0 \notin \overline{B_R}$.}
\end{cases}
\end{multline*}
If $g(X)$ is quasi right anti regular on a neighborhood of the closure
$\overline{B_R}$, then
\begin{multline*}
\frac{R}{2\pi^2}
\int_{X \in S^3_R} g(X) \cdot X^{-1} \cdot \frac{X-X_0}{N(X-X_0)^2}\,dS
+ \frac1{8\pi^2} \int_{X \in S^3_R} (g \overleftarrow{\nabla})(X) \cdot
\deg_X \frac{X-X_0}{N(X-X_0)}\,\frac{dS}R  \\
- \frac1{8\pi^2} \int_{X \in S^3_R}
\Bigl( \deg (g \overleftarrow{\nabla})(X) \Bigr)
\cdot \frac{X-X_0}{N(X-X_0)} \,\frac{dS}R
= \begin{cases}
g(X_0) & \text{if $X_0 \in B_R$;} \\
0 & \text{if $X_0 \notin \overline{B_R}$.}
\end{cases}
\end{multline*}
\end{cor}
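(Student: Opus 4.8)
The plan is to treat this Corollary as the direct specialization of Theorem~\ref{QAR-repro-thm} to the open set $U = B_R$, whose boundary is the round sphere $S^3_R$. No new analysis is required: the whole proof amounts to inserting the two boundary identities recorded just above the Corollary---namely $\frac{\partial}{\partial\overrightarrow{n}} = \frac1R\deg$ on $S^3_R$ and $(Dx)^+\big|_{S^3_R} = R\,X^{-1}\,dS$ (Lemma 6 in \cite{FL1})---into each of the three integrals of \eqref{Cauchy-Fueter-left}, and then observing that the right-hand side, equal to $f(X_0)$ when $X_0\in B_R$ and to $0$ when $X_0\notin\overline{B_R}$, carries over verbatim.

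Concretely, I would proceed term by term. In the first integral I replace $(Dx)^+$ by $R\,X^{-1}\,dS$, which converts $\frac1{2\pi^2}\int_{\partial U}\frac{X-X_0}{N(X-X_0)^2}\cdot(Dx)^+\cdot f(X)$ into $\frac{R}{2\pi^2}\int_{S^3_R}\frac{X-X_0}{N(X-X_0)^2}\cdot X^{-1}\cdot f(X)\,dS$, which is exactly the leading term of the Corollary. In the second and third integrals I replace each normal derivative $\frac{\partial}{\partial\overrightarrow{n}}$ by $\frac1R\deg$; the resulting factor of $R^{-1}$ combines with $dS$ to produce $\frac{dS}R$, yielding $\frac1{8\pi^2}\int_{S^3_R}\bigl(\deg_X\frac{X-X_0}{N(X-X_0)}\bigr)\cdot\nabla f(X)\,\frac{dS}R$ and $-\frac1{8\pi^2}\int_{S^3_R}\frac{X-X_0}{N(X-X_0)}\cdot\deg\nabla f(X)\,\frac{dS}R$, matching the remaining two terms.

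The only point worth verifying---and it is where whatever real content there is resides, although it is already supplied by the cited results---is the applicability of the two boundary identities on $S^3_R$. The normal-derivative identity is immediate: the outward unit normal at $X\in S^3_R$ is radial, so $\frac{\partial}{\partial\overrightarrow{n}}$ agrees with $R^{-1}$ times the Euler (degree) operator $\deg$; the volume-form identity $(Dx)^+\big|_{S^3_R} = R\,X^{-1}\,dS$ is Lemma 6 in \cite{FL1}. Since these substitutions are purely formal and do not touch the hypothesis that $f$ is quasi left anti regular, I expect no genuine obstacle. Having recast \eqref{Cauchy-Fueter-left} this way, the QRAR statement follows by the identical substitution into \eqref{Cauchy-Fueter-right}, so I would simply note that the right-hand case is handled in the same manner.
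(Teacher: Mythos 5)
Your proposal is correct and matches the paper exactly: the Corollary is obtained by substituting the two boundary identities $\frac{\partial}{\partial\overrightarrow{n}}=\frac1R\deg$ and $(Dx)^+\big|_{S^3_R}=RX^{-1}\,dS$ (Lemma 6 in \cite{FL1}) into Theorem \ref{QAR-repro-thm} with $U=B_R$, which is precisely how the paper derives it.
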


%Let $f \in {\cal BH}^+$, then $\tilde f = \nabla^+ f: \BB H \to \BB H$
%is a function satisfying $\nabla \square \tilde f =0$, and the reproducing
%formulas from the previous section apply to $\tilde f$.

\section{$K$-Types of Quasi Anti Regular Functions}  \label{Sect4}

In this section we study the polynomial quasi left and right regular functions.
We identify the $K$-types of these functions with respect to the action of
$SU(2) \times SU(2)$ realized as the subgroup of diagonal matrices in
$GL(2,\BB H)$.
We spell out the bases of these $K$-types as well as actions of
$\mathfrak{gl}(2,\HC)$ in these bases.
As an immediate consequence, we see that certain spaces of
polynomial quasi left and right anti regular functions form irreducible
representations of $\mathfrak{gl}(2,\HC)$ (Proposition \ref{irred-prop}).
In subsequent sections we use these results to construct an invariant
bilinear pairing between left and right quasi regular functions,
pseudounitary structures and two expansions of the reproducing kernel.

\subsection{Polynomial Quasi Anti Regular Functions}  \label{polynomial-subsection}

We introduce the following spaces of quasi left and right anti regular
functions:
%$$
%{\cal U}^+ = \{ f \in \BB C[z^0,z^1,z^2,z^3] \otimes \BB S ;\:
%\nabla\square f =0 \},
%$$
$$
{\cal U} = \{ f \in \BB C[z^0,z^1,z^2,z^3,N(Z)^{-1}] \otimes \BB S ;\:
\nabla\square f =0 \},
$$
$$
{\cal U}(d) = \{ f \in {\cal U};\: \text{$f$ is homogeneous of degree $d$} \},
\qquad d \in \BB Z.
$$
Then
$$
{\cal U} = \bigoplus_{d \in \BB Z} {\cal U}(d) = {\cal U}^+ \oplus {\cal U}^-,
\quad \text{where}
$$
$$
{\cal U}^+ = \bigoplus_{d \in \BB Z,\: d \ge 0} {\cal U}(d), \qquad
{\cal U}^- = \bigoplus_{d \in \BB Z,\: d < 0} {\cal U}(d).   
$$
Note that ${\cal U}$ and a subspace
$$
\{ f \in \BB C[z^0,z^1,z^2,z^3] \otimes \BB S ;\: \nabla\square f =0 \}
\subset {\cal U}^+
$$
are preserved by the $\pi'_l$ action of $\mathfrak{gl}(2,\HC)$.

Similarly, we introduce
%$$
%{\cal U}'^+ = \{ g \in \BB C[z^0,z^1,z^2,z^3] \otimes \BB S' ;\:
%(\square g) \overleftarrow{\nabla} =0 \},
%$$
$$
{\cal U}' = \{ g \in \BB C[z^0,z^1,z^2,z^3,N(Z)^{-1}] \otimes \BB S ;\:
(\square g) \overleftarrow{\nabla} =0 \},
$$
$$
{\cal U}'(d) = \{ g \in {\cal U}';\: \text{$g$ is homogeneous of degree $d$} \},
\qquad d \in \BB Z.
$$
Then
$$
{\cal U}' = \bigoplus_{d \in \BB Z} {\cal U}'(d) = {\cal U}'^+ \oplus {\cal U}'^-,
\quad \text{where}
$$
$$
{\cal U}'^+ = \bigoplus_{d \in \BB Z,\: d \ge 0} {\cal U}'(d), \qquad
{\cal U}'^- = \bigoplus_{d \in \BB Z,\: d < 0} {\cal U}'(d).   
$$
Note that ${\cal U}'$ and a subspace
$$
\{ g \in \BB C[z^0,z^1,z^2,z^3] \otimes \BB S' ;\:
(\square g) \overleftarrow{\nabla} =0 \} \subset {\cal U}'^+
$$
are preserved by the $\pi'_r$ action of $\mathfrak{gl}(2,\HC)$.

\begin{lem}  \label{degree-decomp-lem}
Let $\lambda \in \BB C$ with $|\lambda|=1$, and consider an element
$\Lambda = \left( \begin{smallmatrix} \lambda & 0 & 0 & 0 \\
  0 & \lambda & 0 & 0 \\ 0 & 0 & \lambda^{-1} & 0 \\
  0 & 0 & 0 & \lambda^{-1} \end{smallmatrix} \right) \in U(2) \times U(2)$.
Then, for $f \in {\cal U}(d)$ and $g \in {\cal U}'(d)$, we have:
$$
\pi'_l(\Lambda) f = \lambda^{-2d-1} f \quad \text{and} \quad
\pi'_r(\Lambda) g = \lambda^{-2d-1} g.
$$
\end{lem}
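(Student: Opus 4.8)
The plan is to compute both actions directly by substituting $\Lambda$ into the defining formulas \eqref{pi'_l} and \eqref{pi'_r}, exploiting that $\Lambda$ is block-scalar. Under the identification $GL(4,\BB C) \simeq GL(2,\HC)$, the matrix $\Lambda$ becomes the $2\times 2$ quaternionic matrix $\bigl( \begin{smallmatrix} \lambda e_0 & 0 \\ 0 & \lambda^{-1}e_0 \end{smallmatrix} \bigr)$, so in the notation of \eqref{pi'_l} we have $a' = \lambda e_0$, $b' = c' = 0$ and $d' = \lambda^{-1}e_0$. The crucial simplification is that all nonzero entries are complex scalars times $e_0$, hence central in $\HC$; consequently every quaternionic product below reduces to ordinary multiplication in $\BB C$, and the left action on $\BB S$ (resp. the right action on $\BB S'$) is by honest scalars. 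I would also record the single nontrivial evaluation $N(\lambda e_0) = (\lambda e_0)(\lambda e_0)^+ = \lambda^2$.

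For the $\pi'_l$ action, since $c'=0$ and $b'=0$ the argument of $f$ collapses to $(a')^{-1}(Zd') = \lambda^{-1}e_0 \cdot \lambda^{-1}Z = \lambda^{-2}Z$, while the prefactor is $\frac{a'-Zc'}{N(a'-Zc')} = \frac{\lambda e_0}{\lambda^2} = \lambda^{-1}e_0$. Thus $\bigl(\pi'_l(\Lambda)f\bigr)(Z) = \lambda^{-1} f(\lambda^{-2}Z)$, and invoking homogeneity of degree $d$, namely $f(\lambda^{-2}Z) = \lambda^{-2d}f(Z)$, would give $\pi'_l(\Lambda)f = \lambda^{-2d-1}f$, as claimed.

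For $\pi'_r$ the formula \eqref{pi'_r} is written in terms of $h^{-1} = \bigl( \begin{smallmatrix} a & b \\ c & d \end{smallmatrix} \bigr)$, so I would first record $\Lambda^{-1} = \bigl( \begin{smallmatrix} \lambda^{-1}e_0 & 0 \\ 0 & \lambda e_0 \end{smallmatrix} \bigr)$, i.e. $a = \lambda^{-1}e_0$, $b=c=0$, $d = \lambda e_0$. Then $(aZ+b)(cZ+d)^{-1} = (\lambda^{-1}Z)(\lambda e_0)^{-1} = \lambda^{-2}Z$ once more, and the right factor is $\frac{cZ+d}{N(cZ+d)} = \frac{\lambda e_0}{\lambda^2} = \lambda^{-1}e_0$, so $\bigl(\pi'_r(\Lambda)g\bigr)(Z) = g(\lambda^{-2}Z)\,\lambda^{-1} = \lambda^{-2d-1}g(Z)$ by the same homogeneity argument. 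There is no substantive obstacle here beyond bookkeeping: the only points requiring care are reading off the quaternionic $2\times 2$ matrix from the $4\times 4$ complex form and verifying $N(\lambda e_0)=\lambda^2$, after which the centrality of the entries makes both computations immediate. I note finally that the hypothesis $|\lambda|=1$ enters only to guarantee that $\Lambda$ genuinely lies in $U(2)\times U(2)$; the eigenvalue identity itself holds for every $\lambda \in \BB C^{\times}$.
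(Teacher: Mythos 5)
Your computation is correct and is precisely the direct substitution the paper intends when it says the result "follows immediately from equations \eqref{pi'_l}--\eqref{pi'_r}": identify $\Lambda$ with the block-scalar quaternionic matrix, note $N(\lambda e_0)=\lambda^2$, and use homogeneity. Nothing further is needed.
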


\begin{proof}
The result follows immediately from equations \eqref{pi'_l}-\eqref{pi'_r}.
\end{proof}

\begin{lem}  \label{biharm-decomp-lem}
Let $f \in {\cal U}(d)$, then $f$ can be written uniquely as
$f=\phi + N(Z) \cdot f'$, where $\phi$ is harmonic of degree $d$,
while $f'$ has degree $d-2$ and satisfies $\nabla f'=0$:
$$
\phi \in {\cal H}(d) \otimes \BB S =
\{ h \in \BB C[z^0,z^1,z^2,z^3,N(Z)^{-1}] \otimes \BB S ;\:
\square h =0,\: \text{$h$ is homogeneous of degree $d$} \},
$$
$$
f' = \{ f \in \BB C[z^0,z^1,z^2,z^3,N(Z)^{-1}] \otimes \BB S ;\:
\nabla f =0,\: \text{$f$ is homogeneous of degree $d-2$} \}.
$$
Conversely, each sum of this form $\phi + N(Z) \cdot f'$
belongs to ${\cal U}(d)$.

Similarly, let $g \in {\cal U}'(d)$, then $g$ can be written uniquely as
$g=\psi + N(Z) \cdot g'$, where $\psi$ is harmonic of degree $d$,
while $g'$ has degree $d-2$ and satisfies $g' \overleftarrow{\nabla}=0$:
$$
\psi \in {\cal H}(d) \otimes \BB S' =
\{ h \in \BB C[z^0,z^1,z^2,z^3,N(Z)^{-1}] \otimes \BB S' ;\:
\square h =0,\: \text{$h$ is homogeneous of degree $d$} \},
$$
$$
g' = \{ g \in \BB C[z^0,z^1,z^2,z^3,N(Z)^{-1}] \otimes \BB S' ;\:
g \overleftarrow{\nabla} =0,\: \text{$g$ is homogeneous of degree $d-2$} \}.
$$
Conversely, each sum of this form $\psi + N(Z) \cdot g'$ belongs to
${\cal U}'(d)$.
\end{lem}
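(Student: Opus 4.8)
The plan is to reduce quasi-anti-regularity to a regularity condition on the inner factor, by tracking how the Laplacian interacts with multiplication by $N(Z)$. First I would record the scalar identity
$$
\square\bigl(N(Z)\cdot u\bigr) = N(Z)\cdot\square u + 4(\deg+2)u,
$$
which follows from $N(Z) = (z^0)^2+(z^1)^2+(z^2)^2+(z^3)^2$ and the product rule; since $N(Z)$, $\square$ and $\deg$ all act as scalars, it holds verbatim for $\BB S$- and $\BB S'$-valued functions. In particular, if $u$ is harmonic and homogeneous of degree $d-2$, then $\square\bigl(N(Z)u\bigr)=4d\,u$.

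For existence, take $f\in{\cal U}(d)$ and set $u:=\square f$. Applying $\nabla^+$ on the left to $\nabla\square f=0$ and using $\nabla^+\nabla=\square$ gives $\square^2f=0$, so $u$ is harmonic of degree $d-2$; moreover $\nabla u=\nabla\square f=0$, so $u$ is regular. When $d\ne0$ I would put $f':=\tfrac1{4d}u$ and $\phi:=f-N(Z)f'$; the identity above yields $\square\phi=\square f-4d f'=0$, so $\phi$ is harmonic of degree $d$ while $f'$ is regular of degree $d-2$. The converse is the same computation in reverse: for harmonic $\phi$ of degree $d$ and regular $f'$ of degree $d-2$ (so that $\square f'=\nabla^+\nabla f'=0$), the identity gives $\square\bigl(\phi+N(Z)f'\bigr)=4d f'$ and hence $\nabla\square\bigl(\phi+N(Z)f'\bigr)=4d\,\nabla f'=0$, placing $\phi+N(Z)f'$ in ${\cal U}(d)$.

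Uniqueness follows by subtracting two decompositions: $\phi_1-\phi_2=N(Z)(f'_2-f'_1)$ with both sides harmonic and $g:=f'_2-f'_1$ regular of degree $d-2$, so applying $\square$ gives $4d\,g=0$ and hence $g=0$ once $d\ne0$. The one genuinely delicate point --- and the main obstacle --- is the degenerate degree $d=0$, where the factor $4d$ vanishes and $f'=\tfrac1{4d}u$ is meaningless. Everything there hinges on the vanishing of regular functions of degree $-2$: such a function is harmonic (since $\square=\nabla^+\nabla$), the only harmonic functions of degree $-2$ are the multiples $N(Z)^{-1}c$, $c\in\BB S$, and the direct computation $\nabla\bigl(N(Z)^{-1}c\bigr)=-2\,N(Z)^{-2}Z^+c$ shows such a function is regular only for $c=0$ (equivalently, $\dim_{\BB C}{\cal BH}(0)=1$ forces a degree-$0$ biharmonic function to be constant). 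Applied to $u=\square f$ this forces $\square f=0$, so one takes $\phi=f$, $f'=0$; applied to $g$ it closes the uniqueness argument. The QRAR statement is proved identically, replacing $\nabla$ by the right action $\overleftarrow{\nabla}$, left multiplication by right multiplication, and $\BB S$ by $\BB S'$.
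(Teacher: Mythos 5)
Your proof is correct and follows essentially the same route as the paper's: both arguments turn on the identity $\square\bigl(N(Z)\cdot u\bigr) = N(Z)\cdot\square u + 4(\deg+2)u$ and the resulting relation $\square f = 4d\,f'$. The only difference is organizational --- the paper obtains the decomposition componentwise from Proposition \ref{biharmonic-prop} and then checks that $\nabla f'=0$, whereas you construct $f'$ directly as $\tfrac1{4d}\square f$ and settle the degenerate case $d=0$ by the explicit computation $\nabla\bigl(N(Z)^{-1}c\bigr)=-2N(Z)^{-2}Z^+c$, which makes existence self-contained and the uniqueness at $d=0$ more transparent than the paper's brief remark.
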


\begin{proof}
Write $f= \bigl( \begin{smallmatrix} f_1 \\ f_2 \end{smallmatrix} \bigr)$
for some $f_1, f_2 \in \BB C[z^0,z^1,z^2,z^3,N(Z)^{-1}]$.
Since $\square^2 f_1 = \square^2 f_2 =0$, by Proposition \ref{biharmonic-prop},
we have $f_1 = \phi_1 + N(Z) \cdot f'_1$ and $f_2 = \phi_2 + N(Z) \cdot f'_2$
for some harmonic functions
$\phi_1, \phi_2, f'_1, f'_2 \in \BB C[z^0,z^1,z^2,z^3,N(Z)^{-1}]$
of appropriate degrees.
The harmonic functions $\phi_1,\phi_2,f_1,f_2$ are unique, except when $d=0$,
in which case we may assume that $f'_1=f'_2=0$. Let
$\phi = \bigl( \begin{smallmatrix} \phi_1 \\ \phi_2 \end{smallmatrix} \bigr)$,
$f'= \bigl( \begin{smallmatrix} f'_1 \\ f'_2 \end{smallmatrix} \bigr)$.
It remains to show that $f'$ satisfies $\nabla f'=0$.
By direct calculation,
\begin{equation}  \label{Laplacian-Nf}
\square \bigl( N(Z)^k \cdot h(Z) \bigr) =
4kN(Z)^{k-1} \cdot (\deg+k+1)h(Z) +N(Z)^k \cdot \square h(Z).
\end{equation}
%Recall equation (33) from \cite{ATMP}:
%$$
%\square \bigl( N(Z)^k \cdot h_l(Z) \bigr) = 4k(d'+k+1) N(Z)^{k-1} \cdot h_{d'}(Z),
%$$
%$$
%h_{d'} \in {\cal H}(d') = \{ h \in \BB C[z^0,z^1,z^2,z^3,N(Z)^{-1}] ;\:
%\square h =0,\: \text{$h$ is homogeneous of degree $d'$} \},
%$$
%Considering separately the cases $d>0$ and $d<0$, we see that
Hence,
$$
\square f = \square (N(Z) \cdot f') = 4d f',
$$
and $\nabla\square f=0$ if and only if $\nabla f'=0$.

The case of $\BB S'$-valued functions is similar.
\end{proof}

\begin{cor}
For $d \ge 0$, we have:
$$
{\cal U}^+(d) = \{ f \in \BB C[z^0,z^1,z^2,z^3] \otimes \BB S ;\:
\nabla\square f =0,\: \text{$f$ is homogeneous of degree $d$} \},
$$
$$
{\cal U}'^+(d) = \{ g \in \BB C[z^0,z^1,z^2,z^3] \otimes \BB S' ;\:
(\square g)\overleftarrow{\nabla} =0,\:
\text{$g$ is homogeneous of degree $d$} \}.
$$
In particular,
$$
{\cal U}^+ = \{ f \in \BB C[z^0,z^1,z^2,z^3] \otimes \BB S ;\:
\nabla\square f =0 \},
$$
$$
{\cal U}'^+ = \{ g \in \BB C[z^0,z^1,z^2,z^3] \otimes \BB S' ;\:
(\square g)\overleftarrow{\nabla} =0 \}.
$$
\end{cor}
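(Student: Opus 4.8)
The plan is to combine Lemma \ref{biharm-decomp-lem} with the known $K$-type descriptions of harmonic and regular Laurent polynomials, reducing the statement to a bookkeeping argument on homogeneity degrees. Only the inclusion ${\cal U}^+(d) \subseteq \BB C[z^0,z^1,z^2,z^3]\otimes\BB S$ requires proof, since the reverse inclusion is immediate: a homogeneous polynomial solution of $\nabla\square f = 0$ of degree $d \ge 0$ automatically lies in ${\cal U}(d) = {\cal U}^+(d)$.

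First I would fix $d \ge 0$ and $f \in {\cal U}^+(d)$, and invoke Lemma \ref{biharm-decomp-lem} to write $f = \phi + N(Z)\cdot f'$ with $\phi$ harmonic of degree $d$ and $f'$ left regular ($\nabla f' = 0$) of degree $d-2$. It then suffices to show that each summand is a genuine polynomial. For $\phi$: the homogeneous harmonic elements of $\BB C[z^0,z^1,z^2,z^3,N(Z)^{-1}]$ of degree $\ge 0$ are precisely the harmonic polynomials spanned by the $t^l_{n\,\underline m}(Z)$ with $2l=d$, the harmonic functions genuinely involving $N(Z)^{-1}$ all having strictly negative degree (this is the harmonic specialization of the $K$-type analysis behind Proposition \ref{K-type-biharm_prop}; see \cite{FL1}). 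Hence $\phi \in \BB C[z^0,z^1,z^2,z^3]\otimes\BB S$.

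The remaining term $N(Z)\cdot f'$ is controlled by the degree of $f'$, namely $d-2$, together with the structure of left regular Laurent polynomials from \cite{FL1}: every regular function of non-negative degree is a polynomial, and every regular function with a pole along $N(Z)=0$ has degree $\le -3$; in particular no nonzero left regular function has degree $-1$ or $-2$. Thus if $d \ge 2$, then $f'$ is a regular polynomial and $N(Z)\cdot f'$ is a polynomial; while if $d \in \{0,1\}$, then $f'$ has degree $-2$ or $-1$ and must vanish, so $f = \phi$ (matching the case $d=0$ of Lemma \ref{biharm-decomp-lem}, where $f'=0$ may already be assumed). In every case $f \in \BB C[z^0,z^1,z^2,z^3]\otimes\BB S$, establishing the first equality; the ``in particular'' statement follows by summing over $d\ge 0$ since ${\cal U}^+ = \bigoplus_{d\ge 0}{\cal U}(d)$. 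The assertions for ${\cal U}'^+(d)$ and ${\cal U}'^+$ are proved the same way, with $\nabla$ replaced by $\overleftarrow{\nabla}$ and left regular functions by right regular ones.

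The hard part will be the degree gap for regular functions — that there are no nonzero left regular functions of degree $-1$ or $-2$ — which is exactly what forces the $N(Z)\cdot f'$ correction to remain polynomial (or to vanish) precisely in the range $d\ge 0$. This gap is supplied by the $K$-type classification in \cite{FL1}; granting it, everything else is degree counting through Lemma \ref{biharm-decomp-lem}.
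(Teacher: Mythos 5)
Your proposal is correct and follows essentially the same route as the paper: decompose $f=\phi+N(Z)\cdot f'$ via Lemma \ref{biharm-decomp-lem} and reduce to the fact that harmonic and (anti) regular Laurent polynomials of non-negative degree are genuine polynomials. Your explicit treatment of the edge cases $d\in\{0,1\}$ via the degree gap for anti regular functions (no nonzero ones in degrees $-1$, $-2$) is a point the paper leaves implicit, and your only slip is terminological: the condition $\nabla f'=0$ makes $f'$ left \emph{anti} regular in this paper's conventions, not left regular.
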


\begin{proof}
  The result follows from the corresponding statements for harmonic
  and anti regular functions: for $d \ge 0$,
\begin{align*}
{\cal H}(d) &=
\{ h \in \BB C[z^0,z^1,z^2,z^3,N(Z)^{-1}] ;\:
\square h =0,\: \text{$h$ is homogeneous of degree $d$} \}  \\
&= \{ h \in \BB C[z^0,z^1,z^2,z^3] ;\:
\square h =0,\: \text{$h$ is homogeneous of degree $d$} \},
\end{align*}
\begin{multline*}
\{ f \in \BB C[z^0,z^1,z^2,z^3,N(Z)^{-1}] \otimes \BB S ;\:
\nabla f =0,\: \text{$f$ is homogeneous of degree $d$} \}  \\
= \{ f \in \BB C[z^0,z^1,z^2,z^3] \otimes \BB S ;\:
\nabla f =0,\: \text{$f$ is homogeneous of degree $d$} \},
\end{multline*}
\begin{multline*}
\{ g \in \BB C[z^0,z^1,z^2,z^3,N(Z)^{-1}] \otimes \BB S' ;\:
g \overleftarrow{\nabla} =0,\: \text{$g$ is homogeneous of degree $d$} \}  \\
= \{ g \in \BB C[z^0,z^1,z^2,z^3] \otimes \BB S' ;\:
g \overleftarrow{\nabla} =0,\: \text{$g$ is homogeneous of degree $d$} \},
\end{multline*}
  and Lemma \ref{biharm-decomp-lem}.
\end{proof}

As usual, we realize $\mathfrak{sl}(2,\BB C) \times \mathfrak{sl}(2,\BB C)$
as diagonal elements of $\mathfrak{gl}(2,\HC)$
(recall equation \eqref{sl2xsl2}).
%$$
%\mathfrak{sl}(2,\BB C) \times \mathfrak{sl}(2,\BB C) = \left\{
%\left(\begin{smallmatrix} A & 0 \\ 0 & D \end{smallmatrix}\right)
%\in \mathfrak{gl}(2,\HC);\: A,D \in \HC, \re(A)=\re(D)=0 \right\}.
%$$

\begin{prop}  \label{K-types-prop}
Each ${\cal U}(d)$ is invariant under the $\pi'_l$ action restricted to
$\mathfrak{sl}(2,\BB C) \times \mathfrak{sl}(2,\BB C)$, and we have the
following decomposition into irreducible components:
\begin{align}
{\cal U}(2l) &= \bigl( V_{l+\frac12} \boxtimes V_l \bigr) \oplus
\bigl( V_{l-\frac12} \boxtimes V_l \bigr) \oplus
\bigl( V_{l-\frac12} \boxtimes V_{l-1} \bigr),  \label{U(2l)-decomp}  \\
{\cal U}(-2l-1) &= \bigl( V_l \boxtimes V_{l+\frac12} \bigr) \oplus
\bigl( V_l \boxtimes V_{l-\frac12} \bigr) \oplus
\bigl( V_{l-1} \boxtimes V_{l-\frac12} \bigr),
\end{align}
$l=0,\frac12,1,\frac32,\dots$.
Similarly, each ${\cal U}'(d)$ is invariant under the $\pi'_r$ action
restricted to $\mathfrak{sl}(2,\BB C) \times \mathfrak{sl}(2,\BB C)$,
and we have the following decomposition into irreducible components:
\begin{align*}
{\cal U}'(2l) &= \bigl( V_l \boxtimes V_{l+\frac12} \bigr) \oplus
\bigl( V_l \boxtimes V_{l-\frac12} \bigr) \oplus
\bigl( V_{l-1} \boxtimes V_{l-\frac12} \bigr),  \\
{\cal U}'(-2l-1) &= \bigl( V_{l+\frac12} \boxtimes V_l \bigr) \oplus
\bigl( V_{l-\frac12} \boxtimes V_l \bigr) \oplus
\bigl( V_{l-\frac12} \boxtimes V_{l-1} \bigr),
\end{align*}
$l=0,\frac12,1,\frac32,\dots$.
In particular, $\dim_{\BB C} {\cal U}(d) = \dim_{\BB C} {\cal U}'(d) = 3d^2+3d+2$.
\end{prop}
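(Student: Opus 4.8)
The plan is to read off the restriction of $\pi'_l$ to the diagonal $SU(2)\times SU(2)$, feed the splitting of Lemma~\ref{biharm-decomp-lem} through it, and then invoke the known $K$-types of harmonic and (anti) regular functions. First I would note that each ${\cal U}(d)$ is $\mathfrak{sl}(2,\BB C)\times\mathfrak{sl}(2,\BB C)$-invariant: since ${\cal U}$ is $\pi'_l$-invariant by Theorem~\ref{qr-action-thm} and the diagonal subalgebra preserves homogeneity degree (cf.\ Lemma~\ref{degree-decomp-lem}), it maps ${\cal U}(d)$ to itself. The crucial simplification is that on $\bigl(\begin{smallmatrix} a & 0 \\ 0 & d\end{smallmatrix}\bigr)$ with $N(a)=N(d)=1$ the action \eqref{pi'_l} reduces to $f(Z)\mapsto a\,f(a^{-1}Zd)$ (equivalently, infinitesimally, from Lemma~\ref{pi'-Lie_alg-action} with $\re A=\re D=0$, so that the trace terms drop out). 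Thus the first $\mathfrak{sl}(2,\BB C)$ acts on the argument by left translation \emph{and} on the $\BB S$-values by left multiplication, while the second acts only on the argument by right translation.

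Next I would upgrade the vector-space splitting of Lemma~\ref{biharm-decomp-lem} to a decomposition of $SU(2)\times SU(2)$-modules. Because $N(Z)$ is scalar and invariant under $Z\mapsto a^{-1}Zd$, multiplication by $N(Z)$ is equivariant; and the map $f\mapsto f'=\tfrac1{4d}\square f$ extracting the anti regular part (valid for $d\ne0$ by the identity $\square f=4d\,f'$ from the proof of Lemma~\ref{biharm-decomp-lem}) is equivariant since $\square$ is invariant under the orthogonal substitution $Z\mapsto a^{-1}Zd$ and commutes with the constant left multiplication on values. In fact $f\mapsto N(Z)\tfrac1{4d}\square f$ is an equivariant idempotent, so
$$
{\cal U}(d)\;\cong\;\bigl({\cal H}(d)\otimes\BB S\bigr)\ \oplus\ \{\,h\ \text{of degree}\ d-2\ \text{with}\ \nabla h=0\,\}
$$
as $\mathfrak{sl}(2,\BB C)\times\mathfrak{sl}(2,\BB C)$-modules. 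For $d=2l\ge0$ I would compute the summands from standard facts: the scalar harmonic space ${\cal H}(2l)$ is $V_l\boxtimes V_l$ (spanned by the $t^l_{n\,\underline m}$, as in Proposition~\ref{biharmonic-prop}), and $\BB S\cong V_{1/2}\boxtimes V_0$ because the value twist uses only the first factor, so by Clebsch--Gordan ${\cal H}(2l)\otimes\BB S\cong(V_{l+\frac12}\boxtimes V_l)\oplus(V_{l-\frac12}\boxtimes V_l)$; the regular summand of degree $2l-2$ supplies the remaining $V_{l-\frac12}\boxtimes V_{l-1}$, which is the $K$-type of homogeneous $\BB S$-valued solutions of $\nabla h=0$ from the analysis in \cite{FL1}. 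This gives \eqref{U(2l)-decomp}, with $d=0$ being the $2$-dimensional $\BB S=V_{1/2}\boxtimes V_0$ (higher terms vanishing). For negative degrees I would apply the inversion $\pi'_l\bigl(\begin{smallmatrix} 0&1\\ 1&0\end{smallmatrix}\bigr)\colon f(Z)\mapsto-\tfrac{Z}{N(Z)}f(Z^{-1})$, which maps ${\cal U}(2l)$ isomorphically onto ${\cal U}(-2l-1)$ and conjugates the diagonal subgroup by $a\leftrightarrow d$, hence interchanges the two tensor factors and turns \eqref{U(2l)-decomp} into the stated decomposition of ${\cal U}(-2l-1)$. The ${\cal U}'$ statements are parallel, with $\BB S'$ twisted by the \emph{second} factor so that the roles of the two $SU(2)$'s are swapped throughout, and the dimension formula $3d^2+3d+2$ follows by adding $(2l+2)(2l+1)+2l(2l+1)+2l(2l-1)$.

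The step I expect to be most delicate is confirming that the Lemma~\ref{biharm-decomp-lem} splitting is genuinely $SU(2)\times SU(2)$-equivariant rather than merely a vector-space direct sum, and — hand in hand with this — pinning down which irreducible sits in the first versus the second tensor factor. This is exactly where the value-twist by $a$ and the $\nabla$ versus $\nabla^+$ convention (which interchanges the two $SU(2)$ factors) must be tracked with care, since reversing either would swap the factors in every line of the decomposition.
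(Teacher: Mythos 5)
Your proof is correct and follows the same basic strategy as the paper's: both start from Lemma \ref{biharm-decomp-lem}, tensor the harmonic $K$-types with $\BB S \cong V_{\frac12}\boxtimes V_0$ (the value twist sitting in the first factor for $\pi'_l$), and apply Clebsch--Gordan. The one place where you genuinely diverge is the final identification of components: the paper only establishes that ${\cal U}(2l)$ sits inside the four-component ambient module $\bigl((V_l\boxtimes V_l)\oplus(V_{l-1}\boxtimes V_{l-1})\bigr)\otimes(V_{\frac12}\boxtimes V_0)$ and then pins down which three components occur by comparing with the dimension count $3d^2+3d+2$ from Lemma \ref{biharm-decomp-lem}, whereas you make the splitting $f=\phi+N(Z)f'$ explicitly equivariant via the idempotent $f\mapsto N(Z)\tfrac1{4d}\square f$ and then quote the known $K$-type of homogeneous anti regular functions from \cite{FL1} to see directly that the second summand is $V_{l-\frac12}\boxtimes V_{l-1}$ rather than $V_{l-\frac32}\boxtimes V_{l-1}$. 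Your route buys a cleaner module-theoretic statement (an actual equivariant direct sum rather than a dimension argument) at the cost of importing the anti regular $K$-types as a prior fact; the paper's dimension count avoids that import. You also treat the negative-degree and ${\cal U}'$ cases by the inversion/conjugation argument (consistent with Lemma \ref{inversion-action-lem}), which the paper leaves implicit after doing the ${\cal U}(2l)$ case; this is a legitimate and arguably tidier way to finish. The delicate point you flag — equivariance of the Lemma \ref{biharm-decomp-lem} splitting and the placement of the value twist in the correct tensor factor — is handled correctly in your write-up.
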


\begin{proof}
From Lemma \ref{biharm-decomp-lem},
$\dim_{\BB C} {\cal U}(d) = \dim_{\BB C} {\cal U}'(d) = 3d^2+3d+2$.
Then, for example, ${\cal U}(2l)$ is an
($\mathfrak{sl}(2,\BB C) \times \mathfrak{sl}(2,\BB C)$)-invariant subspace of
\begin{multline*}
\Bigl( \bigl( V_l \boxtimes V_l \bigr) \otimes
\bigl( V_{\frac12} \boxtimes V_0 \bigr) \Bigr)
\oplus \Bigl( \bigl( V_{l-1} \boxtimes V_{l-1} \bigr) \otimes
\bigl( V_{\frac12} \boxtimes V_0 \bigr) \Bigr)  \\
= \bigl( V_{l+\frac12} \boxtimes V_l \bigr) \oplus
\bigl( V_{l-\frac12} \boxtimes V_l \bigr) \oplus
\bigl( V_{l-\frac12} \boxtimes V_{l-1} \bigr) \oplus
\bigl( V_{l-\frac32} \boxtimes V_{l-1} \bigr),
\end{multline*}
hence ${\cal U}(2l)$ must contain some of these four irreducible components.
The harmonic functions produce
$$
\bigl( V_l \boxtimes V_l \bigr) \otimes \bigl( V_{\frac12} \boxtimes V_0 \bigr)
= \bigl( V_{l+\frac12} \boxtimes V_l \bigr) \oplus
\bigl( V_{l-\frac12} \boxtimes V_l \bigr).
$$
And the functions of the type $N(Z) \cdot f'$, where $f'$ has degree $2l-2$
and satisfies $\nabla f=0$, form an invariant subspace of
$$
\bigl( V_{l-1} \boxtimes V_{l-1} \bigr) \otimes
\bigl( V_{\frac12} \boxtimes V_0 \bigr)
= \bigl( V_{l-\frac12} \boxtimes V_{l-1} \bigr) \oplus
\bigl( V_{l-\frac32} \boxtimes V_{l-1} \bigr).
$$
Comparing the dimensions, we see that \eqref{U(2l)-decomp} is the only
possibility.
\end{proof}

\subsection{$K$-Type Basis of Quasi Anti Regular Functions}

In this subsection we find a basis for each ${\cal U}(d)$ and ${\cal U}'(d)$.
These bases consist of weight vectors of the irreducible components of
${\cal U}(d)$, ${\cal U}'(d)$ described in Proposition \ref{K-types-prop}
relative to the maximal torus of $SU(2) \times SU(2)$
$$
\left\{ \Delta(a,b) =
\left( \begin{smallmatrix} a & 0 & 0 & 0 \\ 0 & a^{-1} & 0 & 0 \\
  0 & 0 & b^{-1} & 0 \\ 0 & 0 & 0 & b \end{smallmatrix} \right)
\in SU(2) \times SU(2);\: a, b \in \BB C,\: |a|=|b|=1 \right\}.
$$
First, we find three families of QLAR functions and three families of QRAR
functions for $d \ge 0$.

\begin{prop}  \label{K-typebasis+_prop}
The functions
\begin{align*}
f^{(1)}_{l,m,n}(Z) &= \begin{pmatrix} t^l_{n-\frac12 \,\underline{m}}(Z) \\
- t^l_{n+\frac12 \,\underline{m}}(Z) \end{pmatrix},
\qquad
\begin{smallmatrix}
m =-l ,-l+1,\dots,l,  \\
n =-l-\frac12,-l+\frac12,\dots,l+\frac12,
\end{smallmatrix}  \\
f^{(2)}_{l,m,n}(Z) &= \begin{pmatrix} (l-n+\frac12) t^l_{n-\frac12 \,\underline{m}}(Z)\\
(l+n+\frac12) t^l_{n+\frac12 \,\underline{m}}(Z) \end{pmatrix},
\qquad
\begin{smallmatrix}
m =-l ,-l+1,\dots,l,  \\
n =-l+\frac12,-l+\frac32,\dots,l-\frac12,
\end{smallmatrix}  \\
f^{(3)}_{l,m,n}(Z)
&= \begin{pmatrix} N(Z) \cdot t^{l-1}_{n-\frac12 \,\underline{m}}(Z) \\
-N(Z) \cdot  t^{l-1}_{n+\frac12 \,\underline{m}}(Z) \end{pmatrix},
\qquad
\begin{smallmatrix}
m =-l+1 ,-l+2,\dots,l-1,  \\
n =-l+\frac12,-l+\frac32,\dots,l-\frac12,
\end{smallmatrix}
\end{align*}
span respectively the $V_{l+\frac12} \boxtimes V_l$,
$V_{l-\frac12} \boxtimes V_l$ and $V_{l-\frac12} \boxtimes V_{l-1}$
components of ${\cal U}(2l)$, $l=0,\frac12,1,\frac32,\dots$.

The functions
\begin{align*}
g^{(1)}_{l,m,n}(Z) &= \bigl( (l+m+\tfrac12) t^l_{n\,\underline{m-\frac12}}(Z),
-(l-m+\tfrac12) t^l_{n\,\underline{m+\frac12}}(Z) \bigr),
\qquad
\begin{smallmatrix}
m =-l-\frac12 ,-l+\frac12,\dots,l+\frac12,  \\
n =-l,-l+1,\dots,l,
\end{smallmatrix}  \\
g^{(2)}_{l,m,n}(Z) &= \bigl( t^l_{n\,\underline{m-\frac12}}(Z),
t^l_{n\,\underline{m+\frac12}}(Z) \bigr),
\qquad
\begin{smallmatrix}
m =-l+\frac12 ,-l+\frac32,\dots,l-\frac12,  \\
n =-l,-l+1,\dots,l,
\end{smallmatrix}  \\
g^{(3)}_{l,m,n}(Z) &= N(Z) \cdot
\bigl( (l+m-\tfrac12) t^{l-1}_{n\,\underline{m-\frac12}}(Z),
-(l-m-\tfrac12) t^{l-1}_{n\,\underline{m+\frac12}}(Z) \bigr),
\qquad
\begin{smallmatrix}
m =-l+\frac12 ,-l+\frac32,\dots,l-\frac12,  \\
n =-l+1,-l+2,\dots,l-1,
\end{smallmatrix}
\end{align*}
span respectively the $V_l \boxtimes V_{l+\frac12}$,
$V_l \boxtimes V_{l-\frac12}$ and $V_{l-1} \boxtimes V_{l-\frac12}$
components of ${\cal U}'(2l)$, $l=0,\frac12,1,\frac32,\dots$.
\end{prop}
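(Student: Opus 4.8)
The plan is to test the three explicit families directly against the abstract decomposition already supplied by Proposition~\ref{K-types-prop}, organizing everything through the splitting of Lemma~\ref{biharm-decomp-lem}. By that lemma every $f \in {\cal U}(2l)$ is uniquely $\phi + N(Z)\cdot f'$ with $\phi \in {\cal H}(2l)\otimes\BB S$ harmonic and $f'$ anti regular of degree $2l-2$; the harmonic summand ${\cal H}(2l)\otimes\BB S = (V_l\boxtimes V_l)\otimes(V_{\frac12}\boxtimes V_0)$ contributes $(V_{l+\frac12}\boxtimes V_l)\oplus(V_{l-\frac12}\boxtimes V_l)$, while the $N(Z)\cdot f'$ summand contributes $V_{l-\frac12}\boxtimes V_{l-1}$. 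Thus the task is to place each $f^{(\alpha)}$ in the correct summand and show it exhausts that summand, and then invoke the dimension count of Proposition~\ref{K-types-prop}.

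First I would settle membership and (anti) regularity. The entries of $f^{(1)}$ and $f^{(2)}$ are matrix coefficients $t^l_{n\,\underline{m}}$, hence harmonic, so both lie in ${\cal H}(2l)\otimes\BB S$; the entries of $f^{(3)}$ are $N(Z)$ times $t^{l-1}$'s, so $f^{(3)}$ lies in the second summand provided its $N(Z)$-free factor is anti regular. Using the differentiation formula \eqref{dt}, a one-line cancellation shows that $\partial$ annihilates the column $\bigl(\begin{smallmatrix} t^l_{n-\frac12\,\underline{m}} \\ -t^l_{n+\frac12\,\underline{m}}\end{smallmatrix}\bigr)$ and its $l\to l-1$ analogue; hence $f^{(1)}$ is anti regular of degree $2l$ and $f^{(3)}=N(Z)\cdot(\text{anti regular of degree }2l-2)$, exactly matching the hypotheses of Lemma~\ref{biharm-decomp-lem}.

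Next I would read off the torus weights. From the integral definition \eqref{t} one gets $t^l_{n\,\underline{m}}(\mathrm{diag}(c,c^{-1})) = \delta_{nm}c^{-2m}$, and combined with the multiplicativity of the $t^l$ this shows that the diagonal torus of $SU(2)\times SU(2)$ scales $t^l_{n\,\underline{m}}$ by $a^{2n}b^{2m}$; feeding this and the matrix prefactor into \eqref{pi'_l} makes each $f^{(\alpha)}_{l,m,n}$ a weight vector of eigenvalue $a^{2n}b^{2m}$. The weight ranges and cardinalities then match $V_{l+\frac12}\boxtimes V_l$, $V_{l-\frac12}\boxtimes V_l$, $V_{l-\frac12}\boxtimes V_{l-1}$ for $f^{(1)},f^{(2)},f^{(3)}$ respectively. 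Since $f^{(1)}$ is anti regular and reaches the extreme first-factor weight $l+\frac12$ (which occurs only in $V_{l+\frac12}\boxtimes V_l$) it spans that component, and $f^{(3)}$ spans $V_{l-\frac12}\boxtimes V_{l-1}$. The identity $\det\bigl(\begin{smallmatrix}1 & l-n+\frac12\\ -1 & l+n+\frac12\end{smallmatrix}\bigr)=2l+1\neq0$ shows $f^{(1)}$ and $f^{(2)}$ are independent at each shared weight, so the three families are jointly independent and, by Proposition~\ref{K-types-prop}, span ${\cal U}(2l)$.

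The delicate point, which I expect to be the main obstacle, is $f^{(2)}$: it is harmonic but neither regular nor anti regular, and it shares all its weights with $f^{(1)}$ in the range $|n|\le l-\frac12$, so a priori $\mathrm{span}\{f^{(2)}\}$ is only some complement of $\mathrm{span}\{f^{(1)}\}\oplus\mathrm{span}\{f^{(3)}\}$, not visibly the invariant component $V_{l-\frac12}\boxtimes V_l$. To pin it down I would verify that $\mathrm{span}\{f^{(2)}\}$ is $(\mathfrak{sl}(2,\BB C)\times\mathfrak{sl}(2,\BB C))$-invariant, equivalently that the top vector $f^{(2)}_{l,l,l-\frac12}$ is annihilated by both raising operators: using the diagonal action of Lemma~\ref{pi'-Lie_alg-action} together with \eqref{dt} and \eqref{Zt-identity}, one checks that the raising and lowering operators keep the family $\{f^{(2)}_{l,m,n}\}$ closed. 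The precise coefficients $(l-n+\frac12)$ and $(l+n+\frac12)$ are exactly the Clebsch--Gordan data (in the $t^l$-normalization) that force this closure, and this bookkeeping is the part requiring care. Finally, the QRAR statement for $g^{(1)},g^{(2)},g^{(3)}$ follows by the mirror-image argument: replace columns by rows, $\pi'_l$ by $\pi'_r$, and the left derivative identity \eqref{dt} by its right-handed analogue, with the roles of the two factors interchanged and everything else identical.
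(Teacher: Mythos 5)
Your proposal is correct and follows essentially the same route as the paper: identify each $f^{(\alpha)}_{l,m,n}$, $g^{(\alpha)}_{l,m,n}$ as a weight vector for the torus $\Delta(a,b)$ and then verify, using Lemma \ref{pi'-Lie_alg-action} together with \eqref{dt} and \eqref{Zt-identity}, that each family is closed under the raising and lowering operators $E_i,F_i$, so that it spans the irreducible component with the matching weight set. Your structural shortcuts for $f^{(1)}$ (anti regularity) and $f^{(3)}=N(Z)\cdot f^{(1)}_{l-1,m,n}$ are sound and are noted in the paper's remark following the proposition, but for the genuinely delicate family $f^{(2)}$ you fall back on exactly the computation the paper carries out.
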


%And we find three families of QRAR functions:
%$$
%g^{(1)}_{l,m,n}(Z) = \bigl( (l+m) t^{l-\frac12}_{n\,\underline{m-\frac12}}(Z),
%-(l-m) t^{l-\frac12}_{n\,\underline{m+\frac12}}(Z) \bigr),
%\qquad
%\begin{smallmatrix}
%m =-l,-l+1,\dots,l,  \\
%n =-l+\frac12,-l+\frac32,\dots,l-\frac12,
%\end{smallmatrix}
%$$
%$$
%g^{(2)}_{l,m,n}(Z) = \bigl( t^{l-\frac12}_{n\,\underline{m-\frac12}}(Z),
%t^{l-\frac12}_{n\,\underline{m+\frac12}}(Z) \bigr),
%\qquad
%\begin{smallmatrix}
%m =-l+1 ,-l+2,\dots,l-1,  \\
%n =-l+\frac12,-l+\frac32,\dots,l-\frac12,
%\end{smallmatrix}
%$$
%$$
%g^{(3)}_{l,m,n}(Z)
%= N(Z) \cdot \bigl( (l+m-1) t^{l-\frac32}_{n\,\underline{m-\frac12}}(Z),
%-(l-m-1) t^{l-\frac32}_{n\,\underline{m+\frac12}}(Z) \bigr),
%\qquad
%\begin{smallmatrix}
%m =-l+1,-l+2,\dots,l-1,  \\
%n =-l+\frac32,-l+\frac52,\dots,l-\frac32;
%\end{smallmatrix}
%$$
%they span respectively the $V_{l-\frac12} \boxtimes V_l$,
%$V_{l-\frac12} \boxtimes V_{l-1}$ and $V_{l-\frac32} \boxtimes V_{l-1}$
%components of ${\cal U}'(2l-1)$, $l=\frac12,1,\frac32,2,\dots$.

\begin{rem}
The functions $f^{(1)}_{l,m,n}(Z)$'s form a $K$-type basis of the space
of left anti regular functions,
$f^{(3)}_{l,m,n}(Z) = N(Z) \cdot f^{(1)}_{l-1,m,n}(Z)$,
and, by Lemma 23 in \cite{FL1},
$$
f^{(2)}_{l,m,n}(Z) = Z \cdot
\begin{pmatrix} (l-m) t^{l-\frac12}_{n \,\underline{m+\frac12}}(Z) \\
(l+m) t^{l-\frac12}_{n \,\underline{m-\frac12}}(Z) \end{pmatrix}.
$$
Thus, $f^{(2)}_{l,m,n}(Z)$s are $Z$-multiples of the left regular
basis functions from Proposition 24 in \cite{FL1}.

Similarly, the functions $g^{(1)}_{l,m,n}(Z)$'s form a $K$-type basis of
the space of right anti regular functions,
$g^{(3)}_{l,m,n}(Z) = N(Z) \cdot g^{(1)}_{l-1,m,n}(Z)$,
and, by Lemma 23 in \cite{FL1},
$$
g^{(2)}_{l,m,n}(Z) = \bigl( t^{l-\frac12}_{n+\frac12\,\underline{m}}(Z),
t^{l-\frac12}_{n-\frac12\,\underline{m}}(Z) \bigr) \cdot Z.
$$
Thus, $g^{(2)}_{l,m,n}(Z)$s are $Z$-multiples of the right regular
basis functions from Proposition 24 in \cite{FL1}.
\end{rem}

\begin{proof}
By direct computation,
$$
\pi'_l(\Delta(a,b)) \begin{pmatrix} \alpha t^l_{n\,\underline{m}}(Z) \\
\beta t^{l'}_{n'\,\underline{m'}}(Z) \end{pmatrix}
= \begin{pmatrix} \alpha a^{2n+1} b^{2m} \cdot t^l_{n\,\underline{m}}(Z) \\
\beta a^{2n'-1} b^{2m'} \cdot t^{l'}_{n'\,\underline{m'}}(Z) \end{pmatrix},
$$
$$
\pi'_r(\Delta(a,b)) \bigl( \alpha t^l_{n\,\underline{m}}(Z),
\beta t^{l'}_{n'\,\underline{m'}}(Z) \bigr)
= \bigl( \alpha a^{2n} b^{2m+1} \cdot t^l_{n\,\underline{m}}(Z),
\beta a^{2n'} b^{2m'-1} \cdot t^{l'}_{n'\,\underline{m'}}(Z) \bigr).
$$
Introduce the following elements of
$\mathfrak{sl}(2,\BB C) \times \mathfrak{sl}(2,\BB C)$:
\begin{equation}  \label{EF}
E_1=\begin{pmatrix} \begin{smallmatrix} 0 & 1 \\ 0 & 0 \end{smallmatrix} & 0 \\
  0 & 0 \end{pmatrix}, \quad
F_1=\begin{pmatrix} \begin{smallmatrix} 0 & 0 \\ 1 & 0 \end{smallmatrix} & 0 \\
  0 & 0 \end{pmatrix}, \qquad
E_2 = \begin{pmatrix} 0 & 0 \\
  0 & \begin{smallmatrix} 0 & 0 \\ 1 & 0 \end{smallmatrix} \end{pmatrix}, \quad
F_2 = \begin{pmatrix} 0 & 0 \\
  0 & \begin{smallmatrix} 0 & 1 \\ 0 & 0 \end{smallmatrix} \end{pmatrix}.
\end{equation}
We can find their actions on ${\cal U}(d)$ and ${\cal U}'(d)$ using
Lemma 23 from \cite{FL1} together with
Lemma \ref{pi'-Lie_alg-action} and equation \eqref{dt}:
$$
\pi'_l(E_1) \begin{pmatrix} \alpha t^l_{n-\frac12 \,\underline{m}}(Z) \\
\beta t^l_{n+\frac12 \,\underline{m}}(Z) \end{pmatrix}
= \begin{pmatrix} \bigl(\beta -\alpha (l+n+\frac12) \bigr)
t^l_{n+\frac12 \,\underline{m}}(Z) \\
-\beta (l+n+\frac32) t^l_{n+\frac32 \,\underline{m}}(Z) \end{pmatrix},
$$
$$
\pi'_l(F_1) \begin{pmatrix} \alpha t^l_{n-\frac12 \,\underline{m}}(Z) \\
\beta t^l_{n+\frac12 \,\underline{m}}(Z) \end{pmatrix}
= \begin{pmatrix} -\alpha (l-n+\frac32) t^l_{n-\frac32 \,\underline{m}}(Z) \\
\bigl(\alpha -\beta (l-n+\frac12)\bigr) t^l_{n-\frac12 \,\underline{m}}(Z)
\end{pmatrix},
$$
$$
\pi'_l(E_2) \begin{pmatrix} \alpha t^l_{n-\frac12 \,\underline{m}}(Z) \\
\beta t^l_{n+\frac12 \,\underline{m}}(Z) \end{pmatrix}
= (l-m) \begin{pmatrix} \alpha t^l_{n-\frac12 \,\underline{m+1}}(Z) \\
\beta t^l_{n+\frac12 \,\underline{m+1}}(Z) \end{pmatrix},
$$
$$
\pi'_l(F_2) \begin{pmatrix} \alpha t^l_{n-\frac12 \,\underline{m}}(Z) \\
\beta t^l_{n+\frac12 \,\underline{m}}(Z) \end{pmatrix}
= (l+m) \begin{pmatrix} \alpha t^l_{n-\frac12 \,\underline{m-1}}(Z) \\
\beta t^l_{n+\frac12 \,\underline{m-1}}(Z) \end{pmatrix},
$$
$$
\pi'_r(E_1) \bigl( \alpha t^l_{n\,\underline{m-\frac12}}(Z),
\beta t^l_{n\,\underline{m+\frac12}}(Z) \bigr)
= -(l+n+1) \bigl( \alpha t^l_{n+1\,\underline{m-\frac12}}(Z),
\beta t^l_{n+1\,\underline{m+\frac12}}(Z) \bigr),
$$
$$
\pi'_r(F_1) \bigl( \alpha t^l_{n\,\underline{m-\frac12}}(Z),
\beta t^l_{n\,\underline{m+\frac12}}(Z) \bigr)
= -(l-n+1) \bigl( \alpha t^l_{n-1\,\underline{m-\frac12}}(Z),
\beta t^l_{n-1\,\underline{m+\frac12}}(Z) \bigr),
$$
\begin{multline*}
\pi'_r(E_2) \Bigl( \alpha t^l_{n\,\underline{m-\frac12}}(Z),
\beta t^l_{n\,\underline{m+\frac12}}(Z) \bigr)  \\
= \bigl( \bigl(\alpha (l-m+\tfrac12) -\beta\bigr) t^l_{n\,\underline{m+\frac12}}(Z),
\beta (l-m-\tfrac12) t^l_{n\,\underline{m+\frac32}}(Z) \Bigr),
\end{multline*}
\begin{multline*}
\pi'_r(F_2) \bigl( \alpha t^l_{n\,\underline{m-\frac12}}(Z),
\beta t^l_{n\,\underline{m+\frac12}}(Z) \bigr)  \\
= \Bigl( (l+m-\tfrac12) t^l_{n\,\underline{m-\frac32}}(Z),
\bigl(\beta (l+m+\tfrac12) - \alpha\bigr) t^l_{n\,\underline{m-\frac12}}(Z) \Bigr).
\end{multline*}
We see that each family of functions is invariant under the
$\mathfrak{sl}(2,\BB C) \times \mathfrak{sl}(2,\BB C)$-action and
spans an irreducible component of ${\cal U}(2l)$ or ${\cal U}'(2l)$
as stated.
\end{proof}

We find three more families of QLAR functions and
three more families of QRAR functions of negative degrees:

\begin{prop}  \label{K-typebasis-_prop}
The functions
\begin{align*}
\tilde f^{(1)}_{l,m,n}(Z) &= \begin{pmatrix}
t^{l+\frac12}_{m\,\underline{n+\frac12}}(Z^{-1}) \\
- t^{l+\frac12}_{m\,\underline{n-\frac12}}(Z^{-1}) \end{pmatrix},
\qquad
\begin{smallmatrix}
m =-l-\frac12 ,-l+\frac12,\dots,l+\frac12,  \\
n =-l,-l+1,\dots,l,
\end{smallmatrix}  \\
\tilde f^{(2)}_{l,m,n}(Z) &= \begin{pmatrix}
(l-n) N(Z)^{-1} \cdot t^{l-\frac12}_{m\,\underline{n+\frac12}}(Z^{-1}) \\
(l+n) N(Z)^{-1} \cdot t^{l-\frac12}_{m\,\underline{n-\frac12}}(Z^{-1}) \end{pmatrix},
\qquad
\begin{smallmatrix}
m =-l+\frac12 ,-l+\frac32,\dots,l-\frac12,  \\
n =-l,-l+1,\dots,l,
\end{smallmatrix}  \\
\tilde f^{(3)}_{l,m,n}(Z) &= \begin{pmatrix}
N(Z)^{-1} \cdot t^{l-\frac12}_{m\,\underline{n+\frac12}}(Z^{-1}) \\
-N(Z)^{-1} \cdot t^{l-\frac12}_{m\,\underline{n-\frac12}}(Z^{-1}) \end{pmatrix},
\qquad
\begin{smallmatrix}
m =-l+\frac12 ,-l+1,\dots,l-\frac12,  \\
n =-l+1,-l+2,\dots,l-1,
\end{smallmatrix}
\end{align*}
span respectively the $V_l \boxtimes V_{l+\frac12}$,
$V_l \boxtimes V_{l-\frac12}$ and $V_{l-1} \boxtimes V_{l-\frac12}$
components of ${\cal U}(-2l-1)$, $l=0,\frac12,1,\frac32\dots$.

The functions
%$$
%\bigl( (l+m+1/2) t^l_{m+\frac12\,\underline{n}}(Z^{-1}),
%-(l-m+1/2) t^l_{m-\frac12\,\underline{n}}(Z^{-1}) \bigr),
%\qquad
%\begin{matrix}
%m =-l+\frac12 ,-l+\frac32,\dots,l-\frac12,  \\
%n =-l,-l+1,\dots,l;
%\end{matrix}
%$$
\begin{align*}
\tilde g^{(1)}_{l,m,n}(Z)
&= \bigl( (l+m+1) t^{l+\frac12}_{m+\frac12\,\underline{n}}(Z^{-1}),
-(l-m+1) t^{l+\frac12}_{m-\frac12\,\underline{n}}(Z^{-1}) \bigr),
\qquad
\begin{smallmatrix}
m =-l,-l+1,\dots,l,  \\
n =-l-\frac12,-l+\frac12,\dots,l+\frac12,
\end{smallmatrix}  \\
\tilde g^{(2)}_{l,m,n}(Z)
&= N(Z)^{-1} \cdot \bigl( t^{l-\frac12}_{m+\frac12\,\underline{n}}(Z^{-1}),
t^{l-\frac12}_{m-\frac12\,\underline{n}}(Z^{-1}) \bigr),
\qquad
\begin{smallmatrix}
m =-l,-l+1,\dots,l,  \\
n =-l+\frac12,-l+\frac32,\dots,l-\frac12,
\end{smallmatrix}  \\
%$$
%N(Z)^{-1} \cdot \bigl( (l+m+1/2) t^l_{m+\frac12\,\underline{n}}(Z^{-1}),
%-(l-m+1/2) t^l_{m-\frac12\,\underline{n}}(Z^{-1}) \bigr),
%\qquad
%\begin{matrix}
%m =-l+\frac12 ,-l+\frac32,\dots,l-\frac12,  \\
%n =-l,-l+1,\dots,l,
%\end{matrix}
%$$
\tilde g^{(3)}_{l,m,n}(Z)
&= N(Z)^{-1} \cdot \bigl( (l+m) t^{l-\frac12}_{m+\frac12\,\underline{n}}(Z^{-1}),
-(l-m) t^{l-\frac12}_{m-\frac12\,\underline{n}}(Z^{-1}) \bigr),
\qquad
\begin{smallmatrix}
m =-l+1,-l+2,\dots,l-1,  \\
n =-l+\frac12,-l+\frac32,\dots,l-\frac12,
\end{smallmatrix}
\end{align*}
span respectively the $V_{l+\frac12} \boxtimes V_l$,
$V_{l-\frac12} \boxtimes V_l$ and $V_{l-\frac12} \boxtimes V_{l-1}$
components of ${\cal U}'(-2l-1)$, $l=0,\frac12,1,\frac32,\dots$.
\end{prop}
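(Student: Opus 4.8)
The plan is to deduce the proposition from its positive-degree counterpart, Proposition~\ref{K-typebasis+_prop}, by means of the inversion $w = \bigl( \begin{smallmatrix} 0 & 1 \\ 1 & 0 \end{smallmatrix} \bigr) \in GL(2,\HC)$, in the same spirit in which Lemma~\ref{inversion-bh-lem} relates the two biharmonic bases. Setting $a'=d'=0$, $b'=c'=1$ in \eqref{pi'_l} gives
\[
\bigl( \pi'_l(w) f \bigr)(Z) = -\frac{Z}{N(Z)} \cdot f(Z^{-1}),
\qquad \frac{Z}{N(Z)} = (Z^{-1})^+ .
\]
Since $Z/N(Z)$ is homogeneous of degree $-1$ and $f(Z^{-1})$ has degree $-d$ when $f$ has degree $d$, the operator $\pi'_l(w)$ maps ${\cal U}(2l)$ isomorphically onto ${\cal U}(-2l-1)$, and $\pi'_l(w)^2 = \mathrm{id}$. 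Moreover $\mathrm{Ad}(w)$ interchanges the two diagonal blocks in \eqref{sl2xsl2}, so $\pi'_l(w)$ is an isomorphism of $\bigl( \mathfrak{sl}(2,\BB C) \times \mathfrak{sl}(2,\BB C) \bigr)$-modules that swaps the two tensor factors, carrying a copy of $V_a \boxtimes V_b$ to a copy of $V_b \boxtimes V_a$. This already lines up the decomposition of ${\cal U}(-2l-1)$ in Proposition~\ref{K-types-prop} with that of ${\cal U}(2l)$, so the only remaining task is to identify the images of the explicit basis vectors.

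First I would apply $\pi'_l(w)$ to each of the families $f^{(1)}_{l,m,n}, f^{(2)}_{l,m,n}, f^{(3)}_{l,m,n}$ of Proposition~\ref{K-typebasis+_prop}. After substituting $Z \mapsto Z^{-1}$ this amounts to multiplying the columns of $t^l_{n\,\underline{m}}(Z^{-1})$ by the degree $-1$ factor $Z/N(Z)$; that product is exactly the combination isolated by the inversion identities \eqref{dt-inverse} and \eqref{Ct-inverse} (note that \eqref{Ct-inverse} already contains the term $\frac{Z}{N(Z)} \cdot t^l_{n\,\underline{m}}(Z^{-1})$), which play here the role that \eqref{dt} and \eqref{Ct} played in the positive-degree proof. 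I expect each $f^{(i)}_{l,m,n}$ to be sent, up to a nonzero scalar, to $\tilde f^{(i)}_{l,n,m}$, so that $V_{l+\frac12} \boxtimes V_l$, $V_{l-\frac12} \boxtimes V_l$, $V_{l-\frac12} \boxtimes V_{l-1}$ map to $V_l \boxtimes V_{l+\frac12}$, $V_l \boxtimes V_{l-\frac12}$, $V_{l-1} \boxtimes V_{l-\frac12}$ respectively; since the three summands of ${\cal U}(-2l-1)$ are pairwise inequivalent, the module isomorphism $\pi'_l(w)$ forces this matching once a single vector in each family is identified. As $\pi'_l(w)$ is a linear isomorphism and the $f^{(i)}$ span ${\cal U}(2l)$, the images then span the stated components of ${\cal U}(-2l-1)$.

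The right-handed statement for ${\cal U}'$ is entirely parallel: from \eqref{pi'_r} one has $\bigl( \pi'_r(w) g \bigr)(Z) = g(Z^{-1}) \cdot \frac{Z}{N(Z)}$, which maps ${\cal U}'(2l)$ onto ${\cal U}'(-2l-1)$ with the tensor factors again interchanged, and one matches the $g^{(i)}_{l,m,n}$ to the $\tilde g^{(i)}_{l,m,n}$ in the same way. I expect the one genuinely delicate point to be the index bookkeeping -- tracking the half-integer shifts and the $m \leftrightarrow n$ interchange under $Z^+$-multiplication, and recovering the exact scalar weights $(l \pm n)$, $(l \pm m)$ that appear in the formulas for $\tilde f^{(i)}$ and $\tilde g^{(i)}$. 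Should that bookkeeping become unwieldy, a fully self-contained alternative is to mirror the proof of Proposition~\ref{K-typebasis+_prop} verbatim: compute the actions of the operators $E_1, F_1, E_2, F_2$ of \eqref{EF} on the $\tilde f^{(i)}_{l,m,n}$ and $\tilde g^{(i)}_{l,m,n}$ directly, using Lemma~\ref{pi'-Lie_alg-action} together with \eqref{dt-inverse} and \eqref{Ct-inverse}, and thereby exhibit each family as an invariant subspace spanning an irreducible component of the correct highest weight. Either route rests on the same representation-theoretic skeleton, so the $K$-type content is determined as soon as the weights are read off.
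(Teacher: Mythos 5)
Your proposal is correct, but your primary route differs from the paper's. The paper proves this proposition exactly the way it proves Proposition \ref{K-typebasis+_prop}: it computes the torus action $\pi'_l(\Delta(a,b))$, $\pi'_r(\Delta(a,b))$ on the negative-degree families directly, then works out the actions of $E_1,F_1,E_2,F_2$ from \eqref{EF} using Lemma \ref{pi'-Lie_alg-action} together with \eqref{dt-inverse}, and reads off that each family is invariant and spans the stated irreducible component --- i.e., the paper takes what you call the ``fully self-contained alternative.'' Your inversion argument is legitimate and arguably slicker: $\pi'_l(w)$ with $w=\bigl(\begin{smallmatrix}0&1\\1&0\end{smallmatrix}\bigr)$ is an involution carrying ${\cal U}(2l)$ onto ${\cal U}(-2l-1)$ and swapping the tensor factors of the $K$-types, and since the decomposition in Proposition \ref{K-types-prop} is multiplicity-free, the image of each component is forced; quasi anti regularity of the $\tilde f^{(i)}$, $\tilde g^{(i)}$ comes for free from Theorem \ref{qr-action-thm}. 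The one computation you cannot avoid is showing $\pi'_l(w)f^{(i)}_{l,m,n}=\pm\tilde f^{(i)}_{l,n,m}$ (and similarly for $\pi'_r$), which is a multiplication by $Z^+$ at the argument $Z^{-1}$ handled by \eqref{Zt-identity}; this is precisely the content of Lemma \ref{inversion-action-lem}, which the paper derives only \emph{after} both propositions are in place. What the paper's longer route buys is the explicit list of $E_i,F_i$ matrix elements on the negative-degree basis, which is reused later in Subsection \ref{K-type-action} and in the pairing and pseudounitarity computations; your route buys economy but would still require those matrix elements to be computed separately when they are needed downstream.
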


\begin{rem}
The functions $\tilde f^{(3)}_{l,m,n}(Z)$'s form a $K$-type basis of the space
of left anti regular functions,
$\tilde f^{(1)}_{l,m,n}(Z) = N(Z) \cdot f^{(3)}_{l+1,m,n}(Z)$,
and, by Lemma 23 in \cite{FL1},
$$
\tilde f^{(2)}_{l,m,n}(Z) = Z \cdot
\begin{pmatrix} (l-m+\frac12) N(Z)^{-1} \cdot t^l_{m-\frac12 \,\underline{n}}(Z^{-1})\\
(l+m+\frac12) N(Z)^{-1} \cdot t^l_{m+\frac12 \,\underline{n}}(Z^{-1}) \end{pmatrix}.
$$
Thus, $\tilde f^{(2)}_{l,m,n}(Z)$s are $Z$-multiples of the left regular
basis functions from Proposition 24 in \cite{FL1}.

Similarly, the functions $\tilde g^{(3)}_{l,m,n}(Z)$'s form a $K$-type basis of
the space of right anti regular functions,
$g^{(1)}_{l,m,n}(Z) = N(Z) \cdot g^{(3)}_{l+1,m,n}(Z)$,
and, by Lemma 23 in \cite{FL1},
$$
\tilde g^{(2)}_{l,m,n}(Z) = N(Z)^{-1} \cdot
\bigl( t^l_{m\,\underline{n-\frac12}}(Z^{-1}),
t^l_{m\,\underline{n+\frac12}}(Z^{-1}) \bigr) \cdot Z.
$$
Thus, $\tilde g^{(2)}_{l,m,n}(Z)$s are $Z$-multiples of the right regular
basis functions from Proposition 24 in \cite{FL1}.
\end{rem}

\begin{proof}
We have:
$$
\pi'_l(\Delta(a,b)) \begin{pmatrix}
\alpha N(Z)^{-1} \cdot t^l_{m\,\underline{n}}(Z^{-1}) \\
\beta N(Z)^{-1} \cdot t^{l'}_{m'\,\underline{n'}}(Z^{-1}) \end{pmatrix}
= \begin{pmatrix}
\alpha a^{-2n+1} b^{-2m} N(Z)^{-1} \cdot t^l_{m\,\underline{n}}(Z^{-1}) \\
\beta a^{-2n'-1} b^{-2m'} N(Z)^{-1} \cdot t^{l'}_{m'\,\underline{n'}}(Z^{-1})
\end{pmatrix},
$$
\begin{multline*}
\pi'_r(\Delta(a,b)) \bigl(
\alpha N(Z)^{-1} \cdot t^l_{m\,\underline{n}}(Z^{-1}),
\beta N(Z)^{-1} \cdot t^{l'}_{m'\,\underline{n'}}(Z^{-1}) \bigr)  \\
= \bigl( \alpha a^{-2n} b^{-2m+1} N(Z)^{-1} \cdot t^l_{m\,\underline{n}}(Z^{-1}),
\beta a^{-2n'} b^{-2m'-1} N(Z)^{-1} \cdot t^{l'}_{m'\,\underline{n'}}(Z^{-1}) \bigr).
\end{multline*}
Let $E_1$, $E_2$, $F_1$ and $F_2$ be as in \eqref{EF}.
Using Lemma 23 from \cite{FL1} together with
Lemma \ref{pi'-Lie_alg-action} and equation \eqref{dt-inverse}, we find:
$$
\pi'_l(E_1) \begin{pmatrix}
\alpha N(Z)^{-1} \cdot t^l_{m\,\underline{n+\frac12}}(Z^{-1}) \\
\beta N(Z)^{-1} \cdot t^l_{m\,\underline{n-\frac12}}(Z^{-1}) \end{pmatrix}
= \begin{pmatrix} \bigl(\alpha (l+n+\frac12) +\beta\bigr)
N(Z)^{-1} \cdot t^l_{m\,\underline{n-\frac12}}(Z^{-1}) \\
\beta (l+n-\frac12) N(Z)^{-1} \cdot t^l_{m\,\underline{n-\frac32}}(Z^{-1})
\end{pmatrix},
$$
$$
\pi'_l(F_1) \begin{pmatrix}
\alpha N(Z)^{-1} \cdot t^l_{m\,\underline{n+\frac12}}(Z^{-1}) \\
\beta N(Z)^{-1} \cdot t^l_{m\,\underline{n-\frac12}}(Z^{-1}) \end{pmatrix}
= \begin{pmatrix} \alpha (l-n-\frac12)
N(Z)^{-1} \cdot t^l_{m\,\underline{n+\frac32}}(Z^{-1}) \\
\bigl(\alpha + \beta (l-n+\frac12)\bigr)
N(Z)^{-1} \cdot t^l_{m\,\underline{n+\frac12}}(Z^{-1}) \end{pmatrix},
$$
$$
\pi'_l(E_2) \begin{pmatrix}
\alpha N(Z)^{-1} \cdot t^l_{m\,\underline{n+\frac12}}(Z^{-1}) \\
\beta N(Z)^{-1} \cdot t^l_{m\,\underline{n-\frac12}}(Z^{-1}) \end{pmatrix}
= -(l-m+1)
\begin{pmatrix} \alpha N(Z)^{-1} \cdot t^l_{m-1\,\underline{n+\frac12}}(Z^{-1}) \\
\beta N(Z)^{-1} \cdot t^l_{m-1\,\underline{n-\frac12}}(Z^{-1}) \end{pmatrix},
$$
$$
\pi'_l(F_2) \begin{pmatrix}
\alpha N(Z)^{-1} \cdot t^l_{m\,\underline{n+\frac12}}(Z^{-1}) \\
\beta N(Z)^{-1} \cdot t^l_{m\,\underline{n-\frac12}}(Z^{-1}) \end{pmatrix}
= -(l+m+1)
\begin{pmatrix} \alpha N(Z)^{-1} \cdot t^l_{m+1\,\underline{n+\frac12}}(Z^{-1}) \\
\beta N(Z)^{-1} \cdot t^l_{m+1\,\underline{n-\frac12}}(Z^{-1}) \end{pmatrix},
$$
\begin{multline*}
\pi'_r(E_1) \bigl( \alpha N(Z)^{-1} \cdot t^l_{m+\frac12\,\underline{n}}(Z^{-1}),
\beta N(Z)^{-1} \cdot t^l_{m-\frac12\,\underline{n}}(Z^{-1}) \bigr)  \\
= (l+n) \bigl( \alpha N(Z)^{-1} \cdot t^l_{m+\frac12\,\underline{n-1}}(Z^{-1}),
\beta N(Z)^{-1} \cdot t^l_{m-\frac12\,\underline{n-1}}(Z^{-1}) \bigr),
\end{multline*}
\begin{multline*}
\pi'_r(F_1) \bigl( \alpha N(Z)^{-1} \cdot t^l_{m+\frac12\,\underline{n}}(Z^{-1}),
\beta N(Z)^{-1} \cdot t^l_{m-\frac12\,\underline{n}}(Z^{-1}) \bigr)  \\
= (l-n) \bigl( \alpha N(Z)^{-1} \cdot t^l_{m+\frac12\,\underline{n+1}}(Z^{-1}),
\beta N(Z)^{-1} \cdot t^l_{m-\frac12\,\underline{n+1}}(Z^{-1}) \bigr),
\end{multline*}
\begin{multline*}
\pi'_r(E_2) \bigl( \alpha N(Z)^{-1} \cdot t^l_{m+\frac12\,\underline{n}}(Z^{-1}),
\beta N(Z)^{-1} \cdot t^l_{m-\frac12\,\underline{n}}(Z^{-1}) \bigr)  \\
= - \Bigl( \bigl(\alpha (l-m+\tfrac12) + \beta\bigr)
N(Z)^{-1} \cdot t^l_{m-\frac12\,\underline{n}}(Z^{-1}),
\beta (l-m+\tfrac32) N(Z)^{-1} \cdot t^l_{m-\frac32\,\underline{n}}(Z^{-1}) \Bigr),
\end{multline*}
\begin{multline*}
\pi'_r(E_2) \bigl( \alpha N(Z)^{-1} \cdot t^l_{m+\frac12\,\underline{n}}(Z^{-1}),
\beta N(Z)^{-1} \cdot t^l_{m-\frac12\,\underline{n}}(Z^{-1}) \bigr)  \\
= -\Bigl( \alpha (l+m+\tfrac32) N(Z)^{-1} \cdot t^l_{m+\frac32\,\underline{n}}(Z^{-1}),
\bigl(\alpha+\beta (l+m+\tfrac12)\bigr)
N(Z)^{-1} \cdot t^l_{m+\frac12\,\underline{n}}(Z^{-1}) \Bigr).
\end{multline*}
We see that each family of functions is invariant under the
$\mathfrak{sl}(2,\BB C) \times \mathfrak{sl}(2,\BB C)$-action and
spans an irreducible component of ${\cal U}(-2l-1)$ or ${\cal U}'(-2l-1)$
as stated.
\end{proof}

\subsection{Action of $\mathfrak{gl}(2,\HC)$ on the $K$-Type Basis}  \label{K-type-action}

In this subsection we find the actions of
$\pi'_l \bigl( \begin{smallmatrix} 0 & B \\ C & 0 \end{smallmatrix} \bigr)$ and
$\pi'_r \bigl( \begin{smallmatrix} 0 & B \\ C & 0 \end{smallmatrix} \bigr)$,
$B, C \in \HC$, on the basis vectors given in
Propositions \ref{K-typebasis+_prop}, \ref{K-typebasis-_prop}.
(Note that the actions of
$\pi'_l \bigl( \begin{smallmatrix} A & 0 \\ 0 & D \end{smallmatrix} \bigr)$ and
$\pi'_r \bigl( \begin{smallmatrix} A & 0 \\ 0 & D \end{smallmatrix} \bigr)$,
$A, D \in \HC$, were effectively described in Propositions \ref{K-types-prop},
\ref{K-typebasis+_prop}, \ref{K-typebasis-_prop}.)

By direct computation using equations (27)-(28) in \cite{desitter},
Lemma \ref{pi'-Lie_alg-action} and equations \eqref{dt}-\eqref{Ct-inverse},
we find:
{\footnotesize
$$
\pi'_l \bigl( \begin{smallmatrix} 0 & B \\ 0 & 0 \end{smallmatrix} \bigr)
f^{(1)}_{l,m,n}(Z) = -\tr \left( B \begin{pmatrix}
(l-m) f^{(1)}_{l-\frac12,m+\frac12,n+\frac12}(Z) &
(l-m) f^{(1)}_{l-\frac12,m+\frac12,n-\frac12}(Z)  \\
(l+m) f^{(1)}_{l-\frac12,m-\frac12,n+\frac12}(Z) &
(l+m) f^{(1)}_{l-\frac12,m-\frac12,n-\frac12}(Z)
\end{pmatrix} \right),
$$
\begin{multline*}
\pi'_l \bigl( \begin{smallmatrix} 0 & B \\ 0 & 0 \end{smallmatrix} \bigr)
f^{(2)}_{l,m,n}(Z) =
- \frac{2l+1}{2l} \tr \left( B \begin{pmatrix}
(l-m) f^{(2)}_{l-\frac12,m+\frac12,n+\frac12}(Z) &
(l-m) f^{(2)}_{l-\frac12,m+\frac12,n-\frac12}(Z)  \\
(l+m) f^{(2)}_{l-\frac12,m-\frac12,n+\frac12}(Z) &
(l+m) f^{(2)}_{l-\frac12,m-\frac12,n-\frac12}(Z)
\end{pmatrix} \right)  \\
- \frac1{2l} \tr \left( B \begin{pmatrix}
(l-m)(l+n+\frac12) f^{(1)}_{l-\frac12,m+\frac12,n+\frac12}(Z) &
-(l-m)(l-n+\frac12) f^{(1)}_{l-\frac12,m+\frac12,n-\frac12}(Z)  \\
(l+m)(l+n+\frac12) f^{(1)}_{l-\frac12,m-\frac12,n+\frac12}(Z) &
-(l+m)(l-n+\frac12) f^{(1)}_{l-\frac12,m-\frac12,n-\frac12}(Z)
\end{pmatrix} \right),
\end{multline*}
\begin{multline*}
\pi'_l \bigl( \begin{smallmatrix} 0 & B \\ 0 & 0 \end{smallmatrix} \bigr)
f^{(3)}_{l,m,n}(Z) =
- \frac1{2l} \tr \left( B \begin{pmatrix}
(l+n+\frac12) f^{(1)}_{l-\frac12,m+\frac12,n+\frac12}(Z) &
-(l-n+\frac12) f^{(1)}_{l-\frac12,m+\frac12,n-\frac12}(Z)  \\
-(l+n+\frac12) f^{(1)}_{l-\frac12,m-\frac12,n+\frac12}(Z) &
(l-n+\frac12) f^{(1)}_{l-\frac12,m-\frac12,n-\frac12}(Z)
\end{pmatrix} \right)  \\
- \frac1{2l(2l-1)} \tr \left( B \begin{pmatrix}
- f^{(2)}_{l-\frac12,m+\frac12,n+\frac12}(Z) &
- f^{(2)}_{l-\frac12,m+\frac12,n-\frac12}(Z)  \\
f^{(2)}_{l-\frac12,m-\frac12,n+\frac12}(Z) &
f^{(2)}_{l-\frac12,m-\frac12,n-\frac12}(Z)
\end{pmatrix} \right)  \\
- \frac{2l}{2l-1} \tr \left( B \begin{pmatrix}
(l-m-1) f^{(3)}_{l-\frac12,m+\frac12,n+\frac12}(Z) &
(l-m-1) f^{(3)}_{l-\frac12,m+\frac12,n-\frac12}(Z)  \\
(l+m-1) f^{(3)}_{l-\frac12,m-\frac12,n+\frac12}(Z) &
(l+m-1) f^{(3)}_{l-\frac12,m-\frac12,n-\frac12}(Z)
\end{pmatrix} \right);
\end{multline*}
\begin{multline*}
\pi'_l \bigl( \begin{smallmatrix} 0 & 0 \\ C & 0 \end{smallmatrix} \bigr)
f^{(1)}_{l,m,n}(Z) =
\frac{2l+1}{2l+2} \tr \left( C \begin{pmatrix}
(l-n+\frac32) f^{(1)}_{l+\frac12,m-\frac12,n-\frac12}(Z) &
(l-n+\frac32) f^{(1)}_{l+\frac12,m+\frac12,n-\frac12}(Z)  \\
(l+n+\frac32) f^{(1)}_{l+\frac12,m-\frac12,n+\frac12}(Z) &
(l+n+\frac32) f^{(1)}_{l+\frac12,m+\frac12,n+\frac12}(Z)
\end{pmatrix} \right)  \\
+ \frac1{(2l+1)(2l+2)} \tr \left( C \begin{pmatrix}
- f^{(2)}_{l+\frac12,m-\frac12,n-\frac12}(Z) &
- f^{(2)}_{l+\frac12,m+\frac12,n-\frac12}(Z)  \\
f^{(2)}_{l+\frac12,m-\frac12,n+\frac12}(Z) &
f^{(2)}_{l+\frac12,m+\frac12,n+\frac12}(Z)
\end{pmatrix} \right)  \\
+ \frac1{2l+1} \tr \left( C \begin{pmatrix}
-(l+m) f^{(3)}_{l+\frac12,m-\frac12,n-\frac12}(Z) &
(l-m) f^{(3)}_{l+\frac12,m+\frac12,n-\frac12}(Z)  \\
(l+m) f^{(3)}_{l+\frac12,m-\frac12,n+\frac12}(Z) &
-(l-m) f^{(3)}_{l+\frac12,m+\frac12,n+\frac12}(Z),
\end{pmatrix} \right),
\end{multline*}
\begin{multline*}
\pi'_l \bigl( \begin{smallmatrix} 0 & 0 \\ C & 0 \end{smallmatrix} \bigr)
f^{(2)}_{l,m,n}(Z) =
\frac{2l}{2l+1} \tr \left( C \begin{pmatrix}
(l-n+\frac12) f^{(2)}_{l+\frac12,m-\frac12,n-\frac12}(Z) &
(l-n+\frac12) f^{(2)}_{l+\frac12,m+\frac12,n-\frac12}(Z)  \\
(l+n+\frac12) f^{(2)}_{l+\frac12,m-\frac12,n+\frac12}(Z) &
(l+n+\frac12) f^{(2)}_{l+\frac12,m+\frac12,n+\frac12}(Z)
\end{pmatrix} \right)  \\
+ \frac1{2l+1} \tr \left( C \begin{pmatrix}
-(l+m)(l-n+\frac12) f^{(3)}_{l+\frac12,m-\frac12,n-\frac12}(Z) &
(l-m)(l-n+\frac12) f^{(3)}_{l+\frac12,m+\frac12,n-\frac12}(Z)  \\
-(l+m)(l+n+\frac12) f^{(3)}_{l+\frac12,m-\frac12,n+\frac12}(Z) &
(l-m)(l+n+\frac12) f^{(3)}_{l+\frac12,m+\frac12,n+\frac12}(Z),
\end{pmatrix} \right),
\end{multline*}
$$
\pi'_l \bigl( \begin{smallmatrix} 0 & 0 \\ C & 0 \end{smallmatrix} \bigr)
f^{(3)}_{l,m,n}(Z) = \tr \left( C \begin{pmatrix}
(l-n+\frac12) f^{(3)}_{l+\frac12,m-\frac12,n-\frac12}(Z) &
(l-n+\frac12) f^{(3)}_{l+\frac12,m+\frac12,n-\frac12}(Z)  \\
(l+n+\frac12) f^{(3)}_{l+\frac12,m-\frac12,n+\frac12}(Z) &
(l+n+\frac12) f^{(3)}_{l+\frac12,m+\frac12,n+\frac12}(Z)
\end{pmatrix} \right);
$$
$$
\pi'_r \bigl( \begin{smallmatrix} 0 & B \\ 0 & 0 \end{smallmatrix} \bigr)
g^{(1)}_{l,m,n}(Z) = -\tr \left( B \begin{pmatrix}
(l-m+\frac12) g^{(1)}_{l-\frac12,m+\frac12,n+\frac12}(Z) &
(l-m+\frac12) g^{(1)}_{l-\frac12,m+\frac12,n-\frac12}(Z)  \\
(l+m+\frac12) g^{(1)}_{l-\frac12,m-\frac12,n+\frac12}(Z) &
(l+m+\frac12) g^{(1)}_{l-\frac12,m-\frac12,n-\frac12}(Z)
\end{pmatrix} \right),
$$
\begin{multline*}
\pi'_r \bigl( \begin{smallmatrix} 0 & B \\ 0 & 0 \end{smallmatrix} \bigr)
g^{(2)}_{l,m,n}(Z) = - \frac1{2l} \tr \left( B \begin{pmatrix}
g^{(1)}_{l-\frac12,m+\frac12,n+\frac12}(Z) &
g^{(1)}_{l-\frac12,m+\frac12,n-\frac12}(Z)  \\
- g^{(1)}_{l-\frac12,m-\frac12,n+\frac12}(Z) &
- g^{(1)}_{l-\frac12,m-\frac12,n-\frac12}(Z)
\end{pmatrix} \right)  \\
- \frac{2l+1}{2l} \tr \left( B \begin{pmatrix}
(l-m-\frac12) g^{(2)}_{l-\frac12,m+\frac12,n+\frac12}(Z) &
(l-m-\frac12) g^{(2)}_{l-\frac12,m+\frac12,n-\frac12}(Z)  \\
(l+m-\frac12) g^{(2)}_{l-\frac12,m-\frac12,n+\frac12}(Z) &
(l+m-\frac12) g^{(2)}_{l-\frac12,m-\frac12,n-\frac12}(Z)
\end{pmatrix} \right),
\end{multline*}
\begin{multline*}
\pi'_r \bigl( \begin{smallmatrix} 0 & B \\ 0 & 0 \end{smallmatrix} \bigr)
g^{(3)}_{l,m,n}(Z) = - \frac1{2l} \tr \left( B \begin{pmatrix}
(l+n) g^{(1)}_{l-\frac12,m+\frac12,n+\frac12}(Z) &
-(l-n) g^{(1)}_{l-\frac12,m+\frac12,n-\frac12}(Z)  \\
-(l+n) g^{(1)}_{l-\frac12,m-\frac12,n+\frac12}(Z) &
(l-n) g^{(1)}_{l-\frac12,m-\frac12,n-\frac12}(Z)
\end{pmatrix} \right)  \\
- \frac1{2l(2l-1)} \tr \left( B \begin{pmatrix}
-(l-m-\frac12)(l+n) g^{(2)}_{l-\frac12,m+\frac12,n+\frac12}(Z) &
(l-m-\frac12)(l-n) g^{(2)}_{l-\frac12,m+\frac12,n-\frac12}(Z)  \\
- (l+m-\frac12)(l+n) g^{(2)}_{l-\frac12,m-\frac12,n+\frac12}(Z) &
(l+m-\frac12)(l-n) g^{(2)}_{l-\frac12,m-\frac12,n-\frac12}(Z)
\end{pmatrix} \right)  \\
- \frac{2l}{2l-1} \tr \left( B \begin{pmatrix}
(l-m-\frac12) g^{(3)}_{l-\frac12,m+\frac12,n+\frac12}(Z) &
(l-m-\frac12) g^{(3)}_{l-\frac12,m+\frac12,n-\frac12}(Z)  \\
(l+m-\frac12) g^{(3)}_{l-\frac12,m-\frac12,n+\frac12}(Z) &
(l+m-\frac12) g^{(3)}_{l-\frac12,m-\frac12,n-\frac12}(Z)
\end{pmatrix} \right);
\end{multline*}
\begin{multline*}
\pi'_r \bigl( \begin{smallmatrix} 0 & 0 \\ C & 0 \end{smallmatrix} \bigr)
g^{(1)}_{l,m,n}(Z) = \frac{2l+1}{2l+2}\tr \left( C \begin{pmatrix}
(l-n+1) g^{(1)}_{l+\frac12,m-\frac12,n-\frac12}(Z) &
(l-n+1) g^{(1)}_{l+\frac12,m+\frac12,n-\frac12}(Z)  \\
(l+n+1) g^{(1)}_{l+\frac12,m-\frac12,n+\frac12}(Z) &
(l+n+1) g^{(1)}_{l+\frac12,m+\frac12,n+\frac12}(Z)
\end{pmatrix} \right)  \\
+ \tr \left( C \begin{pmatrix}
-\frac{(l+m+\frac12)(l-n+1)}{(2l+1)(2l+2)} g^{(2)}_{l+\frac12,m-\frac12,n-\frac12}(Z) &
\frac{(l-m+\frac12)(l-n+1)}{(2l+1)(2l+2)} g^{(2)}_{l+\frac12,m+\frac12,n-\frac12}(Z) \\
-\frac{(l+m+\frac12)(l+n+1)}{(2l+1)(2l+2)} g^{(2)}_{l+\frac12,m-\frac12,n+\frac12}(Z) &
\frac{(l-m+\frac12)(l+n+1)}{(2l+1)(2l+2)} g^{(2)}_{l+\frac12,m+\frac12,n+\frac12}(Z)
\end{pmatrix} \right)  \\
+ \frac1{2l+1} \tr \left( C \begin{pmatrix}
-(l+m+\frac12) g^{(3)}_{l+\frac12,m-\frac12,n-\frac12}(Z) &
(l-m+\frac12) g^{(3)}_{l+\frac12,m+\frac12,n-\frac12}(Z)  \\
(l+m+\frac12) g^{(3)}_{l+\frac12,m-\frac12,n+\frac12}(Z) &
-(l-m+\frac12) g^{(3)}_{l+\frac12,m+\frac12,n+\frac12}(Z)
\end{pmatrix} \right),
\end{multline*}
\begin{multline*}
\pi'_r \bigl( \begin{smallmatrix} 0 & 0 \\ C & 0 \end{smallmatrix} \bigr)
g^{(2)}_{l,m,n}(Z) = \frac{2l}{2l+1}\tr \left( C \begin{pmatrix}
(l-n+1) g^{(2)}_{l+\frac12,m-\frac12,n-\frac12}(Z) &
(l-n+1) g^{(2)}_{l+\frac12,m+\frac12,n-\frac12}(Z)  \\
(l+n+1) g^{(2)}_{l+\frac12,m-\frac12,n+\frac12}(Z) &
(l+n+1) g^{(2)}_{l+\frac12,m+\frac12,n+\frac12}(Z)
\end{pmatrix} \right)  \\
+ \frac1{2l+1} \tr \left( C \begin{pmatrix}
- g^{(3)}_{l+\frac12,m-\frac12,n-\frac12}(Z) &
- g^{(3)}_{l+\frac12,m+\frac12,n-\frac12}(Z)  \\
g^{(3)}_{l+\frac12,m-\frac12,n+\frac12}(Z) &
g^{(3)}_{l+\frac12,m+\frac12,n+\frac12}(Z)
\end{pmatrix} \right),
\end{multline*}
$$
\pi'_r \bigl( \begin{smallmatrix} 0 & 0 \\ C & 0 \end{smallmatrix} \bigr)
g^{(3)}_{l,m,n}(Z) = \tr \left( C \begin{pmatrix}
(l-n) g^{(3)}_{l+\frac12,m-\frac12,n-\frac12}(Z) &
(l-n) g^{(3)}_{l+\frac12,m+\frac12,n-\frac12}(Z)  \\
(l+n) g^{(3)}_{l+\frac12,m-\frac12,n+\frac12}(Z) &
(l+n) g^{(3)}_{l+\frac12,m+\frac12,n+\frac12}(Z)
\end{pmatrix} \right);
$$
\begin{multline*}
\pi'_l \bigl( \begin{smallmatrix} 0 & B \\ 0 & 0 \end{smallmatrix} \bigr)
\tilde f^{(1)}_{l,m,n}(Z) = \frac{2l+1}{2l+2} \tr \left( B \begin{pmatrix}
(l-m+\frac32) \tilde f^{(1)}_{l+\frac12,m-\frac12,n-\frac12}(Z) &
(l-m+\frac32) \tilde f^{(1)}_{l+\frac12,m-\frac12,n+\frac12}(Z)  \\
(l+m+\frac32) \tilde f^{(1)}_{l+\frac12,m+\frac12,n-\frac12}(Z) &
(l+m+\frac32) \tilde f^{(1)}_{l+\frac12,m+\frac12,n+\frac12}(Z)
\end{pmatrix} \right)  \\
+ \frac1{(2l+1)(2l+2)} \tr \left( B \begin{pmatrix}
- \tilde f^{(2)}_{l+\frac12,m-\frac12,n-\frac12}(Z) &
- \tilde f^{(2)}_{l+\frac12,m-\frac12,n+\frac12}(Z)  \\
\tilde f^{(2)}_{l+\frac12,m+\frac12,n-\frac12}(Z) &
\tilde f^{(2)}_{l+\frac12,m+\frac12,n+\frac12}(Z)
\end{pmatrix} \right)  \\
+ \frac1{2l+1} \tr \left( B \begin{pmatrix}
-(l+n) \tilde f^{(3)}_{l+\frac12,m-\frac12,n-\frac12}(Z) &
(l-n) \tilde f^{(3)}_{l+\frac12,m-\frac12,n+\frac12}(Z)  \\
(l+n) \tilde f^{(3)}_{l+\frac12,m+\frac12,n-\frac12}(Z) &
-(l-n) \tilde f^{(3)}_{l+\frac12,m+\frac12,n+\frac12}(Z)
\end{pmatrix} \right),
\end{multline*}
\begin{multline*}
\pi'_l \bigl( \begin{smallmatrix} 0 & B \\ 0 & 0 \end{smallmatrix} \bigr)
\tilde f^{(2)}_{l,m,n}(Z) = \frac{2l}{2l+1} \tr \left( B \begin{pmatrix}
(l-m+\frac12) \tilde f^{(2)}_{l+\frac12,m-\frac12,n-\frac12}(Z) &
(l-m+\frac12) \tilde f^{(2)}_{l+\frac12,m-\frac12,n+\frac12}(Z)  \\
(l+m+\frac12) \tilde f^{(2)}_{l+\frac12,m+\frac12,n-\frac12}(Z) &
(l+m+\frac12) \tilde f^{(2)}_{l+\frac12,m+\frac12,n+\frac12}(Z)
\end{pmatrix} \right)  \\
+ \frac1{2l+1} \tr \left( B \begin{pmatrix}
-(l-m+\frac12)(l+n) \tilde f^{(3)}_{l+\frac12,m-\frac12,n-\frac12}(Z) &
(l-m+\frac12)(l-n) \tilde f^{(3)}_{l+\frac12,m-\frac12,n+\frac12}(Z)  \\
-(l+m+\frac12)(l+n) \tilde f^{(3)}_{l+\frac12,m+\frac12,n-\frac12}(Z) &
(l+m+\frac12)(l-n) \tilde f^{(3)}_{l+\frac12,m+\frac12,n+\frac12}(Z)
\end{pmatrix} \right),
\end{multline*}
$$
\pi'_l \bigl( \begin{smallmatrix} 0 & B \\ 0 & 0 \end{smallmatrix} \bigr)
\tilde f^{(3)}_{l,m,n}(Z) = \tr \left( B \begin{pmatrix}
(l-m+\frac12) \tilde f^{(3)}_{l+\frac12,m-\frac12,n-\frac12}(Z) &
(l-m+\frac12) \tilde f^{(3)}_{l+\frac12,m-\frac12,n+\frac12}(Z)  \\
(l+m+\frac12) \tilde f^{(3)}_{l+\frac12,m+\frac12,n-\frac12}(Z) &
(l+m+\frac12) \tilde f^{(3)}_{l+\frac12,m+\frac12,n+\frac12}(Z)
\end{pmatrix} \right);
$$
$$
\pi'_l \bigl( \begin{smallmatrix} 0 & 0 \\ C & 0 \end{smallmatrix} \bigr)
\tilde f^{(1)}_{l,m,n}(Z) = - \tr \left( C \begin{pmatrix}
(l-n) \tilde f^{(1)}_{l-\frac12,m+\frac12,n+\frac12}(Z) &
(l-n) \tilde f^{(1)}_{l-\frac12,m-\frac12,n+\frac12}(Z)  \\
(l+n) \tilde f^{(1)}_{l-\frac12,m+\frac12,n-\frac12}(Z) &
(l+n) \tilde f^{(1)}_{l-\frac12,m-\frac12,n-\frac12}(Z)
\end{pmatrix} \right),
$$
\begin{multline*}
\pi'_l \bigl( \begin{smallmatrix} 0 & 0 \\ C & 0 \end{smallmatrix} \bigr)
\tilde f^{(2)}_{l,m,n}(Z) =  - \frac{2l+1}{2l} \tr \left( C \begin{pmatrix}
(l-n) \tilde f^{(2)}_{l-\frac12,m+\frac12,n+\frac12}(Z) &
(l-n) \tilde f^{(2)}_{l-\frac12,m-\frac12,n+\frac12}(Z)  \\
(l+n) \tilde f^{(2)}_{l-\frac12,m+\frac12,n-\frac12}(Z) &
(l+n) \tilde f^{(2)}_{l-\frac12,m-\frac12,n-\frac12}(Z)
\end{pmatrix} \right)  \\
+ \frac1{2l} \tr \left( C \begin{pmatrix}
-(l+m+\frac12)(l-n) \tilde f^{(1)}_{l-\frac12,m+\frac12,n+\frac12}(Z) &
(l-m+\frac12)(l-n) \tilde f^{(1)}_{l-\frac12,m-\frac12,n+\frac12}(Z)  \\
-(l+m+\frac12)(l+n) \tilde f^{(1)}_{l-\frac12,m+\frac12,n-\frac12}(Z) &
(l-m+\frac12)(l+n) \tilde f^{(1)}_{l-\frac12,m-\frac12,n-\frac12}(Z)
\end{pmatrix} \right),
\end{multline*}
\begin{multline*}
\pi'_l \bigl( \begin{smallmatrix} 0 & 0 \\ C & 0 \end{smallmatrix} \bigr)
\tilde f^{(3)}_{l,m,n}(Z) = \frac1{2l} \tr \left( C \begin{pmatrix}
-(l+m+\frac12) \tilde f^{(1)}_{l-\frac12,m+\frac12,n+\frac12}(Z) &
(l-m+\frac12) \tilde f^{(1)}_{l-\frac12,m-\frac12,n+\frac12}(Z)  \\
(l+m+\frac12) \tilde f^{(1)}_{l-\frac12,m+\frac12,n-\frac12}(Z) &
-(l-m+\frac12) \tilde f^{(1)}_{l-\frac12,m-\frac12,n-\frac12}(Z)
\end{pmatrix} \right)  \\
+ \frac1{2l(2l-1)} \tr \left( C \begin{pmatrix}
\tilde f^{(2)}_{l-\frac12,m+\frac12,n+\frac12}(Z) &
\tilde f^{(2)}_{l-\frac12,m-\frac12,n+\frac12}(Z)  \\
-\tilde f^{(2)}_{l-\frac12,m+\frac12,n-\frac12}(Z) &
-\tilde f^{(2)}_{l-\frac12,m-\frac12,n-\frac12}(Z)
\end{pmatrix} \right)  \\
- \frac{2l}{2l-1} \tr \left( C \begin{pmatrix}
(l-n-1) \tilde f^{(3)}_{l-\frac12,m+\frac12,n+\frac12}(Z) &
(l-n-1) \tilde f^{(3)}_{l-\frac12,m-\frac12,n+\frac12}(Z)  \\
(l+n-1) \tilde f^{(3)}_{l-\frac12,m+\frac12,n-\frac12}(Z) &
(l+n-1) \tilde f^{(3)}_{l-\frac12,m-\frac12,n-\frac12}(Z)
\end{pmatrix} \right);
\end{multline*}
\begin{multline*}
\pi'_r \bigl( \begin{smallmatrix} 0 & B \\ 0 & 0 \end{smallmatrix} \bigr)
\tilde g^{(1)}_{l,m,n}(Z) = \frac{2l+1}{2l+2} \tr \left( B \begin{pmatrix}
(l-m+1) \tilde g^{(1)}_{l+\frac12,m-\frac12,n-\frac12}(Z) &
(l-m+1) \tilde g^{(1)}_{l+\frac12,m-\frac12,n+\frac12}(Z)  \\
(l+m+1) \tilde g^{(1)}_{l+\frac12,m+\frac12,n-\frac12}(Z) &
(l+m+1) \tilde g^{(1)}_{l+\frac12,m+\frac12,n+\frac12}(Z)
\end{pmatrix} \right)  \\
+ \tr \left( B \begin{pmatrix}
-\frac{(l-m+1)(l+n+\frac12)}{(2l+1)(2l+2)}
\tilde g^{(2)}_{l+\frac12,m-\frac12,n-\frac12}(Z) &
\frac{(l-m+1)(l-n+\frac12)}{(2l+1)(2l+2)}
\tilde g^{(2)}_{l+\frac12,m-\frac12,n+\frac12}(Z)  \\
-\frac{(l+m+1)(l+n+\frac12)}{(2l+1)(2l+2)}
\tilde g^{(2)}_{l+\frac12,m+\frac12,n-\frac12}(Z) &
\frac{(l+m+1)(l-n+\frac12)}{(2l+1)(2l+2)}
\tilde g^{(2)}_{l+\frac12,m+\frac12,n+\frac12}(Z)
\end{pmatrix} \right)  \\
+ \frac1{2l+1} \tr \left( B \begin{pmatrix}
-(l+n+\frac12) \tilde g^{(3)}_{l+\frac12,m-\frac12,n-\frac12}(Z) &
(l-n+\frac12) \tilde g^{(3)}_{l+\frac12,m-\frac12,n+\frac12}(Z)  \\
(l+n+\frac12) \tilde g^{(3)}_{l+\frac12,m+\frac12,n-\frac12}(Z) &
-(l-n+\frac12) \tilde g^{(3)}_{l+\frac12,m+\frac12,n+\frac12}(Z)
\end{pmatrix} \right),
\end{multline*}
\begin{multline*}
\pi'_r \bigl( \begin{smallmatrix} 0 & B \\ 0 & 0 \end{smallmatrix} \bigr)
\tilde g^{(2)}_{l,m,n}(Z) = \frac{2l}{2l+1} \tr \left( B \begin{pmatrix}
(l-m+1) \tilde g^{(2)}_{l+\frac12,m-\frac12,n-\frac12}(Z) &
(l-m+1) \tilde g^{(2)}_{l+\frac12,m-\frac12,n+\frac12}(Z)  \\
(l+m+1) \tilde g^{(2)}_{l+\frac12,m+\frac12,n-\frac12}(Z) &
(l+m+1) \tilde g^{(2)}_{l+\frac12,m+\frac12,n+\frac12}(Z)
\end{pmatrix} \right)  \\
+ \frac1{2l+1} \tr \left( B \begin{pmatrix}
-\tilde g^{(3)}_{l+\frac12,m-\frac12,n-\frac12}(Z) &
-\tilde g^{(3)}_{l+\frac12,m-\frac12,n+\frac12}(Z)  \\
\tilde g^{(3)}_{l+\frac12,m+\frac12,n-\frac12}(Z) &
\tilde g^{(3)}_{l+\frac12,m+\frac12,n+\frac12}(Z)
\end{pmatrix} \right),
\end{multline*}
$$
\pi'_r \bigl( \begin{smallmatrix} 0 & B \\ 0 & 0 \end{smallmatrix} \bigr)
\tilde g^{(3)}_{l,m,n}(Z) = \tr \left( B \begin{pmatrix}
(l-m) \tilde g^{(3)}_{l+\frac12,m-\frac12,n-\frac12}(Z) &
(l-m) \tilde g^{(3)}_{l+\frac12,m-\frac12,n+\frac12}(Z)  \\
(l+m) \tilde g^{(3)}_{l+\frac12,m+\frac12,n-\frac12}(Z) &
(l+m) \tilde g^{(3)}_{l+\frac12,m+\frac12,n+\frac12}(Z)
\end{pmatrix} \right);
$$
$$
\pi'_r \bigl( \begin{smallmatrix} 0 & 0 \\ C & 0 \end{smallmatrix} \bigr)
\tilde g^{(1)}_{l,m,n}(Z) = - \tr \left( C \begin{pmatrix}
(l-n+\frac12) \tilde g^{(1)}_{l-\frac12,m+\frac12,n+\frac12}(Z) &
(l-n+\frac12) \tilde g^{(1)}_{l-\frac12,m-\frac12,n+\frac12}(Z)  \\
(l+n+\frac12) \tilde g^{(1)}_{l-\frac12,m+\frac12,n-\frac12}(Z) &
(l+n+\frac12) \tilde g^{(1)}_{l-\frac12,m-\frac12,n-\frac12}(Z)
\end{pmatrix} \right),
$$
\begin{multline*}
\pi'_r \bigl( \begin{smallmatrix} 0 & 0 \\ C & 0 \end{smallmatrix} \bigr)
\tilde g^{(2)}_{l,m,n}(Z) = \frac1{2l}  \tr \left( C \begin{pmatrix}
- \tilde g^{(1)}_{l-\frac12,m+\frac12,n+\frac12}(Z) &
- \tilde g^{(1)}_{l-\frac12,m-\frac12,n+\frac12}(Z)  \\
\tilde g^{(1)}_{l-\frac12,m+\frac12,n-\frac12}(Z) &
\tilde g^{(1)}_{l-\frac12,m-\frac12,n-\frac12}(Z)
\end{pmatrix} \right)  \\
- \frac{2l+1}{2l} \tr \left( C \begin{pmatrix}
(l-n-\frac12) \tilde g^{(2)}_{l-\frac12,m+\frac12,n+\frac12}(Z) &
(l-n-\frac12) \tilde g^{(2)}_{l-\frac12,m-\frac12,n+\frac12}(Z)  \\
(l+n-\frac12) \tilde g^{(2)}_{l-\frac12,m+\frac12,n-\frac12}(Z) &
(l+n-\frac12) \tilde g^{(2)}_{l-\frac12,m-\frac12,n-\frac12}(Z)
\end{pmatrix} \right),
\end{multline*}
\begin{multline*}
\pi'_r \bigl( \begin{smallmatrix} 0 & 0 \\ C & 0 \end{smallmatrix} \bigr)
\tilde g^{(3)}_{l,m,n}(Z) = \frac1{2l} \tr \left( C \begin{pmatrix}
-(l+m) \tilde g^{(1)}_{l-\frac12,m+\frac12,n+\frac12}(Z) &
(l-m) \tilde g^{(1)}_{l-\frac12,m-\frac12,n+\frac12}(Z)  \\
(l+m) \tilde g^{(1)}_{l-\frac12,m+\frac12,n-\frac12}(Z) &
-(l-m)\tilde g^{(1)}_{l-\frac12,m-\frac12,n-\frac12}(Z)
\end{pmatrix} \right)  \\
+ \frac1{2l(2l-1)} \tr \left( C \begin{pmatrix}
(l+m)(l-n-\frac12) \tilde g^{(2)}_{l-\frac12,m+\frac12,n+\frac12}(Z) &
-(l-m)(l-n-\frac12) \tilde g^{(2)}_{l-\frac12,m-\frac12,n+\frac12}(Z)  \\
(l+m)(l+n-\frac12) \tilde g^{(2)}_{l-\frac12,m+\frac12,n-\frac12}(Z) &
-(l-m)(l+n-\frac12) \tilde g^{(2)}_{l-\frac12,m-\frac12,n-\frac12}(Z)
\end{pmatrix} \right)  \\
- \frac{2l}{2l-1} \tr \left( C \begin{pmatrix}
(l-n-\frac12) \tilde g^{(3)}_{l-\frac12,m+\frac12,n+\frac12}(Z) &
(l-n-\frac12) \tilde g^{(3)}_{l-\frac12,m-\frac12,n+\frac12}(Z)  \\
(l+n-\frac12) \tilde g^{(3)}_{l-\frac12,m+\frac12,n-\frac12}(Z) &
(l+n-\frac12) \tilde g^{(3)}_{l-\frac12,m-\frac12,n-\frac12}(Z)
\end{pmatrix} \right).
\end{multline*}
}

As a consequence of these calculations, we obtain that the spaces ${\cal U}^+$,
${\cal U}^-$ of QLAR functions as well as the spaces ${\cal U}'^+$,
${\cal U}'^-$ of QRAR functions are irreducible.

\begin{prop}  \label{irred-prop}
The spaces $(\pi'_l, {\cal U}^+)$, $(\pi'_l, {\cal U}^-)$,
$(\pi'_r, {\cal U}'^+)$ and $(\pi'_r, {\cal U}'^-)$ are irreducible
representations of $\mathfrak{sl}(2,\HC)$ (as well as $\mathfrak{gl}(2,\HC)$).
\end{prop}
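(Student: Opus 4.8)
The plan is to combine the explicit $K$-type decomposition of Proposition \ref{K-types-prop} with the transition formulas computed in Subsection \ref{K-type-action}, reducing irreducibility to a connectedness statement for the three families $f^{(1)}, f^{(2)}, f^{(3)}$ (and similarly for the $\tilde f$, $g$, $\tilde g$ families). I will treat ${\cal U}^+$ in detail; the other three cases are identical word for word.

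First I would record that any $\pi'_l$-invariant subspace $W \subseteq {\cal U}^+$ is graded by degree. Differentiating the one-parameter subgroup $\Lambda$ of Lemma \ref{degree-decomp-lem} gives the element $i \bigl(\begin{smallmatrix} I & 0 \\ 0 & -I \end{smallmatrix}\bigr) \in \mathfrak{sl}(2,\HC)$, which acts on ${\cal U}(d)$ by the scalar $-(2d+1)i$; since these scalars are distinct for distinct $d$, any invariant $W$ satisfies $W = \bigoplus_{d \ge 0} \bigl( W \cap {\cal U}(d) \bigr)$. This element commutes with $\mathfrak{sl}(2,\BB C) \times \mathfrak{sl}(2,\BB C)$, so each $W \cap {\cal U}(2l)$ is an $\mathfrak{sl}(2,\BB C) \times \mathfrak{sl}(2,\BB C)$-submodule of ${\cal U}(2l)$. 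By Proposition \ref{K-types-prop} the latter is the sum of the three pairwise non-isomorphic irreducibles $V_{l+\frac12}\boxtimes V_l$, $V_{l-\frac12}\boxtimes V_l$, $V_{l-\frac12}\boxtimes V_{l-1}$, spanned respectively by $f^{(1)}_l$, $f^{(2)}_l$, $f^{(3)}_l$. Hence each $W \cap {\cal U}(2l)$ is a direct sum of a subset of these three full components.

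Next I would show that the operators $\pi'_l\bigl(\begin{smallmatrix} 0 & 0 \\ C & 0 \end{smallmatrix}\bigr)$ and $\pi'_l\bigl(\begin{smallmatrix} 0 & B \\ 0 & 0 \end{smallmatrix}\bigr)$ link all of these components. Reading off Subsection \ref{K-type-action}: $\pi'_l(C) f^{(1)}_l$ has nonzero projection onto each of $f^{(1)}_{l+\frac12}$, $f^{(2)}_{l+\frac12}$, $f^{(3)}_{l+\frac12}$, while $\pi'_l(B) f^{(2)}_l$ and $\pi'_l(B) f^{(3)}_l$ each have nonzero projection onto $f^{(1)}_{l-\frac12}$ (the relevant coefficients are factors such as $(l\pm m)$ and $(l\pm n+\tfrac12)$, which do not vanish identically over a component). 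Since $W$ is already known to be a sum of \emph{full} irreducible components in each degree, it suffices that a transition operator have nonzero projection onto a target component: then $W \cap {\cal U}(d')$, being a sum of full components, must contain that target entirely. Starting from any single component, repeated application of $\pi'_l(B)$ descends to $f^{(1)}_0$, and repeated application of $\pi'_l(C)$ climbs the $f^{(1)}$-chain while branching into the $f^{(2)}$- and $f^{(3)}$-families; thus every component is reached, forcing $W = {\cal U}^+$. All generators used lie in $\mathfrak{sl}(2,\HC)$, so irreducibility holds there, and a fortiori for $\mathfrak{gl}(2,\HC)$.

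The step I would be most careful about is the multiplicity bookkeeping underlying the "sum of full components" reduction: an abstract type such as $V_{\frac12}\boxtimes V_0$ recurs in several degrees (for instance as $f^{(1)}_0$ and as $f^{(3)}_1$), so ${\cal U}^+$ is not multiplicity-free as an $\mathfrak{sl}(2,\BB C) \times \mathfrak{sl}(2,\BB C)$-module. The grading element above is precisely what separates these recurrences by distinct eigenvalues, so that multiplicity-freeness applies within each fixed degree and an invariant subspace cannot be a skew diagonal across a repeated isotypic component. Once this is secured, the remainder is the mechanical verification that the transition graph on $\{f^{(i)}_l\}$ is connected, which the formulas of Subsection \ref{K-type-action} supply directly.
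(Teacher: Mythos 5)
Your proof is correct and follows essentially the same route as the paper's: the paper likewise combines the degree grading from Lemma \ref{degree-decomp-lem} with the transition formulas of Subsection \ref{K-type-action} (summarized there as connectivity diagrams between the $K$-type components of adjacent degrees) to conclude that any nonzero vector generates the whole space. Your write-up merely makes explicit the multiplicity bookkeeping and the ``sum of full components'' reduction that the paper leaves implicit.
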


\begin{proof}
The following diagram illustrates the action of
$\pi'_l \bigl( \begin{smallmatrix} 0 & B \\ 0 & 0 \end{smallmatrix} \bigr)$,
$B \in \HC$, on ${\cal U}(2l)$:
$$
\xymatrix{
{\cal U}(2l): & V_{l+\frac12} \boxtimes V_l \ar[d] &
V_{l-\frac12} \boxtimes V_l \ar[dl] \ar[d] &
V_{l-\frac12} \boxtimes V_{l-1} \ar[dll] \ar[dl] \ar[d] \\
{\cal U}(2l-1): & V_l \boxtimes V_{l-\frac12} &
V_{l-1} \boxtimes V_{l-\frac12} & V_{l-1} \boxtimes V_{l-\frac32}}
$$
And this diagram illustrates the action of 
$\pi'_l \bigl( \begin{smallmatrix} 0 & 0 \\ C & 0 \end{smallmatrix} \bigr)$,
$C \in \HC$, on ${\cal U}(2l)$:
$$
\xymatrix{
{\cal U}(2l+1): & V_{l+1} \boxtimes V_{l+\frac12} &
V_l \boxtimes V_{l+\frac12} & V_l \boxtimes V_{l+\frac32}  \\
{\cal U}(2l): & V_{l+\frac12} \boxtimes V_l \ar[u] \ar[ur] \ar[urr] &
V_{l-\frac12} \boxtimes V_l \ar[ur] \ar[u] &
V_{l-\frac12} \boxtimes V_{l-1} \ar[u]}
$$
(note that the arrows get reversed).
The diagrams expressing the actions of
$\bigl( \begin{smallmatrix} 0 & B \\ 0 & 0 \end{smallmatrix} \bigr)$
and $\bigl( \begin{smallmatrix} 0 & 0 \\ C & 0 \end{smallmatrix} \bigr)$
on ${\cal U}(-2l-1)$, ${\cal U}'(2l)$ and ${\cal U}'(-2l-1)$ are similar.

It follows from Lemma \ref{degree-decomp-lem} and this description of
the actions of
$\pi'_l \bigl( \begin{smallmatrix} 0 & B \\ C & 0 \end{smallmatrix} \bigr)$ and
$\pi'_r \bigl( \begin{smallmatrix} 0 & B \\ C & 0 \end{smallmatrix} \bigr)$,
$B, C \in \HC$, on the spaces ${\cal U}^+$, ${\cal U}^-$,
${\cal U}'^+$ and ${\cal U}'^-$ that any non-zero vector generates
the whole space. Hence the result.
\end{proof}

We can find the effect of the inversion on these families of QLAR and QRAR
functions. The element
$\bigl( \begin{smallmatrix} 0 & 1 \\ 1 & 0 \end{smallmatrix} \bigr)
\in GL(2,\HC)$ acts on QLAR functions by
$$
\pi'_l \bigl( \begin{smallmatrix} 0 & 1 \\ 1 & 0 \end{smallmatrix} \bigr): \:
f(Z) \mapsto -\frac{Z}{N(Z)} f(Z^{-1})
$$
and on QRAR functions by
$$
\pi'_r \bigl( \begin{smallmatrix} 0 & 1 \\ 1 & 0 \end{smallmatrix} \bigr): \:
G(Z) \mapsto g(Z^{-1}) \frac{Z}{N(Z)}.
$$
By direct calculation using the identity \eqref{Zt-identity}, we obtain:

\begin{lem}  \label{inversion-action-lem}
The inversion
$\pi'_l \bigl( \begin{smallmatrix} 0 & 1 \\ 1 & 0 \end{smallmatrix} \bigr)$
acts on the basis of QLAR functions by sending
$$
f^{(1)}_{l,m,n}(Z) \longleftrightarrow -\tilde f^{(1)}_{l,n,m}(Z), \qquad
f^{(2)}_{l,m,n}(Z) \longleftrightarrow -\tilde f^{(2)}_{l,n,m}(Z), \qquad
f^{(3)}_{l,m,n}(Z) \longleftrightarrow -\tilde f^{(3)}_{l,n,m}(Z).
$$
The inversion
$\pi'_r \bigl( \begin{smallmatrix} 0 & 1 \\ 1 & 0 \end{smallmatrix} \bigr)$
acts on the basis of QRAR functions by sending
$$
g^{(1)}_{l,m,n}(Z) \longleftrightarrow \tilde g^{(1)}_{l,n,m}(Z), \qquad
g^{(2)}_{l,m,n}(Z) \longleftrightarrow \tilde g^{(2)}_{l,n,m}(Z), \qquad
g^{(3)}_{l,m,n}(Z) \longleftrightarrow \tilde g^{(3)}_{l,n,m}(Z).
$$
\end{lem}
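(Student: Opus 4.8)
The plan is to substitute the explicit basis functions of Propositions \ref{K-typebasis+_prop} and \ref{K-typebasis-_prop} into the two inversion formulas recorded just above, namely
$$
\pi'_l \bigl( \begin{smallmatrix} 0 & 1 \\ 1 & 0 \end{smallmatrix} \bigr):
f(Z) \mapsto -\tfrac{Z}{N(Z)} f(Z^{-1}), \qquad
\pi'_r \bigl( \begin{smallmatrix} 0 & 1 \\ 1 & 0 \end{smallmatrix} \bigr):
g(Z) \mapsto g(Z^{-1}) \tfrac{Z}{N(Z)},
$$
and to collapse the outcome using the matrix-coefficient multiplication identities. The organising observation is that $\tfrac{Z}{N(Z)} = (Z^{-1})^+$: writing $W = Z^{-1}$, the prefactor becomes the adjugate $W^+ = \bigl( \begin{smallmatrix} w_{22} & -w_{12} \\ -w_{21} & w_{11} \end{smallmatrix} \bigr)$, so each correspondence reduces to computing $W^+$ times a column (for $\pi'_l$) or a row times $W^+$ (for $\pi'_r$) of functions $t^{\cdot}_{\cdot\,\underline{\cdot}}(W)$. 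Since $\pi'_l$ and $\pi'_r$ are genuine group actions (Theorem \ref{qr-action-thm}) and $\bigl( \begin{smallmatrix} 0 & 1 \\ 1 & 0 \end{smallmatrix} \bigr)^2 = 1$, each inversion operator squares to the identity; hence only one direction of every $\longleftrightarrow$ has to be checked, the other following automatically.

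For the family $f^{(1)}_{l,m,n}$ one multiplies out $W^+ f^{(1)}_{l,m,n}(W)$ and applies the elementary raising relations
$$
w_{22}\,t^l_{n-\frac12\,\underline m}(W) + w_{12}\,t^l_{n+\frac12\,\underline m}(W)
= t^{l+\frac12}_{n\,\underline{m+\frac12}}(W), \qquad
w_{21}\,t^l_{n-\frac12\,\underline m}(W) + w_{11}\,t^l_{n+\frac12\,\underline m}(W)
= t^{l+\frac12}_{n\,\underline{m-\frac12}}(W),
$$
which come straight from the contour definition \eqref{t} by pulling the linear factors $(sw_{11}+w_{21})$ and $(sw_{12}+w_{22})$ out of the integrand (they are the multiplication identities of Lemma 23 in \cite{FL1}). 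This yields exactly $-\tilde f^{(1)}_{l,n,m}(Z)$, with $m$ and $n$ interchanged because $W^+$ mixes the two entries of the column. Since $f^{(3)}_{l,m,n} = N(Z)\, f^{(1)}_{l-1,m,n}$ (noted in the remark following Proposition \ref{K-typebasis+_prop}) and $N(Z^{-1}) = N(Z)^{-1}$, the $f^{(3)}$ case is obtained from the $f^{(1)}$ computation at level $l-1$ together with the bookkeeping of a single factor $N(W)$. The family $f^{(2)}$ is the one where \eqref{Zt-identity} is used in full: here $W^+ f^{(2)}_{l,m,n}(W)$ does not collapse to a pure $t^{l+\frac12}$, but rather to the $N(W)\,t^{l-\frac12}$ part of the multiplication identity, and the scalar coefficients $l\pm n+\tfrac12$ built into the definition of $f^{(2)}$ are precisely what make the two summands of \eqref{Zt-identity} separate into $\tilde f^{(2)}_{l,n,m}(Z)$.

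The negative-degree families $\tilde f^{(i)}$ are treated by the same scheme with the matrix coefficients now evaluated at $Z^{-1}$ and carrying powers of $N(Z)^{-1}$; there the $Z^{-1}$-analogue \eqref{Ct-inverse} of the multiplication identity replaces \eqref{Zt-identity} and ``raising in $l$'' turns into ``lowering''. The quasi right anti regular case is entirely parallel, with $W^+$ acting on rows from the right and the roles of the two factors $(sw_{11}+w_{21})$, $(sw_{12}+w_{22})$ interchanged, so the analogous right-multiplication relations send $g^{(i)}_{l,m,n}$ to $\tilde g^{(i)}_{l,n,m}$; the fact that no global minus sign appears here, in contrast to the $\pi'_l$ statement, simply reflects that the inversion prefactor is $+\tfrac{Z}{N(Z)}$ for $\pi'_r$ but $-\tfrac{Z}{N(Z)}$ for $\pi'_l$, as dictated by \eqref{pi'_l} and \eqref{pi'_r}. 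The only real difficulty throughout is the bookkeeping of the half-integer shifts $n \mapsto n\pm\tfrac12$, $m \mapsto m\pm\tfrac12$, the transposition $m \leftrightarrow n$ forced by the adjugate, and the change in the power of $N(Z)$ when passing between the $t^l(Z)$ and $N(Z)^{-1}t^l(Z^{-1})$ presentations; no ingredient beyond the multiplication identities \eqref{Zt-identity} and \eqref{Ct-inverse} enters.
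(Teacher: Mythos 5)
Your proposal is correct and follows essentially the same route as the paper, which disposes of the lemma as a direct calculation with the matrix-coefficient multiplication identities (Lemma 23 of \cite{FL1} and \eqref{Zt-identity}); your identification of the prefactor $\tfrac{Z}{N(Z)}$ with $(Z^{-1})^+$ and the involutivity remark are sound, and the index transposition $m \leftrightarrow n$ and signs come out exactly as stated.
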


\section{Restricting Quasi Regular Functions to Subgroups of $GL(2,\HC)$}  \label{Sect5}

%\section{Restricting to Subgroups of $GL(2,\HC)$}

In Proposition \ref{irred-prop} we saw that $(\pi'_l, {\cal U}^+)$,
$(\pi'_l, {\cal U}^-)$, $(\pi'_r, {\cal U}'^+)$ and $(\pi'_r, {\cal U}'^-)$
are irreducible representations of $\mathfrak{gl}(2,\HC)$.
In this section we investigate how these spaces decompose into irreducible
components after restricting to certain natural subgroups of $GL(2,\HC)$,
namely the group of upper triangular matrices in $GL(2,\HC)$,
the group of rigid motions of $\BB H$ and the Poincar\'e group.

\subsection{The Upper Triangular Subgroup of $GL(2,\HC)$}

Consider the group of upper triangular matrices in $GL(2,\HC)$:
$$
R_{\BB C} = \bigl\{
\bigl( \begin{smallmatrix} a & b \\ 0 & d \end{smallmatrix}\bigr);\:
a,b,d \in \HC, \: N(a) \ne 0, \: N(d) \ne 0 \bigr\}.
$$
Its Lie algebra is
$$
\bigl\{
\bigl( \begin{smallmatrix} A & B \\ 0 & D \end{smallmatrix}\bigr);\:
A,B,D \in \HC \bigr\}
\subset \mathfrak{gl}(2,\HC).
$$

Let us consider the space $(\pi'_l, {\cal U}^+)$.
From Subsection \ref{K-type-action} we see that $(\pi'_l, {\cal U}^+)$
has $R_{\BB C}$-invariant finite-dimensional subspaces
\begin{align*}
{\cal U}^+_{(1)}(L) = \BB C\text{-span of }
&\bigl\{ f^{(1)}_{L,m,n}(Z),\: f^{(1)}_{l,m,n}(Z),\: f^{(2)}_{l,m,n}(Z),\:
f^{(3)}_{l,m,n}(Z) ;\: 0 \le l \le L-\tfrac12 \bigr\}, \\
&\qquad L = 0,\tfrac12,1,\tfrac32,\dots, \\
{\cal U}^+_{(2)}(L) = \BB C\text{-span of }
&\bigl\{ f^{(1)}_{L,m,n}(Z),\: f^{(2)}_{L,m,n}(Z),\: f^{(1)}_{l,m,n}(Z),\:
f^{(2)}_{l,m,n}(Z),\: f^{(3)}_{l,m,n}(Z);\: 0 \le l \le L-\tfrac12 \bigr\}, \\
&\qquad L = \tfrac12,1,\tfrac32,2,\dots, \\
{\cal U}^+_{(3)}(L) = \BB C\text{-span of }
&\bigl\{ f^{(1)}_{l,m,n}(Z),\: f^{(2)}_{l,m,n}(Z),\:
f^{(3)}_{l,m,n}(Z);\: 0 \le l \le L \bigr\},  \\
&\qquad L = 1,\tfrac32,2,\tfrac52,\dots.
\end{align*}
Furthermore, these invariant subspaces form a filtration of ${\cal U}^+$
\begin{multline}  \label{max-filtration}
\{0\} \subset {\cal U}^+_{(1)}(0) \subset {\cal U}^+_{(1)}(\tfrac12) \subset
{\cal U}^+_{(2)}(\tfrac12) \subset {\cal U}^+_{(1)}(1) \subset
{\cal U}^+_{(2)}(1) \subset {\cal U}^+_{(3)}(1) \\
\subset {\cal U}^+_{(1)}(\tfrac32) \subset {\cal U}^+_{(2)}(\tfrac32)
\subset {\cal U}^+_{(3)}(\tfrac32) \subset \dots,
\end{multline}
and the quotient of two subsequent subspaces is irreducible.
Indeed, when restricted to $SU(2) \times SU(2)$,
\begin{align*}
{\cal U}^+_{(1)}(L) / {\cal U}^+_{(3)}(L-\tfrac12) &=
V_{L+\frac12} \boxtimes V_L,  \\
{\cal U}^+_{(2)}(L)/ {\cal U}^+_{(1)}(L) &= V_{L-\frac12} \boxtimes V_L,  \\
{\cal U}^+_{(3)}(L) / {\cal U}^+_{(2)}(L) &= V_{L-\frac12} \boxtimes V_{L-1}.
\end{align*}
Thus, \eqref{max-filtration} is a maximal filtration of $(\pi'_l, {\cal U}^+)$
by $R_{\BB C}$-invariant subspaces, and we obtain a decomposition of
$(\pi'_l, {\cal U}^+)$ into $R_{\BB C}$-irreducible components.

The story for the other representations $(\pi'_l, {\cal U}^-)$,
$(\pi'_r, {\cal U}'^+)$ and $(\pi'_r, {\cal U}'^-)$ is very similar.

\subsection{The Group of Rigid Motions of $\BB H$}
%{$SO(4) \ltimes \BB R^4$}

We consider the group of rigid motions on $\BB H$:
$$
SO(4) \ltimes \BB R^4 = \bigl\{
\bigl( \begin{smallmatrix} a & b \\ 0 & d \end{smallmatrix}\bigr);\:
a,b,d \in \BB H, \: N(a)=N(d)=1 \bigr\}
\subset GL(2,\BB H).
$$
Its Lie algebra is
$$
\bigl\{
\bigl( \begin{smallmatrix} A & B \\ 0 & D \end{smallmatrix}\bigr);\:
A,B,D \in \BB H, \: \re A = \re D =0 \bigr\}
\subset \mathfrak{gl}(2,\BB H).
$$

Since $SO(4) \ltimes \BB R^4$ is a subgroup of the group of upper triangular
matrices $R_{\BB C}$, the filtration \eqref{max-filtration} of
$(\pi'_l, {\cal U}^+)$ is invariant under $SO(4) \ltimes \BB R^4$,
and it is easy to see that the quotient of two subsequent subspaces remains
irreducible under the action of $SO(4) \ltimes \BB R^4$.
Thus, we obtain a decomposition of $(\pi'_l, {\cal U}^+)$ into irreducible
components for the action of $SO(4) \ltimes \BB R^4$.

The story for the other representations $(\pi'_l, {\cal U}^-)$,
$(\pi'_r, {\cal U}'^+)$ and $(\pi'_r, {\cal U}'^-)$ is very similar.

\subsection{The Poincar\'e Group $O(3,1) \ltimes \BB R^{3,1}$}

First, consider the group of ``motions'' of the Minkowski space $\BB M$:
$$
O(3,1) \ltimes \BB R^{3,1} = \bigl\{
\bigl( \begin{smallmatrix} a & b \\ 0 & d \end{smallmatrix}\bigr);\:
a,b,d \in \HC , \: ad^*=1,\: ab^*+ba^*=0 \bigr\}
\subset GL(2,\HC).
$$
This is one of many possible realizations of the Poincar\'e group,
and we denote it by $P'_{\BB R}$. Its Lie algebra is
$$
\mathfrak{p}'_{\BB R} = \bigl\{
\bigl( \begin{smallmatrix} A & B \\ 0 & -A^* \end{smallmatrix}\bigr);\:
A,B \in \HC, \: \re A =0,\: B^*=-B \bigr\}
\subset \mathfrak{gl}(2,\HC).
$$
Since $P'_{\BB R}$ is a subgroup of the group of upper triangular matrices
$R_{\BB C}$, the filtration \eqref{max-filtration} of $(\pi'_l, {\cal U}^+)$ is
$\mathfrak{p}'_{\BB R}$-invariant, and
everything that was said about $SO(4) \ltimes \BB R^4$ applies here as well.

Note that this group of motions of $\BB M$ is a subgroup of the full
conformal group of $\BB M$
$$
U(2,2)' = \biggl\{ \begin{pmatrix} a & b \\ c & d \end{pmatrix};\:
a,b,c,d \in \HC,\:
\begin{smallmatrix} ab^*+ba^*=0 \\ cd^*+dc^*=0 \\ ad^*+bc^*=1 \end{smallmatrix}
\biggr\}
$$
with Lie algebra
$$
\mathfrak{u}(2,2)' = 
\bigl\{ \bigl(\begin{smallmatrix} A & B \\ C & -A^* \end{smallmatrix}\bigr) ;\:
A,B,C \in \HC,\: B^*=-B,\: C^*=-C \bigr\}.
$$
Clearly, $\mathfrak{p}'_{\BB R}$ is a subalgebra of $\mathfrak{u}(2,2)'$.

Things become more interesting when we consider a different copy of
$U(2,2)$ and the corresponding realization of Poincar\'e group.
Consider $U(2,2) \subset GL(2,\HC)$ as realized by \eqref{U(2,2)};
%$$
%U(2,2) = \biggl\{ \bigl(\begin{smallmatrix} a & b \\
%    c & d \end{smallmatrix}\bigr);\: a,b,c,d \in \HC,\:
%  \begin{smallmatrix} a^*a = 1+c^*c \\ d^*d = 1+b^*b \\ a^*b=c^*d
%  \end{smallmatrix} \biggr\},
%$$
its conformal action preserves $U(2) \subset \HC$.  
The Lie algebra of $U(2,2)$ is given by \eqref{u(2,2)-algebra}.
This copy of $U(2,2)$ is conjugate to $U(2,2)'$ (that preserves $\BB M$)
via the Cayley transform.
The corresponding realization of the Poincar\'e group $P_{\BB R}$ has Lie algebra
$$
\mathfrak{p}_{\BB R} = \bigl\{
\bigl( \begin{smallmatrix} A-A^*+B & -i(A+A^*-B) \\
  i(A+A^*+B) & A-A^*-B \end{smallmatrix}\bigr);\:
A,B \in \HC, \: \re A =0,\: B^*=-B \bigr\}
\subset \mathfrak{gl}(2,\HC).
$$

We consider the following spaces of holomorphic functions on $\BB D^+$
with values in $\BB S$:
\begin{align*}
{\cal U}^+(\BB D^+) &=\{ f: \BB D^+ \to \BB S;\:
\text{$f$ is holomorphic and $\nabla\square f =0$} \},  \\
{\cal U}^+_1(\BB D^+) &= \{ f: \BB D^+ \to \BB S;\:
\text{$f$ is holomorphic and $\nabla [N(Z+i)^{-1} f(Z)] =0$} \},  \\
{\cal U}^+_2(\BB D^+) &= \{ f: \BB D^+ \to \BB S;\:
\text{$f$ is holomorphic and $\square [(Z+i)^{-1} f(Z)] =0$} \},  \\
{\cal V}^+(\BB D^+) &= \{ f: \BB D^+ \to \BB S;\:
\text{$f$ is holomorphic and $\nabla^+ f =0$} \},  \\
{\cal V}^+_a(\BB D^+) &=\{ f: \BB D^+ \to \BB S;\:
\text{$f$ is holomorphic and $\nabla f =0$} \}.
\end{align*}
Each of these spaces has the standard topology of uniform convergence on
compact subsets.
We discuss the space of left anti regular functions ${\cal V}^+_a$ and the
conformal action $\pi_{la}$ in the next subsection.

\begin{thm}  \label{Poincare-restrict-thm}
The spaces ${\cal U}^+_1(\BB D^+)$ and ${\cal U}^+_2(\BB D^+)$ are closed
subspaces of ${\cal U}^+(\BB D^+)$ that are invariant under the $\pi'_l$
action of the Poincar\'e group. The representations of the Poincar\'e group
$P_{\BB R}$
$$
\bigl(\pi'_l, {\cal U}^+_1(\BB D^+)\bigr), \qquad
\bigl(\pi'_l, {\cal U}^+(\BB D^+)/{\cal U}^+_2(\BB D^+)\bigr)
$$
are both isomorphic to $\bigl(\pi_{la}, {\cal V}^+_a(\BB D^+)\bigr)$,
while the representation 
$$
\bigl(\pi'_l, {\cal U}^+_2(\BB D^+)/{\cal U}^+_1(\BB D^+)\bigr)
$$
is isomorphic to $\bigl(\pi_l, {\cal V}^+(\BB D^+)\bigr)$.
In particular, the three representations of the Poincar\'e group $P_{\BB R}$
$$
\bigl(\pi'_l, {\cal U}^+_1(\BB D^+)\bigr), \qquad
\bigl(\pi'_l, {\cal U}^+_2(\BB D^+)/{\cal U}^+_1(\BB D^+)\bigr), \qquad  
\bigl(\pi'_l, {\cal U}^+(\BB D^+)/{\cal U}^+_2(\BB D^+)\bigr)
$$
are irreducible.
\end{thm}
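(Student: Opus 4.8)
The plan is to exhibit ${\cal U}^+_1(\BB D^+)\subseteq{\cal U}^+_2(\BB D^+)\subseteq{\cal U}^+(\BB D^+)$ as a three-step, Poincar\'e-invariant filtration whose successive quotients are identified, through explicit differential-operator intertwiners, with the anti regular, regular, and anti regular representations; irreducibility of the three pieces then follows from the corresponding fact for ${\cal V}^+$ and ${\cal V}^+_a$.

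First I would establish the two inclusions. Writing $W=Z+i$ and using that $\square$ and $\nabla$ commute with the constant shift $Z\mapsto Z+i$, the translate $\square(W^+g)=2\nabla g+W^+\square g$ of the identity $\square(Z^+f)=2\nabla f+Z^+\square f$ from Subsection 3.2 shows that if $g=N(Z+i)^{-1}f$ is anti regular then $(Z+i)^{-1}f=W^+g$ satisfies $\square[(Z+i)^{-1}f]=2\nabla g=0$, so ${\cal U}^+_1\subseteq{\cal U}^+_2$. Likewise the companion identity $\square(Wu)=2\nabla^+u+W\square u$ shows that if $u=(Z+i)^{-1}f$ is harmonic then $\nabla\square f=\nabla\square(Wu)=2\nabla\nabla^+u+\nabla(W\square u)=2\square u=0$ since $\square u=0$, so ${\cal U}^+_2\subseteq{\cal U}^+$. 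Closedness of each subspace is automatic: they are solution sets of linear holomorphic PDE, hence closed under uniform convergence on compact subsets.

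The geometric engine for invariance is that $P_{\BB R}$ is the stabilizer in $U(2,2)$ of the boundary point $Z=-i\in U(2)=\partial\BB D^+$ (the Cayley image of $\infty\in\partial\BB M$, where $P_{\BB R}=\gamma P'_{\BB R}\gamma^{-1}$ with $P'_{\BB R}$ the upper-triangular realization). Hence $N(Z+i)=N(Z-(-i))$ and $(Z+i)$ are the scalar and spinor Cauchy--Fueter kernels based at the fixed point, and for $h\in P_{\BB R}$ they obey the cocycle laws of \cite{FL1} (for the scalar kernel, $N(\tilde Z+i)=\kappa(h)^{-1}N(a'-Zc')^{-1}N(Z+i)$, where $\tilde Z$ is the $\pi'_l$-M\"obius image and $\kappa(h)$ is constant). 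These laws imply that $f\mapsto N(Z+i)^{-1}f$ intertwines $\pi'_l|_{P_{\BB R}}$ with the anti regular action $\pi_{la}$, while $f\mapsto(Z+i)^{-1}f$ intertwines $\pi'_l|_{P_{\BB R}}$ with the conformal action for which $\square$ is covariant; since being anti regular and being harmonic are invariant conditions for these actions, the preimages ${\cal U}^+_1$ and ${\cal U}^+_2$ are $\pi'_l(P_{\BB R})$-invariant. The three quotient isomorphisms are then realized by $f\mapsto N(Z+i)^{-1}f$ onto ${\cal V}^+_a$; by $f\mapsto\nabla[(Z+i)^{-1}f]$ onto ${\cal V}^+$, a regular function whose kernel is exactly ${\cal U}^+_1$; and by the conformally covariant Laplacian conjugated by the factor of $\gamma$ (in the $\BB M$-model this is simply $\square$ carrying the genuine filtration ${\cal V}^+_a\subset{\cal H}^+\subset{\cal U}^+$ of anti regular $\subset$ harmonic $\subset$ QLAR functions, with ${\cal H}^+$ the harmonic $\BB S$-valued functions) onto ${\cal V}^+_a$, with kernel ${\cal U}^+_2$. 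Exactness and surjectivity can be confirmed independently by matching $SU(2)\times SU(2)$ $K$-types: by Proposition~\ref{K-typebasis+_prop} and the Remark following it, the three families $f^{(1)}$, $f^{(2)}$, $f^{(3)}$ reproduce precisely the $K$-types of the anti regular, regular, and anti regular functions.

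Each graded piece is thus isomorphic to ${\cal V}^+_a$ or ${\cal V}^+$, which restrict irreducibly to the Poincar\'e group (\cite{MT} for the regular case; the anti regular case follows by quaternionic conjugation), so the three representations are irreducible. The main obstacle is the precise intertwining bookkeeping: one must verify that $\square$ and $\nabla$ carry the spinor-valued action $\pi'_l$ to $\pi_{la}$ and $\pi_l$ with the correct conformal weights --- the scalar weight is controlled by the Lorentz-invariance of $\square$ in the $\BB M$-model, but the spinor multiplier $(a'-Zc')N(a'-Zc')^{-1}$ must be tracked through the first-order operator $\nabla$ --- and that $\pi'_l(\gamma)$ sends the genuine filtration ${\cal V}^+_a\subset{\cal H}^+\subset{\cal U}^+$ exactly onto the $(Z+i)$-twisted subspaces ${\cal U}^+_1\subset{\cal U}^+_2\subset{\cal U}^+$. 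Surjectivity of $\square$ and $\nabla$ onto the full regular and anti regular spaces on $\BB D^+$ is the remaining analytic point, for which the reproducing formula (Theorem~\ref{QAR-repro-thm}) or the $K$-type count of Proposition~\ref{K-types-prop} supplies the needed solvability.
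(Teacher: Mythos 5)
Your proposal is correct in substance and follows the same strategy as the paper: a three-step Poincar\'e-invariant filtration whose successive quotients are identified with ${\cal V}^+_a$, ${\cal V}^+$, ${\cal V}^+_a$ via $\nabla$ and $\square$ (suitably twisted by powers of $Z+i$), with irreducibility imported from \cite{MT}. The difference is one of coordinates. The paper disposes of all the ``intertwining bookkeeping'' that you flag as the main obstacle by first applying the Cayley transform: Lemma \ref{D-T-bijections} shows that $\pi'_l(\gamma^{\pm 1})$ carry the twisted chain ${\cal U}^+_1(\BB D^+) \subset {\cal U}^+_2(\BB D^+) \subset {\cal U}^+(\BB D^+)$ bijectively onto the untwisted chain ${\cal V}^+_a(\BB T^+) \subset \{\square f = 0\} \subset {\cal U}^+(\BB T^+)$ on the tube domain (precisely because the extra factors are $N(Z+i)$ and $Z+i$, which is the content of your ``kernels based at the fixed point $-i$'' remark); on $\BB T^+$ the intertwiners are the bare operators $\nabla$ and $\square$, and $P'_{\BB R}$-equivariance reduces to two one-line identities for translations and for diagonal elements $\bigl(\begin{smallmatrix} a & 0 \\ 0 & d \end{smallmatrix}\bigr)$ (Proposition \ref{Poincare-iso-prop}). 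Your cocycle computation at $-i$ would in effect reprove Lemma \ref{D-T-bijections} by hand, including checking that the character $\kappa(h)$ is trivial on the Poincar\'e group. The one substantive point you leave open --- density of the images of $\nabla$ and $\square$ --- is handled in the paper by Lemma \ref{density-lem}: $K$-finite vectors are dense, every $(\deg+2)g$ is in the range via $\tfrac12\nabla(Zg_1)=(\deg+2)g_1$ and $\tfrac14\square(N(Z)g_2)=(\deg+2)g_2$; your alternative route through the explicit $K$-type computations (Proposition \ref{K-typebasis+_prop} together with the formulas for $\nabla f^{(2)}_{l,m,n}$, $\nabla f^{(3)}_{l,m,n}$ in Lemma \ref{orthog-rels-lem}) would also work, provided you add the admissibility argument showing polynomial elements are dense. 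So: same proof, different packaging, with two acknowledged verifications that are routine but do need to be written out.
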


This result follows from Lemma \ref{D-T-bijections} and
Proposition \ref{Poincare-iso-prop}.

%For the purpose of decomposition of various spaces of functions into
%irreducible components, consider the following conjugates of
%$\mathfrak{p}_{\BB R}$:
%$$
%\bigl(\begin{smallmatrix} 1 & i \\ 0 & 1 \end{smallmatrix}\bigr)
%\mathfrak{p}_{\BB R}
%\bigl(\begin{smallmatrix} 1 & -i \\ 0 & 1 \end{smallmatrix}\bigr)
%= \bigl\{ \bigl(\begin{smallmatrix} -2A^* & 0 \\
%  i(A+A^*+B) & 2A \end{smallmatrix}\bigr);\:
%A,B \in \HC, \: \re A =0,\: B^*=-B \bigr\},
%$$
%$$
%\bigl(\begin{smallmatrix} 1 & 0 \\ -i & 1 \end{smallmatrix}\bigr)
%\mathfrak{p}_{\BB R}
%\bigl(\begin{smallmatrix} 1 & 0 \\ i & 1 \end{smallmatrix}\bigr)
%= \bigl\{ \bigl(\begin{smallmatrix} 2A & -i(A+A^*-B) \\
%  0 & -2A^* \end{smallmatrix}\bigr);\:
%A,B \in \HC, \: \re A =0,\: B^*=-B \bigr\}.
%$$
%Note that the actions of
%$\bigl(\begin{smallmatrix} 1 & \pm i \\ 0 & 1 \end{smallmatrix}\bigr)$
%are just parallel translations by $\pm i$ and preserve the spaces
%of polynomials, ${\cal V}^+$, ${\cal V}'^+$, ${\cal U}^+$, ${\cal U}'^+$. 
%Similarly, the actions of
%$\bigl(\begin{smallmatrix} 1 & 0 \\ \pm i & 1 \end{smallmatrix}\bigr)$
%preserve the space of Laurent polynomials regular at infinity,
%${\cal V}^-$, ${\cal V}'^-$, ${\cal U}^-$, ${\cal U}'^-$. 
%However, these operators are not unitary, most likely not continuous,
%and most likely do not extend to the globalizations of
%${\cal V}^{\pm}$, ${\cal V}'^{\pm}$, ${\cal U}^{\pm}$, ${\cal U}'^{\pm}$
%to representations of $U(2,2)$.

\subsection{Anti Regular Functions}

Since the spaces of left and right anti regular functions appear in the
decompositions of $(\pi'_l, {\cal U}^{\pm})$ and $(\pi'_r, {\cal U}'^{\pm})$
restricted to the Poincar\'e group, we briefly review their
definitions and conformal properties.
We start with the notions of left and right anti regular functions defined on
open subsets of $\BB H$ and $\HC$.

\begin{df}  \label{r-definition}
Let $U$ be an open subset of $\BB H$.
A ${\cal C}^1$-function $f: U \to \BB S$ is
{\em left anti regular} if it satisfies
$$
\nabla f =0 \qquad \text{at all points in $U$}.
$$

Similarly, a ${\cal C}^1$-function $g: U \to \BB S'$ is
{\em right anti regular} if
$$
g \overleftarrow{\nabla} =0 \qquad \text{at all points in $U$}.
$$
\end{df}

We also can talk about regular functions defined on open subsets of
$\HC$. In this case we require such functions to be holomorphic.

\begin{df}
Let $U$ be an open subset of $\HC$.
A holomorphic function $f: U \to \BB S$ is
{\em left anti regular} if it satisfies
$\nabla f =0$ at all points in $U$.

Similarly, a holomorphic function $g: U \to \BB S'$ is
{\em right anti regular} if
$g \overleftarrow{\nabla} =0$ at all points in $U$.
\end{df}

It is clear from \eqref{Laplacian} that anti regular functions are harmonic,
i.e. annihilated by $\square$.
One way to construct left anti regular functions is to start with a harmonic
function $\phi: \BB H \to \BB S$, then $\nabla^+ \phi$ is left anti regular.
Similarly, if $\phi: \BB H \to \BB S'$ is harmonic, then
$\phi \overleftarrow{\nabla^+}$ is right anti regular.

Let $\tilde{\cal V}_a$ and $\tilde{\cal V}'_a$ denote respectively the
spaces of (holomorphic) left and right anti regular functions on $\HC$,
possibly with singularities.

\begin{thm}  \label{r-action-thm}
\begin{enumerate}
\item
The space $\tilde{\cal V}_a$ of left anti regular functions
$\HC \to \BB S$ (possibly with singularities)
is invariant under the following action of $GL(2,\HC)$:
\begin{multline}  \label{pi_la}
\pi_{la}(h): \: f(Z) \: \mapsto \: \bigl( \pi_{la}(h)f \bigr)(Z) =
\frac{a'-Zc'}{N(a'-Zc')^2} \cdot f \bigl( (a'-Zc')^{-1}(-b'+Zd') \bigr),  \\
h = \bigl( \begin{smallmatrix} a' & b' \\ c' & d' \end{smallmatrix} \bigr)
\in GL(2,\HC).
\end{multline}
\item
The space $\tilde{\cal V}'_a$ of right anti regular functions
$\HC \to \BB S'$ (possibly with singularities)
is invariant under the following action of $GL(2,\HC)$:
\begin{multline}  \label{pi_ra}
\pi_{ra}(h): \: g(Z) \: \mapsto \: \bigl( \pi_{ra}(h)g \bigr)(Z) =
g \bigl( (aZ+b)(cZ+d)^{-1} \bigr) \cdot \frac{cZ+d}{N(cZ+d)^2},  \\
h^{-1} = \bigl( \begin{smallmatrix} a & b \\ c & d \end{smallmatrix} \bigr)
\in GL(2,\HC).
\end{multline}
\end{enumerate}
\end{thm}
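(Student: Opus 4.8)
The plan is to follow the proof of Theorem~\ref{qr-action-thm} almost verbatim, one differential order lower. First I would note that, exactly as for $\pi'_l$ and $\pi'_r$, the formulas \eqref{pi_la}--\eqref{pi_ra} define genuine actions of $GL(2,\HC)$ on all $\BB S$- (resp.\ $\BB S'$-) valued functions on $\HC$ with singularities; the only point is the cocycle identity for the automorphy factors $\frac{a'-Zc'}{N(a'-Zc')^2}$ and $\frac{cZ+d}{N(cZ+d)^2}$, which reduces to the chain rule for the M\"obius substitution together with the multiplicativity $N(PQ)=N(P)N(Q)$. Since $GL(2,\HC)\simeq GL(4,\BB C)$ is connected, it suffices to prove invariance infinitesimally: I differentiate $\pi_{la}$, $\pi_{ra}$ to obtain Lie-algebra actions of $\mathfrak{gl}(2,\HC)$ and show that each generator carries $\ker\nabla$ (resp.\ $\ker\overleftarrow{\nabla}$) into itself.

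The mechanism I would exploit is that the weight $N(\,\cdot\,)^{-2}$ in \eqref{pi_la} is the weight $N(\,\cdot\,)^{-1}$ of $\pi'_l$ times one further factor $N(a'-Zc')^{-1}$. Differentiating that extra factor adds a single \emph{scalar} term to the Lie-algebra action of Lemma~\ref{pi'-Lie_alg-action}, affecting only the generators $\bigl(\begin{smallmatrix}A&0\\0&0\end{smallmatrix}\bigr)$ and $\bigl(\begin{smallmatrix}0&0\\C&0\end{smallmatrix}\bigr)$, the ones for which $a'-Zc'\neq1$:
\begin{align*}
\pi_{la} \bigl( \begin{smallmatrix} A & 0 \\ 0 & 0 \end{smallmatrix} \bigr) &: f \mapsto - \tr (AZ \partial + 2A) f + Af,  \\
\pi_{la} \bigl( \begin{smallmatrix} 0 & B \\ 0 & 0 \end{smallmatrix} \bigr) &: f \mapsto - \tr (B \partial) f,  \\
\pi_{la} \bigl( \begin{smallmatrix} 0 & 0 \\ C & 0 \end{smallmatrix} \bigr) &: f \mapsto \tr (ZCZ \partial + 2ZC) f - ZCf,  \\
\pi_{la} \bigl( \begin{smallmatrix} 0 & 0 \\ 0 & D \end{smallmatrix} \bigr) &: f \mapsto \tr (ZD \partial) f,
\end{align*}
the only change from Lemma~\ref{pi'-Lie_alg-action} being the replacement of $\tr(AZ\partial+A)$ by $\tr(AZ\partial+2A)$ and of $\tr(ZCZ\partial+ZC)$ by $\tr(ZCZ\partial+2ZC)$.

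As in Theorem~\ref{qr-action-thm}, the only generator that is not immediate is $\bigl(\begin{smallmatrix}0&0\\C&0\end{smallmatrix}\bigr)$; the others are easier. Writing $\nabla=2\partial$, so that $\nabla f=0$ is equivalent to $\partial f=0$, I would apply $\partial$ and combine the identity $\partial\bigl(\tr(ZCZ\partial+ZC)f-ZCf\bigr)=\tr(ZCZ\partial+ZC)\partial f+CZ\partial f-Cf$ already obtained in the proof of Theorem~\ref{qr-action-thm} with the elementary $\partial\bigl(\tr(ZC)f\bigr)=\tr(ZC)\partial f+Cf$, which holds because $\partial\tr(ZC)=C$. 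The two stray terms $-Cf$ and $+Cf$ cancel, leaving
$$
\partial\,\pi_{la} \bigl( \begin{smallmatrix} 0 & 0 \\ C & 0 \end{smallmatrix} \bigr) f = \tr (ZCZ \partial + 2ZC)\,\partial f + CZ\,\partial f,
$$
which vanishes whenever $\partial f=0$. This cancellation is the whole point, and it is the step I expect to be the main obstacle: it is exactly the extra scalar $\tr(ZC)f$ contributed by the second factor of $N^{-1}$ that kills the non-derivative term $Cf$, and this is what forces the weight to be $N(\,\cdot\,)^{-2}$ rather than $N(\,\cdot\,)^{-1}$ (the action $\pi'_l$ itself leaves $-Cf$ behind, which is precisely why it preserves only the larger space of QLAR functions). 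For $\bigl(\begin{smallmatrix}A&0\\0&0\end{smallmatrix}\bigr)$ an analogous cancellation occurs, the first-order term $\partial(Af)$ produced by $\partial\tr(AZ\partial)f$ cancelling the derivative of $Af$ and again leaving only a multiple of $\partial f$; the generators $\bigl(\begin{smallmatrix}0&B\\0&0\end{smallmatrix}\bigr)$ and $\bigl(\begin{smallmatrix}0&0\\0&D\end{smallmatrix}\bigr)$ give multiples of $\partial f$ at once.

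Finally, the right case is entirely parallel: differentiating $\pi_{ra}$ yields the analogue of the right-hand formulas of Lemma~\ref{pi'-Lie_alg-action} with the same doubled scalar terms, and the identical one-line cancellation (now with $\overleftarrow{\nabla}=2\partial$ acting on the right) gives $\pi_{ra}(h)g\in\tilde{\cal V}'_a$. Alternatively one can transport the left statement to the right through the inversion $\bigl(\begin{smallmatrix}0&1\\1&0\end{smallmatrix}\bigr)$ together with the symmetry interchanging $\BB S$ and $\BB S'$, but the direct computation is no harder.
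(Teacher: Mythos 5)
Your proposal is correct: the paper states this theorem without proof (it is review material for the Poincar\'e restriction argument), but the argument you give is exactly the one the paper uses for the harder quasi regular analogue, Theorem \ref{qr-action-thm} --- reduce to the Lie algebra by connectedness, compute the generators, and verify the one nontrivial case $\bigl(\begin{smallmatrix}0&0\\C&0\end{smallmatrix}\bigr)$ by applying $\partial$ directly. Your infinitesimal formulas agree with the paper's Lemma \ref{pi-Lie_alg-action}, and the cancellation of $-Cf$ against the $+Cf$ coming from $\partial\bigl(\tr(ZC)f\bigr)$ is precisely the mechanism that singles out the weight $N(a'-Zc')^{-2}$.
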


Differentiating $\pi_{la}$ and $\pi_{ra}$, we obtain actions of the Lie algebra
$\mathfrak{gl}(2,\HC)$, which we still denote by $\pi_{la}$ and $\pi_{ra}$
respectively. We spell out these Lie algebra actions:

\begin{lem}  \label{pi-Lie_alg-action}
The Lie algebra action $\pi_{la}$ of $\mathfrak{gl}(2,\HC)$ on
$\tilde{\cal V}_a$ is given by
\begin{align*}
\pi_{la} \bigl( \begin{smallmatrix} A & 0 \\ 0 & 0 \end{smallmatrix} \bigr) &:
f(Z) \mapsto - \tr (AZ \partial + 2A) f + Af,  \\
\pi_{la} \bigl( \begin{smallmatrix} 0 & B \\ 0 & 0 \end{smallmatrix} \bigr) &:
f(Z) \mapsto - \tr (B \partial) f,  \\
\pi_{la} \bigl( \begin{smallmatrix} 0 & 0 \\ C & 0 \end{smallmatrix} \bigr) &:
f(Z) \mapsto \tr (ZCZ \partial + 2ZC) f - ZCf,  \\
\pi_{la} \bigl( \begin{smallmatrix} 0 & 0 \\ 0 & D \end{smallmatrix} \bigr) &:
f(Z) \mapsto \tr (ZD \partial) f.
\end{align*}

Similarly, the Lie algebra action $\pi_{ra}$ of $\mathfrak{gl}(2,\HC)$ on
$\tilde{\cal V}'_a$ is given by
\begin{align*}
\pi_{ra} \bigl( \begin{smallmatrix} A & 0 \\ 0 & 0 \end{smallmatrix} \bigr) &:
g(Z) \mapsto - \tr (AZ \partial) g,  \\
\pi_{ra} \bigl( \begin{smallmatrix} 0 & B \\ 0 & 0 \end{smallmatrix} \bigr) &:
g(Z) \mapsto - \tr (B \partial) g,  \\
\pi_{ra} \bigl( \begin{smallmatrix} 0 & 0 \\ C & 0 \end{smallmatrix} \bigr) &:
g(Z) \mapsto \tr (ZCZ \partial + 2CZ) g - gCZ,  \\
\pi_{ra} \bigl( \begin{smallmatrix} 0 & 0 \\ 0 & D \end{smallmatrix} \bigr) &:
g(Z) \mapsto \tr (ZD \partial + 2D) g - gD.
\end{align*}
\end{lem}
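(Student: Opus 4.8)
The plan is to obtain Lemma \ref{pi-Lie_alg-action} by differentiating the group actions \eqref{pi_la}--\eqref{pi_ra} along a one-parameter subgroup, reducing almost all of the work to the already-available Lemma \ref{pi'-Lie_alg-action}. The key observation is that $\pi_{la}$ and $\pi_{ra}$ differ from $\pi'_l$ and $\pi'_r$ only in the power of the normalizing determinant: comparing \eqref{pi'_l}--\eqref{pi'_r} with \eqref{pi_la}--\eqref{pi_ra}, the scalar factors $N(a'-Zc')^{-1}$ and $N(cZ+d)^{-1}$ are replaced by $N(a'-Zc')^{-2}$ and $N(cZ+d)^{-2}$. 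Hence at the group level
$$
\pi_{la}(h)f = N(a'-Zc')^{-1}\,\pi'_l(h)f, \qquad
\pi_{ra}(h)g = N(cZ+d)^{-1}\,\pi'_r(h)g,
$$
where $h = \bigl(\begin{smallmatrix} a' & b' \\ c' & d' \end{smallmatrix}\bigr)$ and $h^{-1} = \bigl(\begin{smallmatrix} a & b \\ c & d \end{smallmatrix}\bigr)$.

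Next I would set $h = \exp(tX)$ with $X = \bigl(\begin{smallmatrix} A & B \\ C & D \end{smallmatrix}\bigr) \in \mathfrak{gl}(2,\HC)$, so that at $t=0$ one has $h = 1$ while the entries $a',b',c',d'$ of $h$ (respectively $a,b,c,d$ of $h^{-1}$) have derivatives $A,B,C,D$ (respectively $-A,-B,-C,-D$). Differentiating the two displayed products by the Leibniz rule and using $N(\cdot)=\det$ together with $\frac{d}{dt}\det P\big|_{t=0} = \tr(P^{-1}P')$ when $P(0)=1$, the entire contribution of $\pi'_l(h)f$ and $\pi'_r(h)g$ is packaged by Lemma \ref{pi'-Lie_alg-action}, and the only genuinely new term comes from differentiating the extra scalar factor. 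A one-line computation gives $\frac{d}{dt}\big|_{t=0} N(a'-Zc') = \tr(A-ZC)$ and $\frac{d}{dt}\big|_{t=0} N(cZ+d) = -\tr(CZ+D)$ (the sign in the second reflecting that this factor is built from $h^{-1}$), so that, as operators,
$$
\pi_{la}(X) = \pi'_l(X) - \tr(A-ZC), \qquad
\pi_{ra}(X) = \pi'_r(X) + \tr(CZ+D),
$$
the trace terms acting by scalar multiplication.

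Finally I would substitute the four cases of Lemma \ref{pi'-Lie_alg-action} into these two operator identities. For $\pi_{la}$ the correction $-\tr(A-ZC)$ is nonzero exactly on the $A$- and $C$-blocks, where it doubles the coefficient of the relevant trace: it turns $-\tr(AZ\partial+A)f+Af$ into $-\tr(AZ\partial+2A)f+Af$ and turns $\tr(ZCZ\partial+ZC)f-ZCf$ into $\tr(ZCZ\partial+2ZC)f-ZCf$, leaving the $B$- and $D$-blocks untouched. Symmetrically, for $\pi_{ra}$ the correction $+\tr(CZ+D)$ is nonzero exactly on the $C$- and $D$-blocks, turning $\tr(ZCZ\partial+CZ)g-gCZ$ into $\tr(ZCZ\partial+2CZ)g-gCZ$ and $\tr(ZD\partial+D)g-gD$ into $\tr(ZD\partial+2D)g-gD$, and leaving the $A$- and $B$-blocks untouched; these are precisely the claimed formulas. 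I do not expect a real obstacle here: the only points requiring care are the bookkeeping of left- versus right-multiplication of the $\BB S$- and $\BB S'$-valued functions by the matrix factors, and the sign that enters because $\pi_{ra}$ is expressed through the entries of $h^{-1}$. As a consistency check I would verify that the directional-derivative term $\tr\bigl((-AZ+ZCZ-B+ZD)\partial\bigr)$, coming from differentiating the Möbius argument $(a'-Zc')^{-1}(-b'+Zd')$, is the same in both computations and is therefore already correctly carried inside $\pi'_l(X)$ and $\pi'_r(X)$, confirming that raising the determinant to the second power modifies only the scalar-multiplication term.
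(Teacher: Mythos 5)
Your proposal is correct: the factorization $\pi_{la}(h)f = N(a'-Zc')^{-1}\,\pi'_l(h)f$ and $\pi_{ra}(h)g = N(cZ+d)^{-1}\,\pi'_r(h)g$ is exactly right, the derivatives $\tr(A-ZC)$ and $-\tr(CZ+D)$ of the extra scalar factors are computed correctly, and substituting Lemma \ref{pi'-Lie_alg-action} into the resulting operator identities reproduces all eight formulas (the correction doubles the scalar trace term precisely on the $A$- and $C$-blocks for $\pi_{la}$ and on the $C$- and $D$-blocks for $\pi_{ra}$). The paper offers no written proof, obtaining the lemma by direct differentiation of \eqref{pi_la}--\eqref{pi_ra} along a one-parameter subgroup; your reduction to the already-established Lemma \ref{pi'-Lie_alg-action} is the same differentiation organized more economically, and it is sound.
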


\subsection{Cayley Transform}

Another ingredient in identifying the irreducible components of
$(\pi'_l, {\cal U}^+)$ restricted to the Poincar\'e group is the
Cayley transform from \cite{FL1}:
\begin{align*}
\gamma = \bigl(\begin{smallmatrix} i & 1 \\ i & -1 \end{smallmatrix}\bigr):
&\: Z \mapsto (Z-i)(Z+i)^{-1} \text{ maps $\BB U(2) \to M$,
  $\BB D^{\pm} \to \BB T^{\pm}$ (with singularities)},  \\
\gamma^{-1} = \tfrac12
\bigl(\begin{smallmatrix} -i & -i \\ 1 & -1 \end{smallmatrix}\bigr):
&\: Z \mapsto -i(Z+1)(Z-1)^{-1}
\text{ maps $\BB M \to U(2)$, $\BB T^{\pm} \to \BB D^{\pm}$ (with singularities)}.
\end{align*}
Recall that $\BB T^+ = \BB M + i\operatorname{C}^+$ and
$\BB T^- = \BB M - i\operatorname{C}^+$ are tube domains in $\HC$, where
$\operatorname{C}^+$ is the open cone
\begin{equation*}
\operatorname{C}^+ = \{ Y \in \BB M ;\: N(Y)<0, \: i\tr Y <0 \}.
\end{equation*}
We are mostly interested in the bijection between functions on $\BB D^+$
and $\BB T^+$.

\begin{lem}  \label{T-invertible}
Each $Z \in \BB T^+$ is invertible.
\end{lem}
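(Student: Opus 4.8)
The plan is to show that every $Z\in\BB T^+$ has trivial kernel as a $2\times2$ complex matrix; for a square matrix this is equivalent to invertibility (equivalently $N(Z)=\det Z\ne0$, since $ZZ^+=N(Z)$). The whole argument will rest on pairing $Z$ against vectors with the standard Hermitian form $\langle v,w\rangle=\sum_i\overline{v_i}\,w_i$ on $\BB C^2$, together with the observation that in the decomposition $Z=X+iY$ (with $X\in\BB M$ and $Y\in\operatorname{C}^+$) the two summands contribute to $\langle v,Zv\rangle$ in orthogonal fashion — one purely imaginary, one real.

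First I would record the two facts about the matrix realization of $\HC$ from \cite{FL1,desitter} that drive everything: (i) every element of the Minkowski space $\BB M$ is a skew-Hermitian matrix, $X^*=-X$; and (ii) if $Y\in\operatorname{C}^+$, then $iY$ is a definite Hermitian matrix (negative definite in the conventions of \cite{FL1}). Fact (i) reflects that $\BB M=\gamma(U(2))$ and that the Cayley transform carries unitary matrices to skew-Hermitian ones. Fact (ii) is where the defining inequalities of $\operatorname{C}^+$ enter: for skew-Hermitian $Y$ the matrix $iY$ is Hermitian, and $\det(iY)=-\det Y=-N(Y)>0$ while $\tr(iY)=i\tr Y<0$, so both eigenvalues of the Hermitian matrix $iY$ are negative. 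I expect the verification of (i)--(ii) — that is, translating $N(Y)<0$ and $i\tr Y<0$ into definiteness through the explicit Pauli-type realization of $e_0,\dots,e_3$ — to be the only genuinely computational step, and hence the main (if modest) obstacle.

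Granting (i) and (ii), the conclusion is immediate. Write $Z=X+iY\in\BB T^+$ with $X\in\BB M$ and $Y\in\operatorname{C}^+$, and take any $v\in\BB C^2\setminus\{0\}$. Since $X$ is skew-Hermitian, $\langle v,Xv\rangle$ is purely imaginary; since $iY$ is Hermitian and negative definite, $\langle v,(iY)v\rangle$ is a strictly negative real number. Therefore
$$
\re\langle v,Zv\rangle=\langle v,(iY)v\rangle<0,
$$
so $\langle v,Zv\rangle\ne0$ and in particular $Zv\ne0$. As $v\ne0$ was arbitrary, $Z$ has trivial kernel and is therefore invertible. (Only the definiteness of $iY$ is used, not its sign; the same estimate shows that $0$ lies outside the numerical range of $Z$, which re-proves $N(Z)\ne0$ directly.)
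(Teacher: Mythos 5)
Your proof is correct. It rests on the same two structural facts as the paper's argument --- that in this matrix realization $\BB M$ consists of skew-Hermitian matrices and that $i\operatorname{C}^+$ consists of negative definite Hermitian matrices (your derivation of the latter from $N(Y)<0$, $i\tr Y<0$ is right) --- but it concludes differently. The paper conjugates by a unitary $u$ so that the definite summand becomes a diagonal matrix $\operatorname{diag}(\alpha,\beta)$ with $\alpha,\beta<0$, and then asserts that $N\bigl(uYu^*+uCu^*\bigr)\ne 0$; unpacking that ``easy to see'' step is a short but genuine $2\times 2$ determinant computation (one checks that the real part of the determinant is at least $\alpha\beta>0$ once the imaginary part vanishes). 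You instead show that $0$ lies outside the numerical range: $\re\langle v,Zv\rangle=\langle v,(iY)v\rangle<0$ for every $v\ne 0$, so $Z$ has trivial kernel. Your route skips the diagonalization and the determinant computation entirely, works verbatim for $n\times n$ matrices rather than relying on the $2\times 2$ determinant formula, and proves the marginally stronger statement that the numerical range of $Z$ avoids the closed right half-plane boundary at $0$. The only cost is that you must invoke the Hermitian inner product explicitly, whereas the paper stays entirely inside the quaternionic formalism via $N(\cdot)$.
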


\begin{proof}
Write $Z=Y+C \in \BB T^+$ with $Y \in \BB M$ and $C \in i\operatorname{C}^+$.
Since every element of $\operatorname{C}^+$ is self-adjoint, there exists a
$u \in U(2)$ such that $uCu^*$ is diagonal:
$$
uCu^* =
\bigl(\begin{smallmatrix} \alpha & 0 \\ 0 & \beta \end{smallmatrix}\bigr),
\qquad \alpha, \beta <0.
$$
Then $uYu^* \in \BB M$, and it is easy to see that
$N(Z) = N(uYu^*+uCu^*)$ cannot be zero.
%Then $uYu^* \in \BB M$, and write
%$$
%uYu^* =
%\bigl(\begin{smallmatrix} ia & z \\ -\bar z & ib \end{smallmatrix}\bigr),
%\qquad a,b \in \BB R,\: z \in \BB C.
%$$
%If $Z$ is not invertible,
%$$
%0 = N(Z) = N(uYu^*+uCu^*)
%= (\alpha+ia)(\beta+ib) + |z|^2
%= \alpha\beta - ab + |z|^2 +i(\alpha b + \beta a)
%= \alpha\beta + \tfrac{\beta}{\alpha} a^2 + |z|^2,
%$$
%since the imaginary part of $N(Z)$ is zero.
%But the last expression is strictly positive, which produces a contradiction.
\end{proof}

\begin{cor}
The map $Z \mapsto (Z-i)(Z+i)^{-1}$ has no singularities on $\BB D^+$;
its inverse map $Z \mapsto -i(Z+1)(Z-1)^{-1}$ has no singularities on $\BB T^+$.
\end{cor}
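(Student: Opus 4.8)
The plan is to observe that $\gamma$ and $\gamma^{-1}$ are given by M\"obius-type formulas whose only possible singularities come from the inverses $(Z+i)^{-1}$ and $(W-1)^{-1}$. Since $N(\cdot)=\det(\cdot)$ under the identification of $\HC$ with $2\times2$ complex matrices, the Corollary reduces to the two nonvanishing statements $N(Z+i)\neq0$ for $Z\in\BB D^+$ and $N(W-1)\neq0$ for $W\in\BB T^+$; equivalently, $-i$ is never an eigenvalue of a matrix $Z\in\BB D^+$, and $1$ is never an eigenvalue of a matrix $W\in\BB T^+$.

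For the first statement I would use the defining inequality $ZZ^*<1$ of $\BB D^+$: it says that every eigenvalue of the positive-semidefinite matrix $ZZ^*$ is strictly less than $1$, hence the operator norm satisfies $\|Z\|<1$. Every eigenvalue $\lambda$ of $Z$ then obeys $|\lambda|\le\|Z\|<1$, so the unit-modulus number $-i$ cannot be an eigenvalue of $Z$. Therefore $Z+i$ is invertible and $\gamma$ has no singularity on $\BB D^+$.

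For the second statement I would reduce to Lemma \ref{T-invertible} rather than repeat an eigenvalue estimate. Writing $W=Y+C$ with $Y\in\BB M$ and $C\in i\operatorname{C}^+$, recall from the proof of that lemma that the elements of $i\operatorname{C}^+$ are exactly the negative-definite Hermitian matrices. Subtracting the identity from a negative-definite Hermitian matrix again yields a negative-definite Hermitian matrix, so $C-I\in i\operatorname{C}^+$, and consequently $W-1=Y+(C-I)\in\BB M+i\operatorname{C}^+=\BB T^+$. Lemma \ref{T-invertible} then shows that $W-1$ is invertible, i.e.\ $N(W-1)\neq0$, so $\gamma^{-1}$ has no singularity on $\BB T^+$.

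The translation of ``no singularity'' into the nonvanishing of $N$, and the operator-norm bound on $\BB D^+$, are routine. The one step that genuinely carries the argument is the identification of $i\operatorname{C}^+$ with the cone of negative-definite Hermitian matrices, together with its stability under subtracting $I$; this is precisely what allows the shift $W\mapsto W-1$ to remain inside the tube domain $\BB T^+$ and thereby to invoke the invertibility already established in Lemma \ref{T-invertible}. I expect this structural point to be the main thing to pin down carefully.
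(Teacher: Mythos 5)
Your proof is correct and follows essentially the same route as the paper's: an eigenvalue bound from $ZZ^*<1$ to show $Z+i$ is invertible on $\BB D^+$, and the observation that $Z-1$ stays in $\BB T^+$ so that Lemma \ref{T-invertible} applies. You simply supply more detail than the paper does at both steps (the operator-norm estimate, and the stability of $i\operatorname{C}^+$ under subtracting the identity), and both fill-ins are sound.
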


\begin{proof}
The first map has no singularities on $\BB D^+$ because each $Z \in \BB D^+$
has eigenvalues of norm less than one, hence $Z+i$ is always invertible.
The second map has no singularities on $\BB T^+$ because, for each
$Z \in \BB T^+$, $Z-1 \in \BB T^+$ too, and, by Lemma \ref{T-invertible},
$Z-1$ is invertible.
\end{proof}

Associated to the matrices $\gamma$ and $\gamma^{-1}$ are maps on
quasi left anti regular functions
$$
\pi'_l(\gamma):\: f(Z) \to i \frac{Z-1}{N(Z-1)} f\bigl( -i(Z+1)(Z-1)^{-1} \bigr)
$$
transforming functions on $\BB D^+$ to functions on $\BB T^+$ and its inverse
$$
\pi'_l(\gamma^{-1}):\: f(Z) \to
-2 \frac{Z+i}{N(Z+i)} f\bigl( (Z-i)(Z+i)^{-1} \bigr)
$$
transforming functions on $\BB T^+$ to functions on $\BB D^+$.
We also have maps on left anti regular functions
$$
\pi_{la}(\gamma):\: f(Z) \to
-i \frac{Z-1}{N(Z-1)^2} f\bigl( -i(Z+1)(Z-1)^{-1} \bigr)
$$
transforming functions on $\BB D^+$ to functions on $\BB T^+$ and its inverse
$$
\pi_{la}(\gamma^{-1}):\: f(Z) \to
-8 \frac{Z+i}{N(Z+i)^2} f\bigl( (Z-i)(Z+i)^{-1} \bigr)
$$
transforming functions on $\BB T^+$ to functions on $\BB D^+$.
Next, we have  maps on left regular functions
$$
\pi_l(\gamma):\: f(Z) \to
8 \frac{(Z-1)^{-1}}{N(Z-1)} f\bigl( -i(Z+1)(Z-1)^{-1} \bigr)
$$
transforming functions on $\BB D^+$ to functions on $\BB T^+$ and its inverse
$$
\pi_l(\gamma^{-1}):\: f(Z) \to
i \frac{(Z+i)^{-1}}{N(Z+i)} f\bigl( (Z-i)(Z+i)^{-1} \bigr)
$$
transforming functions on $\BB T^+$ to functions on $\BB D^+$.
Finally, we have  maps on harmonic functions
$$
\pi^0_l(\gamma):\: f(Z) \to
\frac{4}{N(Z-1)} f\bigl( -i(Z+1)(Z-1)^{-1} \bigr)
$$
transforming functions on $\BB D^+$ to functions on $\BB T^+$ and its inverse
$$
\pi^0_l(\gamma^{-1}):\: f(Z) \to
\frac{-1}{N(Z+i)} f\bigl( (Z-i)(Z+i)^{-1} \bigr)
$$
transforming functions on $\BB T^+$ to functions on $\BB D^+$.

We consider the following spaces of holomorphic functions on $\BB T^+$
with values in $\BB S$:
\begin{align*}
{\cal U}^+(\BB T^+) &=\{ f: \BB T^+ \to \BB S;\:
\text{$f$ is holomorphic and $\nabla\square f =0$} \},  \\
{\cal U}^+_1(\BB T^+) = {\cal V}^+_a(\BB T^+) &= \{ f: \BB T^+ \to \BB S;\:
\text{$f$ is holomorphic and $\nabla f(Z) =0$} \},  \\
{\cal U}^+_2(\BB T^+) &= \{ f: \BB T^+ \to \BB S;\:
\text{$f$ is holomorphic and $\square f(Z) =0$} \},  \\
{\cal V}^+(\BB T^+) &= \{ f: \BB T^+ \to \BB S;\:
\text{$f$ is holomorphic and $\nabla^+ f =0$} \}.
\end{align*}
Each of these spaces has the standard topology of uniform convergence on
compact subsets.

\begin{lem}  \label{D-T-bijections}
The maps
$$
\pi'_l(\gamma) : {\cal U}^+(\BB D^+) \to {\cal U}^+(\BB T^+)
\quad \text{and} \quad
\pi'_l(\gamma^{-1}) : {\cal U}^+(\BB T^+) \to {\cal U}^+(\BB D^+)
$$
produce mutually inverse bijections of spaces of holomorphic functions
$$
{\cal U}^+(\BB D^+) \longleftrightarrow {\cal U}^+(\BB T^+), \qquad
{\cal U}^+_1(\BB D^+) \longleftrightarrow {\cal U}^+_1(\BB T^+), \qquad
{\cal U}^+_2(\BB D^+) \longleftrightarrow {\cal U}^+_2(\BB T^+).
$$
\end{lem}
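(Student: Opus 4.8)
The plan is to reduce the whole statement to three ingredients that are already in place: that $\pi'_l$ is an action of $GL(2,\HC)$ (Theorem~\ref{qr-action-thm}), that the Cayley maps have no singularities on the relevant domains (the Corollary to Lemma~\ref{T-invertible}), and that the auxiliary actions $\pi_{la}$ and $\pi^0_l$ preserve anti regularity and harmonicity respectively (Theorem~\ref{r-action-thm}, and for $\pi^0_l$ the classical conformal invariance of harmonic functions, cf.\ \cite{FL1}). Since $\pi'_l$ is a group action and $\gamma\gamma^{-1}=\gamma^{-1}\gamma=1$, the operators $\pi'_l(\gamma)$ and $\pi'_l(\gamma^{-1})$ are automatically mutually inverse wherever both are defined. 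That they are defined, i.e.\ carry holomorphic functions on $\BB D^+$ to holomorphic functions on $\BB T^+$ and back without introducing poles, is exactly the Corollary to Lemma~\ref{T-invertible}: the Möbius maps and the scalar prefactors $N(Z-1)^{-1}$, $N(Z+i)^{-1}$ are regular on $\BB T^+$ and $\BB D^+$ respectively (on $\BB D^+$ the eigenvalues of $Z$ have modulus $<1$, so $N(Z+i)\neq 0$, and on $\BB T^+$ one has $N(Z-1)\neq 0$ because $Z-1\in\BB T^+$ is invertible). Because $\pi'_l$ preserves $\nabla\square f=0$ by Theorem~\ref{qr-action-thm}, the two operators already restrict to mutually inverse bijections ${\cal U}^+(\BB D^+)\longleftrightarrow{\cal U}^+(\BB T^+)$.

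The only real content concerns the two subspaces, whose defining equations on $\BB D^+$ carry the twisting factors $N(Z+i)^{-1}$ and $(Z+i)^{-1}$ that are absent on $\BB T^+$. The key observation is that these factors are precisely the discrepancy between the automorphy factor of $\pi'_l$ and those of $\pi_{la}$ and $\pi^0_l$. Writing $W=-i(Z+1)(Z-1)^{-1}$ for the image of $Z$ under $\gamma$, a one-line computation gives
$$
W+i=-2i(Z-1)^{-1},\qquad N(W+i)=-4\,N(Z-1)^{-1}.
$$
Substituting these into the explicit formula for $\pi'_l(\gamma)$ and grouping the resulting factors in two different ways (pulling out the scalar $N(W+i)$, or the matrix $W+i$) yields, for every holomorphic $\BB S$-valued $f$ on $\BB D^+$, the factorizations
$$
\pi'_l(\gamma)f=4\,\pi_{la}(\gamma)\bigl[N(Z+i)^{-1}f\bigr]
=\tfrac12\,\pi^0_l(\gamma)\bigl[(Z+i)^{-1}f\bigr],
$$
and the matching identities on $\BB T^+$, namely $N(Z+i)^{-1}\,\pi'_l(\gamma^{-1})f=\tfrac14\,\pi_{la}(\gamma^{-1})f$ and $(Z+i)^{-1}\,\pi'_l(\gamma^{-1})f=2\,\pi^0_l(\gamma^{-1})f$.

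With these identities the two remaining bijections follow formally. By definition $f\in{\cal U}^+_1(\BB D^+)$ means that $N(Z+i)^{-1}f$ is left anti regular, while ${\cal U}^+_1(\BB T^+)={\cal V}^+_a(\BB T^+)$ consists of anti regular functions; since $\pi_{la}(\gamma)$ is (by the same group-action argument applied to Theorem~\ref{r-action-thm}) a bijection between anti regular functions on $\BB D^+$ and on $\BB T^+$, the first factorization shows that $\pi'_l(\gamma)$ carries ${\cal U}^+_1(\BB D^+)$ bijectively onto ${\cal U}^+_1(\BB T^+)$, with inverse $\pi'_l(\gamma^{-1})$. Likewise ${\cal U}^+_2(\BB D^+)$ is defined by $(Z+i)^{-1}f$ being harmonic and ${\cal U}^+_2(\BB T^+)$ by $f$ being harmonic, so the second factorization together with the conformal invariance of harmonicity under $\pi^0_l$ yields the bijection ${\cal U}^+_2(\BB D^+)\longleftrightarrow{\cal U}^+_2(\BB T^+)$. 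In each case the equivalence holds for all $f$, so we obtain genuine restrictions of the already-inverse pair of operators.

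The main obstacle is bookkeeping rather than conceptual: one must verify the exact powers of $N$ and the numerical constants in the factorization identities, and confirm that $\pi^0_l$—which a priori acts on scalar functions—operates as claimed on the $\BB S$-valued function $(Z+i)^{-1}f$ componentwise (here one uses $(W+i)^{-1}=\tfrac{i}{2}(Z-1)$, which is what makes the $\pi^0_l$-factorization close up correctly). Once these factorizations are pinned down, the lemma is a formal consequence of the group-action property of $\pi'_l$ and the established invariance of the anti regular and harmonic classes.
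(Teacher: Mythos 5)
Your proposal is correct and follows essentially the same route as the paper's proof: the main bijection comes from the group-action property of $\pi'_l$ together with the Cayley transform being a singularity-free bijection $\BB D^+\leftrightarrow\BB T^+$, and the two subspace bijections come from the factorization identities relating $\pi'_l(\gamma^{\pm1})$ to $\pi_{la}(\gamma^{\pm1})$ and $\pi^0_l(\gamma^{\pm1})$ via the nonvanishing factors $N(Z+i)$ and $Z+i$ on $\BB D^+$. Your constants are consistent with the displayed formulas for the Cayley-transform operators, so the argument closes up exactly as in the paper.
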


\begin{proof}
The fact that $\pi'_l(\gamma)$ and $\pi'_l(\gamma^{-1})$ produce mutually
inverse bijections between ${\cal U}^+(\BB D^+)$ and ${\cal U}^+(\BB T^+)$
follows immediately from Theorem \ref{qr-action-thm} and the fact that
the Cayley transform $\gamma$ produces a bijection between $\BB D^+$ and
$\BB T^+$ (Lemma 63 in \cite{FL1}).

Next, we prove that $\pi'_l(\gamma)$ and $\pi'_l(\gamma^{-1})$
produce mutually inverse bijections between ${\cal U}^+_1(\BB D^+)$
and ${\cal U}^+_1(\BB T^+)$.
Let $f \in {\cal U}^+_1(\BB T^+)$ and observe that
$$
\bigl( \pi'_l(\gamma^{-1}) f \bigr)(Z) = 
\tfrac14 N(Z+i) \cdot \bigl( \pi_{la}(\gamma^{-1}) f \bigr)(Z)
\quad \in {\cal U}^+_1(\BB D^+),
$$
since $\tfrac14 \pi_{la}(\gamma^{-1}) f$ is left anti regular on $\BB D^+$.
Conversely, since $N(Z+i)$ never vanishes on $\BB D^+$, this formula also
shows that every function in ${\cal U}^+_1(\BB D^+)$ can be realized as
$\pi'_l(\gamma^{-1}) f$ for some $f \in {\cal U}^+_1(\BB T^+)$.

Finally, we prove that $\pi'_l(\gamma)$ and $\pi'_l(\gamma^{-1})$
produce mutually inverse bijections between ${\cal U}^+_2(\BB D^+)$
and ${\cal U}^+_2(\BB T^+)$.
Let $f \in {\cal U}^+_2(\BB T^+)$ and observe that
$$
\bigl( \pi'_l(\gamma^{-1}) f \bigr)(Z) = 
-i (Z+i) \cdot \bigl( \pi^0_l(\gamma^{-1}) f \bigr)(Z)
\quad \in {\cal U}^+_2(\BB D^+),
$$
since $-i \pi^0_l(\gamma^{-1}) f$ is harmonic on $\BB D^+$.
Conversely, since $Z+i$ is invertible on $\BB D^+$, this formula also
shows that every function in ${\cal U}^+_2(\BB D^+)$ can be realized as
$\pi'_l(\gamma^{-1}) f$ for some $f \in {\cal U}^+_2(\BB T^+)$.
\end{proof}

%\begin{lem}  \label{harmonic-sum-lem}
%Let $U$ be an open star-shaped subset of $\HC$ centered at the origin.
%\begin{enumerate}
%\item
%  Every holomorphic function $f: U \to \BB S$ such that $\square f=0$
%  can be written as a sum $f(Z)=f_1(Z)+Zf_2(Z)$, where $f_1, f_2: U \to \BB S$
%  are holomorphic, $\nabla f_1=0$ and $\nabla^+ f_2=0$.
%\item
%  Every holomorphic function $f: U \to \BB S$ such that $\nabla\square f=0$
%  can be written as a sum $f(Z)=f_1(Z)+N(Z)f_2(Z)$, where
%  $f_1, f_2: U \to \BB S$ are holomorphic, $\square f_1=0$ and $\nabla f_2=0$.
%\end{enumerate}
%\end{lem}

%\begin{proof}
%If $f: U \to \BB S$ is such that $\square f=0$, let
%$$
%f_1(Z) = f(Z) - \tfrac12 Z (\deg+2)^{-1} \nabla f(Z), \qquad 
%f_2(Z)= \tfrac12 (\deg+2)^{-1} \nabla f(Z).
%$$
%If $f: U \to \BB S$ is such that $\nabla\square f=0$, let
%$$
%f_1(Z) = f(Z) - \tfrac14 N(Z) (\deg+2)^{-1} \square f(Z), \qquad 
%f_2(Z)= \tfrac14 (\deg+2)^{-1} \square f(Z).
%$$
%The operator $(\deg+2)^{-1}$ is discussed in Subsection 2.4 of \cite{ATMP},
%and it is defined on functions with domains that are open star-shaped subsets
%of $\HC$ centered at the origin.
%\end{proof}

%\begin{lem}
%\begin{enumerate}
%\item
%The set of functions in ${\cal U}^+_2(\BB T^+)$ that extend holomorphically to
%a star-shaped open subset of $\HC$ containing $\BB T^+$ is dense in
%${\cal U}^+_2(\BB T^+)$.
%\item
%The set of functions in ${\cal U}^+(\BB T^+)$ that extend holomorphically to
%a star-shaped open subset of $\HC$ containing $\BB T^+$ is dense in
%${\cal U}^+(\BB T^+)$.
%\end{enumerate}  
%\end{lem}

\begin{lem}  \label{density-lem}
\begin{enumerate}
\item
The set of functions $f_1 \in {\cal V}^+(\BB T^+)$
that can be written as $f_1 = (\deg+2)g_1$ for some
$g_1 \in {\cal V}^+(\BB T^+)$ is dense in ${\cal V}^+(\BB T^+)$.
\item
The set of functions $f_2 \in {\cal V}^+_a(\BB T^+)$
that can be written as $f_2 = (\deg+2)g_2$ for some
$g_2 \in {\cal V}^+_a(\BB T^+)$ is dense in ${\cal V}^+_a(\BB T^+)$.
\end{enumerate}  
\end{lem}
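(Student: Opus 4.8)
The plan is to exhibit, for a dense set of $f$, an explicit approximate solution of $(\deg+2)g=f$ built from the one-parameter dilation group. First I would record that $(\deg+2)$ genuinely preserves each space: since $[\deg,\nabla^+]=-\nabla^+$ one has $\nabla^+(\deg+2)=(\deg+3)\nabla^+$, so $\nabla^+ f=0$ implies $\nabla^+\bigl((\deg+2)f\bigr)=0$, and likewise with $\nabla$ for the anti regular case. Next I would introduce the dilation operators $R_t\colon f(Z)\mapsto f(e^tZ)$. Because $e^tZ\in\BB T^+$ for every $t\in\BB R$ when $Z\in\BB T^+$ (the tube $\BB M+i\operatorname{C}^+$ is invariant under positive scaling) and because dilation commutes with $\nabla^+$ up to a scalar, each $R_tf$ again lies in ${\cal V}^+(\BB T^+)$; moreover $\deg$ is the infinitesimal generator, $\tfrac{d}{dt}R_tf=R_t(\deg f)$. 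It therefore suffices to show that a dense subclass of $f$ lies in the closure of the image of $(\deg+2)$.

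The key step is the explicit family
\[
g_{t_0}(Z)=\int_0^{t_0}e^{-2t}\,f(e^{-t}Z)\,dt,\qquad t_0>0 .
\]
For $Z\in\BB T^+$ and $t\ge0$ the integrand is holomorphic and left regular in $Z$, and the integral converges uniformly on compact subsets of $\BB T^+$, so $g_{t_0}\in{\cal V}^+(\BB T^+)$. Integrating by parts in $t$ (using $\deg(R_{-t}f)=-\tfrac{d}{dt}R_{-t}f$) gives the clean identity
\[
(\deg+2)g_{t_0}=f-e^{-2t_0}\,f(e^{-t_0}Z),
\]
so each $(\deg+2)g_{t_0}$ differs from $f$ only by the single remainder term $e^{-2t_0}R_{-t_0}f$. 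Thus $f$ will belong to the closure of the image as soon as this remainder tends to $0$ uniformly on compacta as $t_0\to\infty$.

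The remaining point, and the main obstacle, is exactly the control of this remainder: as $t_0\to\infty$ the point $e^{-t_0}Z$ runs out to the boundary value $0\notin\BB T^+$, where a general $f\in{\cal V}^+(\BB T^+)$ may blow up. This is resolved by first approximating $f$ by functions that are regular near $0$. For a fixed $W_0\in\operatorname{C}^+$ and $s>0$ set $f_s(Z)=f(Z+isW_0)$; then $f_s\in{\cal V}^+(\BB T^+)$, it extends regularly to the larger tube $\BB M+i(\operatorname{C}^+-sW_0)$, which contains a neighborhood of $0$, and $f_s\to f$ uniformly on compact subsets as $s\to0^+$ by uniform continuity. (Alternatively one may invoke the Cayley isomorphism ${\cal V}^+(\BB D^+)\leftrightarrow{\cal V}^+(\BB T^+)$ and the density of the $K$-finite, i.e.\ polynomial, regular functions.) For such an $f$ that is bounded near $0$, one has $\sup_{Z\in K}\bigl|e^{-2t_0}f(e^{-t_0}Z)\bigr|\le e^{-2t_0}\sup_{\text{nbhd of }0}|f|\to0$ on every compact $K\subset\BB T^+$, whence $(\deg+2)g_{t_0}\to f$. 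Since functions regular near $0$ are dense, the image of $(\deg+2)$ is dense in ${\cal V}^+(\BB T^+)$. The anti regular statement (2) follows verbatim with $\nabla$ in place of $\nabla^+$ and ${\cal V}^+_a$ in place of ${\cal V}^+$.
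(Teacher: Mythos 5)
Your proof is correct, and it reaches the conclusion by a genuinely different route in the step that matters. Both arguments ultimately realize an approximate inverse of $(\deg+2)$ through the dilation semigroup $f\mapsto f(e^{-t}Z)$ on the cone-invariant tube $\BB T^+$; the paper does this by invoking the operator $(\deg+2)^{-1}$ from its earlier work, while you make it explicit via the truncated integral $g_{t_0}=\int_0^{t_0}e^{-2t}f(e^{-t}Z)\,dt$ with the exact error identity $(\deg+2)g_{t_0}=f-e^{-2t_0}f(e^{-t_0}Z)$. The real divergence is in the density argument that tames the behavior at $Z=0$: the paper uses admissibility of the $SU(2)\times SU(2)$ action on ${\cal V}^+(\BB D^+)$ to get density of polynomial ($K$-finite) regular functions and pushes them through the Cayley transform, obtaining a dense class regular on $\{N(Z-1)\ne 0\}\ni 0$ to which $(\deg+2)^{-1}$ applies outright; you instead translate in the imaginary cone direction, $f_s(Z)=f(Z+isW_0)$ with $W_0\in\operatorname{C}^+$, which extends $f$ regularly to a neighborhood of $0$ and converges to $f$ uniformly on compacta. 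Your version is more elementary and self-contained (it needs only that $\operatorname{C}^+$ is an open convex cone, not representation-theoretic input or the Cayley machinery), and the explicit remainder term makes the convergence transparent; the paper's version buys the stronger conclusion that on its dense subclass the equation $(\deg+2)g=f$ is solved exactly by a single integral rather than approximated. Both are valid proofs of the stated density.
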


\begin{proof}
We prove the first part only, the other case is similar.
The $\pi_l$ action of a compact group $SU(2) \times SU(2)$ on
${\cal V}^+(\BB D^+)$ is admissible, hence the set of $K$-finite elements
in ${\cal V}^+(\BB D^+)$ is dense. Since the $K$-finite elements are known
to be polynomials on $\HC$,
the set of polynomial functions are dense in ${\cal V}^+(\BB D^+)$.

The Cayley transform $\pi_l(\gamma)$ maps the polynomial functions
in ${\cal V}^+(\BB D^+)$ into a dense set in ${\cal V}^+(\BB T^+)$
that is a subset of
\begin{multline*}
{\cal V}^+ \bigl( \{ Z \in \HC ;\: N(Z-1) \ne 0 \} \bigr)  \\
= \bigl\{ f: \BB \{ Z \in \HC ;\: N(Z-1) \ne 0 \} \to \BB S;\:
\text{$f$ is holomorphic and $\nabla^+ f =0$} \bigr\}.
\end{multline*}
Thus, ${\cal V}^+ \bigl( \{ Z \in \HC ;\: N(Z-1) \ne 0 \} \bigr)$ is dense
in ${\cal V}^+(\BB T^+)$ as well.
Following Subsection 2.4 of \cite{ATMP}, on $\BB T^+$, to each
$f_1 \in {\cal V}^+ \bigl( \{ Z \in \HC ;\: N(Z-1) \ne 0 \} \bigr)$
restricted to $\BB T^+$, we can apply an operator $(\deg+2)^{-1}$
and conclude that
$$
f_1 \bigr|_{\BB T^+} = (\deg+2)g_1, \quad \text{where} \quad
g_1 = (\deg+2)^{-1} f_1 \bigr|_{\BB T^+} \in {\cal V}^+(\BB T^+).
$$
\end{proof}

\begin{prop}  \label{Poincare-iso-prop}
  The spaces ${\cal U}^+_1(\BB T^+)$ and ${\cal U}^+_2(\BB T^+)$
  are closed subspaces of ${\cal U}^+(\BB T^+)$ that are invariant under the
  $\pi'_l$ action of the Poincar\'e group $P'_{\BB R}$.
  The representations of $P'_{\BB R}$
$$
\bigl(\pi'_l, {\cal U}^+_1(\BB T^+)\bigr), \qquad
\bigl(\pi'_l, {\cal U}^+(\BB T^+)/{\cal U}^+_2(\BB T^+)\bigr)
$$
are both isomorphic to $\bigl(\pi_{la}, {\cal V}^+_a(\BB T^+)\bigr)$,
while the representation 
$$
\bigl(\pi'_l, {\cal U}^+_2(\BB T^+)/{\cal U}^+_1(\BB T^+)\bigr)
$$
is isomorphic to $\bigl(\pi_l, {\cal V}^+(\BB T^+)\bigr)$. In particular,
the three representations of the Poincar\'e group $P'_{\BB R}$
$$
\bigl(\pi'_l, {\cal U}^+_1(\BB T^+)\bigr), \qquad
\bigl(\pi'_l, {\cal U}^+_2(\BB T^+)/{\cal U}^+_1(\BB T^+)\bigr), \qquad  
\bigl(\pi'_l, {\cal U}^+(\BB T^+)/{\cal U}^+_2(\BB T^+)\bigr)
$$
are irreducible.
\end{prop}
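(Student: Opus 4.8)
The plan is to realize the two nontrivial subquotients of ${\cal U}^+(\BB T^+)$ as spaces of regular and anti regular functions by means of the operators $\nabla$ and $\square$, and then to invoke the known irreducibility of the latter under the Poincar\'e group. Observe first the chain of inclusions ${\cal U}^+_1(\BB T^+) \subseteq {\cal U}^+_2(\BB T^+) \subseteq {\cal U}^+(\BB T^+)$: indeed $\nabla f =0$ forces $\square f = \nabla^+\nabla f =0$, which in turn forces $\nabla\square f =0$. Each of the three spaces is the kernel of a constant-coefficient differential operator acting continuously on holomorphic functions, so all three are closed in the topology of uniform convergence on compact subsets.

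The two basic intertwiners are $\nabla$ and $\square$. Since $\square = \nabla^+\nabla$, if $f \in {\cal U}^+_2(\BB T^+)$ (so $\square f =0$) then $\nabla^+(\nabla f) = \square f =0$, i.e. $\nabla f \in {\cal V}^+(\BB T^+)$, and the kernel of $\nabla$ on ${\cal U}^+_2(\BB T^+)$ is exactly ${\cal U}^+_1(\BB T^+)$. Similarly, if $f \in {\cal U}^+(\BB T^+)$ then $\nabla(\square f)=0$, i.e. $\square f \in {\cal V}^+_a(\BB T^+)$, and the kernel of $\square$ on ${\cal U}^+(\BB T^+)$ is ${\cal U}^+_2(\BB T^+)$. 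For the Poincar\'e group $P'_{\BB R}$ all group elements are upper triangular ($c'=0$), so in the formula for $\pi'_l$ the automorphy factor reduces to a constant matrix and the argument $Z \mapsto (a')^{-1}(-b'+Zd')$ is affine; consequently $\nabla$ and $\square$ transform covariantly, and a direct check (most conveniently at the Lie algebra level using Lemmas \ref{pi'-Lie_alg-action} and \ref{pi-Lie_alg-action}) shows that $\nabla$ intertwines $\pi'_l$ with $\pi_l$ and $\square$ intertwines $\pi'_l$ with $\pi_{la}$. In particular the kernels ${\cal U}^+_1(\BB T^+)$ and ${\cal U}^+_2(\BB T^+)$ are $\pi'_l(P'_{\BB R})$-invariant, and we obtain injective intertwiners
$$
\nabla: {\cal U}^+_2(\BB T^+)/{\cal U}^+_1(\BB T^+) \hookrightarrow {\cal V}^+(\BB T^+), \qquad
\square: {\cal U}^+(\BB T^+)/{\cal U}^+_2(\BB T^+) \hookrightarrow {\cal V}^+_a(\BB T^+),
$$
while the identity map identifies ${\cal U}^+_1(\BB T^+)$ with ${\cal V}^+_a(\BB T^+)$ and intertwines $\pi'_l$ with $\pi_{la}$.

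It remains to prove surjectivity. For $\square$, given $g \in {\cal V}^+_a(\BB T^+)$ one checks from \eqref{Laplacian-Nf} that $\square\bigl( N(Z)\,(\deg+2)^{-1} g \bigr) = g$ (using $\square g =0$), so $f = N(Z)\,(\deg+2)^{-1} g$ satisfies $\square f = g$ and, since $\nabla\square f = \nabla g =0$, lies in ${\cal U}^+(\BB T^+)$; this exhibits $g$ in the image. For $\nabla$, given $g \in {\cal V}^+(\BB T^+)$, solve $\square u = g$ as above and set $f = \nabla^+ u$; then $\nabla f = \square u = g$ and $\square f = \nabla^+\square u = \nabla^+ g =0$, so $f \in {\cal U}^+_2(\BB T^+)$. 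In both constructions $(\deg+2)^{-1}$ is applied to a function that is not homogeneous of degree $-2$, which is legitimate on the dense subspaces furnished by Lemma \ref{density-lem}; combining this with the closedness of the ranges upgrades denseness of the image to surjectivity. Thus all three maps are isomorphisms of representations of $P'_{\BB R}$.

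Finally, irreducibility follows because the three representations are now identified with $(\pi_{la}, {\cal V}^+_a(\BB T^+))$, $(\pi_l, {\cal V}^+(\BB T^+))$ and $(\pi_{la}, {\cal V}^+_a(\BB T^+))$, and the irreducibility of regular and anti regular functions under the Poincar\'e group is known \cite{MT} (anti regular being the quaternionic conjugate of regular). I expect the main obstacle to be the surjectivity step: while the explicit formulas above solve $\square f = g$ and $\nabla f = g$ on the nose, one must control the operator $(\deg+2)^{-1}$ on $\BB T^+$ and pass from a dense image to the whole space, which is exactly what Lemma \ref{density-lem} is designed to supply.
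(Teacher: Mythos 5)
Your proposal is correct and follows essentially the same route as the paper: identify ${\cal U}^+_1(\BB T^+)$ with ${\cal V}^+_a(\BB T^+)$ directly, realize the two quotients via the intertwiners $f\mapsto\nabla f$ and $f\mapsto\square f$, verify Poincar\'e equivariance, obtain dense image from Lemma \ref{density-lem} (your preimage $\nabla^+\bigl(\tfrac14 N(Z)(\deg+2)^{-1}g\bigr)$ is literally the paper's $\tfrac12 Zg_1$), and conclude irreducibility from \cite{MT}. No substantive differences.
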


\begin{proof}
The isomorphism $\bigl(\pi'_l, {\cal U}^+_1(\BB T^+)\bigr) \simeq
\bigl(\pi_{la}, {\cal V}^+_a(\BB T^+)\bigr)$ is obvious, since
${\cal U}^+_1(\BB T^+) = {\cal V}^+_a(\BB T^+)$ and the actions
$\pi'_l$, $\pi_{la}$ coincide on $P'_{\BB R}$.

The isomorphisms
$$
{\cal U}^+_2(\BB T^+)/{\cal U}^+_1(\BB T^+) \to {\cal V}^+(\BB T^+)
\qquad \text{and} \qquad
{\cal U}^+(\BB T^+)/{\cal U}^+_2(\BB T^+) \to {\cal V}^+_a(\BB T^+)
$$
are given by $f_1 \mapsto \nabla f_1$ and $f_2 \mapsto \square f_2$
respectively.
Clearly, both maps commute with parallel translations by elements of $\BB M$.
Next, we show that the maps commute with the actions of the
diagonal elements
$\bigl(\begin{smallmatrix} a & 0 \\ 0 & d \end{smallmatrix}\bigr)
\in P'_{\BB R}$.
If $f_1 \in {\cal U}^+_2(\BB T^+)$ and $f_2 \in {\cal U}^+(\BB T^+)$,
using identities
$$
\nabla \bigl[ f_1(a^{-1}Zd) \bigr] = d \bigl[\nabla (a^{-1} f_1)\bigr] (a^{-1}Zd)
\quad \text{and} \quad
\square \bigl[ f_2(a^{-1}Zd) \bigr] = N(a^{-1}d) [\square f_2] (a^{-1}Zd),
$$
we have:
\begin{align*}
\nabla \bigl[ \pi'_l \bigl(\begin{smallmatrix} a & 0 \\
    0 & d \end{smallmatrix}\bigr) f_1 \bigr]
&= \nabla \bigl[ a f_1(a^{-1}Zd) \bigr]
= d [\nabla f_1] (a^{-1}Zd)
= \pi_l \bigl(\begin{smallmatrix} a & 0 \\
  0 & d \end{smallmatrix}\bigr) \bigl[ \nabla f_1 \bigr],  \\
\square \bigl[ \pi'_l \bigl(\begin{smallmatrix} a & 0 \\
    0 & d \end{smallmatrix}\bigr) f_2 \bigr]
&= \square \bigl[ a f_2(a^{-1}Zd) \bigr]
= a [\square f_2] (a^{-1}Zd)
= \pi_{la} \bigl(\begin{smallmatrix} a & 0 \\
  0 & d \end{smallmatrix}\bigr) \bigl[ \square f_2 \bigr].
\end{align*}
%If $f \in {\cal U}^+_1(\BB T^+)$, write $f=f_1+Zf_2$ with $\nabla f_1=0$
%and $\nabla^+f_2=0$. Then, using identity $2(\deg+2)=\nabla Z + Z^+\nabla^+$
%(Lemma 7 from \cite{ATMP}),
%\begin{multline*}
%\nabla \bigl[ \pi'_l \bigl(\begin{smallmatrix} a & 0 \\
%    0 & d \end{smallmatrix}\bigr) f \bigr]
%= \nabla \bigl[ Zd f_2(a^{-1}Zd) \bigr]
%= (\nabla Z + Z^+\nabla^+) \bigl[ d f_2(a^{-1}Zd) \bigr]  \\
%= 2(\deg+2) \bigl[ \pi_l \bigl(\begin{smallmatrix} a & 0 \\
%    0 & d \end{smallmatrix}\bigr) f_2 \bigr]
%= \pi_l \bigl(\begin{smallmatrix} a & 0 \\
%  0 & d \end{smallmatrix}\bigr) \bigl[ 2(\deg+2) f_2 \bigr]
%= \pi_l \bigl(\begin{smallmatrix} a & 0 \\
%  0 & d \end{smallmatrix}\bigr) \bigl[ \nabla f \bigr].
%\end{multline*}
%If $f \in {\cal U}^+_2(\BB T^+)$, write $f=f_1+N(Z)f_2$ with $\square f_1=0$
%and $\nabla f_2=0$. Then, using \eqref{Laplacian-Nf},
%\begin{multline*}
%\square \bigl[ \pi'_l \bigl(\begin{smallmatrix} a & 0 \\
%    0 & d \end{smallmatrix}\bigr) f \bigr]
%= \square \bigl[ a N(Z) f_2(a^{-1}Zd) \bigr]
%= 4(\deg+2) \bigl[ a f_2(a^{-1}Zd) \bigr]  \\
%= a \bigl[ 4(\deg+2) f_2 \bigr] (a^{-1}Zd)
%= a \bigl[ \square N(Z) f_2 \bigr] (a^{-1}Zd)
%= \pi_{la} \bigl(\begin{smallmatrix} a & 0 \\
%  0 & d \end{smallmatrix}\bigr) \bigl[ \square f \bigr].
%\end{multline*}

The maps $\nabla$ and $\square$ have dense images because the functions
that appear in Lemma \ref{density-lem} are in the respective ranges of these
maps. Indeed, if $f_1=(\deg+2)g_1 \in {\cal V}^+(\BB T^+)$
and $f_2=(\deg+2)g_2 \in {\cal V}^+_a(\BB T^+)$, then, by \eqref{Laplacian-Nf}
and identity $2(\deg+2)=\nabla Z + Z^+\nabla^+$ (Lemma 7 from \cite{ATMP}),
\begin{align*}
\tfrac12 \nabla (Z g_1) &=f_1, \qquad \tfrac12 Zg_1 \in {\cal U}^+_2(\BB T^+), \\
\tfrac14 \square (N(Z) g_2) &=f_2, \qquad
\tfrac14 N(Z)g_2 \in {\cal U}^+(\BB T^+).
\end{align*}

The three representations of the Poincar\'e group $P'_{\BB R}$
$$
\bigl(\pi'_l, {\cal U}^+_1(\BB T^+)\bigr), \qquad
\bigl(\pi'_l, {\cal U}^+_2(\BB T^+)/{\cal U}^+_1(\BB T^+)\bigr), \qquad  
\bigl(\pi'_l, {\cal U}^+(\BB T^+)/{\cal U}^+_2(\BB T^+)\bigr)
$$
are irreducible because they are isomorphic as representations of the
Poincar\'e group $P_{\BB R}$ to either
$\bigl(\pi_{la}, {\cal V}^+_a(\BB D^+)\bigr)$ or
$\bigl(\pi_l, {\cal V}^+(\BB D^+)\bigr)$, and these are known to be irreducible
\cite{MT} (and a more accessible argument can be found in \cite{W}).
\end{proof}

\section{Invariant Bilinear Pairing and Pseudounitary Structures}  \label{Sect6}

\subsection{Invariant Bilinear Pairing}

In this subsection we construct a $\mathfrak{gl}(2,\HC)$-invariant bilinear
pairing between left and right quasi anti regular functions.

Let $S^3_R = \{ X \in \BB H ;\: N(X)=R^2 \}$ be the sphere of radius $R>0$
centered at the origin, and let $dS$ be the usual Euclidean volume element
on $S^3_R$.
We define a bilinear pairing $\langle f, g \rangle_{QR}$ between
$f \in {\cal U}$ and $g \in {\cal U}'$ by declaring
$$
\langle f, g \rangle_{QR} = 0 \quad \text{on ${\cal U}^+ \times {\cal U}'^+$
and ${\cal U}^- \times {\cal U}'^-$}
$$
and by the formula
\begin{multline}  \label{QR-pairing}
\langle f, g \rangle_{QR} = \frac1{8\pi^2}
\int_{X \in S^3_R} \bigl( \deg g(X) \bigr) \cdot \nabla f(X) \,\frac{dS}R  \\
- \frac1{8\pi^2}
\int_{X \in S^3_R} g(X) \cdot \deg \nabla f(X) \,\frac{dS}R  \\
- \frac{R}{8\pi^2} \int_{X \in S^3_R}
\bigl( \square g(X) \bigr) \cdot X^{-1} \cdot f(X) \,dS
\end{multline}
when $f \in {\cal U}^+$ and $g \in {\cal U}'^-$ or 
$f \in {\cal U}^-$ and $g \in {\cal U}'^+$.
(While the formula makes sense if, for example, $f \in {\cal U}^+$ and
$g \in {\cal U}'^+$, it does not produce a $\mathfrak{gl}(2,\HC)$-invariant
pairing.)
Note that the expression \eqref{QR-pairing} is inspired by the reproducing
formula for QLAR functions \eqref{Cauchy-Fueter-left}, so that
$$
f(X_0) = \biggl\langle f(X), \frac{X-X_0}{N(X-X_0)} \biggr\rangle_{QR}
\qquad \text{if } X_0 \in \BB H,\: N(X_0)<R.
$$

\begin{lem}  \label{orthog-rels-lem}
We have the following orthogonality properties among the families of
QLAR and QRAR functions:
\begin{align*}
\bigl\langle f^{(1)}_{l,m,n}(Z), \tilde g^{(1)}_{l',m',n'}(Z) \bigr\rangle_{QR}
&= \bigl\langle \tilde f^{(1)}_{l,m,n}(Z), g^{(1)}_{l',m',n'}(Z) \bigr\rangle_{QR}
= (2l+1) \delta_{ll'} \delta_{mm'} \delta_{nn'}, \\
\bigl\langle f^{(2)}_{l,m,n}(Z), \tilde g^{(2)}_{l',m',n'}(Z) \bigr\rangle_{QR}
&= \bigl\langle \tilde f^{(2)}_{l,m,n}(Z), g^{(2)}_{l',m',n'}(Z) \bigr\rangle_{QR}
= -2l(2l+1) \delta_{ll'} \delta_{mm'} \delta_{nn'}, \\
\bigl\langle f^{(3)}_{l,m,n}(Z), \tilde g^{(3)}_{l',m',n'}(Z) \bigr\rangle_{QR}
&= \bigl\langle \tilde f^{(3)}_{l,m,n}(Z), g^{(3)}_{l',m',n'}(Z) \bigr\rangle_{QR}
= -2l \delta_{ll'} \delta_{mm'} \delta_{nn'}, \\
%\langle \tilde f^{(1)}_{l,m,n}(Z), g^{(1)}_{l',m',n'}(Z) \rangle_{QR}
%&= (2l+1) \delta_{ll'} \delta_{mm'} \delta_{nn'}, \\
%\langle \tilde f^{(2)}_{l,m,n}(Z), g^{(2)}_{l',m',n'}(Z) \rangle_{QR}
%&= -2l(2l+1) \delta_{ll'} \delta_{mm'} \delta_{nn'}, \\
%\langle \tilde f^{(3)}_{l,m,n}(Z), g^{(3)}_{l',m',n'}(Z) \rangle_{QR}
%&= -2l \delta_{ll'} \delta_{mm'} \delta_{nn'}, \\
\bigl\langle f^{(1)}_{l,m,n}(Z), \tilde g^{(2)}_{l',m',n'}(Z) \bigr\rangle_{QR}
&=
\bigl\langle f^{(1)}_{l,m,n}(Z), \tilde g^{(3)}_{l',m',n'}(Z) \bigr\rangle_{QR} =0, \\
\bigl\langle f^{(2)}_{l,m,n}(Z), \tilde g^{(1)}_{l',m',n'}(Z) \bigr\rangle_{QR}
&=
\bigl\langle f^{(2)}_{l,m,n}(Z), \tilde g^{(3)}_{l',m',n'}(Z) \bigr\rangle_{QR} =0, \\
\bigl\langle f^{(3)}_{l,m,n}(Z), \tilde g^{(1)}_{l',m',n'}(Z) \bigr\rangle_{QR}
&=
\bigl\langle f^{(3)}_{l,m,n}(Z), \tilde g^{(2)}_{l',m',n'}(Z) \bigr\rangle_{QR} =0, \\
\bigl\langle \tilde f^{(1)}_{l,m,n}(Z), g^{(2)}_{l',m',n'}(Z) \bigr\rangle_{QR}
&=
\bigl\langle \tilde f^{(1)}_{l,m,n}(Z), g^{(3)}_{l',m',n'}(Z) \bigr\rangle_{QR} =0, \\
\bigl\langle \tilde f^{(2)}_{l,m,n}(Z), g^{(1)}_{l',m',n'}(Z) \bigr\rangle_{QR}
&=
\bigl\langle \tilde f^{(2)}_{l,m,n}(Z), g^{(3)}_{l',m',n'}(Z) \bigr\rangle_{QR} =0, \\
\bigl\langle \tilde f^{(3)}_{l,m,n}(Z), g^{(1)}_{l',m',n'}(Z) \bigr\rangle_{QR}
&= \bigl\langle \tilde f^{(3)}_{l,m,n}(Z), g^{(2)}_{l',m',n'}(Z) \bigr\rangle_{QR} =0.
\end{align*}
In particular, the pairing \eqref{QR-pairing} is non-degenerate and
does not depend on the choice of $R>0$.
\end{lem}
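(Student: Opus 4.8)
The plan is to reduce each pairing $\langle f^{(i)}_{l,m,n}, \tilde g^{(j)}_{l',m',n'}\rangle_{QR}$ (and its mirror $\langle \tilde f^{(i)}_{l,m,n}, g^{(j)}_{l',m',n'}\rangle_{QR}$) to a finite combination of the scalar integrals controlled by \eqref{t-orthog-rels}, then to read off the Kronecker deltas and the three constants. The key device is that on the real sphere $S^3_R$ one has $X^{-1}=X^+/N(X)=X^+/R^2$, so every factor $t^{l}_{n\,\underline{m}}(X^{-1})$ appearing in $\tilde g^{(j)}$ (and in $\tilde f^{(i)}$) can be turned, via the conjugation formula $\overline{t^l_{n\,\underline{m}}(X)}=\tfrac{(l-m)!(l+m)!}{(l-n)!(l+n)!}\,t^l_{m\,\underline{n}}(X^+)$ recalled just before \eqref{t-orthog-rels}, into a complex conjugate $\overline{t^{l}_{m\,\underline{n}}(X)}$. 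After this substitution, and after expanding $\nabla f$, $\deg\nabla f$, $\square g$ and $\deg g$ by means of \eqref{dt}, \eqref{dt-inverse}, \eqref{Laplacian-Nf} and the Euler relation $\deg h=d\,h$ on functions homogeneous of degree $d$, each term of \eqref{QR-pairing} becomes an integral over $S^3_R$ of products $t^{l_1}_{n_1\,\underline{m_1}}(X)\cdot\overline{t^{l_2}_{n_2\,\underline{m_2}}(X)}$, to which \eqref{t-orthog-rels} applies verbatim.

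Two selection rules cut this down to the diagonal terms. \emph{Degree}: if $f$ is homogeneous of degree $d_f$ and $g$ of degree $d_g$, then---counting $dS\sim R^3$, the factor $1/R$, the degree lost to each $\nabla$, and the explicit $R$ in the third term---every term of \eqref{QR-pairing} scales as $R^{d_f+d_g+1}$, so the required independence of $R$ forces the pairing to vanish unless $d_f+d_g+1=0$. Since $f^{(i)}\in{\cal U}(2l)$ and $\tilde g^{(j)}\in{\cal U}'(-2l'-1)$, this says precisely $l=l'$, and in the surviving cases the common exponent is $R^0$, so the value is automatically $R$-independent. \emph{$K$-type}: with $l=l'$ fixed, the three families sit in the pairwise distinct $SU(2)\times SU(2)$-types $V_{l+\frac12}\boxtimes V_l$, $V_{l-\frac12}\boxtimes V_l$, $V_{l-\frac12}\boxtimes V_{l-1}$ of Proposition \ref{K-types-prop}; equivalently, the admissible index ranges in \eqref{t-orthog-rels} can match only when $i=j$, and then only when $m'=m$, $n'=n$. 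Together these give all the stated vanishing relations and the factor $\delta_{ll'}\delta_{mm'}\delta_{nn'}$. (Conceptually the second rule is $SU(2)\times SU(2)$-invariance of \eqref{QR-pairing} plus Schur's lemma, the invariance being verified directly as in Proposition \ref{biharm-pairing-prop}.)

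It then remains to pin down the three constants by evaluating each diagonal pairing on one convenient representative, say a highest-weight vector, and here \eqref{QR-pairing} collapses favourably. For $i=1$ the vector $f^{(1)}$ is anti regular, $\nabla f^{(1)}=0$, so only the third term $-\tfrac{R}{8\pi^2}\int_{S^3_R}(\square\tilde g^{(1)})\cdot X^{-1}\cdot f^{(1)}\,dS$ survives. For $i=3$ the vector $\tilde g^{(3)}$ is anti regular, $\square\tilde g^{(3)}=0$, while $f^{(3)}=N(Z)\,f^{(1)}_{l-1,m,n}$, so only the first two terms survive. For $i=2$ all three terms are present and $\square\tilde g^{(2)}$ has to be produced from \eqref{Laplacian-Nf}. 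Carrying out each reduced integral with \eqref{dt}, \eqref{dt-inverse} and \eqref{t-orthog-rels}, the factorial ratios collapse to the asserted $2l+1$, $-2l(2l+1)$ and $-2l$. I expect this last bookkeeping, and in particular the $i=2$ case where all three integrals and the Laplacian of a non-harmonic $N(Z)$-multiple occur at once, to be the main obstacle; the two selection-rule steps are comparatively formal.

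The mirror half $\langle\tilde f^{(i)}_{l,m,n},g^{(j)}_{l',m',n'}\rangle_{QR}$ is obtained by the identical computation, with the polynomial and the $t(X^{-1})$ factors interchanging roles on $S^3_R$; alternatively it can be tied to the first half through the inversion $\pi'_l\bigl(\begin{smallmatrix}0&1\\1&0\end{smallmatrix}\bigr)$, $\pi'_r\bigl(\begin{smallmatrix}0&1\\1&0\end{smallmatrix}\bigr)$ of Lemma \ref{inversion-action-lem}, keeping track of the signs it attaches to the $\tilde f^{(i)}$. Finally, the concluding assertions are immediate: in this basis the pairing matrix is block-diagonal with blocks $2l+1$, $-2l(2l+1)$ (for $l\ge\tfrac12$) and $-2l$ (for $l\ge1$), none of which vanish, so the pairing is non-degenerate; and since every nonzero entry is a constant free of $R$, the pairing does not depend on the choice of $R>0$.
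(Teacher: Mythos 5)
Your proposal is correct and takes essentially the same route as the paper: the paper's proof likewise computes $\nabla f^{(\alpha)}_{l,m,n}$, $\nabla \tilde f^{(\alpha)}_{l,m,n}$ and $\square g^{(\alpha)}_{l,m,n}\cdot Z^{-1}$, $\square \tilde g^{(\alpha)}_{l,m,n}\cdot Z^{-1}$ explicitly and then reads off the deltas and constants from the orthogonality relations of the matrix coefficients (the bilinear relations (17) of \cite{desitter}, which are equivalent to your use of \eqref{t-orthog-rels} together with the conjugation formula and $X^{-1}=X^+/R^2$ on $S^3_R$). One caution: your ``degree'' selection rule is circular as phrased, since it invokes the $R$-independence that the lemma is supposed to establish; this is harmless only because the orthogonality relations already force $l=l'$ on their own, after which the surviving terms scale as $R^0$ and the $R$-independence follows rather than being assumed.
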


\begin{proof}
The result follows from the orthogonality relations (17) in \cite{desitter}
and intermediate computations:
$$
\nabla f^{(1)}_{l,m,n}(Z) = \nabla \tilde f^{(3)}_{l,m,n}(Z) = 0,
$$
$$
\nabla f^{(2)}_{l,m,n}(Z) = 2(2l+1) \begin{pmatrix}
(l-m) t^{l-\frac12}_{n \,\underline{m+\frac12}}(Z) \\
(l+m) t^{l-\frac12}_{n \,\underline{m-\frac12}}(Z) \end{pmatrix}, \quad 
\nabla f^{(3)}_{l,m,n}(Z) = 2 \begin{pmatrix}
t^{l-\frac12}_{n \,\underline{m+\frac12}}(Z) \\
- t^{l-\frac12}_{n \,\underline{m-\frac12}}(Z) \end{pmatrix},
$$
$$
\nabla \tilde f^{(1)}_{l,m,n}(Z) =
\frac2{N(Z)} \begin{pmatrix}
t^l_{m-\frac12 \,\underline{n}}(Z^{-1}) \\
- t^l_{m+\frac12 \,\underline{n}}(Z^{-1}) \end{pmatrix}, \quad
\nabla \tilde f^{(2)}_{l,m,n}(Z) =
-\frac{4l}{N(Z)} \begin{pmatrix}
(l-m+\frac12) t^l_{m-\frac12 \,\underline{n}}(Z^{-1}) \\
(l+m+\frac12) t^l_{m+\frac12 \,\underline{n}}(Z^{-1}) \end{pmatrix},
$$
$$
\square g^{(1)}_{l,m,n}(Z) = \square g^{(2)}_{l,m,n}(Z)
= \square \tilde g^{(2)}_{l,m,n}(Z) = \square \tilde g^{(3)}_{l,m,n}(Z) =0,
$$
$$
\square g^{(3)}_{l,m,n}(Z) \cdot Z^{-1} = \frac{8l}{N(Z)} \bigl(
(l+n) t^{l-\frac12}_{n+\frac12 \,\underline{m}}(Z),
-(l-n) t^{l-\frac12}_{n-\frac12 \,\underline{m}}(Z) \bigr),
$$
$$
\square \tilde g^{(1)}_{l,m,n}(Z) \cdot Z^{-1} = -\frac{4(2l+1)}{N(Z)^2} \bigl(
(l+n+1/2) t^l_{m\,\underline{n-\frac12}}(Z^{-1}),
-(l-n+1/2) t^l_{m\,\underline{n+\frac12}}(Z^{-1}) \bigr).
$$
\end{proof}

From these orthogonality relations and Lemma \ref{inversion-action-lem} we
immediately see the effect of the inversion on the pairing \eqref{QR-pairing}:

\begin{cor}  \label{inversion-pairing-cor}
For all $f \in {\cal U}$ and $g \in {\cal U}'$,
$$  
\Bigl\langle
\pi'_l \bigl( \begin{smallmatrix} 0 & 1 \\ 1 & 0 \end{smallmatrix} \bigr) f,
\pi'_r \bigl( \begin{smallmatrix} 0 & 1 \\ 1 & 0 \end{smallmatrix} \bigr) g
\Bigr\rangle_{QR} = - \langle f, g \rangle_{QR}.
$$
\end{cor}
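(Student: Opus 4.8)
The plan is to reduce the identity to the $K$-type basis and then combine Lemma \ref{inversion-action-lem} with the orthogonality relations of Lemma \ref{orthog-rels-lem}. Since the pairing $\langle\,\cdot\,,\,\cdot\,\rangle_{QR}$ is bilinear, the two inversion operators are linear, and every element of ${\cal U}$ and ${\cal U}'$ is a \emph{finite} linear combination of the basis vectors of Propositions \ref{K-typebasis+_prop} and \ref{K-typebasis-_prop}, it suffices to verify the claimed identity when $f$ and $g$ each range over these basis functions. No convergence issues arise.

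First I would dispose of the vanishing cases. The inversion $\pi'_l \bigl( \begin{smallmatrix} 0 & 1 \\ 1 & 0 \end{smallmatrix} \bigr)$ interchanges ${\cal U}^+$ and ${\cal U}^-$, while $\pi'_r \bigl( \begin{smallmatrix} 0 & 1 \\ 1 & 0 \end{smallmatrix} \bigr)$ interchanges ${\cal U}'^+$ and ${\cal U}'^-$. Hence a pair lying in ${\cal U}^+ \times {\cal U}'^+$ is carried into ${\cal U}^- \times {\cal U}'^-$ (and conversely), where the pairing vanishes by definition; so both sides of the asserted identity are zero in these cases. Likewise, by Lemma \ref{inversion-action-lem} the inversions preserve the family index $\alpha \in \{1,2,3\}$, so every pairing that vanishes for mismatched families in Lemma \ref{orthog-rels-lem} continues to vanish after applying the inversions, and the identity again holds trivially as $0=0$.

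The remaining cases are those producing the nonzero entries of Lemma \ref{orthog-rels-lem}, namely the cross pairs $(f^{(\alpha)}_{l,m,n}, \tilde g^{(\alpha)}_{l',m',n'})$ and $(\tilde f^{(\alpha)}_{l,m,n}, g^{(\alpha)}_{l',m',n'})$. Here I would substitute the explicit action of Lemma \ref{inversion-action-lem}: $\pi'_l \bigl( \begin{smallmatrix} 0 & 1 \\ 1 & 0 \end{smallmatrix} \bigr) f^{(\alpha)}_{l,m,n} = -\tilde f^{(\alpha)}_{l,n,m}$ and $\pi'_r \bigl( \begin{smallmatrix} 0 & 1 \\ 1 & 0 \end{smallmatrix} \bigr) \tilde g^{(\alpha)}_{l,m,n} = g^{(\alpha)}_{l,n,m}$ (and the analogous formulas with tildes swapped). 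The crucial structural observation is that the left inversion contributes a factor $-1$ whereas the right inversion contributes $+1$, and both merely transpose the lower indices $m \leftrightarrow n$. Taking $f = f^{(\alpha)}_{l,m,n}$ and $g = \tilde g^{(\alpha)}_{l',m',n'}$, the left-hand side becomes $-\bigl\langle \tilde f^{(\alpha)}_{l,n,m}, g^{(\alpha)}_{l',n',m'}\bigr\rangle_{QR}$, which by the symmetric form of the orthogonality relations equals $-c_\alpha\,\delta_{ll'}\delta_{nn'}\delta_{mm'}$, where $c_\alpha$ is the relevant constant ($2l+1$, $-2l(2l+1)$, or $-2l$). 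Because the Kronecker deltas are insensitive to the reordering induced by the index transposition, this matches $-\bigl\langle f^{(\alpha)}_{l,m,n}, \tilde g^{(\alpha)}_{l',m',n'}\bigr\rangle_{QR}$, as required; the pair $(\tilde f^{(\alpha)}, g^{(\alpha)})$ is handled identically by symmetry.

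I do not expect a serious obstacle, as the computation is entirely elementary; the only point demanding care is bookkeeping. One must track two features simultaneously: the asymmetry of the signs produced by $\pi'_l$ versus $\pi'_r$, which is precisely what produces the overall minus sign in the statement, and the transposition $m \leftrightarrow n$ supplied by Lemma \ref{inversion-action-lem}, which is harmless exactly because the orthogonality constants $c_\alpha$ depend only on $l$ and the delta functions are symmetric. Confirming that these two effects conspire to yield $-\langle f, g\rangle_{QR}$ rather than $+\langle f, g\rangle_{QR}$ is the heart of the matter.
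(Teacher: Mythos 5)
Your proof is correct and follows essentially the same route as the paper, which likewise deduces the corollary directly from the orthogonality relations of Lemma \ref{orthog-rels-lem} combined with the basis action of the inversion in Lemma \ref{inversion-action-lem}. The sign bookkeeping (the $-1$ from $\pi'_l$ versus $+1$ from $\pi'_r$, and the harmless $m \leftrightarrow n$ transposition) is exactly the content the paper leaves implicit in its one-line justification.
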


We are ready to prove that the bilinear pairing is
$\mathfrak{gl}(2,\HC)$-invariant.

\begin{prop}  \label{bilinear-pairing-prop}
The equation \eqref{QR-pairing} defines an $SU(2) \times SU(2)$ and
$\mathfrak{gl}(2,\HC)$-invariant bilinear pairing between
$(\pi'_l,{\cal U}^+)$ and $(\pi'_r,{\cal U}'^-)$ and also between
$(\pi'_l,{\cal U}^-)$ and $(\pi'_r,{\cal U}'^+)$.
This pairing is non-degenerate and independent of the choice of $R>0$.
\end{prop}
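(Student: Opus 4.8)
The plan is to follow the same scheme used for the biharmonic pairing in Proposition~\ref{biharm-pairing-prop}. The non-degeneracy of $\langle\,\cdot\,,\cdot\,\rangle_{QR}$ and its independence of $R$ are already contained in Lemma~\ref{orthog-rels-lem}, so only the invariance statements remain. Since invariance under a Lie algebra element is a linear condition, and the four block types $\bigl(\begin{smallmatrix} A & 0 \\ 0 & 0\end{smallmatrix}\bigr)$, $\bigl(\begin{smallmatrix} 0 & 0 \\ 0 & D\end{smallmatrix}\bigr)$, $\bigl(\begin{smallmatrix} 0 & B \\ 0 & 0\end{smallmatrix}\bigr)$, $\bigl(\begin{smallmatrix} 0 & 0 \\ C & 0\end{smallmatrix}\bigr)$ span $\mathfrak{gl}(2,\HC)$, it suffices to verify
\[
\Bigl\langle \pi'_l(X) f, g\Bigr\rangle_{QR} + \Bigl\langle f, \pi'_r(X) g\Bigr\rangle_{QR} = 0
\]
for $X$ of each of these four types.

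First I would dispose of the diagonal blocks. The $SU(2)\times SU(2)$-invariance is geometric: for $\Delta = \bigl(\begin{smallmatrix} a & 0 \\ 0 & d\end{smallmatrix}\bigr)$ with $N(a)=N(d)=1$ the substitution $X\mapsto a^{-1}Xd$ preserves $S^3_R$ and the measure $dS$, and the operators $\deg$, $\nabla$, $\square$ together with left multiplication by $X^{-1}$ transform equivariantly, so \eqref{QR-pairing} is unchanged. Differentiating and complexifying yields invariance under the diagonal copy of $\mathfrak{sl}(2,\BB C)\times\mathfrak{sl}(2,\BB C)$, which takes care of the traceless parts of $\bigl(\begin{smallmatrix} A & 0 \\ 0 & 0\end{smallmatrix}\bigr)$ and $\bigl(\begin{smallmatrix} 0 & 0 \\ 0 & D\end{smallmatrix}\bigr)$. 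Invariance under the remaining diagonal directions, the scalar matrices $\bigl(\begin{smallmatrix}\lambda & 0\\0 & \lambda\end{smallmatrix}\bigr)$ and the dilations $\bigl(\begin{smallmatrix}\lambda & 0\\0 & \lambda^{-1}\end{smallmatrix}\bigr)$, is automatic: by Lemma~\ref{degree-decomp-lem} the pairing is nonzero only between ${\cal U}(d)$ and ${\cal U}'(-d-1)$, on which these matrices act by reciprocal scalars, so the two eigenvalue contributions cancel.

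The main step, and the one requiring real work, is the block $\bigl(\begin{smallmatrix} 0 & B\\ 0 & 0\end{smallmatrix}\bigr)$. Having established $SU(2)\times SU(2)$-invariance, I would reduce to $B=\bigl(\begin{smallmatrix}1 & 0\\0 & 0\end{smallmatrix}\bigr)$ and check the identity on the basis vectors, separately for each pair $\bigl(f^{(\alpha)}_{l,m,n}, \tilde g^{(\alpha')}_{l',m',n'}\bigr)$ and $\bigl(\tilde f^{(\alpha)}_{l,m,n}, g^{(\alpha')}_{l',m',n'}\bigr)$ with $\alpha,\alpha'\in\{1,2,3\}$. The explicit actions of $\pi'_l\bigl(\begin{smallmatrix}0 & B\\0 & 0\end{smallmatrix}\bigr)$ on the $f$'s and of $\pi'_r\bigl(\begin{smallmatrix}0 & B\\0 & 0\end{smallmatrix}\bigr)$ on the $g$'s were recorded in Subsection~\ref{K-type-action}; taking the trace against $B$ extracts the $(1,1)$-entry and expresses each result as a combination of the three families at index $\bigl(l-\tfrac12, m+\tfrac12, n+\tfrac12\bigr)$. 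By the orthogonality relations of Lemma~\ref{orthog-rels-lem}, both summands vanish unless $(l',m',n')=(l-\tfrac12,m+\tfrac12,n+\tfrac12)$, and in that single surviving case one reads off the relevant coefficients and confirms the cancellation term by term, exactly as in the computation following \eqref{B-invar}.

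Finally, invariance under $\bigl(\begin{smallmatrix}0 & 0\\ C & 0\end{smallmatrix}\bigr)$ follows from that under $\bigl(\begin{smallmatrix}0 & B\\ 0 & 0\end{smallmatrix}\bigr)$ by conjugation with the inversion: one has $\bigl(\begin{smallmatrix}0 & 1\\1 & 0\end{smallmatrix}\bigr)\bigl(\begin{smallmatrix}0 & B\\0 & 0\end{smallmatrix}\bigr)\bigl(\begin{smallmatrix}0 & 1\\1 & 0\end{smallmatrix}\bigr)=\bigl(\begin{smallmatrix}0 & 0\\ B & 0\end{smallmatrix}\bigr)$, and by Corollary~\ref{inversion-pairing-cor} the pairing is inversion-invariant up to an overall sign, so the two infinitesimal invariances are equivalent. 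The hard part will be the bookkeeping in the third paragraph: the action formulas mix all three families $f^{(1)},f^{(2)},f^{(3)}$ (respectively $g^{(1)},g^{(2)},g^{(3)}$) with $l,m,n$-dependent coefficients, so the real care lies in tracking which contributions survive the orthogonality relations and verifying that the surviving coefficients cancel across each of the several family-pairs.
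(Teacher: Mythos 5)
Your proposal is correct and follows essentially the same route as the paper's proof: $R$-independence and non-degeneracy from Lemma \ref{orthog-rels-lem}, $SU(2)\times SU(2)$ invariance by direct transformation of $\deg$, $\nabla$, $\square$ under $X \mapsto a^{-1}Xd$, reduction of the $\bigl(\begin{smallmatrix} 0 & B \\ 0 & 0 \end{smallmatrix}\bigr)$ case to $B=\bigl(\begin{smallmatrix} 1 & 0 \\ 0 & 0 \end{smallmatrix}\bigr)$ checked on the $K$-type basis via the orthogonality relations, and the $\bigl(\begin{smallmatrix} 0 & 0 \\ C & 0 \end{smallmatrix}\bigr)$ case via Corollary \ref{inversion-pairing-cor}. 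The only cosmetic difference is that you derive dilation invariance from the degree support ${\cal U}(d)\times{\cal U}'(-d-1)$ rather than from $R$-independence, and note that for the $\bigl(\tilde f^{(\alpha)}, g^{(\alpha')}\bigr)$ pairs the surviving index is $\bigl(l+\tfrac12,m-\tfrac12,n-\tfrac12\bigr)$ rather than $\bigl(l-\tfrac12,m+\tfrac12,n+\tfrac12\bigr)$.
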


\begin{proof}
We already know that this pairing is independent of the choice of $R>0$.
By direct computation, for $a, d \in \HC^{\times}$ with $N(a)=N(d)=1$, we have:
$$
\deg \bigl( f(a^{-1}Zd) \bigr) = (\deg f)(a^{-1}Zd),
$$
$$
\nabla \bigl( f(a^{-1}Zd) \bigr)
= d \bigl( \nabla (a^{-1}f) \bigr)(a^{-1}Zd),
\qquad
\nabla^+ \bigl( f(a^{-1}Zd) \bigr) = a \bigl( \nabla^+ (d^{-1}f) \bigr)(a^{-1}Zd),
$$
$$
\bigl( g(a^{-1}Zd) \bigr) \overleftarrow{\nabla}
= \bigl( (gd)\overleftarrow{\nabla} (a^{-1}Zd) \bigr) a^{-1},
\qquad
\bigl( g(a^{-1}Zd) \bigr) \overleftarrow{\nabla}^+
= \bigl( (ga)\overleftarrow{\nabla}^+ (a^{-1}Zd) \bigr) d^{-1},
$$
$$
\square \bigl( af(a^{-1}Zd) \bigr) = a (\square f) (a^{-1}Zd), \qquad
\square \bigl( g(a^{-1}Zd)d^{-1} \bigr) = \bigl((\square g)(a^{-1}Zd)\bigr)d^{-1}.
$$
Then it is easy to see that the pairing \eqref{QR-pairing}
is $SU(2) \times SU(2)$ invariant.
The invariance under the scalar  matrices 
$\bigl( \begin{smallmatrix} \lambda & 0 \\
0 & \lambda \end{smallmatrix} \bigr)$, $\lambda \in \BB R^{\times}$,
is trivial.
The invariance under the dilation matrices 
$\bigl( \begin{smallmatrix} \lambda & 0 \\
0 & \lambda^{-1} \end{smallmatrix} \bigr)$, $\lambda \in \BB R^{\times}$,
follows from the independence of the pairing \eqref{QR-pairing}
of the choice of $R>0$.

Next, we verify the invariance of the pairing under
$\bigl( \begin{smallmatrix} 0 & B \\ 0 & 0 \end{smallmatrix} \bigr)
\in \mathfrak{gl}(2,\HC)$, $B \in \HC$.
Since we have already established the $SU(2) \times SU(2)$ invariance,
it is sufficient to check for
$B = \bigl( \begin{smallmatrix} 1 & 0 \\ 0 & 0 \end{smallmatrix} \bigr)$
only. We do so by verifying that
\begin{equation}  \label{B-invariance1}
\Bigl\langle
\pi'_l \bigl( \begin{smallmatrix} 0 & B \\ 0 & 0 \end{smallmatrix} \bigr)
f^{(\alpha)}_{l,m,n}, \tilde g^{(\alpha')}_{l',m',n'} \Bigr\rangle_{QR}
+ \Bigl\langle f^{(\alpha)}_{l,m,n},
\pi'_r \bigl( \begin{smallmatrix} 0 & B \\ 0 & 0 \end{smallmatrix} \bigr)
\tilde g^{(\alpha')}_{l',m',n'} \Bigr\rangle_{QR} = 0,
\end{equation}
\begin{equation}  \label{B-invariance2}
\Bigl\langle
\pi'_l \bigl( \begin{smallmatrix} 0 & B \\ 0 & 0 \end{smallmatrix} \bigr)
\tilde f^{(\alpha)}_{l,m,n}, g^{(\alpha')}_{l',m',n'} \Bigr\rangle_{QR}
+ \Bigl\langle \tilde f^{(\alpha)}_{l,m,n},
\pi'_r \bigl( \begin{smallmatrix} 0 & B \\ 0 & 0 \end{smallmatrix} \bigr)
g^{(\alpha')}_{l',m',n'} \Bigr\rangle_{QR} = 0
\end{equation}
for the basis functions, where $\alpha,\alpha'=1,2,3$.
Note that the actions of
$\pi'_r \bigl( \begin{smallmatrix} 0 & B \\ 0 & 0 \end{smallmatrix} \bigr)$ and
$\pi'_r \bigl( \begin{smallmatrix} 0 & B \\ 0 & 0 \end{smallmatrix} \bigr)$
were spelled out in Subsection \ref{K-type-action}.
We start with $\alpha=1$, then by the orthogonality relations
(Lemma \ref{orthog-rels-lem}), both summands in \eqref{B-invariance1} are zero
unless $\alpha'=1$, $l'=l-\frac12$, $m'=m+\frac12$, $n'=n+\frac12$.
In this case,
$$
\Bigl\langle
\pi'_l \bigl( \begin{smallmatrix} 0 & B \\ 0 & 0 \end{smallmatrix} \bigr)
f^{(1)}_{l,m,n}, \tilde g^{(1)}_{l',m',n'} \Bigr\rangle_{QR}
= -2l(l-m)
= - \Bigl\langle f^{(1)}_{l,m,n},
\pi'_r \bigl( \begin{smallmatrix} 0 & B \\ 0 & 0 \end{smallmatrix} \bigr)
\tilde g^{(1)}_{l',m',n'} \Bigr\rangle_{QR}.
$$
And both summands in
\eqref{B-invariance2} are zero unless $l'=l+\frac12$, $m'=m-\frac12$,
$n'=n-\frac12$. In this case,
\begin{align*}
\Bigl\langle
\pi'_l \bigl( \begin{smallmatrix} 0 & B \\ 0 & 0 \end{smallmatrix} \bigr)
\tilde f^{(1)}_{l,m,n}, g^{(1)}_{l',m',n'} \Bigr\rangle_{QR}
&= (2l+1)(l-m+3/2)
= - \Bigl\langle \tilde f^{(1)}_{l,m,n},
\pi'_r \bigl( \begin{smallmatrix} 0 & B \\ 0 & 0 \end{smallmatrix} \bigr)
g^{(1)}_{l',m',n'} \Bigr\rangle_{QR},  \\
\Bigl\langle
\pi'_l \bigl( \begin{smallmatrix} 0 & B \\ 0 & 0 \end{smallmatrix} \bigr)
\tilde f^{(1)}_{l,m,n}, g^{(2)}_{l',m',n'} \Bigr\rangle_{QR}
&= 1
= - \Bigl\langle \tilde f^{(1)}_{l,m,n},
\pi'_r \bigl( \begin{smallmatrix} 0 & B \\ 0 & 0 \end{smallmatrix} \bigr)
g^{(2)}_{l',m',n'} \Bigr\rangle_{QR},  \\
\Bigl\langle
\pi'_l \bigl( \begin{smallmatrix} 0 & B \\ 0 & 0 \end{smallmatrix} \bigr)
\tilde f^{(1)}_{l,m,n}, g^{(3)}_{l',m',n'} \Bigr\rangle_{QR}
&= l+n
= - \Bigl\langle \tilde f^{(1)}_{l,m,n},
\pi'_r \bigl( \begin{smallmatrix} 0 & B \\ 0 & 0 \end{smallmatrix} \bigr)
g^{(3)}_{l',m',n'} \Bigr\rangle_{QR}.
\end{align*}

When $\alpha=2$, both summands in \eqref{B-invariance1} are zero unless
$\alpha'=1$ or $2$, $l'=l-\frac12$, $m'=m+\frac12$, $n'=n+\frac12$.
In this case,
\begin{align*}
\Bigl\langle
\pi'_l \bigl( \begin{smallmatrix} 0 & B \\ 0 & 0 \end{smallmatrix} \bigr)
f^{(2)}_{l,m,n}, \tilde g^{(1)}_{l',m',n'} \Bigr\rangle_{QR}
&= -2l(l-m)(l+n+1/2)
= - \Bigl\langle f^{(2)}_{l,m,n},
\pi'_r \bigl( \begin{smallmatrix} 0 & B \\ 0 & 0 \end{smallmatrix} \bigr)
\tilde g^{(1)}_{l',m',n'} \Bigr\rangle_{QR},  \\
\Bigl\langle
\pi'_l \bigl( \begin{smallmatrix} 0 & B \\ 0 & 0 \end{smallmatrix} \bigr)
f^{(2)}_{l,m,n}, \tilde g^{(2)}_{l',m',n'} \Bigr\rangle_{QR}
&= (2l-1)(2l+1)(l-m)
= - \Bigl\langle f^{(2)}_{l,m,n},
\pi'_r \bigl( \begin{smallmatrix} 0 & B \\ 0 & 0 \end{smallmatrix} \bigr)
\tilde g^{(2)}_{l',m',n'} \Bigr\rangle_{QR}.
\end{align*}
And both summands in
\eqref{B-invariance2} are zero unless $\alpha'=2$ or $3$, $l'=l+\frac12$,
$m'=m-\frac12$, $n'=n-\frac12$. In this case,
\begin{align*}
\Bigl\langle
\pi'_l \bigl( \begin{smallmatrix} 0 & B \\ 0 & 0 \end{smallmatrix} \bigr)
\tilde f^{(2)}_{l,m,n}, g^{(2)}_{l',m',n'} \Bigr\rangle_{QR}
&= -2l(2l+2)(l-m+1/2)
= - \Bigl\langle \tilde f^{(2)}_{l,m,n},
\pi'_r \bigl( \begin{smallmatrix} 0 & B \\ 0 & 0 \end{smallmatrix} \bigr)
g^{(2)}_{l',m',n'} \Bigr\rangle_{QR},  \\
\Bigl\langle
\pi'_l \bigl( \begin{smallmatrix} 0 & B \\ 0 & 0 \end{smallmatrix} \bigr)
\tilde f^{(2)}_{l,m,n}, g^{(3)}_{l',m',n'} \Bigr\rangle_{QR}
&= (l-m+1/2)(l+n)
= - \Bigl\langle \tilde f^{(2)}_{l,m,n},
\pi'_r \bigl( \begin{smallmatrix} 0 & B \\ 0 & 0 \end{smallmatrix} \bigr)
g^{(3)}_{l',m',n'} \Bigr\rangle_{QR}.
\end{align*}

When $\alpha=3$, both summands in \eqref{B-invariance1} are zero unless
$l'=l-\frac12$, $m'=m+\frac12$, $n'=n+\frac12$. In this case,
\begin{align*}
\Bigl\langle
\pi'_l \bigl( \begin{smallmatrix} 0 & B \\ 0 & 0 \end{smallmatrix} \bigr)
f^{(3)}_{l,m,n}, \tilde g^{(1)}_{l',m',n'} \Bigr\rangle_{QR}
&= -(l+n+1/2)
= - \Bigl\langle f^{(3)}_{l,m,n},
\pi'_r \bigl( \begin{smallmatrix} 0 & B \\ 0 & 0 \end{smallmatrix} \bigr)
\tilde g^{(1)}_{l',m',n'} \Bigr\rangle_{QR},  \\
\Bigl\langle
\pi'_l \bigl( \begin{smallmatrix} 0 & B \\ 0 & 0 \end{smallmatrix} \bigr)
f^{(3)}_{l,m,n}, \tilde g^{(2)}_{l',m',n'} \Bigr\rangle_{QR}
&= -1
= - \Bigl\langle f^{(3)}_{l,m,n},
\pi'_r \bigl( \begin{smallmatrix} 0 & B \\ 0 & 0 \end{smallmatrix} \bigr)
\tilde g^{(2)}_{l',m',n'} \Bigr\rangle_{QR},  \\
\Bigl\langle
\pi'_l \bigl( \begin{smallmatrix} 0 & B \\ 0 & 0 \end{smallmatrix} \bigr)
f^{(3)}_{l,m,n}, \tilde g^{(3)}_{l',m',n'} \Bigr\rangle_{QR}
&= 2l(l-m-1)
= - \Bigl\langle f^{(3)}_{l,m,n},
\pi'_r \bigl( \begin{smallmatrix} 0 & B \\ 0 & 0 \end{smallmatrix} \bigr)
\tilde g^{(3)}_{l',m',n'} \Bigr\rangle_{QR}.
\end{align*}
And both summands in
\eqref{B-invariance2} are zero unless $\alpha'=3$, $l'=l+\frac12$,
$m'=m-\frac12$, $n'=n-\frac12$. In this case,
$$
\Bigl\langle
\pi'_l \bigl( \begin{smallmatrix} 0 & B \\ 0 & 0 \end{smallmatrix} \bigr)
\tilde f^{(3)}_{l,m,n}, g^{(3)}_{l',m',n'} \Bigr\rangle_{QR}
= -(2l+1)(l-m+1/2)
= - \Bigl\langle \tilde f^{(3)}_{l,m,n},
\pi'_r \bigl( \begin{smallmatrix} 0 & B \\ 0 & 0 \end{smallmatrix} \bigr)
g^{(3)}_{l',m',n'} \Bigr\rangle_{QR}.
$$
This proves the invariance of the pairing under
$\bigl( \begin{smallmatrix} 0 & B \\ 0 & 0 \end{smallmatrix} \bigr)
\in \mathfrak{gl}(2,\HC)$, $B \in \HC$.

Finally, it follows from Corollary \ref{inversion-pairing-cor} that the
pairing is also invariant under
$\bigl( \begin{smallmatrix} 0 & 0 \\ C & 0 \end{smallmatrix} \bigr)
\in \mathfrak{gl}(2,\HC)$, $C \in \HC$.
\end{proof}

We have an alternative description of the invariant bilinear pairing
\eqref{QR-pairing}.

\begin{lem}
The $\mathfrak{gl}(2,\HC)$-invariant bilinear pairing \eqref{QR-pairing}
on ${\cal U} \times {\cal U}'$ can be described as
\begin{align*}
\bigl\langle f^{(1)}_{l,m,n}(Z), \tilde g^{(1)}_{l',m',n'}(Z) \bigr\rangle_{QR} &=
- \frac{R}{8\pi^2} \int_{X \in S^3_R}
\bigl( \square \tilde g^{(1)}_{l',m',n'}(X) \bigr) \cdot X^{-1} \cdot
f^{(1)}_{l,m,n}(X) \,dS,  \\
\bigl\langle f^{(2)}_{l,m,n}(Z), \tilde g^{(2)}_{l',m',n'}(Z) \bigr\rangle_{QR} &=
-\frac{2l}{4\pi^2} \int_{X \in S^3_R}
\tilde g^{(2)}_{l',m',n'}(X) \cdot \nabla f^{(2)}_{l,m,n}(X) \,\frac{dS}R,  \\
\bigl\langle f^{(3)}_{l,m,n}(Z), \tilde g^{(3)}_{l',m',n'}(Z) \bigr\rangle_{QR} &=
-\frac{2l}{4\pi^2} \int_{X \in S^3_R}
\tilde g^{(3)}_{l',m',n'}(X) \cdot \nabla f^{(3)}_{l,m,n}(X) \,\frac{dS}R,  \\
\bigl\langle f^{(\alpha)}_{l,m,n}(Z), \tilde g^{(\alpha')}_{l',m',n'}(Z)
\bigr\rangle_{QR} &= 0 \qquad \text{if $\alpha \ne \alpha'$};  \\
\bigl\langle \tilde f^{(1)}_{l,m,n}(Z), g^{(1)}_{l',m',n'}(Z) \bigr\rangle_{QR} &=
\frac{2l+1}{4\pi^2} \int_{X \in S^3_R}
g^{(1)}_{l',m',n'}(X) \cdot \nabla \tilde  f^{(1)}_{l,m,n}(X) \,\frac{dS}R,  \\
\bigl\langle \tilde f^{(2)}_{l,m,n}(Z), g^{(2)}_{l',m',n'}(Z) \bigr\rangle_{QR} &=
\frac{2l+1}{4\pi^2} \int_{X \in S^3_R}
g^{(2)}_{l',m',n'}(X) \cdot \nabla \tilde  f^{(2)}_{l,m,n}(X) \,\frac{dS}R,  \\
\bigl\langle \tilde f^{(3)}_{l,m,n}(Z), g^{(3)}_{l',m',n'}(Z) \bigr\rangle_{QR} &=
- \frac{R}{8\pi^2} \int_{X \in S^3_R}
\bigl( \square g^{(3)}_{l',m',n'}(X) \bigr) \cdot X^{-1} \cdot
\tilde f^{(3)}_{l,m,n}(X) \,dS,  \\
\bigl\langle \tilde f^{(\alpha)}_{l,m,n}(Z), g^{(\alpha')}_{l',m',n'}(Z)
\bigr\rangle_{QR} &= 0 \qquad \text{if $\alpha \ne \alpha'$};  \\
\bigl\langle f^{(\alpha)}_{l,m,n}(Z), g^{(\alpha')}_{l',m',n'}(Z)
\bigr\rangle_{QR} &=
\bigl\langle \tilde f^{(\alpha)}_{l,m,n}(Z), \tilde g^{(\alpha')}_{l',m',n'}(Z)
\bigr\rangle_{QR} = 0.
\end{align*}
\end{lem}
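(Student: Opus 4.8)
The plan is to obtain each simplified formula directly from the three-term definition \eqref{QR-pairing} by discarding the terms that vanish on the given families and then collapsing the survivors using homogeneity. The inputs I would use are exactly the differential identities recorded in the proof of Lemma \ref{orthog-rels-lem}: that $\nabla f^{(1)}_{l,m,n} = \nabla\tilde f^{(3)}_{l,m,n} = 0$ and that $\square g^{(1)}_{l,m,n} = \square g^{(2)}_{l,m,n} = \square\tilde g^{(2)}_{l,m,n} = \square\tilde g^{(3)}_{l,m,n} = 0$, together with the homogeneity degrees of the basis functions dictated by Propositions \ref{K-typebasis+_prop} and \ref{K-typebasis-_prop}.

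I would first dispose of the two cases built from the anti regular families. For $\langle f^{(1)}_{l,m,n}, \tilde g^{(1)}_{l',m',n'}\rangle_{QR}$ the identity $\nabla f^{(1)} = 0$ makes the first two integrals in \eqref{QR-pairing} vanish pointwise, so only the third term $-\frac{R}{8\pi^2}\int (\square\tilde g^{(1)})\cdot X^{-1}\cdot f^{(1)}\,dS$ remains; similarly $\nabla\tilde f^{(3)} = 0$ isolates the third term for $\langle\tilde f^{(3)}_{l,m,n}, g^{(3)}_{l',m',n'}\rangle_{QR}$. These are precisely the claimed expressions and require no further manipulation.

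The four remaining diagonal cases are those in which the ${\cal U}'$-factor is harmonic, so that the third term of \eqref{QR-pairing} drops out and the first two survive. Here I would observe that the integral $\int_{S^3_R}g\cdot\nabla f\,\frac{dS}R$ on the right already vanishes unless $l=l'$ (and the $m,n$ indices agree) by the orthogonality relations (17) of \cite{desitter}, so it suffices to match coefficients on the diagonal. On the diagonal both $g$ and $\nabla f$ are homogeneous, so $\deg$ acts on each by its degree scalar and the two surviving terms combine into $\frac{\deg(g)-\deg(\nabla f)}{8\pi^2}\int g\cdot\nabla f\,\frac{dS}R$. Reading off the degrees — for instance $\tilde g^{(2)}$ has degree $-2l-1$ while $\nabla f^{(2)}$ has degree $2l-1$, giving the constant $\frac{(-2l-1)-(2l-1)}{8\pi^2}=-\frac{2l}{4\pi^2}$, and $g^{(1)}$ has degree $2l$ while $\nabla\tilde f^{(1)}$ has degree $-2l-2$, giving $\frac{2l-(-2l-2)}{8\pi^2}=\frac{2l+1}{4\pi^2}$ — reproduces exactly the stated constants for $\langle f^{(2)},\tilde g^{(2)}\rangle_{QR}$, $\langle f^{(3)},\tilde g^{(3)}\rangle_{QR}$, $\langle\tilde f^{(1)},g^{(1)}\rangle_{QR}$ and $\langle\tilde f^{(2)},g^{(2)}\rangle_{QR}$. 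Finally, the off-diagonal vanishing $\langle f^{(\alpha)},\tilde g^{(\alpha')}\rangle_{QR}=\langle\tilde f^{(\alpha)},g^{(\alpha')}\rangle_{QR}=0$ for $\alpha\neq\alpha'$ is already part of Lemma \ref{orthog-rels-lem}, while $\langle f^{(\alpha)},g^{(\alpha')}\rangle_{QR}=\langle\tilde f^{(\alpha)},\tilde g^{(\alpha')}\rangle_{QR}=0$ holds because the pairing is declared to vanish on ${\cal U}^+\times{\cal U}'^+$ and ${\cal U}^-\times{\cal U}'^-$.

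The computation is routine, and the only genuine subtlety is the coefficient bookkeeping in the four harmonic cases: the factor in front of the single integral is the difference of the homogeneity degrees of $g$ and of $\nabla f$ divided by $8\pi^2$, and this identification is legitimate only after using the orthogonality relations to reduce to $l=l'$ (off the diagonal both sides vanish separately). I would verify the degree assignments carefully against Propositions \ref{K-typebasis+_prop}--\ref{K-typebasis-_prop} so that the constants $-2l$ and $2l+1$ emerge with the correct signs.
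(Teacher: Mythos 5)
Your proof is correct, but it takes a genuinely different route from the paper's. The paper's own argument is a one-line verification in the opposite direction: it evaluates the right-hand-side integrals on the basis pairs and checks that they reproduce exactly the orthogonality constants of Lemma \ref{orthog-rels-lem}; since the bilinear pairing is determined by its values on the basis, the two descriptions must then coincide. You instead transform the defining expression \eqref{QR-pairing} into the stated formulas: the identities $\nabla f^{(1)}=\nabla\tilde f^{(3)}=0$ kill the first two terms and isolate the $\square$-term, the identities $\square g^{(1)}=\square g^{(2)}=\square\tilde g^{(2)}=\square\tilde g^{(3)}=0$ kill the third term, and homogeneity collapses the two surviving $\deg$-terms into a single integral with coefficient $\bigl(\deg g-\deg(\nabla f)\bigr)/8\pi^2$. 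Your degree bookkeeping checks out against Propositions \ref{K-typebasis+_prop}--\ref{K-typebasis-_prop} (e.g.\ $-4l/8\pi^2=-2l/4\pi^2$ and $(4l+2)/8\pi^2=(2l+1)/4\pi^2$), and you correctly flag the one real subtlety: the coefficient identification is only legitimate after reducing to $l=l'$ via the orthogonality relations, with both sides vanishing off the diagonal. Your route has the advantage of explaining \emph{where} the constants come from, rather than verifying them numerically; the paper's verification sidesteps the diagonal-reduction issue but gives no such insight.
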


\begin{proof}
We verify by direct computation that this pairing satisfies the same
orthogonality relations as those described in Lemma \ref{orthog-rels-lem}.
Then the result follows.
\end{proof}

\subsection{Invariant Pseudounitary Structures}

In this subsection we construct pseudounitary (non-degenerate indefinite)
$\mathfrak{u}(2,2)$-invariant structures on ${\cal U}^+$, ${\cal U}^-$,
${\cal U}'^+$ and ${\cal U}'^-$.
We realize the group $U(2,2)$ as the subgroup of $GL(2,\HC)$ as in
\eqref{U(2,2)}. Then the Lie algebra of $U(2,2)$ is given by
\eqref{u(2,2)-algebra}.

We have a complex bilinear pairing
$$
\langle \,\cdot\, ,  \,\cdot\, \rangle : \BB S' \times \BB S \to \BB C,
\qquad
\Bigl\langle (s_1', s_2'), \begin{pmatrix} s_1 \\ s_2 \end{pmatrix}
\Bigr\rangle = s_1's_1 + s_2's_2.
$$
We also have $\BB C$-antilinear maps $\BB S \to \BB S'$ and $\BB S' \to \BB S$
-- matrix transposition followed by complex conjugation --
which are similar to quaternionic conjugation and so, by abuse of notation,
we use the same symbol to denote these maps:
$$
\begin{pmatrix} s_1 \\ s_2 \end{pmatrix}^+ = (\overline{s_1}, \overline{s_2}),
\qquad
(s'_1,s'_2)^+ =\begin{pmatrix} \overline{s'_1} \\ \overline{s'_2}\end{pmatrix},
\qquad
\begin{pmatrix} s_1 \\ s_2 \end{pmatrix} \in \BB S ,\: (s'_1,s'_2) \in \BB S'.
$$
Note that $(Xs)^+ = s^+ X^+$ and $(s'X)^+ = X^+ s'^+$
for all $X \in \BB H$, $s \in \BB S$, $s' \in \BB S'$.

\begin{thm}  \label{unitary-thm}
The spaces $(\pi'_l, {\cal U}^+)$, $(\pi'_l, {\cal U}^-)$,
$(\pi'_r, {\cal U}'^+)$ and $(\pi'_r, {\cal U}'^-)$ have pseudounitary
structures that are preserved by the real form $\mathfrak{u}(2,2)$ of
$\mathfrak{gl}(2,\HC)$. The invariant pseudounitary structure on ${\cal U}^+$
can be described as
\begin{align*}
\bigl( f^{(1)}_{l,m,n}(Z), f^{(1)}_{l',m',n'}(Z) \bigr)_{{\cal U}^+} &=
\frac{2l+1}{2\pi^2} \int_{X \in S^3}
\bigl( f^{(1)}_{l',m',n'}(X) \bigr)^+ \cdot f^{(1)}_{l,m,n}(X) \,dS,  \\
\bigl( f^{(2)}_{l,m,n}(Z), f^{(2)}_{l',m',n'}(Z) \bigr)_{{\cal U}^+} &=
-\frac{2l(2l+1)}{2\pi^2} \int_{X \in S^3}
\bigl( f^{(2)}_{l',m',n'}(X) \bigr)^+ \cdot f^{(2)}_{l,m,n}(X) \,dS,  \\
\bigl( f^{(3)}_{l,m,n}(Z), f^{(3)}_{l',m',n'}(Z) \bigr)_{{\cal U}^+} &=
-\frac{2l}{2\pi^2} \int_{X \in S^3}
\bigl( f^{(3)}_{l',m',n'}(X) \bigr)^+ \cdot f^{(3)}_{l,m,n}(X) \,dS,  \\
\bigl( f^{(\alpha)}_{l,m,n}(Z), f^{(\alpha')}_{l',m',n'}(Z) \bigr)_{{\cal U}^+} &= 0
\qquad \text{if $\alpha \ne \alpha'$}.
\end{align*}
And the invariant pseudounitary structures on ${\cal U}^-$, ${\cal U}'^+$,
${\cal U}'^-$ are similar.
\end{thm}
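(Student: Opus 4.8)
The plan is to follow the template of the proof of Theorem~\ref{BH-unitary-thm}, treating $(\pi'_l,{\cal U}^+)$ in full and noting that the other three cases are identical. First I would check that the prescribed form is Hermitian and $U(2)\times U(2)$-invariant. Conjugate symmetry holds because for $f\in\BB S$ and $f'\in\BB S'$ one has $\overline{f^+\cdot f'}=(f')^+\cdot f$, while the scalar factors $2l+1$, $2l(2l+1)$, $2l$ depend only on $l$; since, by the orthogonality relations, each product vanishes unless $l=l'$, the factor attached to the first argument agrees with that of the second on the support of the form. The $U(2)\times U(2)$-invariance is immediate from the rotation-invariance of $dS$ on $S^3$. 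Using the conjugation identity $\overline{t^l_{n\,\underline m}(X)}=\frac{(l-m)!(l+m)!}{(l-n)!(l+n)!}\,t^l_{m\,\underline n}(X^+)$ and the orthogonality relations \eqref{t-orthog-rels}, I would evaluate the defining integrals on the basis vectors $f^{(1)},f^{(2)},f^{(3)}$, producing explicit diagonal values and confirming that distinct families are orthogonal; since every diagonal entry is nonzero, the form is nondegenerate.

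Next I would reduce the $\mathfrak{u}(2,2)$-invariance to a finite verification. The Lie algebra $\mathfrak{u}(2,2)$ is spanned by its compact part, which lies in $U(2)\times U(2)$, together with the $U(2)\times U(2)$-orbit of the noncompact generators $X$ and $Y$ of \eqref{XYinu(2,2)}; hence it suffices to show that $\pi'_l(X)$ and $\pi'_l(Y)$ act skew-Hermitianly with respect to the form. Writing $X$ in $2\times2$-block form as $\bigl(\begin{smallmatrix}0&B\\ B^*&0\end{smallmatrix}\bigr)$ with $B=\bigl(\begin{smallmatrix}1&0\\0&0\end{smallmatrix}\bigr)$, I would assemble $\pi'_l(X)f^{(\alpha)}_{l,m,n}$ and $\pi'_l(Y)f^{(\alpha)}_{l,m,n}$ from the explicit formulas for $\pi'_l\bigl(\begin{smallmatrix}0&B\\0&0\end{smallmatrix}\bigr)$ and $\pi'_l\bigl(\begin{smallmatrix}0&0\\C&0\end{smallmatrix}\bigr)$ recorded in Subsection~\ref{K-type-action}.

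Then I would verify the skew-Hermitian identity $\bigl(f^{(\alpha)}_{l,m,n},\pi'_l(X)f^{(\alpha')}_{l',m',n'}\bigr)_{{\cal U}^+}=-\bigl(\pi'_l(X)f^{(\alpha)}_{l,m,n},f^{(\alpha')}_{l',m',n'}\bigr)_{{\cal U}^+}$ for all families $\alpha,\alpha'$ and all weights. By the orthogonality relations only finitely many index-shifted terms survive --- those with $l'=l\pm\tfrac12$ and $m',n'$ shifted by $\pm\tfrac12$ --- and for each surviving term I would substitute the explicit diagonal values from the first step and confirm that the two sides coincide. The computation for $\pi'_l(Y)$ is nearly identical, differing only by factors of $i$. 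The main obstacle is the bookkeeping: unlike the two-family biharmonic case, here the $C$-block of $X$ raises $l$ and carries $f^{(1)}$ into all three of $f^{(1)},f^{(2)},f^{(3)}$ while the $B$-block lowers $l$, so there are many more nonzero cross-terms, and the raising and lowering coefficients must cancel exactly for each pair of families.

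Finally, the cases $(\pi'_l,{\cal U}^-)$, $(\pi'_r,{\cal U}'^+)$ and $(\pi'_r,{\cal U}'^-)$ follow by the same argument applied to the bases $\tilde f^{(\alpha)}$, $g^{(\alpha)}$ and $\tilde g^{(\alpha)}$ of Propositions~\ref{K-typebasis+_prop} and~\ref{K-typebasis-_prop}, using the corresponding action formulas of Subsection~\ref{K-type-action}; the verification of Hermitian symmetry, nondegeneracy and skew-Hermiticity of the two noncompact generators proceeds verbatim.
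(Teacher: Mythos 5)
Your proposal is correct and follows essentially the same route as the paper's own proof: establish the orthogonality relations for the $f^{(\alpha)}_{l,m,n}$ via \eqref{t-orthog-rels}, reduce $\mathfrak{u}(2,2)$-invariance to the two noncompact generators $X,Y$ of \eqref{XYinu(2,2)} using the already-known $U(2)\times U(2)$-invariance, and verify skew-symmetry of $\pi'_l(X)$ by direct computation on the $K$-type basis with the action formulas of Subsection~\ref{K-type-action}. The only additions are your explicit remarks on conjugate symmetry and nondegeneracy, which the paper leaves implicit.
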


\begin{proof}
Clearly, the product $(f_1,f_2)_{{\cal U}^+}$ is $U(2) \times U(2)$ invariant.
From \eqref{t-orthog-rels} we obtain orthogonality relations for the
$f^{(\alpha)}_{l,m,n}(Z)$'s:
\begin{align*}
\bigl( f^{(1)}_{l,m,n}(Z), f^{(1)}_{l',m',n'}(Z) \bigr)_{{\cal U}^+} &=
(2l+1) \frac{(l-m)!(l+m)!}{(l-n+\frac12)!(l+n+\frac12)!}
\delta_{ll'} \delta_{mm'} \delta_{nn'},  \\
\bigl( f^{(2)}_{l,m,n}(Z), f^{(2)}_{l',m',n'}(Z) \bigr)_{{\cal U}^+} &=
-2l(2l+1) \frac{(l-m)!(l+m)!}{(l-n-\frac12)!(l+n-\frac12)!}
\delta_{ll'} \delta_{mm'} \delta_{nn'},  \\
\bigl( f^{(3)}_{l,m,n}(Z), f^{(3)}_{l',m',n'}(Z) \bigr)_{{\cal U}^+} &=
-2l \frac{(l-m-1)!(l+m-1)!}{(l-n-\frac12)!(l+n-\frac12)!}
\delta_{ll'} \delta_{mm'} \delta_{nn'}.
\end{align*}

%$$
%X=\left(\begin{smallmatrix} 0 & 0 & 1 & 0 \\ 0 & 0 & 0 & 0 \\ 1 & 0 & 0 & 0 \\
%  0 & 0 & 0 & 0 \end{smallmatrix} \right), \quad
%Y=\left(\begin{smallmatrix} 0 & 0 & i & 0 \\ 0 & 0 & 0 & 0 \\ -i & 0 & 0 & 0 \\
%  0 & 0 & 0 & 0 \end{smallmatrix} \right) \quad
%\in \mathfrak{u}(2,2).
%$$

To prove that $(f_1,f_2)_{{\cal U}^+}$ is $\mathfrak{u}(2,2)$-invariant,
it is sufficient to show that $(f_1,f_2)_{{\cal U}^+}$ is invariant under
$\pi'_l(X)$ and $\pi'_l(Y)$, where $X, Y \in \mathfrak{u}(2,2)$ are as
in \eqref{XYinu(2,2)}.
From Subsection \ref{K-type-action} we find:
\begin{multline*}
\pi'_l(X) f^{(1)}_{l,m,n}(Z) = -(l-m)f^{(1)}_{l-\frac12,m+\frac12,n+\frac12}(Z)
+ \frac{(2l+1)(l-n+\frac32)}{2l+2} f^{(1)}_{l+\frac12,m-\frac12,n-\frac12}(Z)  \\
- \frac1{(2l+1)(2l+2)} f^{(2)}_{l+\frac12,m-\frac12,n-\frac12}(Z)
- \frac{l+m}{2l+1} f^{(3)}_{l+\frac12,m-\frac12,n-\frac12}(Z),
\end{multline*}
\begin{multline*}
\pi'_l(X) f^{(2)}_{l,m,n}(Z)
= -\frac{(l-m)(l+n+\frac12)}{2l} f^{(1)}_{l-\frac12,m+\frac12,n+\frac12}(Z)
- \frac{(2l+1)(l-m)}{2l} f^{(2)}_{l-\frac12,m+\frac12,n+\frac12}(Z)  \\
+ \frac{2l(l-n+\frac12)}{2l+1} f^{(2)}_{l+\frac12,m-\frac12,n-\frac12}(Z)
- \frac{(l+m)(l-n+\frac12)}{2l+1} f^{(3)}_{l+\frac12,m-\frac12,n-\frac12}(Z),
\end{multline*}
\begin{multline*}
\pi'_l(X) f^{(3)}_{l,m,n}(Z)
= -\frac{l+n+\frac12}{2l} f^{(1)}_{l-\frac12,m+\frac12,n+\frac12}(Z)
+ \frac1{2l(2l-1)} f^{(2)}_{l-\frac12,m+\frac12,n+\frac12}(Z)  \\
-\frac{2l(l-m-1)}{2l-1} f^{(3)}_{l-\frac12,m+\frac12,n+\frac12}(Z)
+(l-n+1/2) f^{(3)}_{l+\frac12,m-\frac12,n-\frac12}(Z).
\end{multline*}
And we can find similar expressions for
$$
\pi'_l(Y) f^{(1)}_{l,m,n}(Z), \qquad \pi'_l(Y) f^{(2)}_{l,m,n}(Z), \qquad
\pi'_l(Y) f^{(3)}_{l,m,n}(Z).
$$
Then, by direct calculation,
\begin{align*}
\bigl( f^{(1)}_{l,m,n}(Z),
\pi'_l(X) f^{(1)}_{l+\frac12,m-\frac12,n-\frac12}(Z) \bigr)_{{\cal U}^+} &=
- (2l+1) \frac{(l-m+1)!(l+m)!}{(l-n+\frac12)!(l+n+\frac12)!}  \\
&= - \bigl( \pi'_l(X) f^{(1)}_{l,m,n}(Z),
f^{(1)}_{l+\frac12,m-\frac12,n-\frac12}(Z) \bigr)_{{\cal U}^+},  \\
\bigl( f^{(1)}_{l,m,n}(Z),
\pi'_l(X) f^{(2)}_{l+\frac12,m-\frac12,n-\frac12}(Z) \bigr)_{{\cal U}^+} &=
- \frac{(l-m+1)!(l+m)!}{(l-n+\frac12)!(l+n-\frac12)!}  \\
&= - \bigl( \pi'_l(X) f^{(1)}_{l,m,n}(Z),
f^{(2)}_{l+\frac12,m-\frac12,n-\frac12}(Z) \bigr)_{{\cal U}^+},  \\
\bigl( f^{(1)}_{l,m,n}(Z),
\pi'_l(X) f^{(3)}_{l+\frac12,m-\frac12,n-\frac12}(Z) \bigr)_{{\cal U}^+} &=
- (2l+1) \frac{(l-m)!(l+m)!}{(l-n+\frac12)!(l+n-\frac12)!}  \\
&= - \bigl( \pi'_l(X) f^{(3)}_{l,m,n}(Z),
f^{(1)}_{l+\frac12,m-\frac12,n-\frac12}(Z) \bigr)_{{\cal U}^+},  \\
\bigl( f^{(2)}_{l,m,n}(Z),
\pi'_l(X) f^{(2)}_{l+\frac12,m-\frac12,n-\frac12}(Z) \bigr)_{{\cal U}^+} &=
2l(2l+2) \frac{(l-m+1)!(l+m)!}{(l-n-\frac12)!(l+n-\frac12)!}  \\
&= - \bigl( \pi'_l(X) f^{(2)}_{l,m,n}(Z),
f^{(2)}_{l+\frac12,m-\frac12,n-\frac12}(Z) \bigr)_{{\cal U}^+},  \\
\bigl( f^{(2)}_{l,m,n}(Z),
\pi'_l(X) f^{(3)}_{l+\frac12,m-\frac12,n-\frac12}(Z) \bigr)_{{\cal U}^+} &=
- \frac{(l-m)!(l+m)!}{(l-n-\frac12)!(l+n-\frac12)!}  \\
&= - \bigl( \pi'_l(X) f^{(2)}_{l,m,n}(Z),
f^{(3)}_{l+\frac12,m-\frac12,n-\frac12}(Z) \bigr)_{{\cal U}^+},  \\
\bigl( f^{(3)}_{l,m,n}(Z),
\pi'_l(X) f^{(3)}_{l+\frac12,m-\frac12,n-\frac12}(Z) \bigr)_{{\cal U}^+} &=
(2l+1) \frac{(l-m)!(l+m-1)!}{(l-n-\frac12)!(l+n-\frac12)!}  \\
&= - \bigl( \pi'_l(X) f^{(3)}_{l,m,n}(Z),
f^{(3)}_{l+\frac12,m-\frac12,n-\frac12}(Z) \bigr)_{{\cal U}^+}.
\end{align*}
This proves the $\pi'_l(X)$-invariance of the pseudounitary structure.

The calculations for $\pi'_l(Y)$ are nearly identical.
\end{proof}

Recall that, by Proposition \ref{irred-prop}, $(\pi'_l, {\cal U}^+)$,
$(\pi'_l, {\cal U}^-)$, $(\pi'_r, {\cal U}'^+)$ and $(\pi'_r, {\cal U}'^-)$
are irreducible representations of $\mathfrak{gl}(2,\HC)$ and hence
$\mathfrak{u}(2,2)$, and the (pseudo)unitary structure on an irreducible
representation is unique up to rescaling. Thus we obtain:

\begin{cor}
The spaces $(\pi'_l, {\cal U}^+)$, $(\pi'_l, {\cal U}^-)$,
$(\pi'_r, {\cal U}'^+)$ and $(\pi'_r, {\cal U}'^-)$
do {\em not} have $\mathfrak{u}(2,2)$-invariant unitary structures.
\end{cor}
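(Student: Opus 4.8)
The plan is to deduce the corollary from the uniqueness of invariant Hermitian forms on an irreducible representation, combined with the explicit indefiniteness of the pseudounitary structure produced in Theorem \ref{unitary-thm}. The genuine content sits in the two results we may assume, so the corollary will be a short formal consequence, and I will present it as such.

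First I would record the two input facts. By Proposition \ref{irred-prop}, each of $(\pi'_l, {\cal U}^+)$, $(\pi'_l, {\cal U}^-)$, $(\pi'_r, {\cal U}'^+)$, $(\pi'_r, {\cal U}'^-)$ is irreducible as a representation of $\mathfrak{gl}(2,\HC)$, hence also of its real form $\mathfrak{u}(2,2)$. Theorem \ref{unitary-thm} equips each with a non-degenerate $\mathfrak{u}(2,2)$-invariant Hermitian form, and the orthogonality relations displayed in its proof show that this form is genuinely indefinite: on ${\cal U}^+$ it is positive on each $f^{(1)}_{l,m,n}$ (the factor $2l+1$ times a ratio of factorials) and negative on each $f^{(2)}_{l,m,n}$ with $l>0$ (the factor $-2l(2l+1)$), and the analogous sign pattern holds on the other three spaces.

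Next I would invoke the Schur-type uniqueness principle, which is the one step requiring justification. Suppose $(\cdot,\cdot)_1$ and $(\cdot,\cdot)_2$ are two $\mathfrak{u}(2,2)$-invariant Hermitian forms on ${\cal U}^+$ with $(\cdot,\cdot)_1$ non-degenerate. Then there is a unique linear operator $T$ with $(v,w)_2 = (Tv,w)_1$ for all $v,w$; infinitesimal invariance of both forms (skew-adjointness of the $\mathfrak{u}(2,2)$-action) forces $TX = XT$ for every $X \in \mathfrak{u}(2,2)$, hence for every $X$ in the complexification $\mathfrak{gl}(2,\HC)$. Thus $T$ is an intertwining operator, and by Schur's lemma $T = \lambda\cdot\mathrm{id}$; the Hermitian symmetry of both forms then forces $\lambda \in \BB R$. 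Applying this with $(\cdot,\cdot)_1$ the pseudounitary form of Theorem \ref{unitary-thm}, every invariant Hermitian form on ${\cal U}^+$ is a real multiple $\lambda(\cdot,\cdot)_{{\cal U}^+}$.

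Finally I would conclude. Since $(\cdot,\cdot)_{{\cal U}^+}$ takes values of both signs, so does any nonzero real multiple $\lambda(\cdot,\cdot)_{{\cal U}^+}$ (a rescaling preserves indefiniteness, at most flipping the sign of every value). Hence no invariant Hermitian form on ${\cal U}^+$ is positive-definite, i.e. there is no invariant unitary structure; the same argument applies verbatim to ${\cal U}^-$, ${\cal U}'^+$, ${\cal U}'^-$. The main obstacle is the rigorous justification of the uniqueness step, since ${\cal U}^+$ is infinite-dimensional: one uses that ${\cal U}^+$ has countable dimension over $\BB C$ (a countable $K$-type basis by Proposition \ref{K-types-prop}) and is irreducible as a $\mathfrak{gl}(2,\HC)$-module, so Dixmier's version of Schur's lemma yields $\mathrm{End}_{\mathfrak{gl}(2,\HC)}({\cal U}^+) = \BB C$ and the intertwining operator $T$ is indeed a scalar. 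This reduces the claim to standard representation theory.
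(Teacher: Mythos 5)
Your argument is correct and is essentially the paper's own: the paper deduces the corollary from Proposition \ref{irred-prop} (irreducibility) together with the remark that an invariant (pseudo)unitary structure on an irreducible representation is unique up to rescaling, combined with the indefiniteness of the form from Theorem \ref{unitary-thm}. You simply make explicit the Schur/Dixmier justification of the uniqueness step and the sign pattern of the orthogonality relations, which the paper leaves implicit.
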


\section{Reproducing Kernel Expansion}  \label{Sect7}

In this section we provide two expansions of the reproducing kernel
$\frac{Z-W}{N(Z-W)}$ in terms of the basis functions given in
Propositions \ref{K-typebasis+_prop}, \ref{K-typebasis-_prop}.
These expansions should be considered as quasi regular analogues
of the matrix coefficient expansions of the Cauchy-Fueter kernel
$\frac{(Z-W)^{-1}}{N(Z-W)}$ from Proposition 26 in \cite{FL1}
(see also Proposition 113 in \cite{ATMP}).

\begin{thm}  \label{kernel-exp-thm}
We have the reproducing kernel expansions:
\begin{multline}  \label{1st-kernel-expansion}
\frac{Z-W}{N(Z-W)}
= - \sum_{\genfrac{}{}{0pt}{}{l \ge 0}{m, n}}
\frac1{2l+1} f^{(1)}_{l,m,n}(Z) \cdot \tilde g^{(1)}_{l,m,n}(W)  \\
+ \sum_{\genfrac{}{}{0pt}{}{l \ge 1/2}{m, n}} \frac1{2l(2l+1)}
f^{(2)}_{l,m,n}(Z) \cdot \tilde g^{(2)}_{l,m,n}(W)
+ \sum_{\genfrac{}{}{0pt}{}{l \ge 1}{m, n}}
\frac1{2l} f^{(3)}_{l,m,n}(Z) \cdot \tilde g^{(3)}_{l,m,n}(W),
\end{multline}
which converges uniformly on compact subsets in the region
$\{ (Z,W) \in \HC \times \HC^{\times} ; \: ZW^{-1} \in \BB D^+ \}$.
The sum is taken first over all applicable $m$ and $n$,
then over $l=0,\frac 12, 1, \frac 32,\dots$.

Similarly,
\begin{multline}  \label{2nd-kernel-expansion}
\frac{Z-W}{N(Z-W)}
= \sum_{\genfrac{}{}{0pt}{}{l \ge 0}{m, n}}
\frac1{2l+1} \tilde f^{(1)}_{l,m,n}(Z) \cdot g^{(1)}_{l,m,n}(W)  \\
- \sum_{\genfrac{}{}{0pt}{}{l \ge 1/2}{m, n}} \frac1{2l(2l+1)}
\tilde f^{(2)}_{l,m,n}(Z) \cdot g^{(2)}_{l,m,n}(W)
- \sum_{\genfrac{}{}{0pt}{}{l \ge 1}{m, n}}
\frac1{2l} \tilde f^{(3)}_{l,m,n}(Z) \cdot g^{(3)}_{l,m,n}(W),
\end{multline}
which converges uniformly on compact subsets in the region
$\{ (Z,W) \in \HC^{\times} \times \HC; \: Z^{-1}W \in \BB D^+ \}$.
The sum is taken first over all applicable $m$ and $n$,
then over $l=0,\frac 12, 1, \frac 32,\dots$.
\end{thm}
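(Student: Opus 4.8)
The plan is to prove \eqref{1st-kernel-expansion} by the same direct strategy used for the biharmonic kernel \eqref{1st-bh-kernel-expansion}, and then to deduce \eqref{2nd-kernel-expansion} from it by applying the inversion. First I would substitute the explicit expressions for $f^{(\alpha)}_{l,m,n}(Z)$ and $\tilde g^{(\alpha)}_{l,m,n}(W)$ from Propositions \ref{K-typebasis+_prop} and \ref{K-typebasis-_prop} into the right-hand side of \eqref{1st-kernel-expansion}. Each term $f^{(\alpha)}_{l,m,n}(Z)\cdot\tilde g^{(\alpha)}_{l,m,n}(W)$ is the outer product of a column in $\BB S$ with a row in $\BB S'$, hence a $2\times 2$ matrix in $\HC$; I would write out its four entries as products of the form $t^{l}_{n\,\underline{m}}(Z)\,t^{l\pm\frac12}_{m'\,\underline{n'}}(W^{-1})$, carrying along the linear-in-$m,n$ weights that appear in the basis vectors. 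Convergence on the stated region $\{ZW^{-1}\in\BB D^+\}$ is inherited from the corresponding estimates for the matrix coefficients exactly as in the biharmonic case, so the real content is the evaluation of the sum.

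The key algebraic tool is the multiplicativity of the $t$'s, $\sum_{k} t^{l}_{n\,\underline{k}}(Z)\,t^{l}_{k\,\underline{m}}(W^{-1}) = t^{l}_{n\,\underline{m}}(ZW^{-1})$, together with the character identity $\sum_{n} t^{l}_{n\,\underline{n}}(X)=\chi_{l}(X)$ used in the proof of \eqref{1st-bh-kernel-expansion}. The obstruction to applying these directly is that, unlike the biharmonic families, the two factors in each product carry \emph{different} spins ($l$ for $Z$ and $l+\tfrac12$ for $W$ in the $(1)$ and $(2)$ families) and half-integer-shifted indices, so the lower indices do not line up for multiplicativity. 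I would remove this mismatch with identity \eqref{Zt-identity} and its inverse companion \eqref{Ct-inverse}, which let me trade a factor $t^{l+\frac12}$ for $Z\cdot t^{l}$ modulo lower-spin terms; using the remarks that $f^{(2)}=Z\cdot(\text{regular})$ and $\tilde g^{(2)}=(\text{regular})\cdot W$, and $f^{(3)}=N(Z)\cdot(\text{anti regular})$, this pulls a factor of $Z$ to the far left and a factor of $W$ to the far right of each family's contribution, leaving inner scalar sums that depend only on $ZW^{-1}$.

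Once the double sums over $m,n$ have collapsed into characters $\chi_{l}(ZW^{-1})$ and their shifted variants, I would diagonalize $ZW^{-1}=\bigl(\begin{smallmatrix}\lambda_1&0\\0&\lambda_2\end{smallmatrix}\bigr)$ on the dense set of matrices with distinct eigenvalues, convert each character to $\tfrac{\lambda_1^{2l+1}-\lambda_2^{2l+1}}{\lambda_1-\lambda_2}$, and sum the resulting geometric-type series in $\lambda_1,\lambda_2$ with the coefficients $-\tfrac1{2l+1}$, $\tfrac1{2l(2l+1)}$, $\tfrac1{2l}$. The three families should conspire so that the surviving $Z$- and $W$-factors reassemble into the noncommutative expression $\tfrac{Z-W}{N(Z-W)}$; verifying this reassembly is the main obstacle, since $Z$ and $W$ need not commute and the target depends on them separately rather than only through $ZW^{-1}$. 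Here I would exploit the $SU(2)\times SU(2)$-equivariance of both sides — under $Z\mapsto aZd^{-1}$, $W\mapsto aWd^{-1}$ with $a,d$ unit quaternions the kernel transforms as $\tfrac{Z-W}{N(Z-W)}\mapsto a\,\tfrac{Z-W}{N(Z-W)}\,d^{-1}$ and the basis vectors transform compatibly — to reduce the matrix identity to a scalar check. Finally, \eqref{2nd-kernel-expansion} follows from \eqref{1st-kernel-expansion} by applying the inversion $\bigl(\begin{smallmatrix}0&1\\1&0\end{smallmatrix}\bigr)$ in each variable and invoking Lemma \ref{inversion-action-lem} and Corollary \ref{inversion-pairing-cor}, just as \eqref{2nd-bh-kernel-expansion} was obtained from \eqref{1st-bh-kernel-expansion}.
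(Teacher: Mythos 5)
Your proposal is correct and follows essentially the same route as the paper's proof: substitute the explicit basis functions, use the identities \eqref{dt}, \eqref{Zt-identity} to recognize the spin-mismatched factors as $\partial^+_Z t^l$ or $Z\cdot t^l$ terms, collapse the $m,n$ sums via multiplicativity and characters after diagonalizing $ZW^{-1}$, and obtain \eqref{2nd-kernel-expansion} by inversion via Lemma \ref{inversion-action-lem}. The only detail you gloss over is how the leftover coefficients $\tfrac1{2l}$, $\tfrac1{2l+2}$ are restored after the character sums are evaluated in closed form; the paper does this uniformly with the homogeneity device $\int_0^1 F_i(sZ,W)\,ds$, which keeps everything rational and makes the final reassembly into $\tfrac{Z-W}{N(Z-W)}$ a direct computation rather than an equivariance argument.
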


\begin{proof}
We have:
\begin{multline*}
\sum_{\genfrac{}{}{0pt}{}{l \ge 0}{m, n}}
\frac1{2l+1} f^{(1)}_{l,m,n}(Z) \cdot \tilde g^{(1)}_{l,m,n}(W)
- \sum_{\genfrac{}{}{0pt}{}{l \ge 1/2}{m, n}} \frac1{2l(2l+1)}
f^{(2)}_{l,m,n}(Z) \cdot \tilde g^{(2)}_{l,m,n}(W)
- \sum_{\genfrac{}{}{0pt}{}{l \ge 1}{m, n}}
\frac1{2l} f^{(3)}_{l,m,n}(Z) \cdot \tilde g^{(3)}_{l,m,n}(W)  \\
= \sum_{\genfrac{}{}{0pt}{}{l \ge 0}{m, n}}
\frac1{2l+1} \begin{pmatrix} t^l_{n-\frac12 \,\underline{m}}(Z) \\
- t^l_{n+\frac12 \,\underline{m}}(Z) \end{pmatrix}
\bigl( (l+m+1) t^{l+\frac12}_{m+\frac12\,\underline{n}}(W^{-1}),
-(l-m+1) t^{l+\frac12}_{m-\frac12\,\underline{n}}(W^{-1}) \bigr)  \\
- \sum_{\genfrac{}{}{0pt}{}{l \ge 1/2}{m, n}} \frac{N(W)^{-1}}{2l(2l+1)}
\begin{pmatrix} (l-n+\frac12) t^l_{n-\frac12 \,\underline{m}}(Z) \\
(l+n+\frac12) t^l_{n+\frac12 \,\underline{m}}(Z) \end{pmatrix}
\bigl( t^{l-\frac12}_{m+\frac12\,\underline{n}}(W^{-1}),
t^{l-\frac12}_{m-\frac12\,\underline{n}}(W^{-1}) \bigr)  \\
- \sum_{\genfrac{}{}{0pt}{}{l \ge 1}{m, n}} \frac{N(Z)}{2l N(W)}
\begin{pmatrix} t^{l-1}_{n-\frac12 \,\underline{m}}(Z) \\
-t^{l-1}_{n+\frac12 \,\underline{m}}(Z) \end{pmatrix}
\bigl( (l+m) t^{l-\frac12}_{m+\frac12\,\underline{n}}(W^{-1}),
-(l-m) t^{l-\frac12}_{m-\frac12\,\underline{n}}(W^{-1}) \bigr)  \\
= \sum_{\genfrac{}{}{0pt}{}{l \ge 0}{m, n}} \frac1{2l+1} \begin{pmatrix}
(l+m+1)t^l_{n-\frac12 \,\underline{m}}(Z) t^{l+\frac12}_{m+\frac12\,\underline{n}}(W^{-1}) &
-(l-m+1)t^l_{n-\frac12 \,\underline{m}}(Z) t^{l+\frac12}_{m-\frac12\,\underline{n}}(W^{-1})\\
-(l+m+1)t^l_{n+\frac12 \,\underline{m}}(Z) t^{l+\frac12}_{m+\frac12\,\underline{n}}(W^{-1}) &
(l-m+1)t^l_{n+\frac12 \,\underline{m}}(Z) t^{l+\frac12}_{m-\frac12\,\underline{n}}(W^{-1})
\end{pmatrix}  \\
- \sum_{\genfrac{}{}{0pt}{}{l \ge 1/2}{m, n}} \frac{N(W)^{-1}}{2l(2l+1)}
\begin{pmatrix} (l-n+\frac12) t^l_{n-\frac12 \,\underline{m}}(Z)
t^{l-\frac12}_{m+\frac12\,\underline{n}}(W^{-1}) &
(l-n+\frac12) t^l_{n-\frac12 \,\underline{m}}(Z)
t^{l-\frac12}_{m-\frac12\,\underline{n}}(W^{-1}) \\
(l+n+\frac12) t^l_{n+\frac12 \,\underline{m}}(Z)
t^{l-\frac12}_{m+\frac12\,\underline{n}}(W^{-1}) &
(l+n+\frac12) t^l_{n+\frac12 \,\underline{m}}(Z)
t^{l-\frac12}_{m-\frac12\,\underline{n}}(W^{-1}) \end{pmatrix}  \\
- \sum_{\genfrac{}{}{0pt}{}{l \ge 1}{m, n}} \frac{N(Z)}{2l N(W)} \begin{pmatrix}
(l+m)t^{l-1}_{n-\frac12 \,\underline{m}}(Z) t^{l-\frac12}_{m+\frac12\,\underline{n}}(W^{-1}) &
-(l-m)t^{l-1}_{n-\frac12 \,\underline{m}}(Z)t^{l-\frac12}_{m-\frac12\,\underline{n}}(W^{-1})\\
-(l+m)t^{l-1}_{n+\frac12 \,\underline{m}}(Z) t^{l-\frac12}_{m+\frac12\,\underline{n}}(W^{-1})&
(l-m)t^{l-1}_{n+\frac12 \,\underline{m}}(Z)t^{l-\frac12}_{m-\frac12\,\underline{n}}(W^{-1})
\end{pmatrix}.
\end{multline*}
We deal with the three sums separately. The first sum is
\begin{multline*}
\sum_{\genfrac{}{}{0pt}{}{l \ge 0}{m, n}}
\frac1{2l+1} f^{(1)}_{l,m,n}(Z) \cdot \tilde g^{(1)}_{l,m,n}(W)  \\ 
= \sum_{\genfrac{}{}{0pt}{}{l \ge 1/2}{m, n}} \frac1{2l} \begin{pmatrix}
(l+m)t^{l-\frac12}_{n-\frac12 \,\underline{m-\frac12}}(Z) &
-(l-m)t^{l-\frac12}_{n-\frac12 \,\underline{m+\frac12}}(Z) \\
-(l+m)t^{l-\frac12}_{n+\frac12 \,\underline{m-\frac12}}(Z) &
(l-m)t^{l-\frac12}_{n+\frac12 \,\underline{m+\frac12}}(Z) \end{pmatrix}
t^l_{m\,\underline{n}}(W^{-1})  \\
= \sum_{\genfrac{}{}{0pt}{}{l \ge 1/2}{m, n}} \frac1{2l} \partial^+_Z
t^l_{n\,\underline{m}}(Z) t^l_{m\,\underline{n}}(W^{-1})
= \partial^+_Z \sum_{l \ge \frac12,\: n} \frac1{2l} t^l_{n\,\underline{n}}(Z W^{-1}).
\end{multline*}
Assume further that $ZW^{-1}$ can be diagonalized as
$\bigl(\begin{smallmatrix} \lambda_1 & 0 \\
0 & \lambda_2 \end{smallmatrix}\bigr)$ with
$\lambda_1 \ne \lambda_2$.
This is allowed since the set of matrices with distinct eigenvalues is
dense in $\HC$.
Then the sum $\sum_{n} t^l_{n \, \underline{n}}(ZW^{-1})$
is just the character $\chi_l(ZW^{-1})$ of the irreducible representation of
$GL(2,\BB C)$ of dimension $2l+1$ and equals
$\frac{\lambda_1^{2l+1}-\lambda_2^{2l+1}}{\lambda_1-\lambda_2}$.
We drop the factors $(2l)^{-1}$ and consider
\begin{multline*}
\partial^+_Z \sum_{l \ge \frac12,\: n} t^l_{n\,\underline{n}}(Z W^{-1})
= \partial^+_Z \sum_{l \ge 0}
\frac{\lambda_1^{2l+1}-\lambda_2^{2l+1}}{\lambda_1-\lambda_2}
= \partial^+_Z
\frac{\lambda_1(1-\lambda_1)^{-1} - \lambda_2(1-\lambda_2)^{-1}}
{\lambda_1-\lambda_2}  \\
= \partial^+_Z \frac1{(1-\lambda_1)(1-\lambda_2)}
= \partial^+_Z \frac{N(W)}{N(Z-W)} = F_1(Z,W),
\end{multline*}
then
$$
\partial^+_Z \sum_{l \ge \frac12,\: n} \frac1{2l} t^l_{n\,\underline{n}}(Z W^{-1})
= \int_{s=0}^{s=1} F_1(sZ,W) \,ds.
$$

The second sum is
\begin{multline*}
- \sum_{\genfrac{}{}{0pt}{}{l \ge 1/2}{m, n}} \frac1{2l(2l+1)}
f^{(2)}_{l,m,n}(Z) \cdot \tilde g^{(2)}_{l,m,n}(W)  \\
= \sum_{\genfrac{}{}{0pt}{}{l \ge 0}{m, n}} \frac{-N(W)^{-1}}{(2l+1)(2l+2)}
\begin{pmatrix} (l-n+1) t^{l+\frac12}_{n-\frac12 \,\underline{m-\frac12}}(Z) &
(l-n+1) t^{l+\frac12}_{n-\frac12 \,\underline{m+\frac12}}(Z) \\
(l+n+1) t^{l+\frac12}_{n+\frac12 \,\underline{m-\frac12}}(Z) &
(l+n+1) t^{l+\frac12}_{n+\frac12 \,\underline{m+\frac12}}(Z) \end{pmatrix}
t^l_{m\,\underline{n}}(W^{-1}) \\
= -\frac{Z}{N(W)} \sum_{\genfrac{}{}{0pt}{}{l \ge 0}{m, n}} \frac1{2l+2}
t^l_{n\,\underline{m}}(Z) t^l_{m\,\underline{n}}(W^{-1})  \\
+ \sum_{\genfrac{}{}{0pt}{}{l \ge 1/2}{m, n}} \frac{N(Z) N(W)^{-1}}{(2l+1)(2l+2)} \begin{pmatrix}
(l+m)t^{l-\frac12}_{n-\frac12 \,\underline{m-\frac12}}(Z) &
-(l-m)t^{l-\frac12}_{n-\frac12 \,\underline{m+\frac12}}(Z) \\
-(l+m)t^{l-\frac12}_{n+\frac12 \,\underline{m-\frac12}}(Z) &
(l-m)t^{l-\frac12}_{n+\frac12 \,\underline{m+\frac12}}(Z)
\end{pmatrix} t^l_{m\,\underline{n}}(W^{-1})
\end{multline*}
The second term will be combined with the third sum.
We drop the factors $(2l+2)^{-1}$ from the first term:
\begin{multline*}
-\frac{Z}{N(W)} \sum_{\genfrac{}{}{0pt}{}{l \ge 0}{m, n}}
t^l_{n\,\underline{m}}(Z) t^l_{m\,\underline{n}}(W^{-1})
= -\frac{Z}{N(W)} \sum_{l \ge 0,\: n} t^l_{n\,\underline{n}}(ZW^{-1})  \\
= - \frac{Z}{N(W)} \sum_{l \ge 0}
\frac{\lambda_1^{2l+1}-\lambda_2^{2l+1}}{\lambda_1-\lambda_2}
= - \frac{Z}{N(W)}
\frac{\lambda_1 (1-\lambda_1)^{-1} - \lambda_2 (1-\lambda_2)^{-1}}
{\lambda_1-\lambda_2}  \\
= - \frac{Z N(W)^{-1}}{(1-\lambda_1)(1-\lambda_2)}
= - \frac{Z}{N(Z-W)} = F_2(Z,W),
\end{multline*}
then
$$
-\frac{Z}{N(W)} \sum_{\genfrac{}{}{0pt}{}{l \ge 0}{m, n}} \frac1{2l+2}
t^l_{n\,\underline{m}}(Z) t^l_{m\,\underline{n}}(W^{-1})
= \int_{s=0}^{s=1} F_2(sZ,W) \,ds.
$$

The third sum (combined with a similar term from the second sum) is
\begin{multline*}
- \frac{N(Z)}{N(W)} \sum_{\genfrac{}{}{0pt}{}{l \ge 1/2}{m, n}}
\frac1{2l+2} \begin{pmatrix}
(l+m)t^{l-\frac12}_{n-\frac12 \,\underline{m-\frac12}}(Z) &
-(l-m)t^{l-\frac12}_{n-\frac12 \,\underline{m+\frac12}}(Z) \\
-(l+m)t^{l-\frac12}_{n+\frac12 \,\underline{m-\frac12}}(Z) &
(l-m)t^{l-\frac12}_{n+\frac12 \,\underline{m+\frac12}}(Z)
\end{pmatrix} t^l_{m\,\underline{n}}(W^{-1})  \\
= - \frac{N(Z)}{N(W)}
\sum_{\genfrac{}{}{0pt}{}{l \ge 1/2}{m, n}} \frac1{2l+2} \partial^+_Z
t^l_{n\,\underline{m}}(Z) t^l_{m\,\underline{n}}(W^{-1})
= - \frac{N(Z)}{N(W)} \partial^+_Z
\sum_{l \ge \frac12,\: n} \frac1{2l+2} t^l_{n\,\underline{n}}(Z W^{-1}).
\end{multline*}
We drop the factors $(2l+2)^{-1}$:
\begin{multline*}
- \frac{N(Z)}{N(W)}
\sum_{l \ge \frac12,\: n} \partial^+_Z t^l_{n\,\underline{n}}(Z W^{-1})
= - \frac{N(Z)}{N(W)} \partial^+_Z \sum_{l \ge \frac12}
\frac{\lambda_1^{2l+1}-\lambda_2^{2l+1}}{\lambda_1-\lambda_2}  \\
= - \frac{N(Z)}{N(W)} \partial^+_Z \frac1{(1-\lambda_1)(1-\lambda_2)}
= - N(Z) \partial^+_Z \frac1{N(Z-W)} = F_3(Z,W),
\end{multline*}
then
$$
- \frac{N(Z)}{N(W)} \partial^+_Z
\sum_{l \ge \frac12,\: n} \frac1{2l+2} t^l_{n\,\underline{n}}(Z W^{-1})
= \int_{s=0}^{s=1} F_3(sZ,W) \,ds.
$$

We combine the three sums:
\begin{multline*}
F_1(Z,W) + F_2(Z,W) + F_3(Z,W)
= \bigl( N(W)-N(Z) \bigr) \partial^+_Z \frac1{N(Z-W)} - \frac{Z}{N(Z-W)}  \\
= \bigl( N(Z)-N(W) \bigr) \frac{Z-W}{N(Z-W)^2} - \frac{Z}{N(Z-W)}  \\
= \biggl( \Bigl( \frac{N(Z)}{N(W)} -1 \Bigr) \frac{ZW^{-1}-1}{N(ZW^{-1}-1)^2}
- \frac{ZW^{-1}}{N(ZW^{-1}-1)} \biggr) \frac{W}{N(W)}.
\end{multline*}
Finally,
\begin{multline*}
\int_{s=0}^{s=1} \bigl( F_1(sZ,W) + F_2(sZ,W) + F_3(sZ,W) \bigr)\,ds  \\
= \int_{s=0}^{s=1} \biggl( \Bigl( \frac{N(sZ)}{N(W)} -1 \Bigr)
\frac{sZW^{-1}-1}{N(sZW^{-1}-1)^2}
- \frac{sZW^{-1}}{N(sZW^{-1}-1)} \biggr) \frac{W}{N(W)}\,ds  \\
= \int_{s=0}^{s=1} \biggl(
\frac{\lambda_1\lambda_2 s^2 -1}{(s\lambda_1-1)^2(s\lambda_2-1)^2}
\left(\begin{smallmatrix} s\lambda_1 -1 & 0 \\
0 & s\lambda_2 -1 \end{smallmatrix}\right)
- \frac{s}{(s\lambda_1-1)(s\lambda_2-1)}
\left(\begin{smallmatrix} \lambda_1 & 0 \\
0 & \lambda_2 \end{smallmatrix}\right) \biggr) \frac{W}{N(W)}\,ds  \\
= \int_{s=0}^{s=1} \left(\begin{smallmatrix} (s\lambda_2 -1)^{-2} & 0 \\
0 & (s\lambda_1 -1)^{-2} \end{smallmatrix}\right)  \frac{W}{N(W)}\,ds
= \left(\begin{smallmatrix} (1-\lambda_2)^{-1} & 0 \\ 0 & (1-\lambda_1)^{-1}
\end{smallmatrix}\right) \frac{W}{N(W)}  \\
= \left(\begin{smallmatrix} 1-\lambda_1 & 0 \\ 0 & 1-\lambda_2
\end{smallmatrix}\right)
\frac{W}{N(W)(1-\lambda_1)(1-\lambda_2)}
= - \frac{Z-W}{N(Z-W)}.
\end{multline*}
This proves \eqref{1st-kernel-expansion}.

The proof of \eqref{2nd-kernel-expansion} is similar.
\end{proof}

\begin{rem}
Observe that the reproducing kernels for biharmonic functions
and quasi anti regular functions are related as
\begin{equation*}
\begin{split}
\nabla^+_Z \log\bigl[ N(1-ZW^{-1}) \bigr]
%= 2 \partial^+_Z \log\bigl[ N(1-ZW^{-1}) \bigr]
&= 2 \frac{Z-W}{N(Z-W)},  \\
\nabla^+_W \log\bigl[ N(1-Z^{-1}W) \bigr]
%= 2 \partial^+_W \log\bigl[ N(1-Z^{-1}W) \bigr]
&= - 2 \frac{Z-W}{N(Z-W)}.
\end{split}
\end{equation*}
\end{rem}

\section{Equivariant Maps Involving the Tensor Product of Quasi Anti Regular
  Functions ${\cal U} \otimes {\cal U}'$}  \label{Sect8}

The quasi regular functions can be regarded as a square root of the space of
quaternionic holomorphic functions ${\cal W}'$ in the same sense as regular
functions provide a square root of the dual space ${\cal W}$.
In particular, the pointwise multiplication between functions from
${\cal U}$ and ${\cal U}'$ is an equivariant map to ${\cal W}'$ in the same
way as the pointwise multiplication between functions from ${\cal V}$ and
${\cal V}'$ is an equivariant map to ${\cal W}$.
Conversely, the dual space ${\cal W}$ can be embedded into a certain
holomorphic completion of the tensor product ${\cal U} \otimes {\cal U}'$
(the same is true for embedding of ${\cal W}'$ into a completion of
${\cal V} \otimes {\cal V}'$).
In this section we study the constituents of these equivariant maps in detail.

\subsection{Equivariant Maps ${\cal W} \to {\cal U} \otimes {\cal U}'$
and ${\cal U} \otimes {\cal U}' \to {\cal W}'$}  \label{Subsect8.1}

Recall that in \cite{FL1} we introduced a space of
$$
\bigl\{\text{$\HC$-valued polynomial functions on $\HC^{\times}$}\bigr\}
= \HC \otimes \BB C[z_{11},z_{12},z_{21},z_{22}, N(Z)^{-1}]
$$
%\begin{align*}
%{\cal W} = {\cal W}' &= \bigl\{\text{$\HC$-valued polynomial functions on
%$\HC^{\times}$}\bigr\}  \\
%&= \HC \otimes \BB C[z_{11},z_{12},z_{21},z_{22}, N(Z)^{-1}]
%\end{align*}
equipped with two actions of the Lie algebra $\g{gl}(2,\HC)$ obtained by
differentiating the following group actions:
\begin{align*}
\rho_2(h): \: F(Z) \quad &\mapsto \quad \bigl( \rho_2(h)F \bigr)(Z) =
\frac {(cZ+d)^{-1}}{N(cZ+d)} \cdot F \bigl( (aZ+b)(cZ+d)^{-1} \bigr) \cdot
\frac {(a'-Zc')^{-1}}{N(a'-Zc')},  \\
\rho'_2(h): \: G(Z) \quad &\mapsto \quad \bigl( \rho'_2(h)G \bigr)(Z) =
\frac {a'-Zc'}{N(a'-Zc')} \cdot G \bigl( (aZ+b)(cZ+d)^{-1} \bigr)
\cdot \frac {cZ+d}{N(cZ+d)},
\end{align*}
%where $F \in {\cal W}$, $G \in {\cal W}'$,
where $F, G \in  \HC \otimes \BB C[z_{11},z_{12},z_{21},z_{22}, N(Z)^{-1}]$,
$h = \bigl(\begin{smallmatrix} a' & b' \\ c' & d' \end{smallmatrix}\bigr)
\in GL(2,\HC)$ and 
$h^{-1} = \bigl(\begin{smallmatrix} a & b \\ c & d \end{smallmatrix}\bigr)$.
We denote by ${\cal W}$ and ${\cal W}'$ the corresponding modules over
$\g{gl}(2,\HC)$.

As a representation of $\mathfrak{gl}(2,\HC)$,
$(\rho_2,{\cal W})$ was decomposed into thirteen irreducible components
in Subsection 5.1 in \cite{ATMP}.
According to Corollary 56 in \cite{ATMP}, the quotient
$\bigl( \rho_2, {\cal W}/\ker (\tr \circ \partial^+) \bigr)$
has five irreducible components that are isomorphic to
\begin{equation}  \label{5-irred}
(\rho, \Sh^+), \quad (\rho, \Sh^-), \quad (\rho, {\cal J}), \quad
\bigl( \rho, {\cal I}^+/(\Sh^+ \oplus {\cal J}) \bigr), \quad
\bigl( \rho, {\cal I}^-/(\Sh^- \oplus {\cal J}) \bigr).
\end{equation}
And $\ker (\tr \circ \partial^+) \subset {\cal W}$ has eight more:
three ``large'' components
\begin{align*}
{\cal Q}^+ &= \BB C\text{-span of }
\bigl\{ N(Z)^k \cdot {\bf F_{l,m,n}}(Z),\: N(Z)^k \cdot {\bf G_{l,m,n}}(Z),\:
{\bf H_{k,l,m,n}}(Z);\: k \ge 0 \bigr\}, \\
{\cal Q}^- &= \BB C\text{-span of }
\bigl\{ N(Z)^k \cdot {\bf F_{l,m,n}}(Z),\: N(Z)^k \cdot {\bf G_{l,m,n}}(Z),\:
{\bf H_{k,l,m,n}}(Z);\: k \le -(2l+3) \bigr\}, \\
{\cal Q}^0 &= \BB C\text{-span of }
\biggl\{ \begin{matrix}
N(Z)^k \cdot {\bf F_{l,m,n}}(Z),\: N(Z)^k \cdot {\bf G_{l,m,n}}(Z)
\text{ with } -(2l+1) \le k \le -2, \\
{\bf H_{k,l,m,n}}(Z) \text{ with } -(2l+2) \le k \le -1 \end{matrix} \biggr\};
\end{align*}
four components isomorphic to doubly regular functions
\begin{align*}
{\cal F}^+ &= \BB C\text{-span of }
\bigl\{ N(Z)^{-1} \cdot {\bf F_{l,m,n}}(Z);\: l \ge 1/2 \bigr\}, \\
{\cal F}^- &= \BB C\text{-span of }
\bigl\{ N(Z)^{-(2l+2)} \cdot {\bf G_{l,m,n}}(Z) ;\: l \ge 1/2 \bigr\}  \\
&\qquad \qquad = \BB C\text{-span of }
\bigl\{ N(Z)^{-1} \cdot {\bf F'_{l,m,n}}(Z);\: l \ge 1/2 \bigr\}, \\
{\cal G}^+ &=  \BB C\text{-span of }
\bigl\{ N(Z)^{-1} \cdot {\bf G_{l,m,n}}(Z);\: l \ge 1/2 \bigr\}, \\
{\cal G}^- &= \BB C\text{-span of }
\bigl\{ N(Z)^{-(2l+2)} \cdot {\bf F_{l,m,n}}(Z) ;\: l \ge 1/2 \bigr\}  \\
&\qquad \qquad = \BB C\text{-span of }
\bigl\{ N(Z)^{-1} \cdot {\bf G'_{l,m,n}}(Z);\: l \ge 1/2 \bigr\}
\end{align*}
and the trivial one-dimensional representation
$$
\BB C\text{-span of } \bigl\{ N(Z)^{-2} \cdot Z^+ \bigr\}.
$$
The way these irreducible components fit together can be described as follows.
Theorem 31 and Lemma 55 in \cite{ATMP} illustrate how the five irreducible
components \eqref{5-irred} ``nest'' inside
${\cal W}/\ker (\tr \circ \partial^+)$.

\begin{prop}[Section 5.1 in \cite{ATMP}]
The $\rho_2$-invariant subspace
$\ker (\tr \circ \partial^+) \subset {\cal W}$ has three irreducible
subrepresentations: ${\cal Q}^+$, ${\cal Q}^-$, ${\cal Q}^0$.
And the quotient
$$
\frac{\ker (\tr \circ \partial^+ : {\cal W} \to \Sh)}
{{\cal Q}^- \oplus {\cal Q}^0 \oplus {\cal Q}^+}
\simeq {\cal F}^+ \oplus {\cal F}^- \oplus {\cal G}^+ \oplus {\cal G}^-
\oplus \BB C\text{-span of } \bigl\{ N(Z)^{-2} \cdot Z^+ \bigr\}.
$$
\end{prop}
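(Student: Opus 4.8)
The plan is to reduce everything to explicit computations on the $K$-type basis of ${\cal W}$ recorded in Section 5.1 of \cite{ATMP}, exactly in the style of the proof of Proposition \ref{irred-prop}. Since $\tr \circ \partial^+$ intertwines the $\rho_2$-action on ${\cal W}$ with the $\rho$-action on $\Sh$, its kernel is automatically a $\mathfrak{gl}(2,\HC)$-submodule; so the first task is to evaluate $\tr \circ \partial^+$ on each of the three basis families $N(Z)^k {\bf F_{l,m,n}}$, $N(Z)^k {\bf G_{l,m,n}}$ and ${\bf H_{k,l,m,n}}$ and to confirm that the vectors annihilated by it are precisely those listed in the eight components ${\cal Q}^{\pm}$, ${\cal Q}^0$, ${\cal F}^{\pm}$, ${\cal G}^{\pm}$ together with the line spanned by $N(Z)^{-2}Z^+$. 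This pins down $\ker(\tr\circ\partial^+)$ on the nose.

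Next I would establish the three ``large'' pieces as irreducible submodules. Invariance of ${\cal Q}^+$, ${\cal Q}^-$ and ${\cal Q}^0$ is checked by applying the raising and lowering operators $\rho_2 \bigl( \begin{smallmatrix} 0 & B \\ 0 & 0 \end{smallmatrix} \bigr)$ and $\rho_2 \bigl( \begin{smallmatrix} 0 & 0 \\ C & 0 \end{smallmatrix} \bigr)$, $B,C \in \HC$, to the spanning vectors and observing that the defining constraints $k \ge 0$, $k \le -(2l+3)$, and the middle ranges $-(2l+1)\le k \le -2$ (for ${\bf F},{\bf G}$) and $-(2l+2)\le k \le -1$ (for ${\bf H}$) are preserved; the diagonal and $SU(2)\times SU(2)$ parts preserve each $K$-type and require no comment. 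Irreducibility then follows by the weight-diagram argument of Proposition \ref{irred-prop}: one verifies that the arrows induced by $\bigl( \begin{smallmatrix} 0 & B \\ C & 0 \end{smallmatrix} \bigr)$ connect every $K$-type appearing inside a given ${\cal Q}$, so that any nonzero vector generates the entire component.

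Finally I would analyze the quotient. The images of the remaining basis vectors span the five pieces ${\cal F}^{\pm}$, ${\cal G}^{\pm}$ and $\BB C N(Z)^{-2}Z^+$, and their identifications with doubly regular functions and with the trivial representation are read off from the relations $N(Z)^{-(2l+2)}{\bf G_{l,m,n}} = N(Z)^{-1}{\bf F'_{l,m,n}}$ (and its ${\bf G'}$ analogue) recorded above. The substance is to show that, working modulo ${\cal Q}^- \oplus {\cal Q}^0 \oplus {\cal Q}^+$, the action closes up within each of the five pieces separately, so that the quotient is their direct sum.

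The hard part will be precisely this last step: in ${\cal W}$ itself the top pieces are genuinely glued to the ${\cal Q}$'s — indeed $N(Z)^{-2}Z^+$ generates a strictly larger indecomposable subrepresentation, the one studied in Section \ref{Sect8} — so one must verify that every matrix coefficient of $\bigl( \begin{smallmatrix} 0 & B \\ C & 0 \end{smallmatrix} \bigr)$ connecting two distinct top pieces, or connecting a top piece to itself across the wrong degree, actually lands inside ${\cal Q}^- \oplus {\cal Q}^0 \oplus {\cal Q}^+$ and hence vanishes in the quotient. Equivalently, all the extension classes among ${\cal F}^{\pm}$, ${\cal G}^{\pm}$ and the trivial line collapse once the large components are divided out. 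With the explicit action formulas this is a finite, if tedious, bookkeeping check, and it is where the real content of the proposition sits; the pairwise non-isomorphism of the five irreducible summands then rules out any residual gluing and yields the stated direct-sum decomposition.
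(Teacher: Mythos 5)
This proposition is not proved in the present paper at all: it is quoted verbatim from Section~5.1 of \cite{ATMP}, and the ``proof'' here is the citation. Your plan is the same $K$-type bookkeeping that \cite{ATMP} carries out and that this paper uses everywhere else (compare the proofs of Proposition \ref{irred-prop} and Theorem \ref{J+-_thm}), so in spirit you are reconstructing the source argument rather than finding a new route. Two caveats. First, your opening step --- ``evaluate $\tr\circ\partial^+$ on each of the three basis families and confirm that the annihilated vectors are precisely those listed'' --- understates the work: the $K$-types of type $V_{l-\frac12}\boxtimes V_{l-\frac12}$ occur in ${\cal W}$ with multiplicity two (spanned by $N(Z)^k\cdot{\bf I_{l,m,n}}$ and ${\bf H_{k+1,l-1,m,n}}$, cf.\ equation \eqref{Tr-d-NI} and the surrounding discussion in the proof of Theorem \ref{J+-_thm}), so inside each such two-dimensional isotypic space one must identify \emph{which line} is killed by $\tr\circ\partial^+$; this is exactly how the vectors ${\bf H_{k,l,m,n}}$ are normalized, and it cannot be read off family by family. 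Second, your closing appeal to ``pairwise non-isomorphism of the five irreducible summands'' is both unnecessary and insufficient: non-isomorphic irreducibles can perfectly well admit nontrivial extensions (indeed the trivial line $N(Z)^{-2}\cdot Z^+$ does extend nontrivially over ${\cal Q}^0$ inside ${\cal W}$, as Lemma~59 of \cite{ATMP} and Section \ref{Sect8} record). The direct-sum statement for the quotient rests entirely on the explicit verification you describe just before --- that every matrix coefficient of $\rho_2\bigl(\begin{smallmatrix}0&B\\ C&0\end{smallmatrix}\bigr)$ connecting two of the five top pieces lands in ${\cal Q}^-\oplus{\cal Q}^0\oplus{\cal Q}^+$ --- so that check is not merely ``where the content sits'' but the whole of the argument; the non-isomorphism remark should be dropped.
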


Since $(\rho'_2,{\cal W}')$ is the dual representation of $(\rho_2,{\cal W})$,
it also has thirteen irreducible components; they are described in
Subsection 5.2 in \cite{ATMP}.
The composition structure of $(\rho'_2,{\cal W}')$ ``mirrors'' that of
$(\rho_2,{\cal W})$ and can be described as follows.

\begin{rem}
The representation $(\rho'_2,{\cal W}')$ contains a $\rho'_2$-invariant
subspace $\partial^+(\Sh')$ which is isomorphic to $(\rho',\Sh'/{\cal I}'_0)$
and decomposes further into five irreducible components that are
dual to \eqref{5-irred}:
$$
(\rho', {\cal BH}^+/{\cal I}'_0), \quad (\rho', {\cal BH}^-/{\cal I}'_0), \quad
\bigl( \rho', {\cal J}'/({\cal BH}^++{\cal BH}^-) \bigr),
$$
$$
(\rho', \Sh^+/{\cal BH}^+), \quad (\rho', \Sh'^-/{\cal BH}^-);
$$
Theorem 32 in \cite{ATMP} illustrates how these components fit together. 
The subspace $\partial^+(\Sh')$ sits inside another $\rho'_2$-invariant
subspace $\ker (\M : {\cal W}' \to {\cal W})$, and the quotient
\begin{multline*}
\frac{\ker (\M : {\cal W}' \to {\cal W})}{\partial^+(\Sh')} \simeq  \\
({\cal W}'/ \ker \tau_a^+) \oplus ({\cal W}'/ \ker \tau_a^-) \oplus
({\cal W}'/ \ker \tau_s^+) \oplus ({\cal W}'/ \ker \tau_s^-) \oplus
\BB C\text{-span of } \bigl\{ N(Z)^{-1} \cdot Z \bigr\}
\end{multline*}
-- a direct sum of irreducible components isomorphic to
${\cal F}^+$, ${\cal F}^-$, ${\cal G}^+$, ${\cal G}^-$ and
the trivial one-dimensional representation respectively
(Propositions 37, 47 and Theorem 49 in \cite{ATMP}).

Finally, the quotient $(\rho'_2,{\cal W}'/\ker \M)$ is the direct sum of three
components ${\cal Q}'^- \oplus {\cal Q}'^0 \oplus {\cal Q}'^+$
that are dual to ${\cal Q}^+$, ${\cal Q}^0$, ${\cal Q}^-$ respectively
(Theorem 62 in \cite{ATMP}).
\end{rem}

\begin{rem}  \label{Q-bar-remark}
Using Lemma 59 in \cite{ATMP}, it is easy to show that the minimal
$\rho'_2$-invariant subspace $\operatorname{Span-}{\cal Q}'^0$
of ${\cal W}'$ that contains ${\cal Q}'^0$ (the dual of ${\cal Q}^0$)
has four irreducible components:
$$
{\cal Q}'^0, \quad \BB C\text{-span of } \bigl\{ N(Z)^{-1} \cdot Z \bigr\},
\quad (\rho', {\cal BH}^+/{\cal I}'_0), \quad (\rho', {\cal BH}^-/{\cal I}'_0).
$$
It has the following proper invariant subspaces:
$$
\partial^+({\cal BH}^+/{\cal I}'_0), \quad \partial^+({\cal BH}^-/{\cal I}'_0),
\quad
\partial^+({\cal BH}^+/{\cal I}'_0) \oplus \partial^+({\cal BH}^-/{\cal I}'_0),
$$
$$
\partial^+({\cal BH}^+/{\cal I}'_0) \oplus \partial^+({\cal BH}^-/{\cal I}'_0)
\oplus \BB C\text{-span of } \bigl\{ N(Z)^{-1} \cdot Z \bigr\}.
$$
We denote by $\bar{\cal Q}'^0$ the quotient
$$
\bar{\cal Q}'^0 = \frac{\operatorname{Span-}{\cal Q}'^0}
{\partial^+({\cal BH}^+/{\cal I}'_0) \oplus \partial^+({\cal BH}^-/{\cal I}'_0)}.
$$
Then $\bar{\cal Q}'^0$ is an indecomposable subquotient of ${\cal W}'$ with
exactly two irreducible components: the $1$-dimensional subrepresentation
spanned by $N(Z)^{-1} \cdot Z$ and ${\cal Q}'^0$.
\end{rem}

We can form a tensor product representation
$(\pi'_l \otimes \pi'_r, \tilde{\cal U} \otimes \tilde{\cal U}')$
and consider a larger space
$$
\widetilde{{\cal U} \otimes {\cal U}'} = \left\{ \begin{matrix}
\text{holomorphic $\HC$-valued functions in two variables} \\
\text{$Z_1,Z_2 \in \HC$ (possibly with singularities) that are} \\
\text{QLAR with respect to $Z_1$ and QRAR with respect to $Z_2$}
\end{matrix} \right\}.
$$
The action of $GL(2,\HC)$ on these functions is given by
\begin{multline*}
(\pi'_l \otimes \pi'_r)(h): \: F(Z_1,Z_2) \quad \mapsto \quad
\bigl( (\pi'_l \otimes \pi'_r)(h)F \bigr)(Z_1,Z_2) \\
= \frac{a'-Z_1c'}{N(a'-Z_1c')}
F \bigl( (a'-Z_1c')(-b'+Z_1d')^{-1}, (aZ_2+b)(cZ_2+d)^{-1} \bigr) \cdot
\frac{cZ_2+d}{N(cZ_2+d)},
\end{multline*}
where $F \in \widetilde{{\cal U} \otimes {\cal U}'}$,
$h = \bigl(\begin{smallmatrix} a' & b' \\ c' & d' \end{smallmatrix}\bigr)
\in GL(2,\HC)$ and 
$h^{-1} = \bigl(\begin{smallmatrix} a & b \\ c & d \end{smallmatrix}\bigr)$.
Differentiating, we obtain the corresponding action of the Lie algebra
$\mathfrak{gl}(2,\HC)$ also denoted by $\pi'_l \otimes \pi'_r$.

We denote by $\DR$ the restriction to the diagonal map on
$\widetilde{{\cal U} \otimes {\cal U}'}$:
\begin{equation}  \label{DiagRes}
\DR: F(Z_1,Z_2) \mapsto F(Z,Z).
\end{equation}
Clearly, $\DR$ intertwines the actions of $\pi'_l \otimes \pi'_r$ and $\rho'_2$.
When restricted to ${\cal U} \otimes {\cal U}'$, the map $\DR$ is just the
multiplication map
$$
\mult: {\cal U} \otimes {\cal U}' \to {\cal W}', \quad
f(Z_1) \otimes g(Z_2) \mapsto f(Z) \cdot g(Z).
$$
We have the following analogue of Lemma 8 in \cite{desitter}
and Proposition 89 in \cite{ATMP}:

\begin{prop}  \label{image-prop}
Under the multiplication maps
$\mult: (\pi'_l, {\cal U}^\pm) \otimes (\pi'_r, {\cal U}'^\pm) \to
(\rho'_2, {\cal W}')$,
\begin{enumerate}
\item
The image of ${\cal U}^+ \otimes {\cal U}'^+$ in ${\cal W}'$ is
a vector space $\HC \otimes \Zh^+$;
its irreducible components as a subrepresentation of ${\cal W}'$ are
$$
(\rho', {\cal BH}^+/{\cal I}_0), \quad
(\rho', \Sh^+/{\cal BH}^+), \quad
(\rho'_2, {\cal W}'/ \ker \tau_a^+), \quad
(\rho'_2, {\cal W}'/ \ker \tau_s^+), \quad (\rho'_2, {\cal Q}'^+).
$$
\item
The image of ${\cal U}^- \otimes {\cal U}'^-$ in ${\cal W}'$ is
a vector space $\HC \otimes (N(Z) \cdot \Zh^-)$;
its irreducible components as a subrepresentation of ${\cal W}'$ are
$$
(\rho', {\cal BH}^-/{\cal I}_0), \quad
(\rho',\Sh'^-/{\cal BH}^-\bigr), \quad
(\rho'_2, {\cal W}'/ \ker \tau_a^-\bigr), \quad
(\rho'_2, {\cal W}'/ \ker \tau_s^-), \quad (\rho'_2, {\cal Q}'^-).
$$
\item
The image of ${\cal U}^+ \otimes {\cal U}'^-$ and
${\cal U}^- \otimes {\cal U}'^+$ in ${\cal W}'$ is
a vector space $\HC \otimes (\Zh^0 \oplus {\cal H}^+)$;
its irreducible components as a subrepresentation of ${\cal W}'$ are
\begin{multline*}
(\rho', {\cal BH}^+/{\cal I}_0\bigr), \quad
(\rho', {\cal BH}^-/{\cal I}_0), \quad
\bigl(\rho', {\cal J}'/({\cal BH}^++{\cal BH}^-)\bigr), \quad
\BB C\text{-span of } \bigl\{ N(Z)^{-1} \cdot Z \bigr\}, \\
(\rho'_2, {\cal W}'/ \ker \tau_a^+\bigr), \quad
(\rho'_2, {\cal W}'/ \ker \tau_a^-\bigr), \quad
(\rho'_2, {\cal W}'/ \ker \tau_s^+), \quad
(\rho'_2, {\cal W}'/ \ker \tau_s^-), \quad (\rho'_2, {\cal Q}'^0).
\end{multline*}
\end{enumerate}
\end{prop}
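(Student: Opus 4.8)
The plan is to exploit that $\mult = \DR|_{{\cal U}\otimes{\cal U}'}$ is an intertwining operator (established just before the statement), so each image is automatically a $\rho'_2$-invariant subspace of ${\cal W}'$; the task then splits into (i) identifying each image as a vector space and (ii) reading off its composition factors from the structure of ${\cal W}'$ recorded in the preceding remarks and in \cite{ATMP}. Throughout, I would also restrict to $SU(2)\times SU(2)$ in order to use the semisimple $K$-type picture as bookkeeping.

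First I would identify the images as vector spaces. Using Lemma \ref{biharm-decomp-lem}, write every $f \in {\cal U}$ as $\phi + N(Z)f'$ with $\phi$ harmonic and $f'$ left anti regular, and every $g \in {\cal U}'$ as $\psi + N(Z)g'$ with $\psi$ harmonic and $g'$ right anti regular. Then
\[
\mult(f\otimes g) = \phi\psi + N(Z)\bigl(\phi g' + f'\psi\bigr) + N(Z)^2 f'g',
\]
so the image is spanned by products of harmonic or anti regular $\BB S$-valued functions with harmonic or anti regular $\BB S'$-valued functions, carrying appropriate powers of $N(Z)$. The sign superscripts control the degrees of the factors via Lemma \ref{degree-decomp-lem}, which cleanly separates the three cases. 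Invoking the known spanning results for such products -- the harmonic$\times$harmonic and regular$\times$regular computations behind Lemma 8 of \cite{desitter} and Proposition 89 of \cite{ATMP} -- one checks that these products fill out exactly $\HC\otimes\Zh^+$, $\HC\otimes(N(Z)\cdot\Zh^-)$, and $\HC\otimes(\Zh^0\oplus{\cal H}^+)$ respectively.

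Next I would decompose each vector-space image into irreducibles. The $K$-type multiplicities of the image are computable by applying Clebsch--Gordan to products of the matrix coefficients $t^l_{n\,\underline{m}}$ appearing in the explicit bases of Propositions \ref{K-typebasis+_prop} and \ref{K-typebasis-_prop}. Matching these multiplicities against the known $K$-types of the thirteen irreducible subquotients of ${\cal W}'$ from \cite{ATMP} pins down which composition factors occur, with the degree grading again separating the ``$+$'', ``$-$'' and ``$0$'' families; this yields the five-, five- and nine-component lists stated.

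The main obstacle is the non-semisimplicity of ${\cal W}'$: knowing the $K$-types of the image does not by itself locate it in the submodule lattice, and for part 3 in particular one must verify that the image sits correctly relative to the indecomposable pieces described in Remark \ref{Q-bar-remark} (the interplay of ${\cal Q}'^0$, the one-dimensional span of $N(Z)^{-1}\cdot Z$, and $\partial^+({\cal BH}^\pm/{\cal I}'_0)$). Concretely, I would confirm that the trivial component $N(Z)^{-1}\cdot Z$ genuinely lies in the image and is glued to ${\cal Q}'^0$ as an indecomposable subquotient rather than splitting off, using the explicit action formulas of Subsection \ref{K-type-action} together with the lattice of invariant subspaces listed in the remarks. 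Once the submodule structure is matched against the $K$-type count, the three component lists follow.
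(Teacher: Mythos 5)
Your overall strategy (identify each image as a vector space, then read off composition factors) matches the paper's, and your treatment of parts 1 and 2 would go through: there the upper bound is immediate (products of polynomials are polynomials, and applying the inversion handles the negative case), the lower bound comes from ${\cal H}^\pm \subset$ quasi regular plus the harmonic-times-harmonic spanning result, and the decomposition into irreducibles is exactly what Lemma 63 of \cite{ATMP} records. Your proposed $K$-type count plus submodule-lattice check is a heavier but viable substitute for that citation, and you are right to flag non-semisimplicity as the place where $K$-types alone do not suffice.

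The genuine gap is in part 3, precisely where you write that ``one checks that these products fill out exactly $\HC\otimes(\Zh^0\oplus{\cal H}^+)$.'' After expanding $fg = \phi\psi + N(Z)(\phi g' + f'\psi) + N(Z)^2 f'g'$, the terms that matter are the \emph{mixed} products of a harmonic function with an anti regular one, and neither of the tools you invoke covers these: Lemma 8 of \cite{desitter} is harmonic times harmonic, and Proposition 89 of \cite{ATMP} is regular times regular. So you have no mechanism for the upper bound, i.e.\ for showing the image does not exceed $\HC\otimes(\Zh^0\oplus{\cal H}^+)$ -- which is the delicate point, since a priori the mixed and $N(Z)^2$ terms could contribute components such as $\partial^+(\Sh^+/{\cal BH}^+)$ or ${\cal Q}'^{\pm}$. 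The paper closes this by a two-sided sandwich: for the lower bound it exhibits explicit products generating $\partial^+({\cal BH}^+/{\cal I}_0)$, ${\cal W}'/\ker\tau_a^+$ and ${\cal W}'/\ker\tau_s^+$, which together with $\HC\otimes\Zh^0$ already give $\HC\otimes(\Zh^0\oplus{\cal H}^+)$; for the upper bound it uses that quasi regular functions are biharmonic, so by Proposition \ref{biharmonic-prop} the image lies in $\HC\otimes(\Zh^0\oplus N(Z)\cdot\Zh^0\oplus N(Z)^2\cdot\Zh^0)$, which rules out the four components that would enlarge the answer. You would need to either reproduce this biharmonicity bound or actually compute the span of the mixed products; as written, the plan asserts the conclusion of part 3 rather than proving it.
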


\begin{proof}
Since the harmonic functions are contained in the spaces of quasi regular
functions:
$$
\BB S \otimes {\cal H}^{\pm} \subset {\cal U}^{\pm}, \qquad
\BB S' \otimes {\cal H}^{\pm} \subset {\cal U}'^{\pm},
$$
by Lemma 8 in \cite{desitter}, we have containments of vector spaces:
$$
\HC \otimes \Zh^+ \subset \mult ({\cal U}^+ \otimes {\cal U}'^+), \qquad
\HC \otimes \Zh^- \subset \mult ({\cal U}^- \otimes {\cal U}'^-),
$$
\begin{equation}  \label{obvious-containment}
\HC \otimes \Zh^0 \subset \mult ({\cal U}^+ \otimes {\cal U}'^-), \qquad
\HC \otimes \Zh^0 \subset \mult ({\cal U}^- \otimes {\cal U}'^+).
\end{equation}
The fact that
$$
\mult ({\cal U}^+ \otimes {\cal U}'^+) \subset \HC \otimes \Zh^+
$$
is clear.
The decomposition of $\HC \otimes \Zh^+$ into irreducible components
follows from Lemma 63 in \cite{ATMP}. This proves the first part.

Applying the inversion to both sides of
$\mult ({\cal U}^+ \otimes {\cal U}'^+) = \HC \otimes \Zh^+$
and using Lemma 63 in \cite{ATMP}, we see that
\begin{multline*}
\mult ({\cal U}^- \otimes {\cal U}'^-) =
\partial^+\bigl({\cal BH}^-/{\cal I}_0\bigr) \oplus
\partial^+\bigl(\Sh^-/{\cal BH}^-\bigr) \oplus
\bigl({\cal W}'/ \ker \tau_a^-\bigr) \oplus
\bigl({\cal W}'/ \ker \tau_s^-\bigr) \oplus {\cal Q}'^-  \\
= \HC \otimes (N(Z) \cdot \Zh^-).
\end{multline*}
This proves the second part.

It is easy to see that $\mult ({\cal U}^+ \otimes {\cal U}'^-)$ contains the
generators of the irreducible components
$$
\partial^+\bigl({\cal BH}^+/{\cal I}_0\bigr), \qquad
{\cal W}'/ \ker \tau_a^+, \qquad {\cal W}'/ \ker \tau_s^+
$$
and hence the whole components. Therefore, by \eqref{obvious-containment},
\begin{multline}  \label{mult-eqn}
\mult ({\cal U}^+ \otimes {\cal U}'^-) \supset
(\HC \otimes \Zh^0) \oplus \partial^+\bigl({\cal BH}^+/{\cal I}_0\bigr) \oplus
\bigl({\cal W}'/ \ker \tau_a^+\bigr) \oplus
\bigl({\cal W}'/ \ker \tau_s^+\bigr)  \\
= \HC \otimes (\Zh^0 \oplus {\cal H}^+).
\end{multline}
On the other hand, the quasi regular functions are biharmonic, so,
by Proposition \ref{biharmonic-prop}, we have a containment of vector spaces
$$
{\cal U}^{\pm} \subset
\BB S \otimes \bigl( {\cal H}^{\pm} \oplus N(Z) \cdot {\cal H}^{\pm} \bigr),
\qquad {\cal U}'^{\pm} \subset
\BB S' \otimes \bigl( {\cal H}^{\pm} \oplus N(Z) \cdot {\cal H}^{\pm} \bigr)
$$
and
$$
\mult ({\cal U}^+ \otimes {\cal U}'^-) \subset \HC \otimes
\bigl( \Zh^0 \oplus N(Z) \cdot \Zh^0 \oplus N(Z)^2 \cdot \Zh^0 \bigr).
$$
It follows that $\mult ({\cal U}^+ \otimes {\cal U}'^-)$ cannot contain
these irreducible irreducible components of ${\cal W}'$:
$$
\partial^+\bigl(\Sh^+/{\cal BH}^+\bigr), \qquad
\partial^+\bigl(\Sh'^-/{\cal BH}^-\bigr), \qquad
{\cal Q}'^+, \qquad {\cal Q}'^-.
$$
Therefore,
$$
\mult ({\cal U}^+ \otimes {\cal U}'^-)
= \HC \otimes (\Zh^0 \oplus {\cal H}^+).
$$
The decomposition of $\HC \otimes (\Zh^0 \oplus {\cal H}^+)$
into irreducible components follows from \eqref{mult-eqn} and
Lemma 63 in \cite{ATMP}.

The remaining case of $\mult ({\cal U}^- \otimes {\cal U}'^+)$ is similar.
\end{proof}

Let $U(2)_R = \{ RZ ;\: Z \in U(2) \}$, and
consider the following analogue of equation (57) in \cite{ATMP}:
\begin{equation}  \label{fork'}
(J'_R F)(Z_1,Z_2) = \frac{i}{2\pi^3} \int_{W \in U(2)_R}
\frac{W-Z_1}{N(W-Z_1)} \cdot F(W) \cdot \frac{W-Z_2}{N(W-Z_2)} \,dV,
\qquad F \in {\cal W}.
\end{equation}
Recall open domains $\BB D_R^{\pm}$ defined by equation (22) in \cite{desitter}):
\begin{align*}  %\label{D_R}
\BB D^+_R &= \{ Z \in \HC ;\: ZZ^*<R^2 \} = \{ RZ ;\: Z \in \BB D^+ \}, \\
\BB D^-_R &= \{ Z \in \HC ;\: ZZ^*>R^2 \} = \{ RZ ;\: Z \in \BB D^- \}.
\end{align*}
If $Z_1, Z_2 \in \BB D^-_R \sqcup \BB D^+_R$, the integrand has no singularities,
and the result is a holomorphic function in two variables $Z_1, Z_2$ that are
QLAR with respect to $Z_1$ and QRAR with respect to $Z_2$,
hence an element of $\widetilde{{\cal U} \otimes {\cal U}'}$.
Similarly to the case of $J_R$ considered in \cite{ATMP},
the integral \eqref{fork'} depends on whether $Z_1$ and $Z_2$
are both in $\BB D^+_R$, both in $\BB D^-_R$ or one is in $\BB D^+_R$ and
the other is in $\BB D^-_R$.
Thus the expression \eqref{fork'} determines four different maps.

Recall from Section 2 of \cite{desitter} that the group $U(2,2)_R$ is a
conjugate of $U(2,2)$, which is a real form of $GL(2,\HC)$ preserving
$U(2)_R$, $\BB D_R^+$ and $\BB D_R^-$.
We have the following analogue of Proposition 64 in \cite{ATMP}:

\begin{prop}  \label{fork-prop}
The maps $F \mapsto (J'_R F)(Z_1,Z_2)$ are $U(2,2)_R$ and
$\mathfrak{gl}(2,\HC)$-equivariant maps from $(\rho_2,{\cal W})$
to $(\pi'_l \otimes \pi'_r, \widetilde{{\cal U} \otimes {\cal U}'})$.
\end{prop}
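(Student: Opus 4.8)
The plan is to prove the sharper group-level identity $J'_R\circ\rho_2(h)=(\pi'_l\otimes\pi'_r)(h)\circ J'_R$ for every $h\in U(2,2)_R$ and then descend to the Lie algebra. Since $U(2,2)_R$ is connected and its Lie algebra is a real form of $\mathfrak{gl}(2,\HC)$, differentiating this identity gives equivariance for that real form; and because $X\mapsto\rho_2(X)$ and $X\mapsto(\pi'_l\otimes\pi'_r)(X)$ are both $\BB C$-linear, the relation $J'_R\rho_2(X)=(\pi'_l\otimes\pi'_r)(X)J'_R$ extends from the real form to all of $\mathfrak{gl}(2,\HC)$. (That $J'_RF\in\widetilde{{\cal U}\otimes{\cal U}'}$ is already noted before the statement: $\frac{W-Z_1}{N(W-Z_1)}$ is QLAR in $Z_1$ and $\frac{W-Z_2}{N(W-Z_2)}$ is QRAR in $Z_2$, so one differentiates under the integral, cf. Theorem \ref{qr-action-thm}.) First I would fix $h=\bigl(\begin{smallmatrix}a'&b'\\c'&d'\end{smallmatrix}\bigr)\in U(2,2)_R$ with $h^{-1}=\bigl(\begin{smallmatrix}a&b\\c&d\end{smallmatrix}\bigr)$, substitute the definition of $\rho_2(h)F$ into \eqref{fork'}, and change variables $W\mapsto\omega=(aW+b)(cW+d)^{-1}$. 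This is legitimate precisely because $h\in U(2,2)_R$ maps $U(2)_R$ bijectively to itself and preserves $\BB D^+_R$ and $\BB D^-_R$, so the four configurations of $(Z_1,Z_2)$ are permuted among themselves and no singularity of the integrand is crossed.

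The algebraic engine is a short list of elementary $GL(2,\HC)$-identities. Writing $\phi(Z)=(aZ+b)(cZ+d)^{-1}$, the two realizations of the conformal action coincide as point maps, $(a'-Zc')^{-1}(-b'+Zd')=\phi(Z)$, a direct consequence of $h^{-1}h=hh^{-1}=1$; this is why both $\pi'_l$ and $\pi'_r$ transform arguments by the same $\phi$. One then has the cocycle relations $cW+d=(c'\omega+d')^{-1}$ and $c'\phi(Z_2)+d'=(cZ_2+d)^{-1}$, together with the determinant identity $N(a'-Zc')=(\det h)\,N(cZ+d)$. Using $\frac{X}{N(X)}=(X^+)^{-1}$, the kernels satisfy the covariance identities
\[
W-Z_1=(a'-Z_1c')\bigl(\omega-\phi(Z_1)\bigr)(c'\omega+d')^{-1},\qquad
W-Z_2=(a'-Wc')\bigl(\omega-\phi(Z_2)\bigr)(cZ_2+d).
\]
Feeding these and the two automorphy factors of $\rho_2(h)$ into the integrand, I would check that the left kernel times $\frac{(cW+d)^{-1}}{N(cW+d)}$ collapses (via $(c'\omega+d')^+(c'\omega+d')=N(c'\omega+d')$) to $\frac{a'-Z_1c'}{N(a'-Z_1c')}\cdot\frac{\omega-\phi(Z_1)}{N(\omega-\phi(Z_1))}\cdot N(c'\omega+d')^2$, while $\frac{(a'-Wc')^{-1}}{N(a'-Wc')}$ times the right kernel collapses to $\frac{1}{N(a'-Wc')^2}\cdot\frac{\omega-\phi(Z_2)}{N(\omega-\phi(Z_2))}\cdot\frac{cZ_2+d}{N(cZ_2+d)}$. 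Thus the $\pi'_l$-weight $\frac{a'-Z_1c'}{N(a'-Z_1c')}$ emerges on the far left and the $\pi'_r$-weight $\frac{cZ_2+d}{N(cZ_2+d)}$ on the far right, exactly as in the definition of $(\pi'_l\otimes\pi'_r)(h)$, leaving only the scalar $N(c'\omega+d')^2/N(a'-Wc')^2$ sandwiched around $F(\omega)$.

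It then remains to absorb this scalar into the transformation of the measure. Using the cocycle and determinant identities the scalar equals $(\det h)^{-2}N(cW+d)^{-4}$, which is exactly the holomorphic Jacobian of $W\mapsto\omega=(aW+b)(cW+d)^{-1}$ on the cycle $U(2)_R$ — the same measure computation carried out for $J_R$ in Proposition 64 of \cite{ATMP}. After cancellation the substituted integral is $\frac{i}{2\pi^3}\int_{\omega\in U(2)_R}\frac{\omega-\phi(Z_1)}{N(\omega-\phi(Z_1))}F(\omega)\frac{\omega-\phi(Z_2)}{N(\omega-\phi(Z_2))}\,dV$, so $J'_R(\rho_2(h)F)(Z_1,Z_2)=\frac{a'-Z_1c'}{N(a'-Z_1c')}(J'_RF)\bigl(\phi(Z_1),\phi(Z_2)\bigr)\frac{cZ_2+d}{N(cZ_2+d)}$, which is $\bigl((\pi'_l\otimes\pi'_r)(h)J'_RF\bigr)(Z_1,Z_2)$. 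I expect the main obstacle to be precisely this analytic step: justifying the holomorphic change of variables on $U(2)_R$ rigorously and tracking the four $\BB D^\pm_R$-configurations (so that contour and integrand are correctly matched in each case). The algebra collapses cleanly once the covariance and cocycle identities are assembled; the genuine subtlety lies entirely in the measure and the geometry of the contour, which is why I would lean on the parallel computation in \cite{ATMP} rather than redo it in full. The $\mathfrak{gl}(2,\HC)$-equivariance of the four maps then follows by differentiation and $\BB C$-linearity, and can be cross-checked on the generators of Lemma \ref{pi'-Lie_alg-action} if a direct infinitesimal verification is preferred.
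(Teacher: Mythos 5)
Your proposal is correct and follows essentially the same route as the paper: substitute the definition of $\rho_2(h)F$, apply the kernel covariance and measure-transformation identities (Lemmas 10 and 61 of \cite{FL1}) to change variables $W \mapsto \tilde W$ on $U(2)_R$, extract the $\pi'_l$- and $\pi'_r$-automorphy factors, and then obtain the $\mathfrak{gl}(2,\HC)$-equivariance from $\mathfrak{gl}(2,\HC) \simeq \BB C \otimes \mathfrak{u}(2,2)_R$. The only difference is presentational: you reconstruct the covariance and Jacobian identities explicitly where the paper simply cites them.
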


\begin{proof}
We need to show that, for all $h \in U(2,2)_R$, the maps \eqref{fork'}
commute with the action of $h$. Writing
$h= \bigl(\begin{smallmatrix} a' & b' \\ c' & d' \end{smallmatrix}\bigr)$,
$h^{-1}= \bigl(\begin{smallmatrix} a & b \\ c & d \end{smallmatrix}\bigr)$,
$$
\tilde Z_1 = (aZ_1+b)(cZ_1+d)^{-1}, \qquad
\tilde Z_2 = (aZ_2+b)(cZ_2+d)^{-1}, \qquad
\tilde W = (aW+b)(cW+d)^{-1}
$$
and using Lemmas 10 and 61 from \cite{FL1} we obtain:
\begin{multline*}
\int_{W \in U(2)_R} \frac{W-Z_1}{N(W-Z_1)} \cdot (\rho_2(h)F)(W) \cdot
\frac{W-Z_2}{N(W-Z_2)} \,dV  \\
= \int_{W \in U(2)_R} \frac{W-Z_1}{N(W-Z_1)} \cdot
\frac{(cW+d)^{-1}}{N(cW+d)} \cdot F(\tilde W) \cdot
\frac{(a'-Wc')^{-1}}{N(a'-Wc')} \cdot \frac{W-Z_2}{N(W-Z_2)} \,dV  \\
= \int_{W \in U(2)_R} \frac{(a'-Z_1c') \cdot (\tilde W - \tilde Z_1) \cdot
F(\tilde W) \cdot (\tilde W - \tilde Z_2) \cdot (cZ_2+d) \,dV}
{N(a'-Z_1c') \cdot N(cW+d)^2 \cdot N(\tilde W - \tilde Z_1) \cdot
N(\tilde W - \tilde Z_2) \cdot N(a'-Wc')^2 \cdot N(cZ_2+d)}  \\
= \frac{a'-Z_1c'}{N(a'-Z_1c')} \cdot \int_{\tilde W \in U(2)_R}
\frac{\tilde W - \tilde Z_1}{N(\tilde W - \tilde Z_1)} \cdot F(\tilde W)
\cdot \frac{\tilde W - \tilde Z_2}{N(\tilde W - \tilde Z_2)} \,dV
\cdot \frac{cZ_2+d}{N(cZ_2+d)}.
\end{multline*}
This proves the $U(2,2)_R$-equivariance.
The $\mathfrak{gl}(2,\HC)$-equivariance then follows since
$\mathfrak{gl}(2,\HC) \simeq \BB C \otimes \mathfrak{u}(2,2)_R$.
\end{proof}

These maps $J'_R$ are closely related to the maps $I_R$ given by
equation (34) in Chapter 6 of \cite{desitter}
\begin{equation*}%  \label{I_R}
\Zh \ni f \quad \mapsto \quad (I_R f)(Z_1,Z_2) =
\frac i{2\pi^3} \int_{W \in U(2)_R} \frac{f(W) \,dV}{N(W-Z_1) \cdot N(W-Z_2)}
\quad \in \widetilde{{\cal H} \otimes {\cal H}},
\end{equation*}
where $\widetilde{{\cal H} \otimes {\cal H}}$ denotes the space of
holomorphic $\BB C$-valued functions in two variables $Z_1,Z_2 \in \HC$
(possibly with singularities) that are harmonic in each variable separately.
Indeed, $I_R$ extends to a map on $\HC \otimes \Zh = {\cal W}$, and
\begin{equation}  \label{I-J'-relation}
J'_R F(Z_1,Z_2) =
I_R \bigl( WF(W)W - Z_1F(W)W - WF(W)Z_2 + Z_1F(W)Z_2 \bigr)(Z_1,Z_2).
\end{equation}

%Of particular importance are the two irreducible components of ${\cal W}'$
%isomorphic to
%$$
%{\cal BH}^+/{\cal I}'_0 \qquad \text{and} \qquad {\cal BH}^-/{\cal I}'_0
%$$
%via the $\mathfrak{gl}(2,\HC)$-equivariant map
%$\partial^+: (\rho', \Sh') \to (\rho'_2, {\cal W}')$.

We use notations $J_R^{'++}$ and $J_R^{'--}$ to signify
$Z_1, Z_2 \in \BB D^+_R$ and $Z_1, Z_2 \in \BB D^-_R$ respectively.
We have an analogue of Theorem 65 in \cite{ATMP} for these maps:

\begin{thm}  \label{J'-thm}
The $\mathfrak{gl}(2,\HC)$-equivariant maps $F \mapsto (J'_R F)(Z_1,Z_2)$
have the following properties:
\begin{enumerate}
\item  \label{one}
If $Z_1, Z_2 \in \BB D^+_R$, then $J_R^{'++}$ maps ${\cal W}$ into
${\cal U}^+ \otimes {\cal U}'^+ \subset \widetilde{{\cal U} \otimes {\cal U}'}$,
annihilates all irreducible components of $(\rho_2,{\cal W})$, except for
\begin{align*}
{\cal Q}^+ &= \BB C\text{-span of }
\bigl\{ N(Z)^k \cdot {\bf F_{l,m,n}}(Z),\: N(Z)^k \cdot {\bf G_{l,m,n}}(Z),\:
{\bf H_{k,l,m,n}}(Z);\: k \ge 0 \bigr\}, \\
{\cal F}^+ &= \BB C\text{-span of }
\bigl\{ N(Z)^{-1} \cdot {\bf F_{l,m,n}}(Z);\: l \ge 1/2 \bigr\},  \\
{\cal G}^+ &= \BB C\text{-span of }
\bigl\{ N(Z)^{-1} \cdot {\bf G_{l,m,n}}(Z);\: l \ge 1/2 \bigr\}
\end{align*}
and the components isomorphic to $\Sh^+$ and
${\cal I}^+/(\Sh^+ \oplus {\cal J})$.

The composition $\mult \circ J_R^{'++}$ maps ${\cal W}$ into ${\cal W}'$,
annihilates all irreducible components of $(\rho_2,{\cal W})$, except for
the component isomorphic to ${\cal I}^+/(\Sh^+ \oplus {\cal J})$.
The image of $\mult \circ J_R^{'++}$ is the irreducible component of ${\cal W}'$
isomorphic to ${\cal BH}^+/{\cal I}'_0$.

\item
If $Z_1, Z_2 \in \BB D^-_R$, then $J_R^{'--}$ maps ${\cal W}$ into
${\cal U}^- \otimes {\cal U}'^- \subset
\widetilde{{\cal U} \otimes {\cal U}'}$, annihilates all irreducible
components of $(\rho_2,{\cal W})$, except for
\begin{align*}
{\cal Q}^- &= \BB C\text{-span of }
\bigl\{ N(Z)^k \cdot {\bf F_{l,m,n}}(Z),\: N(Z)^k \cdot {\bf G_{l,m,n}}(Z),\:
{\bf H_{k,l,m,n}}(Z);\: k \le -(2l+3) \bigr\},  \\
{\cal F}^- &= \BB C\text{-span of }
\bigl\{ N(Z)^{-1} \cdot {\bf F'_{l,m,n}}(Z);\: l \ge 1/2 \bigr\},  \\
{\cal G}^- &= \BB C\text{-span of }
\bigl\{ N(Z)^{-1} \cdot {\bf G'_{l,m,n}}(Z);\: l \ge 1/2 \bigr\}
\end{align*}
and the components isomorphic to $\Sh^-$ and
${\cal I}^-/(\Sh^- \oplus {\cal J})$.

The composition $\mult \circ J_R^{'--}$ maps ${\cal W}$ into ${\cal W}'$,
annihilates all irreducible components of $(\rho_2,{\cal W})$, except for
the component isomorphic to ${\cal I}^-/(\Sh^- \oplus {\cal J})$.
The image of $\mult \circ J_R^{'--}$ is the irreducible component of ${\cal W}'$
isomorphic to ${\cal BH}^-/{\cal I}'_0$.
\end{enumerate}
\end{thm}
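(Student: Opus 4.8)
The plan is to organize everything around the $\mathfrak{gl}(2,\HC)$-equivariance of $J'_R$ furnished by Proposition \ref{fork-prop}: since $J_R^{'++}$ intertwines $(\rho_2,{\cal W})$ with $(\pi'_l\otimes\pi'_r,\widetilde{{\cal U}\otimes{\cal U}'})$, Schur's lemma forces it to be either zero or injective on each irreducible subquotient of ${\cal W}$. The proof then splits into an upper bound (which of the thirteen constituents of ${\cal W}$ \emph{must} be killed) and a lower bound (which are genuinely nonzero), and likewise for $\mult\circ J_R^{'++}$. The computational engine throughout is the relation \eqref{I-J'-relation}, which rewrites $J'_R F$ as $I_R$ applied to a degree-shifted integrand, thereby reducing every residue evaluation to a known property of $I_R$ over $\BB D^+_R\times\BB D^+_R$ from \cite{desitter}; this parallels the treatment of the analogous maps $J_R$ in Theorem 65 of \cite{ATMP}.

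Next I would determine where $J_R^{'++}$ sends things. The output is automatically QLAR in $Z_1$ and QRAR in $Z_2$, as noted after \eqref{fork'}; for $Z_1,Z_2\in\BB D^+_R$ it is moreover holomorphic at the origin, so expanding the kernels $\frac{W-Z_i}{N(W-Z_i)}$ by the reproducing kernel expansion \eqref{1st-kernel-expansion} places the image in ${\cal U}^+\otimes{\cal U}'^+$ rather than merely in the completion. Equivariance then yields the upper bound: the scaling element $\Lambda$ of Lemma \ref{degree-decomp-lem} acts on ${\cal U}^+\otimes{\cal U}'^+$ only through the eigenvalues attached to nonnegative degrees, and combining this grading constraint with the reduction \eqref{I-J'-relation} to $I_R$ shows that the negative and neutral pieces ${\cal Q}^-$, ${\cal Q}^0$, ${\cal F}^-$, ${\cal G}^-$, $\Sh^-$, ${\cal J}$, ${\cal I}^-/(\Sh^-\oplus{\cal J})$ and the trivial $\BB C$-span of $N(Z)^{-2}Z^+$ are all annihilated, leaving exactly the five positive candidates ${\cal Q}^+$, ${\cal F}^+$, ${\cal G}^+$, $\Sh^+$, ${\cal I}^+/(\Sh^+\oplus{\cal J})$.

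The composition statement follows by post-composing with the multiplication map. By Proposition \ref{image-prop}(1) we have $\mult({\cal U}^+\otimes{\cal U}'^+)=\HC\otimes\Zh^+$, so $\mult\circ J_R^{'++}$ is an equivariant map ${\cal W}\to{\cal W}'$ whose image lies among the five constituents listed there, namely $(\rho',{\cal BH}^+/{\cal I}'_0)$, $(\rho',\Sh^+/{\cal BH}^+)$, $(\rho'_2,{\cal W}'/\ker\tau_a^+)$, $(\rho'_2,{\cal W}'/\ker\tau_s^+)$ and $(\rho'_2,{\cal Q}'^+)$. Matching isomorphism types between the five surviving source candidates and these five target constituents, using the composition structure of $(\rho'_2,{\cal W}')$ recalled above and in \cite{ATMP}, singles out ${\cal I}^+/(\Sh^+\oplus{\cal J})\to{\cal BH}^+/{\cal I}'_0$ as the unique nonzero composite; the remaining candidates ${\cal Q}^+$, ${\cal F}^+$, ${\cal G}^+$, $\Sh^+$ have no matching target, equivalently their $J_R^{'++}$-images lie in $\ker\mult$, and so are annihilated. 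Part 2 is then deduced from Part 1 by conjugating with the inversions $\pi'_l\bigl(\begin{smallmatrix}0&1\\1&0\end{smallmatrix}\bigr)$ and $\pi'_r\bigl(\begin{smallmatrix}0&1\\1&0\end{smallmatrix}\bigr)$, which interchange $\BB D^+_R$ with $\BB D^-_R$ and carry each ``$+$'' constituent to its ``$-$'' counterpart by Lemma \ref{inversion-action-lem}.

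The hard part will be the lower bound: establishing that $J_R^{'++}$ is genuinely nonzero on each of ${\cal Q}^+$, ${\cal F}^+$, ${\cal G}^+$, $\Sh^+$, and that $\mult\circ J_R^{'++}$ is nonzero on ${\cal I}^+/(\Sh^+\oplus{\cal J})$. For this I would feed explicit generators of these components—the basis functions $\mathbf{F_{l,m,n}}$, $\mathbf{G_{l,m,n}}$, $\mathbf{H_{k,l,m,n}}$, and a harmonic representative of $\Sh^+$—through the contour integral \eqref{fork'}, reducing each residue via \eqref{I-J'-relation} to a known $I_R$-evaluation in \cite{desitter}. The genuine effort lies in this bookkeeping and in reconciling the abstract type-matching with the concrete residues, which is precisely where the analogue, Theorem 65 of \cite{ATMP}, concentrates its work.
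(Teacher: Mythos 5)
Your overall architecture (equivariance from Proposition \ref{fork-prop}, an ``upper bound'' locating the image in ${\cal U}^+\otimes{\cal U}'^+$, explicit generator computations for the ``lower bound'', and inversion for part 2) matches the paper's, and your grading argument via Lemma \ref{degree-decomp-lem} for killing the trivial component $\BB C\cdot N(Z)^{-2}Z^+$ is a legitimate, slightly slicker substitute for the paper's explicit computation $J_R^{'++}(N(W)^{-2}W^+)=0$. But the type-matching step for the composition $\mult\circ J_R^{'++}$ has a genuine gap. You claim that among the five constituents of $\mult({\cal U}^+\otimes{\cal U}'^+)=\HC\otimes\Zh^+$ the candidates ${\cal Q}^+$, ${\cal F}^+$, ${\cal G}^+$, $\Sh^+$ ``have no matching target.'' That is false: the paper itself records (in the remark on the composition structure of ${\cal W}'$) that ${\cal W}'/\ker\tau_a^+\simeq{\cal F}^+$ and ${\cal W}'/\ker\tau_s^+\simeq{\cal G}^+$, and both of these occur in the list of constituents of $\HC\otimes\Zh^+$ in Proposition \ref{image-prop}(1). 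So Schur's lemma cannot rule out nonzero composites on ${\cal F}^+$ and ${\cal G}^+$; one genuinely has to compute, as the paper does, that $\mult\circ J_R^{'++}$ vanishes on the generators $N(W)^{-1}\cdot{\bf F_{\frac12,\frac12,-\frac12}}(W)$ and $N(W)^{-1}\cdot{\bf G_{\frac12,-\frac12,\frac12}}(W)$ (and on ${\bf H_{0,0,-1,-1}}(W)$ for ${\cal Q}^+$). Note that the paper's softer argument via Theorem 70 of \cite{FL1} only disposes of components overlapping with $\HC\otimes\Zh^+$, i.e.\ ${\cal Q}^+$ and the $\Sh^+$ component, and does not reach ${\cal F}^+$, ${\cal G}^+$, whose $K$-types involve $N(Z)^{-1}$. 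Similarly, identifying the image as ${\cal BH}^+/{\cal I}'_0$ requires more than matching isomorphism types: the paper pins it down by computing $\mult\circ J_R^{'++}(N(W)^{-1}W^+)=-Z=-\partial^+N(Z)$ and using that $N(Z)$ generates ${\cal BH}^+$.

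Two further points. First, $\Sh^+$ occurs in ${\cal W}$ only as a subquotient, not a subrepresentation, so ``feeding a harmonic representative through the integral'' does not directly decide whether that constituent is annihilated; the paper instead argues structurally that any invariant subspace of ${\cal W}$ (such as $\ker J_R^{'++}$) containing the $\Sh^+$ constituent must contain ${\cal Q}^+$, which is already known not to lie in the kernel. Second, the nonvanishing computations you defer to ``bookkeeping'' are in fact the bulk of the proof; as written, your argument establishes the upper bound and the reduction of part 2 to part 1, but not the theorem.
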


\begin{proof}
We prove part \ref{one} only, the proof of the other part is similar.
So, suppose that $Z_1, Z_2 \in \BB D^+_R$.
By Proposition \ref{fork-prop}, the map $J_R^{'++}$ is
$\mathfrak{gl}(2,\HC)$-equivariant.
It follows immediately from Theorem 12 in \cite{desitter} and equations
\eqref{Zt-identity}, \eqref{I-J'-relation}
that the image of $J_R^{'++}$ lies in ${\cal U}^+ \otimes {\cal U}'^+$ and that
$J_R^{'++}$ annihilates the irreducible components of $(\rho_2,{\cal W})$
that overlap with
$$
\HC \otimes \BB C \text{-span of }
\bigl\{ t^l_{n\,\underline{m}}(Z) \cdot N(Z)^k;\: k \le -3 \bigr\}.
$$
This proves that $J_R^{'++}$ annihilates all irreducible components of
$(\rho_2,{\cal W})$, except possibly
$$
{\cal Q}^+, \quad {\cal F}^+, \quad {\cal G}^+, \quad
\BB C\text{-span of } \bigl\{ N(Z)^{-2} \cdot Z^+ \bigr\},
$$
$$
\text{comp. isomorphic to } \Sh^+, \quad
\text{comp. isomorphic to } {\cal I}^+/(\Sh^+ \oplus {\cal J}).
$$
%\begin{align*}
%&\BB C\text{-span of }
%\bigl\{ N(Z)^{-1} \cdot {\bf F_{l,m,n}}(Z);\: l \ge 1/2 \bigr\}, \\
%&\BB C\text{-span of }
%\bigl\{ N(Z)^{-1} \cdot {\bf G_{l,m,n}}(Z);\: l \ge 1/2 \bigr\}, \\
%&\BB C\text{-span of } \bigl\{ N(Z)^{-2} \cdot Z^+ \bigr\},  \\
%&\text{component isomorphic to }
%{\cal I}^+/(\Sh^+ \oplus {\cal J}).
%\end{align*}
We find the effect of $J_R^{'++}$ on these components by evaluating
$J_R^{'++}$ on the following generators
$$
{\bf H_{0,0,-1,-1}}(Z) = 2 \left(\begin{smallmatrix}
1 & 0 \\ 0 & 0 \end{smallmatrix}\right),
$$
$$
N(Z)^{-1} \cdot {\bf F_{\frac12,\frac12,-\frac12}}(Z)
= \tfrac2{N(Z)} \cdot \left(\begin{smallmatrix}
0 & 0 \\ -z_{22} & z_{12} \end{smallmatrix}\right), \qquad
N(Z)^{-1} \cdot {\bf G_{\frac12,-\frac12,\frac12}}(Z)
= \tfrac1{N(Z)} \cdot \left(\begin{smallmatrix}
0 & -z_{22} \\ 0 & z_{21} \end{smallmatrix}\right),
$$
$$
N(Z)^{-2} \cdot Z^+, \qquad N(Z)^{-1} \cdot Z^+
$$
of ${\cal Q}^+$, ${\cal F}^+$, ${\cal G}^+$,
$\BB C\text{-span of } \bigl\{ N(Z)^{-2} \cdot Z^+ \bigr\}$,
and the component isomorphic to ${\cal I}^+/(\Sh^+ \oplus {\cal J})$
respectively. We have:
\begin{multline*}
J_R^{'++} \bigl( {\bf H_{0,0,-1,-1}}(W) \bigr)(Z_1,Z_2)  \\
= -\tfrac23 (Z_1-Z_2)
\left(\begin{smallmatrix} 1 & 0 \\ 0 & 0 \end{smallmatrix}\right) (Z_1-Z_2)
+ \tfrac13 \left(\begin{smallmatrix} ((z_{11})_1-(z_{11})_2)^2 & 0 \\
  0 & ((z_{11})_1-(z_{11})_2)((z_{22})_1-(z_{22})_2) \end{smallmatrix}\right),
\end{multline*}
$$
\mult \circ J_R^{'++} \bigl( {\bf H_{0,0,-1,-1}}(W) \bigr) = 0;
$$
$$
J_R^{'++} \bigl( N(W)^{-1} \cdot {\bf F_{\frac12,\frac12,-\frac12}}(W) \bigr)(Z_1,Z_2)
= (Z_1-Z_2) \left(\begin{smallmatrix} 0 & 0 \\ 1 & 0 \end{smallmatrix}\right),
$$
$$
\mult \circ J_R^{'++}
\bigl( N(W)^{-1} \cdot {\bf F_{\frac12,\frac12,-\frac12}}(W) \bigr) = 0;
$$
$$
J_R^{'++} \bigl( N(W)^{-1} \cdot {\bf G_{\frac12,-\frac12,\frac12}}(W) \bigr)(Z_1,Z_2)
= \tfrac12  \left(\begin{smallmatrix} 0 & 1 \\ 0 & 0 \end{smallmatrix}\right)
(Z_1-Z_2),
$$
$$
\mult \circ J_R^{'++}
\bigl( N(W)^{-1} \cdot {\bf G_{\frac12,-\frac12,\frac12}}(W) \bigr) = 0;
$$
$$
J_R^{'++} \bigl( N(W)^{-2} \cdot W^+ \bigr) = 0;
$$
$$
J_R^{'++} \bigl( N(W)^{-1} \cdot W^+ \bigr)(Z_1,Z_2) = -\tfrac12 (Z_1+Z_2),
$$
$$
\mult \circ J_R^{'++} \bigl( N(W)^{-1} \cdot W^+ \bigr)(Z) = -Z.
$$
Since $-Z = - \partial^+ N(Z)$ and $N(Z)$ generates ${\cal BH}^+ \subset \Sh'$,
it follows that the image of $\mult \circ J_R^{'++}$ is the irreducible component
of ${\cal W}'$ isomorphic to ${\cal BH}^+/{\cal I}'_0$.

The component isomorphic to $\Sh^+$ does not appear in ${\cal W}$ as a
subrepresentation. Any subrepresentation $(\rho_2, {\cal W}_{sub})$ of
$(\rho_2,{\cal W})$ -- such as $\ker J_R^{'++}$ -- containing $(\rho, \Sh^+)$
must also contain $(\rho_2,{\cal Q}^+)$,
and ${\cal Q}^+$ is not in the kernel of $J_R^{'++}$.
This proves that $J_R^{'++}$ is not zero on the component isomorphic to $\Sh^+$.
Finally, it was observed in Subsection 4.4 in \cite{FL1} that by
Theorem 70 in \cite{FL1} and equation \eqref{I-J'-relation},
$\mult \circ J_R^{'++}$ annihilates the irreducible components of
$(\rho_2,{\cal W})$ that overlap with vector space
${\cal W}^+ = \HC \otimes \Zh^+$. This vector space ${\cal W}^+$ contains
${\cal Q}^+$ and the component isomorphic to $\Sh^+$.
\end{proof}

Recall that in Section 4 of \cite{desitter} we decomposed $(\rho_1, \Zh)$
into three irreducible components:
\begin{equation}  \label{Zh-decomp}
(\rho_1,\Zh) \simeq (\rho_1,\Zh^-) \oplus (\rho_1,\Zh^0) \oplus (\rho_1,\Zh^+).
\end{equation}
Corresponding to this decomposition, we have equivariant projections
\begin{equation}  \label{Zh-proj}
\proj^- : \Zh \twoheadrightarrow \Zh^-, \qquad
\proj^0 : \Zh \twoheadrightarrow \Zh^0 \qquad \text{and} \qquad
\proj^+ : \Zh \twoheadrightarrow \Zh^+.
\end{equation}
Tensoring with $\HC$, we obtain a vector space decomposition
$$
{\cal W} = \HC \otimes \Zh
= (\HC \otimes \Zh^+) \oplus  (\HC \otimes \Zh^0) \oplus (\HC \otimes \Zh^-)
$$
and projections
\begin{equation}  \label{W-proj}
\proj^- : {\cal W} \twoheadrightarrow \HC \otimes \Zh^-, \qquad
\proj^0 : {\cal W} \twoheadrightarrow \HC \otimes \Zh^0
\qquad \text{and} \qquad
\proj^+ : {\cal W} \twoheadrightarrow \HC \otimes \Zh^+.
\end{equation}
Note that these decomposition and projections $\proj^-$, $\proj^0$ are
{\em not} $\mathfrak{gl}(2,\BB C)$-invariant.
We introduce operators $\Xm^+$ and $\Xm^-: {\cal W} \to {\cal W}'$:
\begin{align}
(\Xm^+ F)(Z) &= \proj^+(ZFZ) - Z\proj^+(FZ) - \proj^+(ZF)Z + Z\proj^+(F)Z,
\label{Xm^+}  \\
(\Xm^- F)(Z) &= \proj^-(ZFZ) - Z\proj^-(FZ) - \proj^-(ZF)Z + Z\proj^-(F)Z, 
\qquad F \in {\cal W}.
\label{Xm^-}
\end{align}
From Proposition \ref{fork-prop}, Theorem \ref{J'-thm} and equation
\eqref{I-J'-relation} we obtain an analogue of Theorem 77 in \cite{FL1}:

\begin{cor}  \label{Xm-cor}
\begin{enumerate}
\item
The operator $\Xm^+: (\rho_2, {\cal W}) \to (\rho'_2, {\cal W}')$ is
$\mathfrak{gl}(2,\BB C)$-equivariant and annihilates all irreducible
components of $(\rho_2,{\cal W})$, except for the component isomorphic
to ${\cal I}^+/(\Sh^+ \oplus {\cal J})$.
The image of $\Xm^+$ is the irreducible component of ${\cal W}'$
isomorphic to ${\cal BH}^+/{\cal I}'_0$.
For $Z \in \BB D^+_R$, we have an integral presentation
$$
(\Xm^+ F)(Z) = \frac{i}{2\pi^3} \int_{W \in U(2)_R}
\frac{(W-Z) \cdot F(W) \cdot (W-Z)}{N(W-Z)^2} \,dV,
\qquad F \in {\cal W}.
$$

\item
The operator $\Xm^-: (\rho_2, {\cal W}) \to (\rho'_2, {\cal W}')$ is
$\mathfrak{gl}(2,\BB C)$-equivariant and annihilates all irreducible
components of $(\rho_2,{\cal W})$, except for the component isomorphic
to ${\cal I}^-/(\Sh^- \oplus {\cal J})$.
The image of $\Xm^-$ is the irreducible component of ${\cal W}'$
isomorphic to ${\cal BH}^-/{\cal I}'_0$.
For $Z \in \BB D^-_R$, we have an integral presentation
$$
(\Xm^- F)(Z) = \frac{i}{2\pi^3} \int_{W \in U(2)_R}
\frac{(W-Z) \cdot F(W) \cdot (W-Z)}{N(W-Z)^2} \,dV,
\qquad F \in {\cal W}.
$$
\end{enumerate}
\end{cor}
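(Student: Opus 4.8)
The plan is to prove the corollary by identifying the two operators defined in \eqref{Xm^+}, \eqref{Xm^-} with the compositions $\mult \circ J_R^{'++}$ and $\mult \circ J_R^{'--}$ respectively. Once these identifications are in place, every assertion of the corollary transfers mechanically: the equivariance follows from the $\mathfrak{gl}(2,\HC)$-equivariance of $J'_R$ in Proposition \ref{fork-prop} together with the fact that $\mult = \DR$ intertwines $\pi'_l \otimes \pi'_r$ with $\rho'_2$; the lists of annihilated components and the identification of the images with ${\cal BH}^+/{\cal I}'_0$ and ${\cal BH}^-/{\cal I}'_0$ are exactly the statements about $\mult \circ J_R^{'++}$ and $\mult \circ J_R^{'--}$ already proved in Theorem \ref{J'-thm}; and the integral presentations are the diagonal restrictions of \eqref{fork'}.

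First I would record the integral presentation, which is immediate. For $Z \in \BB D^+_R$ both arguments of \eqref{fork'} may be set equal to $Z$ without meeting a singularity, so
\[
(\mult \circ J_R^{'++} F)(Z) = (J_R^{'++}F)(Z,Z)
= \frac{i}{2\pi^3} \int_{W \in U(2)_R}
\frac{(W-Z) \cdot F(W) \cdot (W-Z)}{N(W-Z)^2} \, dV,
\]
using $\frac{W-Z}{N(W-Z)} \cdot F(W) \cdot \frac{W-Z}{N(W-Z)} = \frac{(W-Z) F(W) (W-Z)}{N(W-Z)^2}$; the case $Z \in \BB D^-_R$ is identical with $J_R^{'--}$.

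The substance of the argument is the identification $\Xm^+ = \mult \circ J_R^{'++}$. Here I would expand the right-hand side on the diagonal by means of \eqref{I-J'-relation}. Setting $Z_1 = Z_2 = Z$, the bracket in \eqref{I-J'-relation} collapses to $W F(W) W - Z F(W) W - W F(W) Z + Z F(W) Z = (W-Z) F(W) (W-Z)$, so by linearity of $I_R$ and by pulling the constant (in $W$) matrix $Z$ through the entrywise scalar operator $I_R$ on the correct side, one obtains
\[
\mult \circ J_R^{'++} F
= I_R^{++}(WFW)|_{\Delta} - Z \cdot I_R^{++}(FW)|_{\Delta}
- I_R^{++}(WF)|_{\Delta} \cdot Z + Z \cdot I_R^{++}(F)|_{\Delta} \cdot Z,
\]
where $|_{\Delta}$ denotes $\DR$. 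Comparing with \eqref{Xm^+}, the two agree provided $\DR \circ I_R^{++}$ coincides with the projection $\proj^+$ on $\BB D^+_R$, applied entrywise to the four matrix arguments $WFW$, $FW$, $WF$, $F$; this is exactly the reproducing property of $I_R^{++}$ from \cite{desitter} (Theorem 12), which expresses $\proj^+ g$ on $\BB D^+_R$ as $\frac{i}{2\pi^3}\int_{U(2)_R} g(W) N(W-Z)^{-2}\,dV$. The $\Xm^-$ case is the same with $\BB D^-_R$, $\proj^-$ and $J_R^{'--}$.

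The point that deserves care, and the place I expect the real work, is the bookkeeping in this comparison together with its conceptual payoff. In $\proj^+(ZFZ)$ the outer factors $Z$ are part of the function being projected, hence become $W$ inside the reproducing integral, whereas in $Z \proj^+(FZ)$ the leading $Z$ is an external constant multiplier that stays as $Z$ while the $Z$ in $FZ$ is internal and becomes $W$; it is precisely this distinction between internal and external occurrences of $Z$ that makes the four terms reassemble into $(W-Z) F(W) (W-Z)$, and hence into $\mult \circ J_R^{'++}$. The conceptual payoff is that, although \eqref{Xm^+} is built out of the projections $\proj^{\pm}$, which are \emph{not} equivariant, the particular combination appearing in $\Xm^{\pm}$ is equivariant because it equals the manifestly equivariant integral operator $\mult \circ J_R^{'\pm\pm}$; this is the feature the corollary is really asserting, and it is invisible from the formula \eqref{Xm^+} alone.
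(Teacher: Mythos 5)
Your proposal is correct and follows exactly the route the paper intends: the paper's own proof is the one-line citation of Proposition \ref{fork-prop}, Theorem \ref{J'-thm} and equation \eqref{I-J'-relation}, and you have simply filled in the details — collapsing the bracket in \eqref{I-J'-relation} on the diagonal to $(W-Z)F(W)(W-Z)$, identifying $\DR\circ I_R^{\pm\pm}$ with $\proj^{\pm}$, and thereby transferring the equivariance, kernel and image statements from $\mult\circ J_R^{'\pm\pm}$ to $\Xm^{\pm}$. The careful distinction you draw between internal and external occurrences of $Z$ is precisely the bookkeeping the paper leaves implicit.
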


\subsection{Equivariant Maps
  ${\cal W} \to \widetilde{{\cal U} \otimes {\cal U}'}$}

In this subsection we continue to study equivariant maps from ${\cal W}$
to the tensor product ${\cal U} \otimes {\cal U}'$, and now we need to
consider the completion $\widetilde{{\cal U} \otimes {\cal U}'}$.
We use notations $J_R^{'+-}$ and $J_R^{'-+}$ to
signify $Z_1 \in \BB D^+_R$, $Z_2 \in \BB D^-_R$ and $Z_1 \in \BB D^-_R$,
$Z_2 \in \BB D^+_R$ respectively.
Our first result is about these maps $J_R^{'+-}$ and $J_R^{'-+}$ applied
to the generator of the trivial (one-dimensional) component.

\begin{prop}  \label{J-gen-prop}
We have:
\begin{multline}  \label{J-formula1}
J_R^{'+-} (N(W)^{-2} \cdot W^+) =
- \sum_{l \ge 0,m,n} \frac1{(2l+1)^2}
f^{(1)}_{l,m,n}(Z_1) \cdot \tilde g^{(1)}_{l,m,n}(Z_2)  \\
- \sum_{l \ge \frac12,m,n} \frac1{(2l)^2(2l+1)^2}
f^{(2)}_{l,m,n}(Z_1) \cdot \tilde g^{(2)}_{l,m,n}(Z_2)
- \sum_{l \ge 1,m,n} \frac1{(2l)^2}
f^{(3)}_{l,m,n}(Z_1) \cdot \tilde g^{(3)}_{l,m,n}(Z_2);
\end{multline}
\begin{multline}  \label{J-formula2}
J_R^{'-+} (N(W)^{-2} \cdot W^+) =
- \sum_{l \ge 0,m,n} \frac1{(2l+1)^2}
\tilde f^{(1)}_{l,m,n}(Z_1) \cdot g^{(1)}_{l,m,n}(Z_2)  \\
- \sum_{l \ge \frac12,m,n} \frac1{(2l)^2(2l+1)^2}
\tilde f^{(2)}_{l,m,n}(Z_1) \cdot g^{(2)}_{l,m,n}(Z_2)
- \sum_{l \ge 1,m,n} \frac1{(2l)^2}
\tilde f^{(3)}_{l,m,n}(Z_1) \cdot g^{(3)}_{l,m,n}(Z_2).
\end{multline}
\end{prop}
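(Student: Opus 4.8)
The plan is to substitute the reproducing kernel expansions of Theorem \ref{kernel-exp-thm} into the two factors of the integrand in \eqref{fork'} and then integrate term by term over $W \in U(2)_R$. Concretely, for $J_R^{'+-}$ we have $Z_1 \in \BB D^+_R$ and $Z_2 \in \BB D^-_R$, and I would first record that for $W \in U(2)_R$ the products satisfy $Z_1 W^{-1} \in \BB D^+$ and $W Z_2^{-1} \in \BB D^+$ (this is a one-line eigenvalue estimate using $W^* W = R^2$). Hence \eqref{1st-kernel-expansion} is applicable to both factors: writing $\frac{W-Z_1}{N(W-Z_1)} = -\frac{Z_1-W}{N(Z_1-W)}$ and applying \eqref{1st-kernel-expansion} with $Z=Z_1$ gives an expansion whose $Z_1$-dependence is carried by the $f^{(\alpha)}_{l,m,n}(Z_1)$ and whose $W$-dependence is carried by the $\tilde g^{(\alpha)}_{l,m,n}(W)$; applying \eqref{1st-kernel-expansion} directly to $\frac{W-Z_2}{N(W-Z_2)}$ with $Z=W$ produces $f^{(\beta)}_{l',m',n'}(W)$ times $\tilde g^{(\beta)}_{l',m',n'}(Z_2)$. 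Since $U(2)_R$ is compact and the expansions converge uniformly on it, I may interchange summation and integration, pulling the $Z_1$- and $Z_2$-factors outside the integral.

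After this substitution the problem reduces to evaluating the middle scalar integrals $\frac{i}{2\pi^3}\int_{W\in U(2)_R} \tilde g^{(\alpha)}_{l,m,n}(W)\cdot N(W)^{-2}W^+ \cdot f^{(\beta)}_{l',m',n'}(W)\,dV$ (row times $\HC$-matrix times column, hence scalar). The main device here is orthogonality of the matrix coefficients $t^l_{n\,\underline{m}}$: on $U(2)_R$ one rewrites the $t^{\cdot}(W^{-1})$ occurring in the $\tilde g^{(\alpha)}$ as complex conjugates of $t^{\cdot}(W)$ via the $SU(2)$ unitarity relation, so that the integrand becomes a product $t(W)\,\overline{t(W)}$ weighted by entries of $W^+ N(W)^{-2}$, and then applies the Schur orthogonality relations \eqref{t-orthog-rels}. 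A cleaner way to see which terms survive is representation-theoretic: by Proposition \ref{fork-prop} the map $J_R^{'+-}$ is $SU(2)\times SU(2)$-equivariant, and $N(W)^{-2}\cdot W^+$ spans the trivial one-dimensional component of ${\cal W}$, so $J_R^{'+-}(N(W)^{-2}W^+)$ must be an $SU(2)\times SU(2)$-invariant element of $\widetilde{{\cal U}\otimes{\cal U}'}$; since $f^{(\alpha)}_{l,\cdot,\cdot}(Z_1)$ and $\tilde g^{(\alpha)}_{l,\cdot,\cdot}(Z_2)$ transform in mutually dual $SU(2)\times SU(2)$-types, the only invariants are the contractions $\sum_{m,n} f^{(\alpha)}_{l,m,n}(Z_1)\cdot \tilde g^{(\alpha)}_{l,m,n}(Z_2)$. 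This forces $\alpha=\beta$, $l=l'$, $m=m'$, $n=n'$ in the middle integral and reduces the answer to determining one scalar coefficient $c_\alpha(l)$ for each family and each $l$.

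The remaining task is to pin down these coefficients. Each kernel factor already contributes the weights $\tfrac1{2l+1}$, $-\tfrac1{2l(2l+1)}$, $-\tfrac1{2l}$ (first factor) and $-\tfrac1{2l+1}$, $\tfrac1{2l(2l+1)}$, $\tfrac1{2l}$ (second factor) to the families $\alpha=1,2,3$; the stated coefficients $-\tfrac1{(2l+1)^2}$, $-\tfrac1{(2l)^2(2l+1)^2}$, $-\tfrac1{(2l)^2}$ in \eqref{J-formula1} are exactly these products, so the computation amounts to checking that the diagonal middle integral equals $1$ after the normalizations built into the bases of Propositions \ref{K-typebasis+_prop} and \ref{K-typebasis-_prop}. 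I expect this constant-chasing through the half-integer index shifts of the $f^{(\alpha)}$, $\tilde g^{(\alpha)}$ together with the $W^+N(W)^{-2}$ weight to be the main obstacle; it is pure bookkeeping but must be done carefully to confirm the unit normalization and the overall signs. Finally, formula \eqref{J-formula2} follows by the identical argument with the roles of $Z_1,Z_2$ interchanged and \eqref{2nd-kernel-expansion} used in place of \eqref{1st-kernel-expansion}, or alternatively by applying the inversion $\bigl(\begin{smallmatrix}0&1\\1&0\end{smallmatrix}\bigr)$ and invoking Lemma \ref{inversion-action-lem} to swap $f^{(\alpha)}\leftrightarrow\tilde f^{(\alpha)}$, $g^{(\alpha)}\leftrightarrow\tilde g^{(\alpha)}$.
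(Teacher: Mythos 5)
Your proposal is essentially correct but follows a genuinely different route from the paper. The paper does not expand the two kernel factors at all: it invokes the algebraic identity \eqref{I-J'-relation} to write $J_R^{'+-}(N(W)^{-2}\cdot W^+)$ as a four-term combination of the \emph{scalar} operator $I_R^{+-}$ applied to $N(W)^{-1}\cdot W$, $N(W)^{-1}$ and $N(W)^{-2}\cdot W^+$, then evaluates each term from the already-known expansion of $I_R^{+-}(N(W)^{-1})$ in \cite{desitter} together with the identities \eqref{dt}--\eqref{Ct-inverse}, and finally recombines. What that buys is that no new orthogonality integrals over $U(2)_R$ need to be computed — everything is reduced to previously established matrix-coefficient identities. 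Your route (substitute \eqref{1st-kernel-expansion} into both factors of \eqref{fork'} and integrate term by term) is conceptually cleaner and makes transparent why the answer looks like a ``squared'' kernel expansion, but it shifts the entire burden onto the middle scalar integrals $\frac{i}{2\pi^3}\int_{U(2)_R}\tilde g^{(\alpha)}_{l,m,n}(W)\,N(W)^{-2}W^+\,f^{(\beta)}_{l',m',n'}(W)\,dV$, which you do not actually evaluate. Two points there need care. First, your diagonality argument via $SU(2)\times SU(2)$-equivariance alone is not quite enough: $f^{(1)}_{l,\cdot,\cdot}$ and $\tilde g^{(3)}_{l+1,\cdot,\cdot}$ both transform in $V_{l+\frac12}\boxtimes V_l$, so a cross-invariant is not excluded by $K$-types; you must also use the dilation/degree equivariance (Proposition \ref{fork-prop} gives full $U(2,2)_R$- and $\mathfrak{gl}(2,\HC)$-equivariance, or equivalently note that the integrand must be homogeneous of total degree $-4$ for the $U(2)_R$-integral to survive), which forces $l=l'$ and then kills all cross terms. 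Second, the claim that each surviving diagonal integral equals exactly $1$ is the whole quantitative content of the proposition and is only asserted; moreover the tool you cite, \eqref{t-orthog-rels}, is the orthogonality over the real sphere $S^3$, whereas the relevant relations are the holomorphic orthogonality relations over the $4$-cycle $U(2)_R$ (equation (17) of \cite{desitter}). With those relations the check does go through (e.g.\ for $\alpha=\beta=1$, $l=0$ the integrand collapses to $N(W)^{-2}$ and the normalization $\frac{i}{2\pi^3}\int_{U(2)_R}N(W)^{-2}\,dV=1$ gives the unit coefficient), so the plan is sound, but as written it is a proof outline rather than a proof. Your derivation of \eqref{J-formula2} from \eqref{J-formula1} via the inversion and Lemma \ref{inversion-action-lem} coincides with the paper's.
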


\begin{proof}
By \eqref{I-J'-relation},
\begin{multline*}
J_R^{'+-} (N(W)^{-2} \cdot W^+)
= I_R^{+-} (N(W)^{-1} \cdot W) - Z_1 \cdot I_R^{+-} (N(W)^{-1})  \\
-I_R^{+-}(N(W)^{-1}) \cdot Z_2 + Z_1 \cdot I_R^{+-}(N(W)^{-2} \cdot W^+) \cdot Z_2.
\end{multline*}
We compute each of the four terms separately starting from
$$
I_R^{+-}(N(W)^{-1}) (Z_1,Z_2)
= \sum_{l \ge 0,m,n} \frac1{2l+1} t^l_{n \, \underline{m}}(Z_1) \cdot
N(Z_2)^{-1} \cdot t^l_{m \, \underline{n}}(Z_2^{-1})
$$
(Section 6 in \cite{desitter}).

By Lemma 3 in \cite{desitter}, for $C \in \HC$,
$$
\rho_1 \bigl(\begin{smallmatrix} 0 & 0 \\ C & 0 \end{smallmatrix} \bigr)
N(W)^{-1} = \tr\bigl( C \cdot N(W)^{-1} \cdot W \bigr).
$$
Hence, by Lemma 11 in \cite{desitter}, Lemma 17 in \cite{FL1}
and identities \eqref{Ct}-\eqref{Ct-inverse}, we have:
\begin{multline}  \label{J1}
I_R^{+-} (N(W)^{-1} \cdot W)  \\
= \sum_{l \ge 0,m,n} \frac1{2l+1}
\begin{pmatrix} (l-n+1) t^{l+\frac12}_{n-\frac12\,\underline{m-\frac12}}(Z_1) &
(l-n+1) t^{l+\frac12}_{n-\frac12\,\underline{m+\frac12}}(Z_1) \\
(l+n+1) t^{l+\frac12}_{n+\frac12\,\underline{m-\frac12}}(Z_1) &
(l+n+1) t^{l+\frac12}_{n+\frac12\,\underline{m+\frac12}}(Z_1) \end{pmatrix}
\cdot \frac1{N(Z_2)} \cdot t^l_{m \, \underline{n}}(Z_2^{-1})  \\
- \sum_{l \ge \frac12,m,n} \frac1{2l+1} t^l_{n \, \underline{m}}(Z_1) \cdot
\frac1{N(Z_2)} \begin{pmatrix}
(l-n) t^{l-\frac12}_{m+\frac12\,\underline{n+\frac12}}(Z_2) &
(l-n) t^{l-\frac12}_{m-\frac12\,\underline{n+\frac12}}(Z_2) \\
(l+n) t^{l-\frac12}_{m+\frac12\,\underline{n-\frac12}}(Z_2) &
(l+n) t^{l-\frac12}_{m-\frac12\,\underline{n-\frac12}}(Z_2) \end{pmatrix}  \\
%= \sum_{l \ge \frac12,m,n} \frac1{2l}
%t^l_{n \, \underline{m}}(Z_1) \cdot
%\frac1{N(Z_2)} \begin{pmatrix}
%(l-n) t^{l-\frac12}_{m+\frac12\,\underline{n+\frac12}}(Z_2) &
%(l-n) t^{l-\frac12}_{m-\frac12\,\underline{n+\frac12}}(Z_2) \\
%(l+n) t^{l-\frac12}_{m+\frac12\,\underline{n-\frac12}}(Z_2) &
%(l+n) t^{l-\frac12}_{m-\frac12\,\underline{n-\frac12}}(Z_2) \end{pmatrix}  \\
%- \sum_{l \ge \frac12,m,n} \frac1{2l+1} t^l_{n \, \underline{m}}(Z_1) \cdot
%\frac1{N(Z_2)} \begin{pmatrix}
%(l-n) t^{l-\frac12}_{m+\frac12\,\underline{n+\frac12}}(Z_2) &
%(l-n) t^{l-\frac12}_{m-\frac12\,\underline{n+\frac12}}(Z_2) \\
%(l+n) t^{l-\frac12}_{m+\frac12\,\underline{n-\frac12}}(Z_2) &
%(l+n) t^{l-\frac12}_{m-\frac12\,\underline{n-\frac12}}(Z_2) \end{pmatrix}  \\
= \sum_{l \ge \frac12,m,n} \frac1{2l(2l+1)}
t^l_{n \, \underline{m}}(Z_1) \cdot
\frac1{N(Z_2)} \begin{pmatrix}
(l-n) t^{l-\frac12}_{m+\frac12\,\underline{n+\frac12}}(Z_2) &
(l-n) t^{l-\frac12}_{m-\frac12\,\underline{n+\frac12}}(Z_2) \\
(l+n) t^{l-\frac12}_{m+\frac12\,\underline{n-\frac12}}(Z_2) &
(l+n) t^{l-\frac12}_{m-\frac12\,\underline{n-\frac12}}(Z_2) \end{pmatrix}  \\
%= \sum_{l \ge \frac12,m,n} \frac1{2l(2l+1)}
%\begin{pmatrix} (l-n+\frac12) t^l_{n-\frac12\,\underline{m}}(Z_1) \\
%(l+n+\frac12) t^l_{n+\frac12\,\underline{m}}(Z_1) \end{pmatrix}
%\cdot \frac1{N(Z_2)} \bigl( t^{l-\frac12}_{m+\frac12\,\underline{n}}(Z_2^{-1}), 
%t^{l-\frac12}_{m-\frac12\,\underline{n}}(Z_2^{-1}) \bigr)  \\
= \sum_{l \ge \frac12,m,n} \frac1{2l(2l+1)} f^{(2)}_{l,m,n}(Z_1) \cdot
\tilde g^{(2)}_{l,m,n}(Z_2).
\end{multline}

By Lemma 11 in \cite{desitter}, Lemma 17 in \cite{FL1}
and \eqref{Zt-identity}, we have:
\begin{multline}  \label{J2}
Z_1 \cdot I_R^{+-} (N(W)^{-1})
= Z_1 \cdot \sum_{l \ge 0,m,n} \frac1{2l+1} t^l_{n \, \underline{m}}(Z_1) \cdot
N(Z_2)^{-1} \cdot t^l_{m \, \underline{n}}(Z_2^{-1})  \\
= \sum_{l \ge 0,m,n} \frac1{(2l+1)^2} \begin{pmatrix}
(l-n+1) t^{l+\frac12}_{n-\frac12\,\underline{m-\frac12}}(Z_1) &
(l-n+1) t^{l+\frac12}_{n-\frac12\,\underline{m+\frac12}}(Z_1) \\
(l+n+1) t^{l+\frac12}_{n+\frac12\,\underline{m-\frac12}}(Z_1) &
(l+n+1) t^{l+\frac12}_{n+\frac12\,\underline{m+\frac12}}(Z_1) \end{pmatrix} \cdot
\frac1{N(Z_2)} \cdot t^l_{m \, \underline{n}}(Z_2^{-1})  \\
+ \sum_{l \ge \frac12,m,n} \frac{N(Z_1)}{(2l+1)^2} \cdot \begin{pmatrix}
(l+m) t^{l-\frac12}_{n-\frac12\,\underline{m-\frac12}}(Z_1) &
-(l-m) t^{l-\frac12}_{n-\frac12\,\underline{m+\frac12}}(Z_1) \\
-(l+m) t^{l-\frac12}_{n+\frac12\,\underline{m-\frac12}}(Z_1) &
(l-m) t^{l-\frac12}_{n+\frac12\,\underline{m+\frac12}}(Z_1) \end{pmatrix} \cdot
\frac1{N(Z_2)} \cdot t^l_{m \, \underline{n}}(Z_2^{-1})  \\
%= \sum_{l\ge \frac12,m,n} \frac1{(2l)^2} \begin{pmatrix}
%(l-n+\frac12) t^l_{n-\frac12\,\underline{m}}(Z_1) \\
%(l+n+\frac12) t^l_{n+\frac12\,\underline{m}}(Z_1) \end{pmatrix} \cdot
%\frac1{N(Z_2)} \cdot \bigl( t^{l-\frac12}_{m+\frac12 \, \underline{n}}(Z_2^{-1}),
%t^{l-\frac12}_{m-\frac12 \, \underline{n}}(Z_2^{-1}) \bigr)  \\
%+ \sum_{l \ge 1,m,n} \frac{N(Z_1)}{(2l)^2} \cdot \begin{pmatrix}
%t^{l-1}_{n-\frac12\,\underline{m}}(Z_1) \\
%-t^{l-1}_{n+\frac12\,\underline{m}}(Z_1) \end{pmatrix} \cdot
%\frac1{N(Z_2)} \bigl( (l+m)t^{l-\frac12}_{m+\frac12 \, \underline{n}}(Z_2^{-1}),
%-(l-m)t^{l-\frac12}_{m-\frac12 \, \underline{n}}(Z_2^{-1}) \bigr)  \\
= \sum_{l \ge \frac12,m,n} \frac1{(2l)^2} f^{(2)}_{l,m,n}(Z_1) \cdot
\tilde g^{(2)}_{l,m,n}(Z_2)
+ \sum_{l \ge 1,m,n} \frac1{(2l)^2} f^{(3)}_{l,m,n}(Z_1) \cdot
\tilde g^{(3)}_{l,m,n}(Z_2);
\end{multline}
\begin{multline}  \label{J3}
I_R^{+-} (N(W)^{-1}) \cdot Z_2
= \sum_{l \ge 0,m,n} \frac1{2l+1} t^l_{n \, \underline{m}}(Z_1) \cdot
N(Z_2)^{-1} \cdot t^l_{m \, \underline{n}}(Z_2^{-1}) \cdot Z_2  \\
= \sum_{l \ge 0,m,n} \frac1{(2l+1)^2} t^l_{n \, \underline{m}}(Z_1) \cdot
\begin{pmatrix}
(l+m+1) t^{l+\frac12}_{m+\frac12\,\underline{n+\frac12}}(Z_2^{-1}) &
-(l-m+1) t^{l+\frac12}_{m-\frac12\,\underline{n+\frac12}}(Z_2^{-1}) \\
-(l+m+1) t^{l+\frac12}_{m+\frac12\,\underline{n-\frac12}}(Z_2^{-1}) &
(l-m+1) t^{l+\frac12}_{m-\frac12\,\underline{n-\frac12}}(Z_2^{-1}) \end{pmatrix}  \\
+ \sum_{l \ge \frac12,m,n} \frac1{(2l+1)^2} t^l_{n \, \underline{m}}(Z_1) \cdot
\frac1{N(Z_2)} \begin{pmatrix}
(l-n) t^{l-\frac12}_{m+\frac12\,\underline{n+\frac12}}(Z_2^{-1}) &
(l-n) t^{l-\frac12}_{m-\frac12\,\underline{n+\frac12}}(Z_2^{-1}) \\
(l+n) t^{l-\frac12}_{m+\frac12\,\underline{n-\frac12}}(Z_2^{-1}) &
(l+n) t^{l-\frac12}_{m-\frac12\,\underline{n-\frac12}}(Z_2^{-1}) \end{pmatrix}  \\
%= \sum_{l\ge 0,m,n} \frac1{(2l+1)^2}
%\begin{pmatrix} t^l_{n-\frac12\,\underline{m}}(Z_1) \\
%-t^l_{n+\frac12\,\underline{m}}(Z_1) \end{pmatrix} \cdot
%\bigl( (l+m+1) t^{l+\frac12}_{m+\frac12\,\underline{n}}(Z_2^{-1}),
%-(l-m+1) t^{l+\frac12}_{m-\frac12\,\underline{n}}(Z_2^{-1}) \bigr)  \\
%+ \sum_{l \ge \frac12,m,n} \frac1{(2l+1)^2}
%\begin{pmatrix} (l-n+\frac12) t^l_{n-\frac12\,\underline{m}}(Z_1) \\
%(l+n+\frac12) t^l_{n+\frac12\,\underline{m}}(Z_1) \end{pmatrix} \cdot
%\frac1{N(Z_2)} \bigl( t^{l-\frac12}_{m+\frac12\,\underline{n}}(Z_2^{-1}),
%t^{l-\frac12}_{m-\frac12\,\underline{n}}(Z_2^{-1}) \bigr)  \\
= \sum_{l \ge 0,m,n} \frac1{(2l+1)^2} f^{(1)}_{l,m,n}(Z_1) \cdot
\tilde g^{(1)}_{l,m,n}(Z_2)
+ \sum_{l \ge \frac12,m,n} \frac1{(2l+1)^2} f^{(2)}_{l,m,n}(Z_1) \cdot
\tilde g^{(2)}_{l,m,n}(Z_2).
\end{multline}

Since $\partial N(W)^{-1} = -N(W)^{-2} \cdot W^+$, using
\eqref{dt}-\eqref{dt-inverse}, we obtain:
\begin{multline*}
I_R^{+-}(N(W)^{-2} \cdot W^+) (Z_1,Z_2)
= - \partial_{Z_1} \bigl[ I_R^{+-}(N(W)^{-1}) (Z_1,Z_2) \bigr]
- \bigl[ I_R^{+-}(N(W)^{-1}) (Z_1,Z_2) \bigr]
\overleftarrow{\partial}_{Z_2}  \\
%= - \sum_{l \ge 0,m,n} \frac1{2l+1} \partial_{Z_1} t^l_{n \, \underline{m}}(Z_1) \cdot
%N(Z_2)^{-1} \cdot t^l_{m \, \underline{n}}(Z_2^{-1})  \\
%- \sum_{l \ge 0,m,n} \frac1{2l+1} t^l_{n \, \underline{m}}(Z_1) \cdot
%\bigl( N(Z_2)^{-1} \cdot t^l_{m \, \underline{n}}(Z_2^{-1}) \bigr)
%\overleftarrow{\partial}_{Z_2} \\
= - \sum_{l \ge \frac12,m,n} \frac1{2l+1}
\begin{pmatrix} (l-m) t^{l-\frac12}_{n+\frac12\,\underline{m+\frac12}}(Z_1) &
(l-m) t^{l-\frac12}_{n-\frac12\,\underline{m+\frac12}}(Z_1) \\
(l+m) t^{l-\frac12}_{n+\frac12\,\underline{m-\frac12}}(Z_1) &
(l+m) t^{l-\frac12}_{n-\frac12\,\underline{m-\frac12}}(Z_1) \end{pmatrix}
\cdot \frac1{N(Z_2)} \cdot t^l_{m \, \underline{n}}(Z_2^{-1})  \\
+ \sum_{l \ge 0,m,n} \frac1{2l+1} t^l_{n \, \underline{m}}(Z_1) \cdot
\frac1{N(Z_2)} \begin{pmatrix}
(l-m+1) t^{l+\frac12}_{m-\frac12\,\underline{n-\frac12}}(Z_2^{-1}) &
(l-m+1) t^{l+\frac12}_{m-\frac12\,\underline{n+\frac12}}(Z_2^{-1})  \\
(l+m+1) t^{l+\frac12}_{m+\frac12\,\underline{n-\frac12}}(Z_2^{-1}) &
(l+m+1) t^{l+\frac12}_{m+\frac12\,\underline{n+\frac12}}(Z_2^{-1}) \end{pmatrix}  \\
%= - \sum_{l \ge \frac12,m,n} \frac1{2l+1}
%\begin{pmatrix} (l-m) t^{l-\frac12}_{n+\frac12\,\underline{m+\frac12}}(Z_1) &
%(l-m) t^{l-\frac12}_{n-\frac12\,\underline{m+\frac12}}(Z_1) \\
%(l+m) t^{l-\frac12}_{n+\frac12\,\underline{m-\frac12}}(Z_1) &
%(l+m) t^{l-\frac12}_{n-\frac12\,\underline{m-\frac12}}(Z_1) \end{pmatrix}
%\cdot \frac1{N(Z_2)} \cdot t^l_{m \, \underline{n}}(Z_2^{-1})  \\
%+ \sum_{l \ge \frac12,m,n} \frac1{2l}
%\begin{pmatrix} (l-m) t^{l-\frac12}_{n+\frac12\,\underline{m+\frac12}}(Z_1) &
%(l-m) t^{l-\frac12}_{n-\frac12\,\underline{m+\frac12}}(Z_1) \\
%(l+m) t^{l-\frac12}_{n+\frac12\,\underline{m-\frac12}}(Z_1) &
%(l+m) t^{l-\frac12}_{n-\frac12\,\underline{m-\frac12}}(Z_1) \end{pmatrix}
%\cdot \frac1{N(Z_2)} \cdot t^l_{m \, \underline{n}}(Z_2^{-1})  \\
= \sum_{l \ge \frac12,m,n} \frac1{2l(2l+1)}
\begin{pmatrix} (l-m) t^{l-\frac12}_{n+\frac12\,\underline{m+\frac12}}(Z_1) &
(l-m) t^{l-\frac12}_{n-\frac12\,\underline{m+\frac12}}(Z_1) \\
(l+m) t^{l-\frac12}_{n+\frac12\,\underline{m-\frac12}}(Z_1) &
(l+m) t^{l-\frac12}_{n-\frac12\,\underline{m-\frac12}}(Z_1) \end{pmatrix}
\cdot \frac1{N(Z_2)} \cdot t^l_{m \, \underline{n}}(Z_2^{-1}).
\end{multline*}
Then, applying Lemma 23 from \cite{FL1} and \eqref{Zt-identity}, we obtain:
\begin{multline}  \label{J4}
Z_1 \cdot I_R^{+-}(N(W)^{-2} \cdot W^+) \cdot Z_2  \\
%= \sum_{l \ge \frac12,m,n} \frac{Z_1}{2l(2l+1)}
%\begin{pmatrix} (l-m) t^{l-\frac12}_{n+\frac12\,\underline{m+\frac12}}(Z_1) &
%(l-m) t^{l-\frac12}_{n-\frac12\,\underline{m+\frac12}}(Z_1) \\
%(l+m) t^{l-\frac12}_{n+\frac12\,\underline{m-\frac12}}(Z_1) &
%(l+m) t^{l-\frac12}_{n-\frac12\,\underline{m-\frac12}}(Z_1) \end{pmatrix}
%\cdot \frac{Z_2}{N(Z_2)} \cdot t^l_{m \, \underline{n}}(Z_2^{-1})  \\
= \sum_{l \ge \frac12,m,n} \frac1{2l(2l+1)}
\begin{pmatrix} (l-n) t^l_{n\,\underline{m}}(Z_1) &
(l-n+1) t^l_{n-1\,\underline{m}}(Z_1) \\
(l+n+1) t^l_{n+1\,\underline{m}}(Z_1) &
(l+n) t^l_{n\,\underline{m}}(Z_1) \end{pmatrix}
\cdot \frac{Z_2}{N(Z_2)} \cdot t^l_{m \, \underline{n}}(Z_2^{-1})  \\
= \sum_{l \ge \frac12,m,n} \frac1{2l(2l+1)^2}
\begin{pmatrix} (l-n) t^l_{n\,\underline{m}}(Z_1) &
(l-n+1) t^l_{n-1\,\underline{m}}(Z_1) \\
(l+n+1) t^l_{n+1\,\underline{m}}(Z_1) &
(l+n) t^l_{n\,\underline{m}}(Z_1) \end{pmatrix}  \\
\hskip1in \times \begin{pmatrix}
(l+m+1) t^{l+\frac12}_{m+\frac12\,\underline{n+\frac12}}(Z_2^{-1}) &
-(l-m+1) t^{l+\frac12}_{m-\frac12\,\underline{n+\frac12}}(Z_2^{-1}) \\
-(l+m+1) t^{l+\frac12}_{m+\frac12\,\underline{n-\frac12}}(Z_2^{-1}) &
(l-m+1) t^{l+\frac12}_{m-\frac12\,\underline{n-\frac12}}(Z_2^{-1}) \end{pmatrix}  \\
+ \sum_{l \ge \frac12,m,n} \frac1{2l(2l+1)^2}
\begin{pmatrix} (l-n) t^l_{n\,\underline{m}}(Z_1) &
(l-n+1) t^l_{n-1\,\underline{m}}(Z_1) \\
(l+n+1) t^l_{n+1\,\underline{m}}(Z_1) &
(l+n) t^l_{n\,\underline{m}}(Z_1) \end{pmatrix}   \\
\hskip1in \times \frac1{N(Z_2)} \begin{pmatrix}
(l-n) t^{l-\frac12}_{m+\frac12\,\underline{n+\frac12}}(Z_2^{-1}) &
(l-n) t^{l-\frac12}_{m-\frac12\,\underline{n+\frac12}}(Z_2^{-1}) \\
(l+n) t^{l-\frac12}_{m+\frac12\,\underline{n-\frac12}}(Z_2^{-1}) &
(l+n) t^{l-\frac12}_{m-\frac12\,\underline{n-\frac12}}(Z_2^{-1}) \end{pmatrix}  \\
%= \sum_{l \ge \frac12,m,n} \frac1{2l(2l+1)^2}
%\begin{pmatrix} (l-n+\frac12) t^l_{n-\frac12\,\underline{m}}(Z_1) &
%(l-n+\frac12) t^l_{n-\frac12\,\underline{m}}(Z_1) \\
%(l+n+\frac12) t^l_{n+\frac12\,\underline{m}}(Z_1) &
%(l+n+\frac12) t^l_{n+\frac12\,\underline{m}}(Z_1) \end{pmatrix}  \\
%\hskip1in \times \begin{pmatrix}
%(l+m+1) t^{l+\frac12}_{m+\frac12\,\underline{n}}(Z_2^{-1}) &
%-(l-m+1) t^{l+\frac12}_{m-\frac12\,\underline{n}}(Z_2^{-1}) \\
%-(l+m+1) t^{l+\frac12}_{m+\frac12\,\underline{n}}(Z_2^{-1}) &
%(l-m+1) t^{l+\frac12}_{m-\frac12\,\underline{n}}(Z_2^{-1}) \end{pmatrix}  \\
%+ \sum_{l \ge \frac12,m,n} \frac1{2l(2l+1)^2}
%\begin{pmatrix} (l-n+\frac12) t^l_{n-\frac12\,\underline{m}}(Z_1) &
%(l-n+\frac12) t^l_{n-\frac12\,\underline{m}}(Z_1) \\
%(l+n+\frac12) t^l_{n+\frac12\,\underline{m}}(Z_1) &
%(l+n+\frac12) t^l_{n+\frac12\,\underline{m}}(Z_1) \end{pmatrix}   \\
%\hskip1in \times \frac1{N(Z_2)} \begin{pmatrix}
%(l-n+\frac12) t^{l-\frac12}_{m+\frac12\,\underline{n}}(Z_2^{-1}) &
%(l-n+\frac12) t^{l-\frac12}_{m-\frac12\,\underline{n}}(Z_2^{-1}) \\
%(l+n+\frac12) t^{l-\frac12}_{m+\frac12\,\underline{n}}(Z_2^{-1}) &
%(l+n+\frac12) t^{l-\frac12}_{m-\frac12\,\underline{n}}(Z_2^{-1}) \end{pmatrix}  \\
%= \sum_{l \ge \frac12,m,n} \frac1{2l(2l+1)}
%\begin{pmatrix} (l-n+\frac12) t^l_{n-\frac12\,\underline{m}}(Z_1) \\
%(l+n+\frac12) t^l_{n+\frac12\,\underline{m}}(Z_1) \end{pmatrix}
%\cdot \frac1{N(Z_2)}
%\bigl( t^{l-\frac12}_{m+\frac12\,\underline{n}}(Z_2^{-1}), 
%t^{l-\frac12}_{m-\frac12\,\underline{n}}(Z_2^{-1}) \bigr)  \\
= \sum_{l \ge \frac12,m,n} \frac1{2l(2l+1)} f^{(2)}_{l,m,n}(Z_1) \cdot
\tilde g^{(2)}_{l,m,n}(Z_2).
\end{multline}

Combining \eqref{J1}-\eqref{J4}, we obtain \eqref{J-formula1}.
Applying the inversion
$\bigl( \begin{smallmatrix} 0 & 1 \\ 1 & 0 \end{smallmatrix} \bigr)
\in GL(2,\HC)$
and Lemma \ref{inversion-action-lem} to \eqref{J-formula1},
we obtain \eqref{J-formula2}.
\end{proof}

Note that the expressions \eqref{J-formula1}-\eqref{J-formula2} are
similar to the reproducing kernel expansions
\eqref{1st-kernel-expansion}-\eqref{2nd-kernel-expansion}.

Recall that, by Lemma 59 in \cite{ATMP}, $N(Z)^{-2} \cdot Z^+$ generates an
indecomposable subrepresentation of ${\cal W}$ that has exactly two irreducible
components: ${\cal Q}^0$ and the trivial one-dimensional representation.
Hence, by equivariance (Proposition \ref{fork-prop}), applying operators
of the form $(\pi'_l \otimes \pi'_r)(X)$, $X \in \mathfrak{gl}(2,\HC)$,
to the expansions \eqref{J-formula1}-\eqref{J-formula2}
one can obtain -- at least theoretically -- $J_R^{'+-} F$ and $J_R^{'-+} F$,
for all $F \in {\cal Q}^0$. In particular, we can conclude:

\begin{cor}  \label{Q^0-not-in-ker-cor}
  The irreducible component ${\cal Q}^0$ of $(\rho_2,{\cal W})$ is {\em not}
  in the kernel of $J_R^{'+-}$ and {\em not} in the kernel of $J_R^{'-+}$.
\end{cor}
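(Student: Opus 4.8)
The plan is to argue by contradiction, combining the indecomposable module structure attached to $N(Z)^{-2}\cdot Z^+$ with the explicit nonvanishing of Proposition \ref{J-gen-prop}. Write $M\subset{\cal W}$ for the $\rho_2$-submodule generated by $N(Z)^{-2}\cdot Z^+$. By Lemma 59 in \cite{ATMP}, $M$ is indecomposable with exactly two composition factors, ${\cal Q}^0$ and the trivial representation; since ${\cal Q}^0$ is itself a $\rho_2$-submodule of ${\cal W}$ (Section 5.1 in \cite{ATMP}), it must be the unique proper nonzero submodule of $M$, with $M/{\cal Q}^0$ trivial. Hence $\rho_2(X)\bigl(N(Z)^{-2}\cdot Z^+\bigr)\in{\cal Q}^0$ for every $X\in\mathfrak{gl}(2,\HC)$, and, because $M$ is generated by $N(Z)^{-2}\cdot Z^+$ and has length two, these vectors generate all of ${\cal Q}^0$.

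The first step is to record the resulting equivalence. Suppose, toward a contradiction, that ${\cal Q}^0\subseteq\ker J_R^{'+-}$. By the $\mathfrak{gl}(2,\HC)$-equivariance of Proposition \ref{fork-prop},
$$(\pi'_l\otimes\pi'_r)(X)\,J_R^{'+-}\bigl(N(W)^{-2}\cdot W^+\bigr)=J_R^{'+-}\bigl(\rho_2(X)(N(W)^{-2}\cdot W^+)\bigr)=0\qquad\text{for all }X,$$
the last equality holding because $\rho_2(X)(N(W)^{-2}\cdot W^+)\in{\cal Q}^0\subseteq\ker J_R^{'+-}$. Thus $J_R^{'+-}\bigl(N(W)^{-2}\cdot W^+\bigr)$ would be a $\mathfrak{gl}(2,\HC)$-invariant vector of $\widetilde{{\cal U}\otimes{\cal U}'}$. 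Since this vector is nonzero by Proposition \ref{J-gen-prop}, the entire problem reduces to showing that the explicit series \eqref{J-formula1} is \emph{not} annihilated by the whole Lie algebra.

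To exhibit a generator that does not annihilate it I would apply $(\pi'_l\otimes\pi'_r)\bigl(\begin{smallmatrix}0&0\\C&0\end{smallmatrix}\bigr)=\pi'_l\bigl(\begin{smallmatrix}0&0\\C&0\end{smallmatrix}\bigr)\otimes 1+1\otimes\pi'_r\bigl(\begin{smallmatrix}0&0\\C&0\end{smallmatrix}\bigr)$ to \eqref{J-formula1}, using the action tables of Subsection \ref{K-type-action}, and isolate the coefficient of a single basis vector near the bottom of the series, for instance $f^{(2)}_{\frac12,m,n}(Z_1)\cdot\tilde g^{(1)}_{0,m',n'}(Z_2)$. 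Every summand of \eqref{J-formula1} pairs $f^{(\alpha)}$ of spin $l$ in $Z_1$ with $\tilde g^{(\alpha)}$ of the \emph{same} spin $l$ in $Z_2$; the operator $\pi'_l\bigl(\begin{smallmatrix}0&0\\C&0\end{smallmatrix}\bigr)$ raises the $Z_1$-spin by $\tfrac12$ while $\pi'_r\bigl(\begin{smallmatrix}0&0\\C&0\end{smallmatrix}\bigr)$ lowers the $Z_2$-spin by $\tfrac12$, so a fixed target basis vector receives contributions from only two neighbouring terms: the $\pi'_l$-image of the $l=0$ first-sum term $f^{(1)}_{0}\tilde g^{(1)}_{0}$ (with coefficient $-\tfrac1{(2l+1)^2}\big|_{l=0}=-1$) and the $\pi'_r$-image of the $l=\tfrac12$ second-sum term $f^{(2)}_{\frac12}\tilde g^{(2)}_{\frac12}$ (with coefficient $-\tfrac1{(2l)^2(2l+1)^2}\big|_{l=1/2}=-\tfrac14$). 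Substituting these coefficients together with the tabulated structure constants shows the two contributions do not cancel, producing a nonzero element of $J_R^{'+-}({\cal Q}^0)$, contradicting the assumption and proving the claim for $J_R^{'+-}$. The statement for $J_R^{'-+}$ then follows verbatim from the companion expansion \eqref{J-formula2}, or by transporting the conclusion through the inversion $\bigl(\begin{smallmatrix}0&1\\1&0\end{smallmatrix}\bigr)$ by means of Lemma \ref{inversion-action-lem}.

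The main obstacle is precisely this final non-cancellation check. Because both the $\pi'_l$- and $\pi'_r$-parts of a single generator shift the spin grading, each component of the image is a sum of two series terms, so $\mathfrak{gl}(2,\HC)$-invariance of \eqref{J-formula1} cannot be excluded by any grading, weight, or support consideration; it can only be ruled out by a genuine comparison of the explicit coefficients. Localizing at the minimal-spin edge of the series, where the two competing contributions reduce to a single pair of structure constants, keeps this verification finite and is the decisive computation.
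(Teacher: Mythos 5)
Your argument is correct and follows essentially the same route as the paper: the indecomposable module generated by $N(Z)^{-2}\cdot Z^+$ (with socle ${\cal Q}^0$ and trivial head), the equivariance of Proposition \ref{fork-prop}, and the explicit expansions of Proposition \ref{J-gen-prop}, reducing everything to the non-invariance of the series \eqref{J-formula1}. The one step you assert without computing — the non-cancellation at the minimal-spin edge — does check out: for $C=\bigl(\begin{smallmatrix}c_{11}&c_{12}\\c_{21}&c_{22}\end{smallmatrix}\bigr)$ the coefficient of $f^{(2)}_{\frac12,\frac12,0}(Z_1)\cdot\tilde g^{(1)}_{0,0,-\frac12}(Z_2)$ receives $-\tfrac12 c_{22}$ from the $l=0$ first-sum term and $-\tfrac14 c_{22}$ from the $l=\tfrac12$ second-sum term, which reinforce rather than cancel.
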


We have an analogue of Proposition 66 in \cite{ATMP}.

\begin{thm}  \label{J+-_thm}
Let $Z_1 \in \BB D^+_R$ and $Z_2 \in \BB D^-_R$, then the kernel of $J_R^{'+-}$
has exactly four irreducible components of $(\rho_2,{\cal W})$:
\begin{equation}  \label{J+-_kernel}
(\rho, \Sh^+), \qquad (\rho, \Sh^-), \qquad {\cal Q}^+, \qquad {\cal Q}^-.
\end{equation}

Similarly, if $Z_1 \in \BB D^-_R$ and $Z_2 \in \BB D^+_R$, the kernel of
$J_R^{'-+}$ has the same four irreducible components \eqref{J+-_kernel}
of $(\rho_2,{\cal W})$.
In other words, $\ker J_R^{'+-} = \ker J_R^{'-+}$.
\end{thm}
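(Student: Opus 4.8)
The plan is to combine the equivariance of $J_R^{'+-}$ with the factorization \eqref{I-J'-relation} through the scalar operator $I_R^{+-}$. By Proposition \ref{fork-prop} the map $J_R^{'+-}$ is $\mathfrak{gl}(2,\HC)$-equivariant, so $\ker J_R^{'+-}$ is a subrepresentation of $(\rho_2,{\cal W})$ and hence a union of composition factors among the thirteen listed in Subsection \ref{Subsect8.1}. Two of these are already settled: Proposition \ref{J-gen-prop} shows the trivial component is not in the kernel (its generator $N(W)^{-2}\cdot W^+$ maps to the nonzero series \eqref{J-formula1}), and Corollary \ref{Q^0-not-in-ker-cor} shows that ${\cal Q}^0$ is not in the kernel.

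First I would record a sufficient condition for membership in the kernel. Since the argument of $I_R$ in \eqref{I-J'-relation} is $(W-Z_1)F(W)(W-Z_2)=WFW-Z_1FW-WFZ_2+Z_1FZ_2$, and since (as established in \cite{desitter}) the operator $I_R^{+-}$ annihilates $\HC\otimes(\Zh^+\oplus\Zh^-)$, so that $I_R^{+-}(\phi)$ depends only on $\proj^0\phi$, pulling the constant factors $Z_1,Z_2$ out of the $W$-integration gives
$$J_R^{'+-}F = I_R^{+-}(WFW) - Z_1\,I_R^{+-}(FW) - I_R^{+-}(WF)\,Z_2 + Z_1\,I_R^{+-}(F)\,Z_2,$$
all terms evaluated at $(Z_1,Z_2)$. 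Each term is separately zero as soon as the corresponding $\Zh^0$-projection vanishes, so $\proj^0(F)=\proj^0(FW)=\proj^0(WF)=\proj^0(WFW)=0$ implies $J_R^{'+-}F=0$.

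Next I would verify this sufficient condition on the $K$-type content of ${\cal Q}^+,{\cal Q}^-,\Sh^+,\Sh^-$. Elements of ${\cal Q}^+$ involve only $N(Z)^k$ with $k\ge0$; by \eqref{Zt-identity} left or right multiplication by $Z$ carries $t^l_{n\,\underline{m}}N(Z)^k$ into a combination of $t^{l+\frac12}N(Z)^k$ and $t^{l-\frac12}N(Z)^{k+1}$, so $F,FW,WF,WFW$ all remain within $\HC\otimes\Zh^+$ and all four $\proj^0$'s vanish; hence ${\cal Q}^+\subseteq\ker J_R^{'+-}$. The same bookkeeping, applied to the deep range $k\le-(2l+3)$, keeps ${\cal Q}^-$ and its $Z$-multiples inside $\HC\otimes\Zh^-$, so ${\cal Q}^-\subseteq\ker J_R^{'+-}$. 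For $\Sh^+$ and $\Sh^-$, which occur only as subquotients sitting directly above ${\cal Q}^+$, respectively ${\cal Q}^-$, in the composition series of ${\cal W}$, I would exhibit lifts whose $K$-types again lie in $\Zh^+$, respectively $\Zh^-$, so that the subrepresentation carrying the $\Sh^\pm$-layer still meets the sufficient condition; this places $\Sh^+$ and $\Sh^-$ among the composition factors of the kernel.

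It remains to show the other seven components --- ${\cal J}$, the two copies ${\cal I}^\pm/(\Sh^\pm\oplus{\cal J})$, and ${\cal F}^\pm,{\cal G}^\pm$ --- are not annihilated, which I would do by evaluating $J_R^{'+-}$ on an explicit generator of each, exactly as in the proofs of Theorem \ref{J'-thm} and Proposition \ref{J-gen-prop}, using \eqref{I-J'-relation} and the identities \eqref{dt}--\eqref{Ct-inverse}, and checking nonvanishing. Together with the already-handled trivial and ${\cal Q}^0$ components this pins the kernel down to exactly $\{\Sh^+,\Sh^-,{\cal Q}^+,{\cal Q}^-\}$. The equality $\ker J_R^{'+-}=\ker J_R^{'-+}$ is then immediate: $J_R^{'-+}$ factors in the same way through $I_R^{-+}$, which likewise annihilates $\HC\otimes(\Zh^+\oplus\Zh^-)$, so the identical sufficient condition and generator computations apply; alternatively, conjugation by the inversion $\bigl(\begin{smallmatrix}0&1\\1&0\end{smallmatrix}\bigr)$ swaps $\BB D^+_R\leftrightarrow\BB D^-_R$ and the pairs $\Sh^+\leftrightarrow\Sh^-$, ${\cal Q}^+\leftrightarrow{\cal Q}^-$, leaving the set $\{\Sh^\pm,{\cal Q}^\pm\}$ invariant and forcing the two kernels to coincide. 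The main obstacle is the necessity direction: the $\proj^0$-criterion is only sufficient, since the four $(Z_1,Z_2)$-dependent terms above may cancel (this is exactly what makes, say, the trivial component lie in $\ker J_R^{'++}$), so excluding the seven surviving components genuinely requires the explicit nonvanishing computations rather than a degree count.
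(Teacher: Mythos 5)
Your containment direction and your treatment of the trivial component and ${\cal Q}^0$ coincide with the paper's: equivariance from Proposition \ref{fork-prop}, the factorization \eqref{I-J'-relation} through $I_R^{+-}$, the fact that $I_R^{+-}$ kills $\Zh^+$ and $\Zh^-$ so that only the $\Zh^0$-content of $F$, $FW$, $WF$, $WFW$ matters, and the appeal to Proposition \ref{J-gen-prop} and Corollary \ref{Q^0-not-in-ker-cor}. Where you genuinely diverge is in excluding the remaining seven composition factors. You propose evaluating $J_R^{'+-}$ on an explicit generator of each and checking nonvanishing. The paper never computes $J_R^{'+-}$ on any of them: it first notes (Theorem 31 in \cite{ATMP}) that any subrepresentation containing ${\cal I}^\pm/(\Sh^\pm\oplus{\cal J})$ as a factor must contain ${\cal J}$, reducing seven cases to five; then for ${\cal F}^\pm$, ${\cal G}^\pm$, ${\cal J}$ it shows that any subrepresentation of ${\cal W}$ containing one of these as a factor must contain a specific $K$-type vector, applies $\partial_{11}$ to that vector, and reads off a nonzero element of ${\cal Q}^0$. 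Since ${\cal Q}^0\not\subseteq\ker J_R^{'+-}$ is already known, none of the seven can occur in the kernel. This buys both economy (five short $\partial_{11}$ computations instead of seven evaluations of $J_R^{'+-}$) and, more importantly, correctness in the multiplicity-two situation described next.

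The concrete gap in your plan is the phrase ``evaluate on an explicit generator.'' For ${\cal F}^\pm$ and ${\cal G}^\pm$ the relevant $K$-types occur in ${\cal W}$ with multiplicity one, so the generator is pinned down and nonvanishing of $J_R^{'+-}$ on it would indeed suffice. But the $K$-types of ${\cal J}$ occur with multiplicity two (spanned by $N(Z)^k\cdot{\bf I_{l,m,n}}(Z)$ and ${\bf H_{k+1,l-1,m,n}}(Z)$), so a subrepresentation having ${\cal J}$ as a composition factor is only guaranteed to contain \emph{some unknown nontrivial linear combination} of the two; showing $J_R^{'+-}\ne 0$ on one particular lift excludes nothing. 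You would need the statement for every nontrivial combination, which is exactly what the paper's $\partial_{11}$ computation delivers (every such combination generates a nonzero element of ${\cal Q}^0$). The same issue, in milder form, affects $\Sh^\pm$ and ${\cal I}^\pm/(\Sh^\pm\oplus{\cal J})$, which do not occur as subrepresentations at all. So if you pursue direct evaluation you must either restrict it to the multiplicity-one factors and borrow the paper's structural reduction for ${\cal J}$ and the two components above it, or prove the ``for all lifts'' version — at which point you have essentially reconstructed the paper's argument. Your final remarks (the inversion interchanging the two kernels, and the warning that the four terms of \eqref{I-J'-relation} can cancel) are correct and consistent with the paper.
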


\begin{proof}
We prove the first part only, the proof of the other part is similar.
So, suppose that $Z_1 \in \BB D^+_R$, $Z_2 \in \BB D^-_R$.
By Proposition \ref{fork-prop}, the map $J_R^{'+-}$ is
$\mathfrak{gl}(2,\HC)$-equivariant.
By the results of Section 6 in \cite{desitter} and equations
\eqref{Zt-identity}, \eqref{I-J'-relation},
$J_R^{'+-}$ annihilates the irreducible components of $(\rho_2,{\cal W})$
that overlap with vector spaces
$$
\HC \otimes \Zh^+ \qquad \text{and} \qquad
\HC \otimes \BB C \text{-span of }
\bigl\{ t^l_{n\,\underline{m}}(Z) \cdot N(Z)^k;\: k \le -(2l+4) \bigr\}.
$$
Comparing this with the list of irreducible components of $(\rho_2, {\cal W})$
given at the beginning of Subsection \ref{Subsect8.1}, we see that $J_R^{'+-}$
annihilates the components \eqref{J+-_kernel} of $(\rho_2,{\cal W})$.

By Proposition \ref{J-gen-prop} and Corollary \ref{Q^0-not-in-ker-cor},
the trivial $1$-dimensional component and ${\cal Q}^0$ are not in the kernel
of $J_R^{'+-}$.
We need to check that the remaining seven irreducible components
\begin{equation}  \label{7-list}
{\cal F}^+, \quad {\cal F}^-, \quad {\cal G}^+, \quad {\cal G}^-, \quad
(\rho, {\cal J}), \quad
\bigl( \rho, {\cal I}^+/(\Sh^+ \oplus {\cal J}) \bigr), \quad
\bigl( \rho, {\cal I}^-/(\Sh^- \oplus {\cal J}) \bigr)
\end{equation}
of $(\rho_2,{\cal W})$ are not annihilated by $J_R^{'+-}$.
Theorem 31 in \cite{ATMP} implies that if a subrepresentation
$(\rho_2, {\cal W}_{sub})$ of $(\rho_2,{\cal W})$
-- such as $\ker J_R^{'+-}$ -- contains either of the last
two irreducible components on the list, then it contains $(\rho, {\cal J})$.
Thus, it is sufficient to show that $J_R^{'+-}$ does not annihilate the first
five irreducible components.
In all five cases we find a $K$-type generator of the respective component
that also generates ${\cal Q}^0$.
By Proposition 58 and the conclusion of Subsection 5.1 in \cite{ATMP},
any subrepresentation ${\cal W}_{sub}$ of ${\cal W}$ that contains
an irreducible component isomorphic to ${\cal F}^+$, ${\cal F}^-$,
${\cal G}^+$ or ${\cal G}^-$, must contain a respective generator
\begin{align*}
& N(Z)^{-1} \cdot {\bf F_{\frac12,-\frac12,-\frac12}}(Z), \qquad 
N(Z)^{-3} \cdot {\bf G_{\frac12,-\frac12,-\frac12}}(Z),  \\
& N(Z)^{-1} \cdot {\bf G_{\frac12,-\frac12,-\frac12}}(Z), \qquad 
N(Z)^{-3} \cdot {\bf F_{\frac12,-\frac12,-\frac12}}(Z).
\end{align*}
By equations (52)-(53) in \cite{ATMP},
\begin{align*}
\partial_{11} \bigl( N(Z)^{-1} \cdot {\bf F_{\frac12,-\frac12,-\frac12}}(Z) \bigr)
&= - \tfrac13 N(Z)^{-2} \cdot {\bf F_{1,\frac12,\frac12}}(Z)
+ \tfrac12 {\bf H_{-1,0,0,0}}(Z) \quad \in {\cal Q}^0,  \\
\partial_{11} \bigl( N(Z)^{-3} \cdot {\bf G_{\frac12,-\frac12,-\frac12}}(Z) \bigr)
&= -2  N(Z)^{-4} \cdot {\bf G_{1,\frac12,\frac12}}(Z)
+ \tfrac12 {\bf H_{-1,0,0,0}}(Z) \quad \in {\cal Q}^- \oplus {\cal Q}^0,  \\
\partial_{11} \bigl( N(Z)^{-1} \cdot {\bf G_{\frac12,-\frac12,-\frac12}}(Z) \bigr)
&= - \tfrac23 N(Z)^{-2} \cdot {\bf G_{1,\frac12,\frac12}}(Z)
+ \tfrac12 {\bf H_{-1,0,0,0}}(Z)  \quad \in {\cal Q}^0,  \\
\partial_{11} \bigl( N(Z)^{-3} \cdot {\bf F_{\frac12,-\frac12,-\frac12}}(Z) \bigr)
&= - N(Z)^{-4} \cdot {\bf F_{1,\frac12,\frac12}}(Z)
+ \tfrac12 {\bf H_{-1,0,0,0}}(Z) \quad \in {\cal Q}^- \oplus {\cal Q}^0.
\end{align*}
In either of the four cases, it follows that ${\cal W}_{sub}$ contains
${\bf H_{-1,0,0,0}}(Z) \in {\cal Q}^0$ and ${\cal Q}^0 \subset {\cal W}_{sub}$.

We turn our attention to the component $(\rho, {\cal J})$.
Introduce elements of ${\cal W}$:
$$
{\bf I_{l,m,n}}(Z) = \begin{pmatrix}
(l+n+1) t^l_{n+1\,\underline{m+1}}(Z) & -(l-n) t^l_{n\,\underline{m+1}}(Z) \\
-(l+n+1) t^l_{n+1\,\underline{m}}(Z) & (l-n) t^l_{n\,\underline{m}}(Z)
\end{pmatrix}, \qquad
\begin{smallmatrix} %k=0, \pm1, \pm2, \dots, \\
l=\frac12,1,\frac32,\dots, \\
-l \le m,n \le l-1. \end{smallmatrix}
$$
The functions
$$
N(Z)^k \cdot {\bf I_{l,m,n}}(Z), \qquad
k \in \BB Z,\quad l=\tfrac12,1,\tfrac32,\dots, \quad -l \le m,n \le l-1,
$$
form a family of $K$-types of $(\rho_2,{\cal W})$.
More precisely, as representations of $SU(2) \times SU(2)$,
for $k$ and $l$ fixed,
$$
V_{l-\frac12} \boxtimes V_{l-\frac12} = \BB C\text{-span of }
\bigl\{ N(Z)^k \cdot {\bf I_{l,m,n}}(Z) ;\: -l \le m,n \le l-1 \bigr\}
$$
(cf. Proposition 58 in \cite{ATMP} describing the $K$-types of the kernel of
$\tr \circ \partial^+ : {\cal W} \to \Sh$).
By direct calculation, using equation (27) in \cite{desitter}, we have:
\begin{equation}  \label{Tr-d-NI}
\tr \circ \partial^+ \bigl( N(Z)^k \cdot {\bf I_{l,m,n}}(Z) \bigr)
= (2l+1)(2l+k+1) N(Z)^k \cdot t^{l-\frac12}_{n+\frac12\,\underline{m+\frac12}}(Z).
\end{equation}
The $K$-types of $(\rho, {\cal J})$ can be read from Theorem 31 in \cite{ATMP}
-- they are the functions \eqref{Tr-d-NI}
with $l=\frac32,2,\frac52,\dots$ and $-2l \le k \le -3$.
Note that, unlike the $K$-types of ${\cal F}^+$, ${\cal F}^-$, ${\cal G}^+$
and ${\cal G}^-$, these $K$-types appear in $(\rho_2,{\cal W})$ with
multiplicity two.
This means that any subrepresentation ${\cal W}_{sub}$ of ${\cal W}$
that contains an irreducible component isomorphic to $(\rho, {\cal J})$
must contain some linear combinations of
$$
N(Z)^k \cdot {\bf I_{l,m,n}}(Z) \quad \text{and} \quad {\bf H_{k+1,l-1,m,n}}(Z),
\qquad \begin{smallmatrix} l=\frac32,2,\frac52,\dots, \\
-2l \le k \le -3, \\ -l \le m,n \le l-1. \end{smallmatrix}
$$
By direct calculation, using equation (27) in \cite{desitter}, we have:
\begin{multline*}
\partial_{11} \bigl( N(Z)^k \cdot {\bf I_{l,m,n}}(Z) \bigr)
%= \frac{2l+k+1}{2l+1} N(Z)^k \begin{pmatrix}
%  (l-m-1)(l+n+1) t^{l-\frac12}_{n+\frac32\,\underline{m+\frac32}}(Z) &
%  -(l-m-1)(l-n) t^{l-\frac12}_{n+\frac12\,\underline{m+\frac32}}(Z) \\
%  -(l-m)(l+n+1) t^{l-\frac12}_{n+\frac32\,\underline{m+\frac12}}(Z) &
%  (l-m)(l-n) t^{l-\frac12}_{n+\frac12\,\underline{m+\frac12}}(Z)
%\end{pmatrix}  \\
%+ \frac{k(l+n+1)}{2l+1} N(Z)^{k-1} \begin{pmatrix}
%  (l+n+2) t^{l+\frac12}_{n+\frac32\,\underline{m+\frac32}}(Z) &
%  -(l-n) t^{l+\frac12}_{n+\frac12\,\underline{m+\frac32}}(Z) \\
%  -(l+n+2) t^{l+\frac12}_{n+\frac32\,\underline{m+\frac12}}(Z) &
%  (l-n) t^{l+\frac12}_{n+\frac12\,\underline{m+\frac12}}(Z) \end{pmatrix}  \\
= \tfrac{2l+k+1}{(2l)^2} N(Z)^k \cdot {\bf F_{l-\frac12,m+\frac12,n+\frac12}}(Z) \\
+ \tfrac{(2l+k+1)(l-m-1)(l+n+1)}{(2l)^2} N(Z)^k
\cdot {\bf G_{l-\frac12,m+\frac12,n+\frac12}}(Z)  \\
+ \tfrac{l+n+1}{2l(2l+1)} {\bf H_{k,l-\frac12,m+\frac12,n+\frac12}}(Z)
+ \tfrac{k(2l+1)(l+n+1)}{2l(2l+2)}
N(Z)^{k-1} \cdot {\bf I_{l+\frac12,m+\frac12,n+\frac12}}(Z).
\end{multline*}
On the other hand, by the calculation given between equations (53) and (54)
in \cite{ATMP},
\begin{multline*}
\partial_{11} \bigl( {\bf H_{k+1,l-1,m,n}}(Z) \bigr)
= - \tfrac{(k+1)(2l+k+1)}{l(2l-1)(2l+1)} N(Z)^k
\cdot {\bf F_{l-\frac12,m+\frac12,n+\frac12}}(Z) \\
- \tfrac{(k+1)(2l+k+1)(l-m-1)(l+n+1)}{l(2l-1)(2l+1)} N(Z)^k
\cdot {\bf G_{l-\frac12,m+\frac12,n+\frac12}}(Z)  \\
+ \tfrac{(2l-2)(2l+k+1)(l-m-1)}{(2l-1)^2}
{\bf H_{k+1,l-\frac32,m+\frac12,n+\frac12}}(Z)
+ \tfrac{(k+1)(2l+2)(l+n+1)}{(2l+1)^2} {\bf H_{k,l-\frac12,m+\frac12,n+\frac12}}(Z).
\end{multline*}
In particular,
\begin{multline*}
\partial_{11} \bigl( N(Z)^{-3} \cdot {\bf I_{\frac32,-\frac32,-\frac32}}(Z) \bigr)
= \tfrac19 N(Z)^{-3} \cdot {\bf F_{1,-1,-1}}(Z) \\
+ \tfrac29 N(Z)^{-3} \cdot {\bf G_{1,-1,-1}}(Z)
+ \tfrac{1}{12} {\bf H_{-3,1,-1,-1}}(Z)
- \tfrac45 N(Z)^{-4} \cdot {\bf I_{2,-1,-1}}(Z),
\end{multline*}
\begin{multline*}
\partial_{11} \bigl( {\bf H_{-2,\frac12,-\frac32,-\frac32}}(Z) \bigr)
= \tfrac16 N(Z)^{-3} \cdot {\bf F_{1,-1,-1}}(Z) \\
+ \tfrac13 N(Z)^{-3} \cdot {\bf G_{1,-1,-1}}(Z)
+ \tfrac12 {\bf H_{-2,0,-1,-1}}(Z)
- \tfrac58 {\bf H_{-3,1,-1,-1}}(Z).
\end{multline*}
%The components $N(Z)^{-3} \cdot {\bf F_{1,-1,-1}}(Z)$,
%$N(Z)^{-3} \cdot {\bf G_{1,-1,-1}}(Z)$, ${\bf H_{-2,0,-1,-1}}(Z)$
%and ${\bf H_{-3,1,-1,-1}}(Z)$ belong to ${\cal Q}^0$.
We conclude that any subrepresentation ${\cal W}_{sub}$ of ${\cal W}$
that contains an irreducible component isomorphic to $(\rho, {\cal J})$
must contain a non-trivial linear combination of
$$
N(Z)^{-3} \cdot {\bf I_{\frac32,-\frac32,-\frac32}}(Z) \quad \text{and} \quad
{\bf H_{-2,\frac12,-\frac32,-\frac32}}(Z),
$$
hence also a non-trivial linear combination of
$$
{\bf H_{-2,0,-1,-1}}(Z) \quad \text{and} \quad
{\bf H_{-3,1,-1,-1}}(Z) \qquad \in {\cal Q}^0,
$$
and ${\cal Q}^0 \subset {\cal W}_{sub}$.
\end{proof}

%This case when $Z_1 \in \BB D^+_R$ and $Z_2 \in \BB D^-_R$ (or the other way
%around) is very subtle and may be analyzed further in a future paper.

\subsection{Operator $\Xm^0: {\cal W} \to {\cal W}'$ Dual to the Maxwell
  Operator}

In this subsection we consider an analogue of Theorems 15 from \cite{desitter}
and 67 from \cite{ATMP}.
As was stated at the end of Subsection 6.2 in \cite{ATMP}, these results
can be interpreted as mathematical versions of the regularization of vacuum
polarization in QED and scalar QED.

Recall an open subset $\Omega_0 \subset \HC^{\times} \times \HC^{\times}$
introduced in Subsection 6.2 in \cite{ATMP}:
$$
\Omega_0 = \left\{ (Z_1,Z_2) \in \HC^{\times} \times \HC^{\times} ;\:
\begin{smallmatrix} \lambda_1 \ne 1,\: \lambda_2 \ne 1,\:
\text{neither $\frac{1-\lambda_1}{1-\lambda_2}$ nor} \\
\text{$\frac{1-\lambda_1^{-1}}{1-\lambda_2^{-1}}$ is a negative real number}
\end{smallmatrix}\right\},
$$
where $\lambda_1$ and $\lambda_2$ denote the eigenvalues of $Z_1Z_2^{-1}$.
From Section 6 of \cite{desitter} and the relation \eqref{I-J'-relation}
between $J'_R$ and $I_R$, we see that, for any $F \in {\cal W}$,
$(J_R^{'+-}F)(Z_1,Z_2)$ and $(J_R^{'-+}F)(Z_1,Z_2)$ extend analytically across
$\Omega_0$, and we have well defined operators $J_R^{'+-}$ and $J_R^{'-+}$
on ${\cal W}$:
\begin{align*}
(J_R^{'+-} F)(Z_1,Z_2) &= \text{analytic extension of $(J'_RF)(Z_1,Z_2)$
from $\BB D^+_R \times \BB D^-_R$ to $\Omega_0$},  \\
(J_R^{'-+} F)(Z_1,Z_2) &= \text{analytic extension of $(J'_RF)(Z_1,Z_2)$
from $\BB D^-_R \times \BB D^+_R$ to $\Omega_0$}.
\end{align*}
The operators $J_R^{'+-}$, $J_R^{'-+}$ is $U(2) \times U(2)$ and
$\mathfrak{gl}(2,\HC)$-equivariant (which follows from
Proposition \ref{fork-prop}) and both have kernel with four irreducible
components \eqref{J+-_kernel}.

We continue to use notation introduced in \cite{ATMP}:
if $\lambda \in \BB C$, let
$$
\sgn (\im \lambda) = \begin{cases}
1 & \text{ if $\lambda$ lies in the upper half plane of $\BB C$}, \\
-1 & \text{ if $\lambda$ lies in the lower half plane of $\BB C$}, \\
\text{undefined} & \text{ if $\lambda \in \BB R$}.
\end{cases}
$$

\begin{thm}  \label{Xm^0-operator-thm}
We have a well defined operator on ${\cal W}$
\begin{equation}  \label{Xm^0-operator}
(\Xm^0 F)(Z) = \lim_{\genfrac{}{}{0pt}{}{Z_1, Z_2 \to Z, \: N(Z_1-Z_2) \ne 0}
{\sgn (\im \lambda_1) = \sgn (\im \lambda_2)}}
- \bigl( (J_R^{'+-} + J_R^{'-+}) F \bigr)(Z_1,Z_2),
\qquad Z \in U(2)_R,
\end{equation}
where $\lambda_1$ and $\lambda_2$ are the eigenvalues of $Z_1Z_2^{-1}$.
The operator $\Xm^0$ can be expressed as
\begin{equation}  \label{Xm^0-P-rel}
(\Xm^0 F)(Z) = \proj^0(ZFZ) - Z\proj^0(FZ) - \proj^0(ZF)Z + Z\proj^0(F)Z,
\qquad F \in {\cal W},
\end{equation}
where $\proj^0 : {\cal W} \twoheadrightarrow \HC \otimes \Zh^0$ was introduced
in \eqref{W-proj}, and we have a relation between operators:
$$
\Xm^+ + \Xm^0 + \Xm^- =0.
$$
In particular, the operator $\Xm^0$ has values in ${\cal W}'$,
is $\mathfrak{gl}(2,\HC)$-equivariant, annihilates all irreducible components
of $(\rho_2,{\cal W})$, except for the components isomorphic
to ${\cal I}^+/(\Sh^+ \oplus {\cal J})$ and
${\cal I}^-/(\Sh^- \oplus {\cal J})$, and maps these components into
the irreducible components of ${\cal W}'$ isomorphic to
${\cal BH}^+/{\cal I}'_0$ and ${\cal BH}^-/{\cal I}'_0$ respectively.
\end{thm}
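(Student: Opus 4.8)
The plan is to reduce the whole statement to the scalar theory of \cite{desitter} through the relation \eqref{I-J'-relation}, and then to read off every representation-theoretic property from the already established Corollary \ref{Xm-cor}. The only genuinely analytic point is the existence of the regularized diagonal limit \eqref{Xm^0-operator}; the relation $\Xm^++\Xm^0+\Xm^-=0$ and all of its consequences are then formal.

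First I would establish the closed formula \eqref{Xm^0-P-rel}. Writing $S=I_R^{+-}+I_R^{-+}$ and applying \eqref{I-J'-relation} to the sum of the two mixed operators gives
\[
(J_R^{'+-}+J_R^{'-+})F(Z_1,Z_2)= S[WFW] - Z_1\,S[FW] - S[WF]\,Z_2 + Z_1\,S[F]\,Z_2,
\]
where the bracketed arguments denote the ${\cal W}$-valued functions $W\mapsto WF(W)W$, $W\mapsto F(W)W$, $W\mapsto WF(W)$, $W\mapsto F(W)$, and $S[\,\cdot\,]$ is evaluated at $(Z_1,Z_2)$ (here $S$ acts entrywise, since $I_R$ is extended to ${\cal W}=\HC\otimes\Zh$ entrywise). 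By Theorem 15 of \cite{desitter}, applied entrywise to each scalar component, the regularized diagonal limit $\lim\,-S[g](Z_1,Z_2)$ — taken as $Z_1,Z_2\to Z\in U(2)_R$ with $N(Z_1-Z_2)\ne0$ and $\sgn(\im\lambda_1)=\sgn(\im\lambda_2)$ — exists and equals $\proj^0 g$. This is exactly where the matching-sign condition enters: $I_R^{+-}$ and $I_R^{-+}$ individually blow up along $N(Z_1-Z_2)=0$, but their sum, continued across $\Omega_0$, has a finite value along the admissible diagonal approach. Taking the limit term by term, letting the prefactors $Z_1,Z_2\to Z$, and tracking the overall minus sign of \eqref{Xm^0-operator} (chosen precisely to absorb the sign in the scalar result) yields \eqref{Xm^0-P-rel}. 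In particular the limit exists for every $F$, so $\Xm^0$ is well defined.

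Next I would deduce the relation $\Xm^++\Xm^0+\Xm^-=0$, which is now purely algebraic. Summing the three four-term expressions \eqref{Xm^+}, \eqref{Xm^-} and \eqref{Xm^0-P-rel} and using $\proj^++\proj^0+\proj^-=\mathrm{id}_{\cal W}$ collapses each of the four groups of terms to $ZFZ - Z(FZ) - (ZF)Z + Z F Z = 0$. Consequently $\Xm^0=-\Xm^+-\Xm^-$, so $\Xm^0$ has values in ${\cal W}'$ by Corollary \ref{Xm-cor}. The kernel and image are then read off from this relation: on every irreducible component of $(\rho_2,{\cal W})$ other than the two isomorphic to ${\cal I}^+/(\Sh^+\oplus{\cal J})$ and ${\cal I}^-/(\Sh^-\oplus{\cal J})$, both $\Xm^+$ and $\Xm^-$ vanish, hence so does $\Xm^0$; on ${\cal I}^+/(\Sh^+\oplus{\cal J})$ we have $\Xm^-=0$, so $\Xm^0=-\Xm^+$ maps it onto ${\cal BH}^+/{\cal I}'_0$; and symmetrically $\Xm^0=-\Xm^-$ maps ${\cal I}^-/(\Sh^-\oplus{\cal J})$ onto ${\cal BH}^-/{\cal I}'_0$. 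For the $\mathfrak{gl}(2,\HC)$-equivariance I would argue directly from \eqref{Xm^0-operator}: $J_R^{'+-}+J_R^{'-+}$ is $\mathfrak{gl}(2,\HC)$-equivariant by Proposition \ref{fork-prop}, the diagonal restriction $\DR$ intertwines $\pi'_l\otimes\pi'_r$ with $\rho'_2$, and the regularization (the conditions $Z\in U(2)_R$ and $\sgn(\im\lambda_1)=\sgn(\im\lambda_2)$) is preserved by $U(2,2)_R$; since $\mathfrak{gl}(2,\HC)\simeq\BB C\otimes\mathfrak{u}(2,2)_R$, equivariance follows (and is consistent with $\Xm^0=-\Xm^+-\Xm^-$).

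The main obstacle is the first step, i.e. verifying that the mixed operators combine so that the regularized diagonal limit exists and equals the $\proj^0$-expression; this rests entirely on importing the scalar diagonal-limit identity of Theorem 15 in \cite{desitter} and checking that its sign convention matches the normalization in \eqref{Xm^0-operator}. Once that input is in hand, the relation $\Xm^++\Xm^0+\Xm^-=0$ and the kernel/image description are immediate, so I expect no further difficulty beyond bookkeeping of signs and the entrywise passage between the scalar and $\HC$-valued settings.
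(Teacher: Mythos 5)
Your proposal is correct and follows essentially the same route as the paper's proof: reduce the existence of the regularized diagonal limit to the scalar identity $\lim -\bigl((I_R^{+-}+I_R^{-+})f\bigr)(Z_1,Z_2)=\proj^0 f$ via \eqref{I-J'-relation}, then obtain $\Xm^++\Xm^0+\Xm^-=0$ from $\proj^++\proj^0+\proj^-=\mathrm{id}$ together with \eqref{Xm^+}--\eqref{Xm^-}, and read off the kernel and image from Corollary \ref{Xm-cor}. The only cosmetic difference is that the paper verifies the scalar limit by exhibiting the explicit logarithmic generator of the image of $I_R^{+-}+I_R^{-+}$ and invoking equivariance, whereas you cite the scalar result of \cite{desitter} directly; the content is the same.
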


Note that the space ${\cal W}'$ consists of rational functions, and rational
functions on $\HC$ as well as analytic ones are completely determined by
their values on $U(2)_R$.

\begin{proof}
First, we show that the limit \eqref{Xm^0-operator} exists.
As was observed in the proof of Theorem 67 in \cite{ATMP}, the operator
$(I_R^{+-} + I_R^{-+})$ on $\Zh$ annihilates $\Zh^+$, $\Zh^-$, and
the image of $(I_R^{+-} + I_R^{-+})$ is generated by
\begin{equation*}  %\label{I(N(W)^{-1})}
\bigl( (I_R^{+-}+ I_R^{-+}) N(W)^{-1} \bigr)(Z_1,Z_2)
= -\frac1{N(Z_2)} \cdot
\begin{cases} \frac{\log\lambda_2-\log\lambda_1}{\lambda_2-\lambda_1} &
\text{if $\lambda_1 \ne \lambda_2$;} \\
\lambda^{-1} & \text{if $\lambda_1 = \lambda_2 = \lambda$, $\lambda \ne 1$,}
\end{cases}
\end{equation*}
where $\log$ denotes the branch of logarithm with a cut along the positive
real axis. If we restrict $\bigl( (I_R^{+-}+ I_R^{-+}) N(W)^{-1} \bigr)(Z_1,Z_2)$
to the open set where $\sgn (\im \lambda_1) = \sgn (\im \lambda_2)$,
we see that this restriction is 
$$
\frac1{N(Z_2)} \cdot \bigl( \text{function of
  $\lambda_1, \lambda_2 \in \BB C$ that is holomorphic near
  $(\lambda_1,\lambda_2)=(1,1)$} \bigr).
$$
Since the map $(I_R^{+-}+ I_R^{-+})$ is $\mathfrak{gl}(2,\HC)$-equivariant,
it follows that the same is true for
$\bigl( (I_R^{+-}+ I_R^{-+}) f \bigr)(Z_1,Z_2)$ with any $f \in \Zh$.
Hence the limit
$$
\lim_{\genfrac{}{}{0pt}{}{Z_1, Z_2 \to Z, \: N(Z_1-Z_2) \ne 0}
{\sgn (\im \lambda_1) = \sgn (\im \lambda_2)}}
- \bigl( (I_R^{+-} + I_R^{-+}) f \bigr)(Z_1,Z_2),
\qquad f \in \Zh,\: Z \in U(2)_R,
$$
exists and equals $\proj^0 f$, where $\proj^0 : \Zh \twoheadrightarrow \Zh^0$
is the projection introduced
in \eqref{Zh-proj}.
Then \eqref{I-J'-relation} implies that the limit \eqref{Xm^0-operator}
also exists, and we have an expression \eqref{Xm^0-P-rel}.

We express the projection
$\proj^0 : {\cal W} \twoheadrightarrow \HC \otimes \Zh^0$ as
$$
\proj^0 F = F - \proj^+ F - \proj^- F, \qquad F \in {\cal W}.
$$
Substituting this expression into \eqref{Xm^0-P-rel} and using
\eqref{Xm^+}-\eqref{Xm^-} yields $\Xm^0 = -(\Xm^+ + \Xm^-)$.
Then the result follows from Corollary \ref{Xm-cor}.
  
Alternatively, one can prove that $\Xm^0 = -(\Xm^+ + \Xm^-)$
by checking that both sides coincide on suitable generators of each
irreducible component of ${\cal W}$.
\end{proof}

\section{Invariant Algebra Structures on $\Zh$, ${\cal W}$ and ${\cal W}'$}  \label{IntertwiningOper-section}

\subsection{Invariant Algebra Structures on $\Zh$, ${\cal W}$ and ${\cal W}'$}  \label{IntertwiningOper-subsection}

In this subsection we study $\mathfrak{gl}(2,\HC)$-invariant maps
$$
(\rho_1 \otimes \rho_1, \Zh \otimes \Zh) \to (\rho_1, \Zh), \quad
(\rho_2 \otimes \rho_2, {\cal W} \otimes {\cal W}) \to (\rho_2, {\cal W})
\quad \text{and} \quad
(\rho'_2 \otimes \rho'_2, {\cal W}' \otimes {\cal W}') \to (\rho'_2, {\cal W}')
$$
and related $\mathfrak{gl}(2,\HC)$-invariant $3$-linear forms
$$
\Zh \times \Zh \times \Zh \to \BB C, \qquad
{\cal W} \times {\cal W} \times {\cal W} \to \BB C
\quad \text{and} \quad   
{\cal W}' \times {\cal W}' \times {\cal W}' \to \BB C.
$$

\subsection{The Scalar Case $\Zh$}

We start with the scalar case:
$$
(\rho_1 \otimes \rho_1, \Zh \otimes \Zh) \to (\rho_1, \Zh).
$$
Recall that $(\rho_1,\Zh)$ decomposes into a direct sum \eqref{Zh-decomp}.
Thus, an equivariant map
$\mu: (\rho_1 \otimes \rho_1, \Zh \otimes \Zh) \to (\rho_1, \Zh)$
can be decomposed into components
$$
\mu: (\rho_1 \otimes \rho_1, \Zh^{\alpha} \otimes \Zh^{\beta}) \to
(\rho_1, \Zh^{\gamma}), \qquad \text{where }
\alpha, \beta, \gamma \in \{ -,0,+ \}.
$$

For $f, g \in \Zh$, we consider maps given by the formula
\begin{equation}  \label{6easy-scalar}
f \otimes g \mapsto \Bigl( \frac{i}{2\pi^3} \Bigr)^2
\iint_{\genfrac{}{}{0pt}{}{W_1 \in U(2)_{R_1}}{W_2 \in U(2)_{R_2}}}
\frac{f(W_1) \cdot g(W_2)\,dV_{W_1}dV_{W_2}}{N(Z-W_1) N(W_1-W_2) N(W_2-Z)}
\end{equation}
with different choices of radii $R_1$, $R_2$ and regions containing $Z$.
By Lemmas 8, 11 and Theorems 12, 15 in \cite{desitter}, we have the following
$\mathfrak{gl}(2,\HC)$-equivariant maps:
\begin{itemize}
\item
If $0<R_1<R_2$, $Z \in \BB D^+_{R_1}$, the integral \eqref{6easy-scalar}
produces a multiplication $\Zh \otimes \Zh \to \Zh^+$ which descends to
$\Zh^0 \otimes \Zh^+ \to \Zh^+$;
\item
If $0<R_1<R_2$, $Z \in \BB D^-_{R_1} \cap \BB D^+_{R_2}$, the integral
\eqref{6easy-scalar} produces a multiplication $\Zh \otimes \Zh \to \Zh^0$
which descends to $\Zh^- \otimes \Zh^+ \to \Zh^0$;
\item
If $0<R_1<R_2$, $Z \in \BB D^-_{R_2}$, the integral
\eqref{6easy-scalar} produces a multiplication $\Zh \otimes \Zh \to \Zh^-$
which descends to $\Zh^- \otimes \Zh^0 \to \Zh^-$;
\item
If $0<R_2<R_1$, $Z \in \BB D^+_{R_2}$, the integral \eqref{6easy-scalar}
produces a multiplication $\Zh \otimes \Zh \to \Zh^+$ which descends to
$\Zh^+ \otimes \Zh^0 \to \Zh^+$;
\item
If $0<R_2<R_1$, $Z \in \BB D^+_{R_1} \cap \BB D^-_{R_2}$, the integral
\eqref{6easy-scalar} produces a multiplication $\Zh \otimes \Zh \to \Zh^0$
which descends to $\Zh^+ \otimes \Zh^- \to \Zh^0$;
\item
If $0<R_2<R_1$, $Z \in \BB D^-_{R_1}$, the integral
\eqref{6easy-scalar} produces a multiplication $\Zh \otimes \Zh \to \Zh^-$
which descends to $\Zh^0 \otimes \Zh^- \to \Zh^-$.
\end{itemize}
It is clear that these maps are non-trivial.

\begin{thm}  \label{Zh-mult-thm}
An $\mathfrak{sl}(2,\HC)$-equivariant map
$\mu: (\rho_1 \otimes \rho_1, \Zh \otimes \Zh) \to (\rho_1, \Zh)$ maps
\begin{center}
\begin{tabular}{lll}
$\Zh^- \otimes \Zh^+ \to \Zh^0$, & \qquad & $\Zh^+ \otimes \Zh^- \to \Zh^0$, \\
$\Zh^- \otimes \Zh^0 \to \Zh^0$, & \qquad & $\Zh^0 \otimes \Zh^- \to \Zh^-$, \\
$\Zh^0 \otimes \Zh^+ \to \Zh^0$, & \qquad & $\Zh^+ \otimes \Zh^0 \to \Zh^+$, \\
$\Zh^+ \otimes \Zh^+ \to \{0\}$, & \qquad & $\Zh^- \otimes \Zh^- \to \{0\}$.
\end{tabular}
\end{center}
Equivariant maps
\begin{center}
\begin{tabular}{lll}
  $(\rho_1 \otimes \rho_1, \Zh^- \otimes \Zh^+) \to (\rho_1, \Zh^0)$, & \qquad &
  $(\rho_1 \otimes \rho_1, \Zh^+ \otimes \Zh^-) \to (\rho_1, \Zh^0)$, \\
  $(\rho_1 \otimes \rho_1, \Zh^- \otimes \Zh^0) \to (\rho_1, \Zh^0)$, & \qquad &
  $(\rho_1 \otimes \rho_1, \Zh^0 \otimes \Zh^-) \to (\rho_1, \Zh^-)$, \\
  $(\rho_1 \otimes \rho_1, \Zh^0 \otimes \Zh^+) \to (\rho_1, \Zh^0)$, & \qquad &
  $(\rho_1 \otimes \rho_1, \Zh^+ \otimes \Zh^0) \to (\rho_1, \Zh^+)$
\end{tabular}
\end{center} 
are unique (up to scaling).
\end{thm}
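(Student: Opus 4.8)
The plan is to reduce the whole statement to a count of Hom-space dimensions via Schur's lemma. By \eqref{Zh-decomp} each of $\Zh^{+},\Zh^{0},\Zh^{-}$ is an irreducible $\mathfrak{gl}(2,\HC)$-module, and since the scalar matrices act trivially on $\Zh$, the notions of $\mathfrak{sl}(2,\HC)$- and $\mathfrak{gl}(2,\HC)$-equivariance coincide here. Any equivariant $\mu\colon \Zh\otimes\Zh\to\Zh$ therefore splits into the components $\mu^{\gamma}_{\alpha\beta}\colon \Zh^{\alpha}\otimes\Zh^{\beta}\to\Zh^{\gamma}$, and because the target is irreducible, $\dim\operatorname{Hom}_{\mathfrak{sl}(2,\HC)}\!\bigl(\Zh^{\alpha}\otimes\Zh^{\beta},\Zh^{\gamma}\bigr)$ equals the multiplicity of $\Zh^{\gamma}$ in $\Zh^{\alpha}\otimes\Zh^{\beta}$; a nonzero component is then automatically surjective and is unique up to scaling exactly when that multiplicity is $1$. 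Thus the table and the uniqueness clause together amount to the single claim that this multiplicity is $1$ for the six nonzero entries and $0$ in the remaining cases.

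For existence I would invoke the six domain configurations of the integral \eqref{6easy-scalar} recorded just before the theorem: the orderings $R_{1}<R_{2}$ and $R_{2}<R_{1}$, combined with the three admissible positions of $Z$ relative to $\BB D^{\pm}_{R_{1}}$ and $\BB D^{\pm}_{R_{2}}$, yield six nonzero $\mathfrak{gl}(2,\HC)$-equivariant maps realizing precisely the six nonzero boxes of the table, and hence force those multiplicities to be at least one. Their non-triviality is seen by evaluating on a single generating $K$-type in each region, using Lemmas 8, 11 and Theorems 12, 15 of \cite{desitter}.

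The substance is then the upper bound: every other multiplicity vanishes and the six surviving ones equal $1$. Here I would grade by homogeneity, since the dilation $\bigl(\begin{smallmatrix}\lambda&0\\0&\lambda^{-1}\end{smallmatrix}\bigr)$ scales $\Zh(d)$ by $\lambda^{-2d}$, so every equivariant map is additive in degree. Using the lowest-weight structure of $\Zh^{+}$, the highest-weight structure of $\Zh^{-}$, and the explicit $K$-type bases $N(Z)^{k}t^{l}_{n\,\underline m}(Z)$ of $\Zh^{\pm},\Zh^{0}$ from \cite{desitter}, a nonzero $\mu^{\gamma}_{\alpha\beta}$ must carry a fixed cyclic generator of $\Zh^{\gamma}$ of definite degree and $SU(2)\times SU(2)$-type (the lowest, resp. highest, weight vector for $\gamma=+$, resp. $-$) to a vector of matching degree and type inside $\Zh^{\alpha}\otimes\Zh^{\beta}$. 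Where no such vector exists the component vanishes — this is what rules out $\Zh^{+}\otimes\Zh^{+}$, $\Zh^{-}\otimes\Zh^{-}$, and the off-target boxes of the mixed rows — and where it exists it is forced to be unique, so by irreducibility of $\Zh^{\gamma}$ the generator's image, and hence all of $\mu^{\gamma}_{\alpha\beta}$, is pinned down up to one scalar.

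The hard part will be the $K$-type bookkeeping of this last step. Because $\Zh^{0}$ carries $V_{l}\boxtimes V_{l}$ with multiplicity growing in $l$, one cannot read the answer off the compact group alone and must use the degree grading to isolate a single copy of the relevant type within each fixed total degree; and because $\Zh^{\pm}$ are infinite-dimensional extreme-weight modules the tensor product cannot be decomposed outright, so the entire argument has to be run on the chosen generators (for the target $\Zh^{0}$ the convenient choice is the minimal-spin vector $N(Z)^{-1}t^{1/2}_{n\,\underline m}(Z)$). I expect the cleanest route to be to locate the module boundaries at $k=0$ and $k=-(2l+1)$ — the zeros of the structure constants of the $\mathfrak{gl}(2,\HC)$-action already visible in \eqref{Laplacian-Nf} — since these are exactly what separate $\Zh^{+}$, $\Zh^{0}$ and $\Zh^{-}$ and thereby produce the selection rule of the table.
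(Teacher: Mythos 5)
The first thing to say is that the paper itself contains no proof of this theorem: immediately after the statement the authors write that they ``will provide the proof of this theorem as well as study the case of $\Zh^0 \otimes \Zh^0$ in a future work.'' So there is nothing in the paper to compare your argument against, and your proposal has to be judged on its own. Its existence half is fine and is exactly what the paper does establish before the theorem: the six placements of $Z$ and the radii in \eqref{6easy-scalar} give six nonzero equivariant maps realizing the six nonzero boxes, so those Hom-spaces are at least one-dimensional. The reduction via Schur's lemma (a nonzero component into an irreducible target is surjective; uniqueness up to scale is a one-dimensional Hom-space) is also correct, with the caveat that for a non-semisimple source ``multiplicity'' must mean multiplicity of $\Zh^\gamma$ as a \emph{quotient} of $\Zh^\alpha\otimes\Zh^\beta$, not its multiplicity in a composition series.

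The gap is that the upper bound --- the vanishing of the other components and the one-dimensionality of the surviving ones --- is announced but not carried out, and the one mechanism you do make explicit does not reach it. Degree additivity (with the fixed shift coming from the conformal factor in $\rho_1$) does kill $\Zh^+\otimes\Zh^+$ and $\Zh^-\otimes\Zh^-$ outright, since the total degrees there are bounded on one side while every irreducible target requires degrees the image cannot contain; but it does not rule out, say, a component $\Zh^-\otimes\Zh^+\to\Zh^+$ or $\to\Zh^-$, because the mixed tensor products contain every total degree, and it says nothing about why $\Zh^0$ occurs as a quotient of $\Zh^-\otimes\Zh^+$ with multiplicity exactly one rather than more (the $K$-type $V_l\boxtimes V_l$ of $\Zh^0$ sits inside $\Zh^\alpha(d_1)\otimes\Zh^\beta(d_2)$ in many ways for each fixed total degree, as you yourself note). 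There is also a directional slip in the key sentence: a map $\mu^\gamma_{\alpha\beta}\colon\Zh^\alpha\otimes\Zh^\beta\to\Zh^\gamma$ does not ``carry a cyclic generator of $\Zh^\gamma$ to a vector inside $\Zh^\alpha\otimes\Zh^\beta$''; to argue that way you must pass to the adjoint map or to the invariant trilinear form on $\Zh^\alpha\times\Zh^\beta\times\Zh^{\gamma^*}$ via the pairing $\langle\,\cdot\,,\,\cdot\,\rangle_{\Zh}$ (which the paper sets up for exactly this purpose), or alternatively use that $\Zh^+$ (resp.\ $\Zh^-$) is generated by its extreme $K$-type so that $\Zh^\alpha\otimes\Zh^\pm$ is generated by $\Zh^\alpha\otimes(\text{that generator})$ and then pin the map down on that subspace. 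Either route is viable, but it is precisely the computation you defer as ``the hard part,'' so as written the proposal is a plan rather than a proof.
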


%The proof of this theorem will be given elsewhere.
Missing from this theorem is the more difficult case of $\Zh^0 \otimes \Zh^0$.
We will provide the proof of this theorem as well as study the case of
$\Zh^0 \otimes \Zh^0$ in a future work.

It is often useful to consider $\mathfrak{gl}(2,\HC)$-invariant $3$-forms
$\Zh \times \Zh \times \Zh \to \BB C$.
They are related to the invariant multiplications $\Zh \otimes \Zh \to \Zh$
as follows.
Recall that there is a $\mathfrak{gl}(2,\HC)$-invariant non-degenerate
symmetric bilinear pairing on $(\rho_1,\Zh)$ given by
Proposition 69 in \cite{FL1}; let us denote this pairing by
$\langle \:,\: \rangle_{\Zh}$.
Given a multiplication $\mu: \Zh \otimes \Zh \to \Zh$, define a $3$-form
$c: \Zh \times \Zh \times \Zh \to \BB C$
by the rule
$$
c(f_1,f_2,f_3) = \bigl\langle \mu(f_1 \otimes f_2), f_3 \bigr\rangle_{\Zh}.
$$
If $\mu$ is $\mathfrak{gl}(2,\HC)$-invariant, so is $c$.
Conversely, a $3$-form
$$
c: \Zh \times \Zh \times \Zh \to \BB C
$$
defines a map
$$
c^*: \Zh \otimes \Zh \to \Zh^*,
$$
where $\Zh^*$ is the linear dual of $\Zh$.
If $c$ is $\mathfrak{gl}(2,\HC)$-invariant, so is $c^*$, and
$c^*(f_1 \otimes f_2)$ is $K$-finite for all $f_1, f_2 \in \Zh$:
$$
c^*(f_1 \otimes f_2) \bigl( N(Z)^k \cdot t^l_{n\,\underline{m}}(Z) \bigr)
= c \bigl( f_1, f_2, N(Z)^k \cdot t^l_{n\,\underline{m}}(Z) \bigr) =0
$$
for all but finitely many basis elements
$N(Z)^k \cdot t^l_{n\,\underline{m}}(Z) \in \Zh$.
Therefore, there exists a unique element $\mu(f_1 \otimes f_2) \in \Zh$
such that
$$
c^*(f_1 \otimes f_2)(g) = c(f_1,f_2,g)
= \bigl\langle \mu(f_1 \otimes f_2), g \bigr\rangle_{\Zh}
\qquad \text{for all } g \in \Zh.
$$
Thus, with the help of the bilinear pairing $\langle \:,\: \rangle_{\Zh}$
on $(\rho_1,\Zh)$, we have a bijection between $\mathfrak{gl}(2,\HC)$-invariant
$3$-linear forms $c: \Zh \times \Zh \times \Zh \to \BB C$
and $\mathfrak{gl}(2,\HC)$-equivariant multiplications
$\mu: \Zh \otimes \Zh \to \Zh$.

The $\mathfrak{gl}(2,\HC)$-invariant $3$-linear form corresponding to
\eqref{6easy-scalar} is
\begin{equation}  \label{6easy-scalar-3form}
\langle f_1,f_2,f_3 \rangle \mapsto \Bigl( \frac{i}{2\pi^3} \Bigr)^3
\iiint_{\begin{smallmatrix} W_1 \in U(2)_{R_1} \\ W_2 \in U(2)_{R_2} \\
W_3 \in U(2)_{R_3} \end{smallmatrix}}
\frac{f_1(W_1) \cdot f_2(W_2) \cdot f_3(W_3)\,dV_{W_1}dV_{W_2}dV_{W_3}}
{N(W_1-W_2) N(W_2-W_3) N(W_3-W_1)},
\end{equation}
$f_1, f_2, f_3 \in \Zh$, with different choices of radii $R_1$, $R_2$ and $R_3$.
Then we have six combinations of the radii $R_1$, $R_2$ and $R_3$ and,
by Theorems 12, 15 in \cite{desitter},
\begin{itemize}
\item
If $0<R_3<R_1<R_2$, the integral \eqref{6easy-scalar-3form} produces an
invariant $3$-form on $\Zh$ which descends to $\Zh^0 \times \Zh^+ \times \Zh^-$;
\item
If $0<R_1<R_3<R_2$, the integral\eqref{6easy-scalar-3form} produces an
invariant $3$-form on $\Zh$ which descends to $\Zh^- \times \Zh^+ \times \Zh^0$;
\item
If $0<R_1<R_2<R_3$, the integral \eqref{6easy-scalar-3form} produces an
invariant $3$-form on $\Zh$ which descends to $\Zh^- \times \Zh^0 \times \Zh^+$;
\item
If $0<R_3<R_2<R_1$, the integral \eqref{6easy-scalar-3form} produces an
invariant $3$-form on $\Zh$ which descends to $\Zh^+ \times \Zh^0 \times \Zh^-$;
\item
If $0<R_2<R_3<R_1$, the integral \eqref{6easy-scalar-3form} produces an
invariant $3$-form on $\Zh$ which descends to $\Zh^+ \times \Zh^- \times \Zh^0$;
\item
If $0<R_2<R_1<R_3$, the integral \eqref{6easy-scalar-3form} produces an
invariant $3$-form on $\Zh$ which descends to $\Zh^0 \times \Zh^- \times \Zh^+$.
\end{itemize}

Missing from this list are the two impossible combinations of the radii
$$
0<R_1<R_2<R_3<R_1 \qquad \text{and} \qquad R_1>R_2>R_3>R_1>0.
$$
In a future work we will construct two additional invariant $3$-forms
on $\Zh$ that descend to $\Zh^0 \times \Zh^0 \times \Zh^0$ and can be
regarded as counterparts of these impossible combinations of the radii.

\subsection{The Spinor Case ${\cal W}'$}

By analogy with \eqref{6easy-scalar-3form}, for $F_1, F_2, F_3 \in {\cal W}'$,
we can consider
\begin{multline}  \label{6easy-W'-3form}
\langle F_1, F_2, F_3 \rangle_{{\cal W}'} = \Bigl( \frac{i}{2\pi^3} \Bigr)^3
\iiint_{\begin{smallmatrix} W_1 \in U(2)_{R_1} \\ W_2 \in U(2)_{R_2} \\
W_3 \in U(2)_{R_3} \end{smallmatrix}}
\tr \biggl[ F_1(W_1) \cdot \frac{(W_1-W_2)^{-1}}{N(W_1-W_2)} \cdot F_2(W_2) \\
\times \frac{(W_2-W_3)^{-1}}{N(W_2-W_3)} \cdot F_3(W_3) \cdot
\frac{(W_3-W_1)^{-1}}{N(W_3-W_1)} \biggr] \,dV_{W_1}dV_{W_2}dV_{W_3}
\end{multline}
with different choices of radii $R_1$, $R_2$ and $R_3$.
By Proposition 64 in \cite{ATMP}, this is a
$\mathfrak{gl}(2,\HC)$-invariant $3$-linear form
$$
{\cal W}' \times {\cal W}' \times {\cal W}' \to \BB C.
$$

Recall that $\bar{\cal Q}'^0$ denotes the indecomposable subquotient of
${\cal W}'$ introduced in Remark \ref{Q-bar-remark}.
By Theorem 65 and Proposition 66 in \cite{ATMP},
\begin{itemize}
\item
If $0<R_3<R_1<R_2$, the integral \eqref{6easy-W'-3form} produces an
invariant $3$-form on ${\cal W}'$ which descends to
$\bar{\cal Q}'^0 \times {\cal Q}'^+ \times {\cal Q}'^-$;
\item
If $0<R_1<R_3<R_2$, the integral \eqref{6easy-W'-3form} produces an
invariant $3$-form on ${\cal W}'$ which descends to
${\cal Q}'^- \times {\cal Q}'^+ \times \bar{\cal Q}'^0$;
\item
If $0<R_1<R_2<R_3$, the integral \eqref{6easy-W'-3form} produces an
invariant $3$-form on ${\cal W}'$ which descends to
${\cal Q}'^- \times \bar{\cal Q}'^0 \times {\cal Q}'^+$;
\item
If $0<R_3<R_2<R_1$, the integral \eqref{6easy-W'-3form} produces an
invariant $3$-form on ${\cal W}'$ which descends to
${\cal Q}'^+ \times \bar{\cal Q}'^0 \times {\cal Q}'^-$;
\item
If $0<R_2<R_3<R_1$, the integral \eqref{6easy-W'-3form} produces an
invariant $3$-form on ${\cal W}'$ which descends to
${\cal Q}'^+ \times {\cal Q}'^- \times \bar{\cal Q}'^0$;
\item
If $0<R_2<R_1<R_3$, the integral \eqref{6easy-W'-3form} produces an
invariant $3$-form on ${\cal W}'$ which descends to
$\bar{\cal Q}'^0 \times {\cal Q}'^- \times {\cal Q}'^+$.
\end{itemize}
It is clear that these maps are non-trivial.

By Proposition 80 in \cite{FL1}, we have a $\mathfrak{gl}(2,\HC)$-invariant
non-degenerate bilinear pairing between $(\rho_2, {\cal W})$ and
$(\rho'_2, {\cal W}')$. Thus, the ``multiplication'' corresponding to
\eqref{6easy-W'-3form} and an analogue of \eqref{6easy-scalar} is a map
\begin{equation}  \label{6easy-Wprime}
F \otimes G \mapsto \Bigl( \frac{i}{2\pi^3} \Bigr)^2
\iint_{\genfrac{}{}{0pt}{}{W_1 \in U(2)_{R_1}}{W_2 \in U(2)_{R_2}}}
\frac{(Z-W_1)^{-1}}{N(Z-W_1)} \cdot F(W_1) \cdot
\frac{(W_1-W_2)^{-1}}{N(W_1-W_2)} \cdot G(W_2) \cdot
\frac{(W_2-Z)^{-1}}{N(W_2-Z)} \,dV_{W_1}dV_{W_2},
\end{equation}
$F, G \in {\cal W}'$, with different choices of radii $R_1$, $R_2$ and
regions containing $Z$.
By Proposition 64 in \cite{ATMP}, this is an equivariant map
$$
(\rho'_2 \otimes \rho'_2, {\cal W}' \otimes {\cal W}') \to (\rho_2, {\cal W}).
$$
If we want this map to have values in $(\rho'_2, {\cal W}')$,
we can compose it with an equivariant map
$(\rho_2, {\cal W}) \to (\rho'_2, {\cal W}')$, such as $\Xm^+$ or $\Xm^-$.

%By Lemma 8 in \cite{desitter} and Lemma 63, Theorem 65,
%Propositions 66, 89 in \cite{ATMP}, we have the following
%$\mathfrak{gl}(2,\HC)$-equivariant maps:
%\begin{itemize}
%\item
%If $0<R_1<R_2$, $Z \in \BB D^+_{R_1}$, the integral \eqref{6easy-W'}
%produces a map ${\cal W}' \otimes {\cal W}' \to {\cal W}$
%which descends to $\bar{\cal Q}'^0 \otimes {\cal Q}'^+ \to {\cal Q}^+$;
%\item
%If $0<R_1<R_2$, $Z \in \BB D^-_{R_1} \cap \BB D^+_{R_2}$, the integral
%\eqref{6easy-W'} produces a map
%${\cal W}' \otimes {\cal W}' \to {\cal W}$
%which descends to ${\cal Q}'^- \otimes {\cal Q}'^+ \to {\cal Q}^0$;
%\item
%If $0<R_1<R_2$, $Z \in \BB D^-_{R_2}$, the integral
%\eqref{6easy-W'} produces a map
%${\cal W}' \otimes {\cal W}' \to {\cal W}$
%which descends to ${\cal Q}'^- \otimes \bar{\cal Q}'^0 \to {\cal Q}^-$;
%\item
%If $0<R_2<R_1$, $Z \in \BB D^+_{R_2}$, the integral \eqref{6easy-W'}
%produces a map ${\cal W}' \otimes {\cal W}' \to {\cal W}$
%which descends to ${\cal Q}'^+ \otimes \bar{\cal Q}'^0 \to {\cal Q}^+$;
%\item
%If $0<R_2<R_1$, $Z \in \BB D^+_{R_1} \cap \BB D^-_{R_2}$, the integral
%\eqref{6easy-W'} produces a map
%${\cal W}' \otimes {\cal W}' \to {\cal W}$
%which descends to ${\cal Q}'^+ \otimes {\cal Q}'^- \to {\cal Q}^0$;
%\item
%If $0<R_2<R_1$, $Z \in \BB D^-_{R_1}$, the integral
%\eqref{6easy-W'} produces a map
%${\cal W}' \otimes {\cal W}' \to {\cal W}$
%which descends to $\bar{\cal Q}'^0 \otimes {\cal Q}'^- \to {\cal Q}^-$.
%\end{itemize}
%It is clear that these maps are non-trivial.

\subsection{The Spinor Case ${\cal W}$}

By analogy with \eqref{6easy-scalar-3form} and \eqref{6easy-W'-3form},
for $F_1, F_2, F_3 \in {\cal W}$,
we can consider
\begin{multline}  \label{6easy-W-3form}
\langle F_1, F_2, F_3 \rangle_{\cal W} = \Bigl( \frac{i}{2\pi^3} \Bigr)^3
\iiint_{\begin{smallmatrix} W_1 \in U(2)_{R_1} \\ W_2 \in U(2)_{R_2} \\
W_3 \in U(2)_{R_3} \end{smallmatrix}}
\tr \biggl[ F_1(W_1) \cdot \frac{W_1-W_2}{N(W_1-W_2)} \cdot F_2(W_2) \\
\times \frac{W_2-W_3}{N(W_2-W_3)} \cdot F_3(W_3) \cdot
\frac{W_3-W_1}{N(W_3-W_1)} \biggr] \,dV_{W_1}dV_{W_2}dV_{W_3}
\end{multline}
with different choices of radii $R_1$, $R_2$ and $R_3$.
By Proposition \ref{fork-prop}, this is a
$\mathfrak{gl}(2,\HC)$-invariant $3$-linear form
$$
{\cal W} \times {\cal W} \times {\cal W} \to \BB C.
$$

Recall that the kernels of $J_R^{'\pm\pm}$ are described in
Theorems \ref{J'-thm} and \ref{J+-_thm}. We have:
\begin{itemize}
\item
If $0<R_3<R_1<R_2$, the integral \eqref{6easy-W-3form} produces an
invariant $3$-form on ${\cal W}$ which descends to
$$
{\cal W}/\ker J_{R_1}^{'+-} \times {\cal W}/\ker J_{R_2}^{'++}
\times {\cal W}/\ker J_{R_3}^{'--};
$$
\item
If $0<R_1<R_3<R_2$, the integral \eqref{6easy-W-3form} produces an
invariant $3$-form on ${\cal W}$ which descends to
$$
{\cal W}/\ker J_{R_1}^{'--} \times {\cal W}/\ker J_{R_2}^{'++}
\times {\cal W}/\ker J_{R_3}^{'-+};
$$
\item
If $0<R_1<R_2<R_3$, the integral \eqref{6easy-W-3form} produces an
invariant $3$-form on ${\cal W}$ which descends to
$$
{\cal W}/\ker J_{R_1}^{'--} \times {\cal W}/\ker J_{R_2}^{'+-}
\times {\cal W}/\ker J_{R_3}^{'++};
$$
\item
If $0<R_3<R_2<R_1$, the integral \eqref{6easy-W-3form} produces an
invariant $3$-form on ${\cal W}$ which descends to
$$
{\cal W}/\ker J_{R_1}^{'++} \times {\cal W}/\ker J_{R_2}^{'-+}
\times {\cal W}/\ker J_{R_3}^{'--};
$$
\item
If $0<R_2<R_3<R_1$, the integral \eqref{6easy-W-3form} produces an
invariant $3$-form on ${\cal W}$ which descends to
$$
{\cal W}/\ker J_{R_1}^{'++} \times {\cal W}/\ker J_{R_2}^{'--}
\times {\cal W}/\ker J_{R_3}^{'+-};
$$
\item
If $0<R_2<R_1<R_3$, the integral \eqref{6easy-W-3form} produces an
invariant $3$-form on ${\cal W}$ which descends to
$$
{\cal W}/\ker J_{R_1}^{'-+} \times {\cal W}/\ker J_{R_2}^{'--}
\times {\cal W}/\ker J_{R_3}^{'++}.
$$
\end{itemize}
It is clear that these maps are non-trivial.

Since we have a $\mathfrak{gl}(2,\HC)$-invariant
non-degenerate bilinear pairing between $(\rho_2, {\cal W})$ and
$(\rho'_2, {\cal W}')$, the ``multiplication'' corresponding to
\eqref{6easy-W-3form} and an analogue of \eqref{6easy-scalar},
\eqref{6easy-Wprime} is a map
\begin{equation*}
F \otimes G \mapsto \Bigl( \frac{i}{2\pi^3} \Bigr)^2
\iint_{\genfrac{}{}{0pt}{}{W_1 \in U(2)_{R_1}}{W_2 \in U(2)_{R_2}}}
\frac{Z-W_1}{N(Z-W_1)} \cdot F(W_1) \cdot \frac{W_1-W_2}{N(W_1-W_2)} \cdot
G(W_2) \cdot \frac{W_2-Z}{N(W_2-Z)} \,dV_{W_1}dV_{W_2}
\end{equation*}
with different choices of radii $R_1$, $R_2$ and regions containing $Z$.
By Proposition \ref{fork-prop}, this is an equivariant map
$$
(\rho_2 \otimes \rho_2, {\cal W} \otimes {\cal W}) \to (\rho'_2, {\cal W}').
$$
If we want this map to have values in $(\rho_2, {\cal W})$,
we can compose it with an equivariant map
$(\rho'_2, {\cal W}') \to (\rho_2, {\cal W})$, such as $\M$.

\section{Factorization of Intertwining Operators via Clifford Algebras}  \label{Sect10}

The integral formulas \eqref{6easy-W'-3form}, \eqref{6easy-W-3form}
for the invariant $3$-linear forms
$$
{\cal W} \times {\cal W} \times {\cal W} \to \BB C \qquad \text{and} \qquad
{\cal W}' \times {\cal W}' \times {\cal W}' \to \BB C
$$
suggest their factorizations via Clifford algebras and their modules.
Such factorizations are well known in the setting of spinor representations
of loop algebras and their central extensions.
In this section we try to express an analogy between our new constructions
in four dimensions and the classical facts of the two-dimensional theory.

Let $V$ be a complex vector space (possibly of infinite dimension) with basis
$\{\beta_i\}_{i \in I}$ and let $V'$ be another copy of $V$ with basis
$\{\gamma_i\}_{i \in I}$. Define a symmetric bilinear form on $V \oplus V'$ by
$$
\langle\beta_i,\gamma_j\rangle = \langle\gamma_j,\beta_i\rangle
= -\tfrac12 \delta_{ij},
\qquad \langle\beta_i,\beta_j\rangle = \langle\gamma_i,\gamma_j\rangle = 0.
$$
Associated to this bilinear form is the (universal)  Clifford algebra
$Cl(V \oplus V')$. It is generated by $1$, $\{\beta_i\}_{i \in I}$ and
$\{\gamma_i\}_{i \in I}$ subject to the relations
$$
\{ \beta_i, \gamma_j \} = \delta_{ij}, \qquad
\{ \beta_i, \beta_j \} = \{ \gamma_i, \gamma_j \} = 0, \qquad
\text{for all $i, j \in I$},
$$
where $\{x,y\}=xy+yx$ is the anticommutator.

The exterior algebras $\Lambda V$ and $\Lambda V'$ provide examples of
modules for the Clifford algebra $Cl(V \oplus V')$.
However, in the theory of representations of loop algebras we also need to
consider more general Clifford modules obtained from "intermediate"
polarizations of $V \oplus V'$ as follows.
Consider a partition of $I$ -- the set indexing the bases of $V$ and $V'$ --
into two disjoint subsets $I_+$ and $I_-$, and let $V_+$ and $V_-$ be the spans
of the basis vectors $\{\beta_i\}_{i \in I_+}$ and $\{\beta_i\}_{i \in I_-}$
respectively. Similarly, let $V'_{\pm}$ be the spans of the basis vectors
$\{\gamma_i\}_{i \in I_{\pm}}$. We set
\begin{equation}  \label{H-def}
H = \Lambda(V_- \oplus V'_+).
\end{equation}
Then $H$ has a natural Clifford module structure.
Using the physics terminology, we call
\begin{align*}
&\{\beta_i\}_{i \in I_-} \quad \text{and} \quad \{\gamma_i\}_{i \in I_+} \quad
\text{creation operators},  \\
&\{\beta_i\}_{i \in I_+} \quad \text{and} \quad \{\gamma_i\}_{i \in I_-} \quad
\text{annihilation operators}.
\end{align*}
The creation operators act on $H$ by left multiplication,
and the annihilation operators act by contraction (or interior product)
and thus annul the vacuum vector $1 \in H$.
This defines an action of $V \oplus V'$ on $H$ that extends uniquely to
$Cl(V \oplus V')$ via the universal property.

Let $V$ be a certain space of functions.
In our first example we set
$$
V = V' = \BB C [z,z^{-1}]
$$
with a bilinear pairing between $V$ and $V'$
$$
\langle g, f \rangle = \frac1{2\pi i} \oint g(z) f(z) \,dz,
\qquad f \in V,\: g \in V'.
$$
Let $\{f_i(z)\}_{i \in I}$ be a basis of $V$ and $\{g_i(z)\}_{i \in I}$
be a dual basis of $V'$, i.e.
$$
\langle g_i, f_j \rangle = \delta_{ij}.
$$
In our first example, we can choose
$$
I = \BB Z, \qquad f_n(z) = z^n, \qquad g_n(z) = z^{-n-1}, \quad n \in I.
$$
Next, we consider a Clifford algebra $Cl(V \oplus V')$ with the ``abstract''
basis $\beta_i$ corresponding to $f_i(z)$ and $\gamma_i$ corresponding to
$g_i(Z)$, $i \in I$.
Let us fix a partition of $I$ into $I_+$ and $I_-$, so we have a decomposition
$$
V = V_+ \oplus V_-, \qquad V' = V'_+ \oplus V'_-.
$$
In our first example, we choose $I_+ = \BB Z_{\ge 0}$ and $I_-=\BB Z_{<0}$. Then
$$
V_+ = \BB C[z], \qquad V_- = z^{-1} \BB C[z^{-1}], \qquad
V'_+ = z^{-1} \BB C[z^{-1}], \qquad V'_- = \BB C[z],
$$
$$
H = \Lambda \bigl( z^{-1} \BB C[z^{-1}] \oplus z^{-1} \BB C[z^{-1}] \bigr).
$$

A fundamental concept in the quantum field theory is the notion of
generating functions, called quantum fields:
\begin{equation}  \label{quantum_fields-def}
\beta(z) = \sum_{i \in I} \beta_i g_i(z) \qquad \text{and} \qquad
\gamma(z) = \sum_{i \in I} \gamma_i f_i(z).
\end{equation}
In our first example, these are just
\begin{equation} \label{quantum_fields}
\beta(z) = \sum_{i \in \BB Z} \beta_i z^{-i-1} \qquad \text{and} \qquad
\gamma(z) = \sum_{i \in \BB Z} \gamma_i z^i.
\end{equation}
In the 2-dimensional quantum field theory, these fields form the so-called
$\beta\gamma$-system.
One can construct more complicated algebras by introducing the normal
ordering of polynomials of the quantum fields \eqref{quantum_fields}
and their derivatives.
The normal ordering $:\::$ of two elements is defined by
$$
:\beta_i\gamma_i: = \begin{cases} \beta_i\gamma_i, & i \in I_-, \\
  -\gamma_i\beta_i, & i \in I_+, \end{cases} \qquad
:\beta_i\gamma_j: = \beta_i\gamma_j = -\gamma_j\beta_i
\quad \text{if } i \ne j.
$$
In general, the normal ordering of a monomial amounts to moving the creation
operators to the left side of the product and the annihilation operators to
the right and changing the sign of the product with each transposition.
Here is a key calculation of the algebra of $\beta\gamma$-system involving
the normal ordering of quantum fields.

\begin{prop}
  Let $|z|>|w|$, then
  $$
  \beta(z)\gamma(w) = :\beta(z)\gamma(w): + \frac1{z-w} Id, \qquad
  \gamma(z)\beta(w) = :\gamma(z)\beta(w): + \frac1{z-w} Id.
  $$
\end{prop}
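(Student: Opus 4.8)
The plan is to expand both sides as double series in the abstract Clifford generators, compare them coefficientwise, and reduce the entire statement to the single anticommutation relation $\{\beta_i,\gamma_i\}=1$ together with one geometric series summation. The only genuine content is that the difference between the plain product and the normal-ordered product is a scalar, and identifying that scalar with $\frac1{z-w}$.

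First I would write $\beta(z)\gamma(w) = \sum_{i,j}\beta_i\gamma_j\, z^{-i-1}w^j$ and define the normal-ordered field by applying the ordering coefficientwise, so that $:\beta(z)\gamma(w): = \sum_{i,j}:\beta_i\gamma_j:\, z^{-i-1}w^j$. Subtracting, the difference equals $\sum_{i,j}\bigl(\beta_i\gamma_j - :\beta_i\gamma_j:\bigr)\,z^{-i-1}w^j$. From the definition of the normal ordering, $\beta_i\gamma_j - :\beta_i\gamma_j:$ vanishes whenever $i\neq j$ (there $:\beta_i\gamma_j:=\beta_i\gamma_j$) and whenever $i=j\in I_-$ (there again $:\beta_i\gamma_i:=\beta_i\gamma_i$); it is nonzero only when $i=j\in I_+$, in which case $\beta_i\gamma_i - :\beta_i\gamma_i: = \beta_i\gamma_i+\gamma_i\beta_i = \{\beta_i,\gamma_i\} = 1$. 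Hence the operator part collapses completely, leaving the purely scalar series $\bigl(\sum_{i\in I_+} z^{-i-1}w^i\bigr)\,\mathrm{Id}$.

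Substituting $I_+ = \BB Z_{\ge 0}$ gives $\sum_{i\ge 0}z^{-i-1}w^i = z^{-1}\sum_{i\ge 0}(w/z)^i$, which converges precisely because $|z|>|w|$ and sums to $\frac1{z-w}$. This proves the first identity. For the second I would run the identical argument with the roles of creation and annihilation operators interchanged: now $\gamma_i\beta_j - :\gamma_i\beta_j:$ is nonzero only for $i=j\in I_-$, where it again equals $\{\beta_i,\gamma_i\}=1$, so the surviving scalar series is $\sum_{i<0}z^i w^{-i-1}$; reindexing $i=-n$ yields $w^{-1}\sum_{n\ge 1}(w/z)^n = \frac1{z-w}$ under the same hypothesis $|z|>|w|$.

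The computation has no deep obstacle. The only points requiring care are the bookkeeping of the ordering convention — in particular the observation that the plain and normal-ordered products agree on every term except the diagonal ones in which an annihilation operator stands to the left of a creation operator — and the convergence domain, where the hypothesis $|z|>|w|$ is exactly what makes the relevant geometric series converge, and is what breaks the apparent symmetry between $\beta(z)\gamma(w)$ and $\gamma(z)\beta(w)$ (the latter being governed by the complementary index set $I_-$).
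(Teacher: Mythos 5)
Your proof is correct, and it is the standard argument; the paper states this proposition without proof, so your computation supplies exactly the expected justification. The bookkeeping is right at the one point where care is needed: the difference $\beta_i\gamma_j-{:}\beta_i\gamma_j{:}$ survives only on the diagonal $i=j\in I_+$ (and $\gamma_i\beta_j-{:}\gamma_i\beta_j{:}$ only on $i=j\in I_-$), each contributing $\{\beta_i,\gamma_i\}=1$, and both resulting geometric series converge precisely when $|z|>|w|$ and sum to $\frac1{z-w}$.
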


Now we consider an example of $V$ and $V'$ arising from the theory of
quaternionic regular functions, namely $V$ is the space of left
regular functions with basis $\{f_i(Z)\}_{i \in I}$ and $V'$ is the dual
space of right regular functions with basis $\{g_i(Z)\}_{i \in I}$.
Then we define the quantum fields $\beta(Z)$ and $\gamma(Z)$ as in
\eqref{quantum_fields-def}, and the normal ordering in this case yields

\begin{prop}  \label{Cauchy-Fueter-normal-ordering-prop}
  If $Z^{-1}W \in \BB D^+$, then
  $$
  \beta(Z)\gamma(W) = :\beta(Z)\gamma(W): + \frac{(Z-W)^{-1}}{N(Z-W)} Id, \qquad
  \gamma(Z)\beta(W) = :\gamma(Z)\beta(W): + \frac{(Z-W)^{-1}}{N(Z-W)} Id.
  $$
\end{prop}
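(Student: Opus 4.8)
The plan is to follow the template of the classical $\beta\gamma$-system computation established in the preceding proposition, replacing the scalar propagator $\frac1{z-w}$ by the Cauchy-Fueter reproducing kernel $\frac{(Z-W)^{-1}}{N(Z-W)}$. Here $V = {\cal V}$ is the space of left regular functions with $K$-type basis $\{f_i\}_{i \in I}$ and $V' = {\cal V}'$ is the space of right regular functions with the dual basis $\{g_i\}_{i \in I}$, dual with respect to the invariant pairing between ${\cal V}$ and ${\cal V}'$. First I would expand the quantum fields via \eqref{quantum_fields-def} and write
$$
\beta(Z)\gamma(W) = \sum_{i,j \in I} \beta_i\gamma_j \cdot g_i(Z)\,f_j(W).
$$

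Next I would pass to normal-ordered products using the Clifford relations $\{\beta_i,\gamma_j\} = \delta_{ij}$. An off-diagonal pair $\beta_i\gamma_j$ with $i \ne j$ already equals its normal ordering, while for a diagonal pair the definition of $:\,:$ gives $\beta_i\gamma_i - :\beta_i\gamma_i: = \{\beta_i,\gamma_i\} \cdot Id = Id$ precisely when $i \in I_+$ and $0$ when $i \in I_-$. Hence only the diagonal terms indexed by $I_+$ contribute a scalar, and
$$
\beta(Z)\gamma(W) = :\beta(Z)\gamma(W): + \Bigl( \sum_{i \in I_+} g_i(Z)\,f_i(W) \Bigr) Id.
$$
For the second identity the same bookkeeping applied to $\gamma_i\beta_j$ shows that the diagonal difference $\gamma_i\beta_i - :\gamma_i\beta_i:$ equals $Id$ precisely when $i \in I_-$, so that
$$
\gamma(Z)\beta(W) = :\gamma(Z)\beta(W): + \Bigl( \sum_{i \in I_-} f_i(Z)\,g_i(W) \Bigr) Id.
$$

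The crux is to identify each contraction sum with the Cauchy-Fueter kernel. The partition $I = I_+ \sqcup I_-$ is chosen so that $V_+ = {\cal V}^+$ consists of the regular functions holomorphic at the origin and $V_- = {\cal V}^-$ of those regular at infinity. With this choice the sum over $I_+$ and the sum over $I_-$ are exactly the two matrix-coefficient expansions of $\frac{(Z-W)^{-1}}{N(Z-W)}$ given in Proposition 26 of \cite{FL1}, analogous to the two expansions in Theorem \ref{kernel-exp-thm}. Both converge uniformly on compact subsets of the region $Z^{-1}W \in \BB D^+$, the quaternionic counterpart of the classical annular condition $|z|>|w|$, and both sum to $\frac{(Z-W)^{-1}}{N(Z-W)}$. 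Substituting these into the two displays above gives the proposition.

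I expect the only real obstacle to be the kernel-expansion bookkeeping rather than anything Clifford-algebraic: one must check that the creation/annihilation splitting of the $\beta_i$ and $\gamma_i$ matches the positive/negative degree splitting of ${\cal V}$, that the $\BB S$- and $\BB S'$-valued factors $g_i(Z)$ and $f_i(W)$ assemble through $\BB S \otimes \BB S' \simeq \HC$ into the $\HC$-valued kernel in the correct order, and that both the $I_+$-sum and the $I_-$-sum converge to the same kernel in the single region $Z^{-1}W \in \BB D^+$. Each of these is settled by direct appeal to Proposition 26 of \cite{FL1}, while the operator-algebraic step is formally identical to the classical case.
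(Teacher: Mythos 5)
Your proposal is correct and follows exactly the route the paper intends: the paper offers no detailed proof, only the remark that the identity ``follows from the matrix coefficient expansion of the Cauchy--Fueter formula,'' and your argument is the standard fleshing-out of that remark --- Clifford-algebra bookkeeping reduces $\beta(Z)\gamma(W)-{:}\beta(Z)\gamma(W){:}$ (resp.\ $\gamma(Z)\beta(W)-{:}\gamma(Z)\beta(W){:}$) to the contraction sum over $I_+$ (resp.\ $I_-$), which is then identified with the two expansions of $\frac{(Z-W)^{-1}}{N(Z-W)}$ in Proposition 26 of \cite{FL1}, the regular-function analogue of Theorem \ref{kernel-exp-thm}. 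The points you flag as needing verification (the matching of the creation/annihilation splitting with the degree splitting, the $\BB S\otimes\BB S'\simeq\HC$ ordering, and the convergence of both contraction sums in the region $Z^{-1}W\in\BB D^+$) are precisely the right ones, and all are settled by that cited expansion.
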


This fact is well-known in the quantization theory of massless Dirac fields
in the context of the 4-dimensional conformal field theory.
From the point of view of quaternionic analysis, it follows from the matrix
coefficient expansion of the Cauchy-Fueter formula.

Our final example of quantum fields comes from replacing the spaces
of regular functions ${\cal V}$ and ${\cal V}'$ with similar spaces of
quasi anti regular functions ${\mathcal U}$ and ${\mathcal U}'$ that we
introduced in Subsection \ref{polynomial-subsection}.
Then the quantum fields $\beta(Z)$ and $\gamma(Z)$ can be constructed using
the basis functions from Propositions \ref{K-typebasis+_prop},
\ref{K-typebasis-_prop}.
And the reproducing kernel expansions
\eqref{1st-kernel-expansion}-\eqref{2nd-kernel-expansion} yield a
counterpart of Proposition \ref{Cauchy-Fueter-normal-ordering-prop}:

\begin{prop}
  If $Z^{-1}W \in \BB D^+$, then
  $$
  \beta(Z)\gamma(W) = :\beta(Z)\gamma(W): + \frac{Z-W}{N(Z-W)} Id, \qquad
  \gamma(Z)\beta(W) = :\gamma(Z)\beta(W): + \frac{Z-W}{N(Z-W)} Id.
  $$
\end{prop}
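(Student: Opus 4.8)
The plan is to reduce the identity to the reproducing kernel expansions of Theorem~\ref{kernel-exp-thm}, exactly as the analogous statement for regular functions (Proposition~\ref{Cauchy-Fueter-normal-ordering-prop}) rests on the Cauchy-Fueter expansion. First I would fix the fields concretely: take $\{f_i\}$ to be the basis of ${\cal U}$ consisting of the functions $f^{(\alpha)}_{l,m,n}$ and $\tilde f^{(\alpha)}_{l,m,n}$ of Propositions~\ref{K-typebasis+_prop} and~\ref{K-typebasis-_prop}, and let $\{g_i\}$ be the $\langle\,\cdot\,,\,\cdot\,\rangle_{QR}$-dual basis of ${\cal U}'$, so that $\beta(Z)=\sum_i\beta_i f_i(Z)$ is $\BB S$-valued and $\gamma(Z)=\sum_i\gamma_i g_i(Z)$ is $\BB S'$-valued. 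The polarization $I=I_+\sqcup I_-$ is chosen so that $I_-$ indexes the nonnegative-degree part ${\cal U}^+$ (the $f^{(\alpha)}_{l,m,n}$) and $I_+$ the negative-degree part ${\cal U}^-$ (the $\tilde f^{(\alpha)}_{l,m,n}$); this is the choice that makes the annihilation modes of $\beta$ match the summation region $\{ Z^{-1}W \in \BB D^+ \}$.

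Next I would carry out the Wick contraction. Expanding the product and applying the definition of normal ordering together with the Clifford relation $\{\beta_i,\gamma_j\}=\delta_{ij}$, every off-diagonal term cancels and every diagonal term indexed by a creation mode cancels, leaving $\beta(Z)\gamma(W)-{:}\beta(Z)\gamma(W){:}=\bigl(\sum_{i\in I_+}f_i(Z)\hat g_i(W)\bigr)Id$, where $\hat g_i$ is the dual basis vector and the equality is understood as convergence of matrix coefficients on the Fock space $H$ of \eqref{H-def}. The entire content is then to identify this $\BB S\otimes\BB S'\simeq\HC$-valued sum with a kernel expansion. Reading off the dual basis from the orthogonality relations of Lemma~\ref{orthog-rels-lem} — the vector dual to $\tilde f^{(1)}_{l,m,n}$ is $(2l+1)^{-1}g^{(1)}_{l,m,n}$, the one dual to $\tilde f^{(2)}_{l,m,n}$ is $-(2l(2l+1))^{-1}g^{(2)}_{l,m,n}$, and the one dual to $\tilde f^{(3)}_{l,m,n}$ is $-(2l)^{-1}g^{(3)}_{l,m,n}$ — the sum over $I_+$ becomes term by term the right-hand side of \eqref{2nd-kernel-expansion}. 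Theorem~\ref{kernel-exp-thm} then supplies both the value $\frac{Z-W}{N(Z-W)}$ and the uniform convergence on compact subsets of $\{ Z^{-1}W \in \BB D^+ \}$.

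For the second identity I would run the same computation on $\gamma(Z)\beta(W)$; the surviving terms are now a sum over the complementary set $I_-$ of products $f_i(W)\hat g_i(Z)$, which after the spinor identification $\BB S'\otimes\BB S\simeq\HC$ is the right-hand side of \eqref{1st-kernel-expansion} with $Z$ and $W$ interchanged, equal to $-\frac{W-Z}{N(W-Z)}=\frac{Z-W}{N(Z-W)}$; it converges in the same region because $Z^{-1}W$ and $WZ^{-1}$ are conjugate and hence lie in $\BB D^+$ simultaneously. The main obstacle is the bookkeeping rather than any analytic difficulty: because $\langle\,\cdot\,,\,\cdot\,\rangle_{QR}$ is indefinite, the dual-basis coefficients carry the signs $-2l(2l+1)$ and $-2l$ of Lemma~\ref{orthog-rels-lem}, and one must verify that these signs, together with the order of the $\BB S$- and $\BB S'$-factors in $\beta(Z)\gamma(W)$ versus $\gamma(Z)\beta(W)$, reproduce exactly the signs and the column-times-row structure of \eqref{1st-kernel-expansion}-\eqref{2nd-kernel-expansion}. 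This is guaranteed to work out because those expansions were themselves derived from the same orthogonality relations, but it is the step where a sign or transposition error would most easily slip in, so I would track the spinor indices explicitly there.
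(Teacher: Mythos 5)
Your proposal is correct in substance and follows exactly the route the paper intends: the paper offers no argument beyond the remark that the expansions \eqref{1st-kernel-expansion}--\eqref{2nd-kernel-expansion} of Theorem \ref{kernel-exp-thm} ``yield'' the statement, and your Wick contraction plus the dual-basis normalizations read off from Lemma \ref{orthog-rels-lem} is precisely the bookkeeping that realizes this. One small caution: your justification that the second contraction converges in the same region ``because $Z^{-1}W$ and $WZ^{-1}$ are conjugate and hence lie in $\BB D^+$ simultaneously'' is not right as stated --- $\BB D^+$ is cut out by singular values ($ZZ^*<1$), which are not conjugation-invariant, so $Z^{-1}W \in \BB D^+$ does not force $WZ^{-1}\in\BB D^+$; what conjugation does preserve are the eigenvalues $\lambda_1,\lambda_2$, and since the convergence argument in the proof of Theorem \ref{kernel-exp-thm} reduces to the scalar series in $\lambda_1,\lambda_2$ having modulus less than one, that is the invariant you should invoke to transfer convergence of the swapped expansion to the region $\{Z^{-1}W\in\BB D^+\}$.
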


Note however an important difference between the regular and
quasi anti regular cases. In the case of regular functions, the spaces
${\cal V}$ and ${\cal V}'$ have positive definite unitary structures that are
invariant under the action of the conformal group $SU(2,2)$, which in turn
yield a positive definite unitary structure on the space $H$ defined by
equation \eqref{H-def}.
On the other hand, in the quasi anti regular case, the spaces
${\mathcal U}$ and ${\mathcal U}'$ have only pseudounitary structures
that are $SU(2,2)$-invariant, and, therefore, $H$ is no longer a
(pre-)Hilbert space. Such spaces, however, do appear in quantum field theory
and are expected to have important applications to physics.
%In particular, the restriction of ${\mathcal U}$, ${\mathcal U}'$ and $H$
%to the Poincar\'e subgroup $SO(3,1) \ltimes \BB R^{3,1}$ does have an
%invariant positive definite unitary structure.

As we mentioned at the beginning of this section, the quantum fields
$\beta(z)$ and $\gamma(z)$ can be used to construct representations of
loop algebras. For example, for
$F(z) \in \operatorname{Mat}_{n \times n} (\BB C[z,z^{-1}])$,
one can define \cite{Fr}
\begin{equation}  \label{pi(F)}
\pi(F) = \oint :\beta(z) F(z) \gamma(z):\, dz,
\end{equation}
where we take $\BB C^n$-valued fields $\beta$ and $\gamma$
($\beta$ being a row vector and $\gamma$ being a column vector).
The commutation relations can be easily found; they also follow from
the correlation functions, such as
\begin{equation}  \label{correlation-eqn}
  \bigl( 1, \pi(F)\pi(G)\pi(H) 1\bigr)
  = \iiint_{\begin{smallmatrix} |z_1|=R_1 \\ |z_2| = R_2 \\
    |z_3| =R_3 \end{smallmatrix}}
\tr \left[ F(z_1) \frac1{z_1-z_2} G(z_2) \frac1{z_2-z_3} H(z_3) \frac1{z_3-z_1}
  \right]\,dz_1dz_2dz_3,
\end{equation}
where $R_1>R_2>R_3$.
This integral can be easily computed and yields the commutation relations
of the loop algebra.
Another way to treat \eqref{correlation-eqn} is as an intertwining operator
in the triple product of the loop algebra
$L\mathfrak{g} = \operatorname{Mat}_{n \times n} (\BB C[z,z^{-1}])$
$$
L\mathfrak{g} \otimes L\mathfrak{g} \otimes L\mathfrak{g} \to \BB C.
$$
Clearly, one can generalize the correlation functions to
correlation functions of any number of copies of the loop algebra and obtain
complete information about its representation theory \cite{Fr}.

If we replace $F$ in equation \eqref{pi(F)} with an element of ${\cal W}'$
or ${\cal W}$ and let $\beta(Z)$, $\gamma(Z)$ be the quantum fields for
the spaces regular or quasi anti regular functions, we can factorize our
intertwining operators from Section \ref{IntertwiningOper-section} using
the currents, such as $\pi(F)$.
In fact, the right hand side of equation \eqref{correlation-eqn} yields
precisely the intertwining operators\footnote{Instead of ${\cal W}'$ and
  ${\cal W}$ we can also consider $\operatorname{Mat}_{n \times n} ({\cal W}')$
  and $\operatorname{Mat}_{n \times n} ({\cal W})$, as in the loop algebra case.}
\eqref{6easy-W'-3form} and \eqref{6easy-W-3form}.
%formulas for the intertwining operators with the kernels
%$\frac{(Z-W)^{-1}}{N(Z-W)}$ and $\frac{Z-W}{N(Z-W)}$!
Unfortunately, unlike the case of $L\mathfrak{g}$, neither ${\cal W}'$
nor ${\cal W}$ inherits an interesting structure of a Lie algebra.
Recall, however, that we have constructed only six elementary intertwining
operators, which we were able to reproduce as correlation functions of
currents. There are two more intertwining operators on the triple products
of ${\cal W}'$ and ${\cal W}$ that we did not study in this paper.
Thus, we are led to a natural question of whether these missing intertwining
operators on ${\cal W}'$ or ${\cal W}$ yield an interesting Lie algebra
structure with a profound representation theory.
This question will be subject of our future research.

\noindent
{\em Department of Mathematics, Yale University,
P.O. Box 208283, New Haven, CT 06520-8283}\\
{\em Department of Mathematics, Indiana University,
Rawles Hall, 831 East 3rd St, Bloomington, IN 47405}   

\end{document}